\newtheorem{theo}{{\bfseries Theorem}}[section]
\newtheorem{prop}[theo]{{\bfseries Proposition}}
\newtheorem{lem}[theo]{{\bfseries Lemma}}
\newtheorem{cor}[theo]{{\bfseries Corollary}}
\theoremstyle{definition}
\newtheorem{ex}[theo]{{\bfseries Example}}
\newtheorem{remark}[theo]{{\bfseries Remark}}
\newtheorem{df}[theo]{{\bfseries Definition}}
\newtheorem{Qu}[theo]{{\bfseries Question}}
\newtheorem{Qs}[theo]{{\bfseries Questions}}
\def \ul {\underline}
\def \ol {\overline}
\def \N {\mathbb N}
\def \Z {\mathbb Z}
\def \R {\mathbb R}
\def \A {\mathcal A}
\def \B {\mathcal B}
\def \D {\mathcal D}
\def \E {\mathcal E}
\def \F {\mathcal F}
\def \L {\mathcal L}
\def \M {\mathcal M}
\def \NN {\mathcal N}
\def \P {\mathcal P}
\def \Q {\mathcal Q}
\def \T {\mathcal T}
\def \mm {\mathbf m}
\def \nn {\mathbf n}
\def \ww {\mathbf w}
\def \rr {\mathbf r}
\def \ss {\mathbf s}
\def \00 {\mathbf 0}
\def \a {\alpha }
\def \b {\beta}
\def \ep {\epsilon}
\def \g {\gamma}
\def \d {\delta}
\def \o {\omega}
\def \t {\tau}
\def \r {\rho}
\def \z {\zeta}
\def \Lra {\Longrightarrow}
\def \proof { {\bfseries Proof:} }
\def \LAB {\L \A \B}
\def \Nuc {{\rm{Nuc}}}
\numberwithin{equation}{section}
\begin{document}

\title{ WAP Systems and Labeled Subshifts}
\vspace{1cm}


\author{Ethan Akin and Eli Glasner}
\address{Mathematics Department,
 The City College, 137 Street and Convent Avenue,
 New York City, NY 10031, USA}
\email{ethanakin@earthlink.net}

\address{Department of Mathematics,
Tel-Aviv University, Ramat Aviv, Israel}
\email{glasner@math.tau.ac.il}

 \date{August  18, 2016}




\keywords{WAP, HAE, LE dynamical systems, space of
labels, expanding functions,
enveloping semigroup, adherence semigroup, subshifts, countable subshifts, symbolic dynamics,
null, tame}


\thanks{{\em 2010 Mathematical Subject Classification} 37Bxx, 37B10, 54H20,
54H15}

\maketitle

\tableofcontents

\section*{Introduction}\label{intro}


The main object of this work is to present a powerful method of construction of
subshifts which we use chiefly to construct WAP systems with various properties.
Among many other applications of
these so called labeled subshifts,
we obtain examples of null as well as non-null WAP subshifts,
WAP subshifts of arbitrary countable (Birkhoff) height,
and completely scrambled WAP systems of arbitrary countable height.
We also construct LE but not HAE subshifts, and recurrent non-tame subshifts.
Of course all of these notions, with some or all of which the reader may not be familiar,
will be defined and illustrated in due course.

The notion of weakly almost periodic (WAP) functions on a
locally compact abelian
group $G$
was introduced by Eberlein \cite{Eb}, generalizing Bohr's notion of almost periodic (AP) functions.
As the theory of AP functions was eventually reduced to the study of the largest topological
group compactification of $G$, so the theory of WAP functions can be reduced
to the study of the largest semi-topological semigroup compactification of $G$.
Following Eberlein's work there evolved a general theory of WAP functions on a general
topological group $G$, or even more generally, on various type of semigroups.
From the very beginning it was realized that a dual approach, via topological
dynamics, is a
very fruitful tool as well as an end in itself. Thus in the more recent
literature on the subject one is usually concerned with WAP dynamical systems $(X,G)$.
These are defined as continuous actions of the group $G$ on a compact Hausdorff space $X$
such that, for every $f \in C(X)$, the weak closure of the orbit $\{f \circ g : g \in G \}$
is
weakly
compact.
The turning point toward this view point is the paper of Ellis and Nerurkar \cite{EN},
which used the famous double limit criterion of
Grothendieck to reformulate the definition
of WAP dynamical systems as those $(X,G)$ whose enveloping
semigroup $E(X,G)$ consists of continuous maps (and is thus a semi-topological semigroup).

In the last two decades the theory of WAP dynamical systems was put into
the broader context of hereditarily almost equicontinuous (HAE) and tame dynamical systems.
The starting point for this direction was the proof, in the work \cite{AAB} of
Akin Auslander and Berg, that WAP systems are HAE. For later development
along these lines see e.g. \cite{GM}.

Most of the extensive literature on the subject of WAP functions and WAP dynamical systems
has a very abstract flavor.
The research in these works is mostly concerned
with related questions in harmonic analysis, Banach space theory,
and the topology and the algebraic structure of the universal WAP semigroup compactification.
Very few papers deal with presentations and constructions of concrete WAP
dynamical systems. As a few exceptions let us point out the works of
Katznelson-Weiss \cite{KW}, Akin-Auslander-Berg \cite{AAB1},
Downarowicz \cite{D}
and Glasner-Weiss \cite[Example 1, page 349]{GW-LE}.
Even in these few works the attention is usually directed toward examples of
{\bf recurrent} WAP topologically transitive systems. These are the (usually metric) WAP
dynamical systems
which admit a recurrent transitive point.

A point $x$ in a metric dynamical system $(X,G)$ is a point of equicontinuity
if  for every $\epsilon >0$ there is a $\delta >0$ such that $d(gx,gx') < \epsilon$
for every $x' \in B_\delta(x)$ and every $g \in G$. The system is called
{\em almost equicontinuous} (AE) if it has a dense (necessarily $G_\delta$)
subset of equicontinuity points. It is {\em hereditarily almost
equicontinuous} (HAE) if every subsystem (i.e. non-empty closed invariant subset)
is AE.

As our work deals almost exclusively with cascades (i.e. $\Z$-dynamical systems), in
the sequel we will consider dynamical systems of the form $(X,T)$ where $T : X \to X$
is a
homeomorphism. A large and important class of cascades
is the class of
symbolic systems or subshifts. We will deal only with
subsystems of the
{\em Bernoulli} dynamical system
$(\{0,1\}^\Z, S)$, where $S$ is the shift transformation defined by
\begin{equation}\label{shift}
(Sx)_n = x _{n+1}\ ( x \in \{0,1\}^\Z, \ n \in \Z).
\end{equation}
We will call such dynamical systems {\em subshifts}.
It was first observed in \cite{GM} that a subshift is HAE iff it is countable (see
Proposition
\ref{propHAE} below). In particular it follows that WAP subshifts are countable.
Since a dynamical system which admits a recurrent non-periodic point is necessarily uncountable,
it follows that in a WAP subshift
the only recurrent points are the periodic points.
These considerations immediately raise the question which countable subshifts are WAP,
and how rich is this class ?
This question was
the starting point of our investigation.

As we proceeded with our study of that problem we were able to construct several simple
examples of both WAP and non WAP topologically transitive countable
subshifts, but
particular constructions of WAP subshifts turned out to be quite complicated. After many trials
we finally discovered the beautiful world of {\em labels}. We begin with $FIN(\N)$, the additive semigroup
of nonnegative integer-valued functions with finite support defined on $\N$, the set of positive
integers. A label is a subset $\M$ of $FIN(\N)$ which is hereditary in the sense
that $ \00 \leq \mm_1 \leq \mm$ and $\mm \in \M$ imply $\mm_1 \in \M$. The space $\LAB$ of labels
has a natural compact metric space structure.

For an odd positive integer $b = 2e + 1$ every integer $t$ has a unique symmetric base $b$ expansion
using
the function $k = k_b$ defined by
$k_b(i) = b^{i - 1}$ for $i \in \N$.
$ t = \Sigma_{i = 1}^{\infty} \ \ep_i k(i)$ with $|\ep_i| \leq e$ and with $\ep_i = 0$ for all but finitely
many indices $i$. We use $b \geq 5$ and extend $k$ to define $k : \Z \to \Z$ such that
$k(-n) = -k(n)$ and $k(n + 1) > 3 \Sigma_{i = 0}^n \ k(i)$ for $n \geq 0$. Together with $k$ we use
an infinite partition $\{ D_{\ell} : \ell \in \N \}$ of $\N$
 by infinite subsets. The set $IP(k) \subset \Z$
 \footnote{
$IP(k)$ is an example of a {\em symmetric IP set}; for more information on IP sets and their connections to dynamical systems see e.g.  \cite{FW-79}, \cite{F},
\cite{FK-85}, \cite{GW-06} and the recent
paper \cite{AG-16}.}  
consists of the sums of finite subsets of the image of $k$.
That is, it is the set of $t$ such that $\ep_i = \pm 1$ whenever $\ep_i \not= 0$.
Each
$t \in IP(k)$  has a unique expansion $t = k(j_1) + \dots + k(j_r)$ with
$|j_1| > \dots >|j_r|$. The \emph{length vector} $\rr(t) \in FIN(\N)$ counts the number
of
 occurrences of each member of the
 partition in the expansion. That is, $\rr(t)_{\ell} = \# \{ i : j_i \in D_{\ell} \}$. For a label $\M$
 the set $A[\M]$ is the set of $t \in IP(k)$ such that $\rr(t) \in \M$. For example, $\emptyset$ and $ 0 = \{ \00 \}$
 are labels with $A[\emptyset] = \emptyset$ and $A[0] = \{ 0 \}$.

Once the base $b$ and the partition $\{ D_{\ell} \}$ are fixed, there is a canonical injective, continuous map
from the space of labels $\LAB$ into $\{0,1\}^\Z, \ \M \mapsto x[\M]$ where $x[\M]$ is the
characteristic function of the set $A[\M]$. Thus
 to a label $\M$ there is assigned a subshift $X(\M)$, the orbit closure of
$x[\M]$ under $S$.

We show in Theorem \ref{towtheoBanach01}
that the set $IP(k)$ of all expanding times has upper Banach density zero.
This, in turn, implies that for every label $\M$ the corresponding subshift
$(X(\M),S)$ is uniquely ergodic with the point measure at $e = x[\emptyset]$ the unique invariant
probability measure. It follows that each such system has zero topological entropy.

The space $\LAB$ is naturally equipped with an action of the discrete semigroup
$FIN(\N)$,
$$(\rr, \M) \mapsto  \M - \rr = \{ \ww \in FIN(\N) : \ww + \rr \in \M \}.$$
We denote the compact orbit closure of a label $\M$ under this action
by $\Theta(\M)$.

The key lemma which connects the two actions (the $FIN(\N)$ action on labels and
the shift $S$ on subshifts)
is Lemma \ref{towlem11}.  Let
$\{ t^i  \}$ be any sequence of in $IP(k)$  such
that
the sequence of smallest terms $\{ |j_r(t^i)|\}$
tends to
infinity and let $\{ \rr(t^i) \} $ be the corresponding sequence of length
vectors. Then for any sequence of labels
$\{\M^i\}$, the sequences $\{ S^{t^i}(x[ \M^i ]) \}$ and $\{ x[ \M^i
- \rr(t^i) ] \}$ are asymptotic in $\{ 0, 1 \}^{\Z}$.

We show that for a $FIN(\N)$-recurrent label the corresponding $x[\M]$ is an $S$-recurrent
point.  At the other extreme we have the {\em labels of finite type}.
For such a label $\M$, $e = x[\emptyset]$ is the only recurrent point in
$X(\M)$.
These labels are particularly amenable to our analysis, which leads to a complete
picture of the resulting subshift.
In fact for a label $\M$ of finite type
$$
X(\M) = \{S^k x[\NN] : k \in \Z, \ \NN \in \Theta(\M)\} =
\bigcup_{k \in \Z} S^k
x[\Theta(\M)].
$$

We show (see Corollary \ref{lattice1-1}) that, given $\M \in \LAB $,
the maps $\Phi(\cdot)$, which for a closed $S$-invariant subset $Y$ of $X(\M)$ is defined by
$$
Y \mapsto \Phi(Y) =  \{ \NN \in \Theta(\M)  : x[\NN] \in Y \},
$$
and $X(\cdot)$, which for a closed $FIN(\N)$-invariant $\Psi \subset \Theta(\M)$ is defined by
$$
\Psi \mapsto X(\Psi)  = {\text{the subshift generated by}} \ \{ x[\NN] : \NN \in \Psi \},
$$
have the following properties:
\begin{enumerate}
\item
The map $\Psi  \mapsto X(\Psi)$ is one-to-one from the collection of closed $FIN(\N)$ invariant
subsets of $\Theta(\M)$ into the collection of closed $S$-invariant subsets of $X(\M)$.
\item
If $\M$ is of finite type then this map is surjective, i.e.
every closed $S$-invariant subsets $Y$ of $X(\M)$
is of the form $X(\Psi)$ for some closed $FIN(\N)$ invariant $\Psi \subset \Theta(\M)$,
with
$$
\Phi(X(\Psi)) = \Psi \ \text{ and} \  Y = \Phi(X(\Psi)).
$$
\end{enumerate}

Thus, for a finite type label $\M$, the lattice of subsystems of the dynamical system
$(\Theta(\M), FIN(\N))$  fully describes the lattice of subsystems of the
dynamical system $(X(\M), S)$.

2
Two useful subcollections of the collection of finite type labels are the classes of the
{\em finitary labels} and of the {\em simple labels}. For each label $\M$
in either one of these special classes,
the corresponding subshift $X(\M)$ is a countable WAP system whose enveloping semigroup
structure is encoded in the structure of the label $\M$. This fact enables us to produce
WAP subshifts with various dynamical properties by tinkering with their labels.

The recurrent labels are far less transparent
and for these labels the image
$x[\Theta(\M)]$,
which in this case is a Cantor set, forms
only a meagre subset of the subshift $X(\M)$. Nonetheless it seems that this image forms a kind
of nucleus which encapsulates the dynamical properties of $X(\M)$.

The table of contents will now give the reader a rough notion of the
structure of our work. In the first section (section \ref{sec,WAP}) we deal with abstract
WAP systems and their enveloping semigroups and present
simple examples of WAP and non-WAP systems. 
Among other considerations it is shown that topologically
transitive WAP systems are coalescent and that
a general WAP system is E-coalescent. 
For an arbitrary
separable metric system the hierarchies $z_{NW}$ and $z_{LIM}$ of
non-wandering and $\alpha \cup \omega$ limiting procedures are studied.
Both lead, by transfinite induction, to the Birkhoff center of the
dynamical system. We call the ordinal at which the limiting
$\alpha \cup \omega$ transfinite sequence stabilizes, the {\em
height} of the system. 
In the final subsection we
describe some general
constructions like  the discrete suspension,
and the spin construction.

The space of labels is introduced and studied in section \ref{sec,labels}.
The associated subshifts are introduced and studied in section \ref{sec,exp}.
Section \ref{ss,finitarysimple} deals with WAP labels and their corresponding subshifts.
Finally, in
sections \ref{sec,dyn-prop} and \ref{sec,scrambled} these tools are applied to obtain many interesting and subtle
constructions of subshifts. Let us mention just a few.
On the
finite type
side we obtain examples of null as well as non-null WAP subshifts,
Example \ref{ex11}
(answering
a question of
Downarowicz);
WAP subshifts of arbitrary (countable) height,
Theorem \ref{towtheo28};
topologically transitive subshifts which are LE but not HAE,
Example \ref{ex,unc} and Remark \ref{re,LEnotHAE}
(these seem to be the first such examples);
and completely scrambled
WAP systems (although not subshifts) of arbitrary countable
height, Example \ref{ex11a}
(answering a question which is left open in Huang and Ye's work
\cite{HY}).
On the recurrent side we construct various examples of non-tame subshifts.
Of course many questions are left open, especially when labels which are not
of finite type are considered, and we present some of these throughout the work
at the relevant places.

We thank Benjy Weiss for several helpful suggestions which greatly improved this work.
We also thank the two anonymous referees for a careful reading of the manuscript and for
their useful comments and corrections.

\vspace{1cm}

\section{WAP systems}\label{sec,WAP}
\vspace{.5cm}


\subsection{Transitivity,  Recurrence and Enveloping Semigroups}\label{Tr-E}
$\qquad$

\vspace{.5cm}

The type of dynamical system of greatest interest for us is the
\emph{cascade}:
a pair  $(X,T)$ with $T$ is a homeomorphism on a nonempty compact Hausdorff space $X$, usually a metric space.

 We follow some of the notation of \cite{A} concerning relations on
 a space. In particular, we will use the
the \emph{orbit relation}
 $$
 O_T \ = \ \{ \ (x,T^n(x)) \ : \ x \in X, n \in \Z \ \}
 $$ and the associated
  limit relation:

  $\ R_T = \o T \cup \a T$,

 where
 $$ \o T \ = \ \{ \ (x,x') \ : \ x \in X, \ x' = \lim_{i \to \infty} T^{n_i} x \ \text{with}\
 n_i \nearrow \infty \ \},  $$
 and
  $$ \a T \ = \ \{ \ (x,x') \ : \ x \in X, \ x' = \lim_{i \to \infty}
 T^{-n_i} x \ \text{with}\  n_i  \nearrow \infty \ \}.  $$

$R_T$ is a pointwise closed relation (each $R_T(x)$ is closed) but
not usually a closed relation
(i.e. $R_T$ is usually not closed in $X \times X$).

We can regard the cascade $(X,T)$ as an action of the group $\Z$ on $X$ by $(t,x) \mapsto T^t(x)$.
We will need certain results for more general actions.

Let $\Gamma$ be a discrete, countable, commutative
 monoid ( = a semigroup with an identity element $1$). Let $\Gamma' = \Gamma \setminus \{1 \}$\label{gammaprime}.

A $\Gamma$-dynamical system
is a pair $(X, \Gamma)$\label{gammasys} where $X$ is a nonempty, compact Hausdorff space and $\Gamma$
acts on $X$ via a homomorphism of $\Gamma$ into the semigroup
$C(X,X)$ of continuous maps from $X$ to itself,
mapping
$1$
to the identity map, $id_X$. We will write $\Gamma \cdot x = \{ g x : g \in \Gamma \}$ for the $\Gamma$
orbit of a point $x \in X$.

We will call the action \emph{point transitive} when it admits a transitive point,
i.e. a point $x^*$ such that $\Gamma \cdot x^*$ is dense in $X$. $(X,\Gamma)$ is called \emph{minimal} when every
point of
$X$
is a transitive point.

A subset $X_0$ is \emph{invariant} if  $x \in X_0$ implies $\Gamma \cdot x \subset X_0$. The closure of an
invariant set is invariant since the action is continuous.

Given two sets $A, B \subset X$ we let $N(A,B) = \{ g \in \Gamma : g A \cap B \not= \emptyset \}$.
We call $x \in X$ a \emph{recurrent point} if $x \in \ol{\Gamma' \cdot x}$ or, equivalently, if
$N(\{ x \},U) \cap \Gamma' \not= \emptyset$ for every neighborhood $U$ of $x$.
An open set $A$ (or more generally a set with nonempty interior) is 
called \emph{non-wandering} if
$N(A,A) \cap \Gamma' \not= \emptyset$.
If $\Gamma$ is a group
then $A$ is called \emph{wandering} if $\{ g(A) : g \in \Gamma \}$ is a pairwise disjoint collection indexed
by $\Gamma$ and a set is either wandering or non-wandering.
A point $x$ is
\emph{non-wandering} if
 every neighborhood $U$ of $x$ is non-wandering.
It is easy to check that the set of non-wandering points is closed and contains the set of recurrent points.
We will call the system \emph{central}  if every point is non-wandering or, equivalently, if
$N(U,U) \cap \Gamma' \not= \emptyset$ for  for every nonempty open  $U \subset X$.

We will call the system \emph{transitive}  if $N(U,V) \not= \emptyset$ for every pair of nonempty open  $U, V \subset X$.

\begin{prop}\label{transprop} Let $(X, \Gamma)$ be a $\Gamma$ dynamical system with $X$ metrizable.
\begin{enumerate}

\item[(a)] If the system is central then the set of recurrent points is a dense $G_{\delta}$ subset of $X$.

\item[(b)] If the system is transitive then
the set
of transitive points is a dense $G_{\delta}$ subset of $X$ and so the system
is point transitive.

\item[(c)] If $\Gamma$ is a group and the system is point transitive, then it is transitive
and the set of transitive
points is invariant.

\end{enumerate}
\end{prop}

\proof These are just easy versions of the results for cascades with $\Gamma = \Z$ and so we will just sketch the
proofs. Let $\B$ be a countable base for $X$. For $A \subset X$ let $(\Gamma')^{-1}(A) = \bigcup_{g \in \Gamma'} \{ g^{-1}(A) \}$
with an analogous definition for $(\Gamma)^{-1}(A) = (\Gamma')^{-1}(A) \cup A$.

(a): Let $\A$ be a finite cover of $X$ by elements of $\B$.

$$Recur = \bigcap_{\A} \ \bigcup_{U \in \A}  U \cap (\Gamma')^{-1}(U)$$
is the set of recurrent points and it is the countable intersection of dense open sets when $(X,\Gamma)$ is central.

(b): The set of transitive points is $\bigcap_{U \in \B} (\Gamma)^{-1}(U)$ and this is the countable intersection of
dense open sets when $(X,\Gamma)$ is transitive.

(c): If
$x$
is a transitive point and $U, V \subset X$ are nonempty open sets then there exist $g_1, g_2$ such that
$g_1 x \in U, g_2 x \in V$. So $g_2 g_1^{-1} \in N(U,V)$ which is defined since  $\Gamma$ is a group.

For any monoid action $y \in \Gamma \cdot x$ implies $\Gamma \cdot y \subset \Gamma \cdot x$ with equality when $\Gamma$
is a group. Hence, if $y$ is a transitive point then $x$ is and the converse is true when $\Gamma$ is a group.

$\Box$ \vspace{.5cm}

If $X_0 \subset X$ is nonempty, closed and invariant,  then
$\Gamma$ acts on $X_0$ by restriction, and we call $(X_0,\Gamma)$ a \emph{subsystem} of $(X,\Gamma)$.
In particular, the orbit closure $ \ol{\Gamma \cdot x }$ is a closed, invariant set for any $x \in X$. By the
compactness and the usual Zorn's Lemma argument, any nonempty, closed and invariant subset contains a
nonempty, closed and invariant subset $M$ which is minimal with respect to inclusion. This is equivalent to
the condition that the subsystem $(M,\Gamma)$ is minimal in the previously mentioned sense, i.e. every
point $x \in M$ is a transitive point for $(M,\Gamma)$. Since the intersection of closed, invariant sets is closed and
invariant, it follows that distinct minimal subsets are disjoint.

A not necessarily closed subset $X_0$ is \emph{orbit-closed} if  $x \in X_0$ implies $\ol{\Gamma \cdot x} \subset X_0$.
An orbit-closed set is invariant and a closed, invariant set is orbit-closed.

In particular, a cascade $(X,T)$ is transitive if for every two
non-empty open sets $U, V $ in $X$ there is an $n \in \Z$ with $T^{-n}U \cap V \not=\emptyset$.
By Proposition \ref{transprop}, if $X$ is metrizable, then transitivity is equivalent to point
transitivity and implies that $X_{tr}$, the set of transitive points, is a dense $G_\delta$ subset of $X$.

A space is Polish if it is separable and admits a complete metric, e.g. a compact metric space.
Since a Polish space is separable the set of isolated points is finite or countably infinite.
A nonempty Polish space without isolated points is a union of
Cantor subsets, see, e.g. \cite[Proposition 2.3]{A-02}.
Since a  $G_{\delta}$ subset of a Polish space is Polish, any nonempty $G_{\delta}$ subset $A$ of
a Polish space either contains
points isolated in $A$ or contains a Cantor Set. In particular, in a countable Polish space the isolated points are dense.
Note that the set of rationals in $\R$ is not Polish.

The action of $\Gamma$ on $X$ is \emph{faithful} if $g_1 x = g_2 x$ for all $x \in X$ implies $g_1 = g_2$, i.e.
the homomorphism from $\Gamma$ to $C(X,X)$ is injective.
If $\breve{\Gamma}$ is the image of $\Gamma$ in
 $C(X,X)$ then $\breve{\Gamma}$ is a countable, abelian submonoid of $C(X,X)$ which acts faithfully
on $X$.
We call the action of $\Gamma$ on $X$ \emph{weakly faithful} if $gx = x$ for all $x \in X$ implies $g = 1$, i.e. the
only element of $\Gamma$ which acts as the identity is $1$.

We let $\Gamma_u$ denote the \emph{group of units}\label{gammau} in $\Gamma$, i.e. the set of elements $g \in \Gamma$ such that
there exists a -necessarily unique- inverse $\bar g$ such that $g \bar g = \bar g g = 1$.

\begin{prop}\label{counttransa} Let $(X, \Gamma)$ be a point transitive $\Gamma$ dynamical system. Assume that
$X$ is not perfect, i.e. it contains isolated points.

(a) Assume that the action is weakly faithful. If some isolated point $x^*$ of $X$ is a transitive point, then the
set of transitive points is $\Gamma_u x^*$ and these are all isolated points. None of the transitive points
is recurrent.

(b) If $\Gamma$ is a group then the set of transitive points is
the countable, dense open set of
isolated points. Moreover, this set consists of a single $\Gamma$ orbit.
\end{prop}

\proof (a) If $x^*$ is an isolated transitive point and $\bar x$ is another transitive point then there exists $g^* \in \Gamma$
such that $g^* \bar x = x^*$ since the orbit of $\bar x$ meets the clopen set $\{ x^* \}$.
$(g^*)^{-1}(x^*) = \{ y \in X : g^* y = x^* \}$ is a
clopen set containing $\bar x$. Because $x^*$ is a transitive point, there exist $\bar g \in \Gamma$ such that
$\bar g x^* \in (g^*)^{-1}(x^*)$ and so $g^* \bar g(x^*) = x^*$. Because $\Gamma$ is abelian, this implies
that $g^* \bar g$ acts as the identity on $\Gamma x^*$ which is dense in $X$. Because the action is continuous and
weakly faithful, it follows that $g^* \bar g = 1$. Commutativity implies that $g^*$ and $\bar g$ are inverses in $\Gamma$
and so lie in $\Gamma_u$. Thus, $\bar x \in \Gamma_u x^*$.

On the other hand, if $g \in \Gamma_u$ and $x = g x^*$ then since $g$ is invertible, $x^* \in \Gamma x$ and so
$\Gamma x^* $ is contained in - actually it equals - $\Gamma x$ and so the latter is dense. That is, $x \in X_{tr}$.

Since the elements of $\Gamma_u$ act as homeomorphisms and $x^*$ is isolated, it follows that every element of
$\Gamma_u x^*$ is isolated.

Finally, if $g x^* = x^*$ then as above $g = 1$. Hence, $\Gamma' x^*$ is disjoint from the clopen set $\{ x^* \}$.
Hence, $x^*$ is not recurrent.

(b) By replacing $\Gamma$ by $\breve{\Gamma}$ we may assume that the action is faithful. Observe that the $\Gamma$ orbit
is the same as the $\breve{\Gamma}$ orbit for every point of $X$.   If
$x^*$ is any isolated  point and $\bar x$ is a transitive point then there exists $g^* \in \Gamma$
such that $g^* \bar x = x^*$. Because $\Gamma$ is a group, it acts via homeomorphisms and so $\bar x$ is an isolated point
as well as a transitive point. Since $\Gamma$ is a group, $\Gamma_u = \Gamma$ and so applying (a) we obtain that
the orbit $\Gamma \bar x$ is the set of transitive points all of which are isolated. There are countably many because
$\Gamma$ is countable and the set is dense because $\bar x$ is a transitive point.

$\Box$ \vspace{.5cm}

\begin{cor}\label{counttrans} If $(X, \Gamma)$ is a transitive $\Gamma$ dynamical system
with $X$ countable and $\Gamma$ a group, then
$X$ is metrizable and
the set of transitive points is
the dense open set of
isolated points. Moreover, this set consists of a single $\Gamma$ orbit.\end{cor}

\proof  We first observe that a countable compact space has a countable base and so is metrizable.
For each pair $(x,y)$ of distinct points of $X$, choose $U_{(x,y)}$ an open set
containing $x$ and with $y \in X \setminus \ol{U_{(x,y)}}$. Since $X$ is countable $\{ U_{(x,y)} \}$ is a countable collection
of open sets. Using compactness it is easy to check that the finite intersections of such sets form a basis for $X$. Hence,
$X$ is compact and metrizable. Because it is countable,
no open subset contains a Cantor set and so
the isolated points are dense.
Furthermore, since the action is transitive, it is point transitive by Proposition \ref{transprop} (b).
The result then follows from Proposition \ref{counttransa} (b).

$\Box$ \vspace{.5cm}

\begin{ex}\label{oddaction} Let $\Gamma$ be the one-point compactification of $\N$ with the multiplication
$t \cdot s = \min(s,t)$. Thus, $\Gamma$ is a countable, compact, abelian topological monoid (i.e. the multiplication
is jointly continuous). The point $\infty$ at infinity is the identity element and is the one non-isolated point.
The action of $\Gamma$ on itself by multiplication is  faithful and $\infty$ is the unique transitive point. Thus, the
system is point transitive but not transitive.
Contrast this with Proposition \ref{transprop}.
The set $\{ 1 \}$ is the unique minimal subset of $\Gamma$. \end{ex}
\vspace{.5cm}

The system $(X,\Gamma)$ is called {\em weakly mixing}
when the diagonal action of $\Gamma$ on $X \times X$ is transitive.

\vspace{.5cm}

For a cascade $(X,T)$ we recall the definitions of $\epsilon$-chains and chain transitivity.
Given $\epsilon > 0$
an
\emph{$\epsilon$-chain from $x$ to $y$} is a finite sequence
$x = x_0, x_1, \dots ,x_n= y$ such that
$n > 0$ and $d(T(x_i),x_{i+1}) < \epsilon$ for $i = 0, \dots, n - 1$.
The system $(X,T)$ is \emph{chain transitive} if
for any pair of points $x, y \in X$
and any $\epsilon  > 0$ there is an $\epsilon$-chain going from $x$ to
$y$.
An \emph{asymptotic chain} is an infinite sequence $ \{ x_i : i \in \Z_+ \}$ or $ \{ x_i : i \in \Z \}$  such that
$\lim_{|i| \to \infty} \ d(T(x_i),x_{i+1}) \ = \ 0$. It is a \emph{dense asymptotic chain}
if for every $N \in \N$
$\{ x_i : i \geq N \}$ is dense in $X$ (and in the $\Z$ case $\{ x_i : i \leq -N \}$ is dense in $X$ as well).
If $(X,T)$ is chain transitive and $x \in X$ then
there exists a dense asymptotic chain $\{ x_i : i \in \Z \}$ with $x = x_0$.
\vspace{.5cm}

The following construction is due to Takens (see, e. g. \cite[Chapter 4, Exercise 29]{A}
).

\begin{ex}\label{chainconstruct} If $(X,T)$ is a chain transitive metric system and
$\{ x^i : i \in \Z \}$ is a dense asymptotic chain then let
\begin{equation}\label{con01}
z^i \quad = \quad \begin{cases} (x^i,(2i + 1)^{-1}) \qquad \mbox{for} \ i \geq 0, \\
(x^i,(2|i|)^{-1}) \qquad \mbox{for} \ i < 0. \end{cases}
\end{equation}
Let $X^* = X \times \{ 0 \} \cup \{ z^i : i \in \Z \}, x^* = z^0$.
Extend $T = T \times id_{0}$ on $X \times \{ 0 \}$ identified with $X$, by
$T(z^i) = z^{i+1}$ for $i \in \Z$. Then $(X^*,T)$ is a topologically transitive metrizable
system with transitive point $x^*$ and $X^* = X \cup O_T(x^*)$ and $X = \o T(x^*)$.
\end{ex}
\vspace{.5cm}

Now we return to $\Gamma$ actions.

The {\em enveloping semigroup\/} $E=E(X,\Gamma)$ of the dynamical system
$(X,\Gamma)$ is defined as the closure in
$X^X$ (with its compact, usually non-metrizable,
pointwise convergence topology) of the image of $\Gamma$ in $C(X,X)$
considered as a subset of $X^X$. Since $\Gamma$ is a monoid, $E(X,\Gamma)$ is a monoid with
the identity $id_X$.

It follows directly from the definitions that,
under composition of maps, $E$ forms a compact
semigroup in which the operations
\begin{equation*}\label{df,E}
p \mapsto pq \qquad {\text{\rm and}}\qquad p\mapsto g p 
\end{equation*}
for $p,q\in E,\ g \in \Gamma$, are continuous.
Since we have assumed that $\Gamma$ is commutative, it follows from continuity of the $\Gamma$ action that each
$g \in \Gamma$ commutes with every $p \in E(X,\Gamma)$.

Notice that
$\Gamma$ acts on $E$ by  multiplication, so that $(E,\Gamma)$ is a
$\Gamma$-system (though usually non-metrizable).
%

The elements of $E$ may behave very
badly as maps of $X$ into itself; usually they are
not even Borel measurable. However our main
interest in $E$ lies in its algebraic structure
and its dynamical significance. A key lemma
in the study of this algebraic structure is the
following (see Lemma
\ref{applem01}
in Appendix
\ref{appendix-ellis}):

\begin{lem}[Ellis-Numakura]\label{idemp} Let $L$ be a compact
Hausdorff semigroup in which all maps $p\mapsto pq$ are
continuous.  Then $L$ contains an {\em idempotent \/};
i.e.,\ an element $v$ with $v^2=v$.
\end{lem}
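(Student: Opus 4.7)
The plan is the classical two-step Zorn/minimality argument. First I would apply Zorn's lemma to produce a minimal non-empty closed subsemigroup $M \subseteq L$, and then I would exploit the continuity of right multiplication within $M$ to squeeze out an idempotent.

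For the Zorn step, let $\mathcal{F}$ be the family of non-empty compact subsemigroups of $L$, ordered by reverse inclusion. The family is non-empty since $L \in \mathcal{F}$. Given a chain $\{M_\alpha\} \subseteq \mathcal{F}$, the intersection $M = \bigcap_\alpha M_\alpha$ is closed (hence compact), is closed under the semigroup operation, and is non-empty by the finite intersection property in the compact Hausdorff space $L$. Thus every chain has an upper bound in the reverse-inclusion order, and Zorn's lemma produces a minimal element $M \in \mathcal{F}$.

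Now fix any $v \in M$ and consider $Mv = \{pv : p \in M\}$. Since right multiplication by $v$ is continuous, $Mv$ is a continuous image of a compact set, hence compact. It is also a subsemigroup of $L$: for $p_1 v, p_2 v \in Mv$ we have $(p_1 v)(p_2 v) = (p_1 v p_2) v$, and $p_1 v p_2 \in M$ because $M$ is a subsemigroup. Clearly $Mv \subseteq M$, so by minimality $Mv = M$. In particular there exists some element $p \in M$ with $pv = v$, so the set
\[
K \ = \ \{ p \in M : pv = v \}
\]
is non-empty. Again using that $p \mapsto pv$ is continuous, $K$ is the preimage of the closed singleton $\{v\}$ intersected with $M$, hence closed in $M$ and therefore compact. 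It is a subsemigroup, since for $p_1, p_2 \in K$ we have $(p_1 p_2) v = p_1 (p_2 v) = p_1 v = v$. Thus $K \in \mathcal{F}$ and $K \subseteq M$, so by minimality $K = M$, and in particular $v \in K$, i.e.\ $v^2 = v$.

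The only mildly delicate point is the verification that the intersection of a decreasing chain of compact subsemigroups is itself non-empty, which is where compactness is genuinely used. Note also that one-sided continuity is exactly what the argument needs: we only ever invoke continuity of the map $p \mapsto pv$, never of $p \mapsto vp$. This matches the hypothesis of the lemma and is precisely why the lemma applies to enveloping semigroups $E(X,\Gamma)$, where only right multiplication is known to be continuous.
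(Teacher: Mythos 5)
Your proof is correct and is the standard Ellis--Numakura argument (Zorn's lemma via the finite intersection property to get a minimal closed subsemigroup $M$, then $Mv=M$ and $\{p\in M: pv=v\}=M$ by minimality); the paper states this lemma without proof, citing it as a classical fact, so there is no in-paper argument to compare against. Your closing remark is also on point: only continuity of $p\mapsto pv$ is ever invoked, which is precisely the one-sided continuity available in enveloping semigroups $E(X,\Gamma)$, where the lemma is applied.
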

\vspace{.5cm}

Given two $\Gamma$ dynamical systems, say $(X,\Gamma)$ and $(Y,\Gamma)$,
a continuous  map $\pi : X \to Y$ is a {\em homomorphism} or an {\em action map}
if it intertwines the $\Gamma$ actions, i.e. $\gamma \pi(x) = \pi(\gamma x)$ for every
$x \in X$ and $\gamma \in \Gamma$. If $\pi$ is injective it expresses $X$ as a \emph{subsystem} of $Y$.
If $\pi$ is surjective it expresses $Y$ as a \emph{factor} of $X$.

If $X_0 \subset X$ is closed and invariant,
 then
 the inclusion of the subsystem $(X_0,\Gamma)$ into $X$ is an injective action map. Furthermore, $X_0$ is
 invariant with respect to the $E(X,\Gamma)$ action.

An injective action map induces, by restriction, a surjective, continuous monoid homomorphism
(and an action map)
$\pi^* : E(Y,\Gamma) \to E(X,\Gamma)$.
A surjective action map $\pi : X \to Y$ induces a surjective, continuous monoid homomorphism
(and an action map)
$\pi_* : E(X,\Gamma) \to E(Y,\Gamma)$.
Observe that if $\pi(x_1) = \pi(x_2)$ then
for any $p \in E(X,\Gamma)$, $\pi(p x_1) = p \pi(x_1) =  p \pi(x_2) =  \pi(p x_2)$.

The map $E(X,\Gamma) \times X \to X$ given by $(p,x) \mapsto px$ is an action although the map is usually not jointly continuous.
However, if $x \in X$ then the evaluation map $ev_x : E(X,\Gamma) \to X$ given by $p \mapsto px$ is continuous and is an
action map. A point $x^* \in X$ is a transitive point iff $ev_{x^*}$ is surjective and so yields $X$ as a factor of
$E$.

For more details see e.g. \cite[Chapter 1, Section 4]{Gl-03} and \cite{AAG}.

\begin{lem}\label{lem01} If $p  \in E(X,\Gamma)$ is continuous at $x \in X$, then for any $q \in E(X,\Gamma)$ and $y \in X$ if
$ qy = x$ then $pqy = qpy$. \end{lem}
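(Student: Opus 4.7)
The plan is to unpack $q$ using the definition of $E(X,T)$ as a pointwise closure of powers of $T$, then move $T^n$ past $p$ using the fact that elements of $T^{\Z}$ are central in $E$, and finally apply continuity of $p$ at the one specific point $x$ where we need it.

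First I would choose a net $\{n_{\a}\}$ such that $T^{n_\a} \to q$ in the pointwise topology on $X^X$; this is possible because, by definition, $E(X,T)$ is the closure of $\{T^n : n \in \Z\}$ in $X^X$. Thus for every $z \in X$, $T^{n_\a} z \to qz$. In particular, $T^{n_\a} y \to qy = x$.

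Next, I would establish the auxiliary fact that $T^n$ commutes with every element $p \in E(X,T)$. This is a standard consequence of the continuity statements recalled just before the lemma: right multiplication $r \mapsto rT^n$ is continuous, and certainly $T^m \cdot T^n = T^n \cdot T^m$ for all integers $m,n$. Writing $p = \lim_\b T^{m_\b}$ and passing to the limit yields $pT^n = T^n p$. Consequently, for each $\a$,
$$T^{n_\a}(py) \ = \ p(T^{n_\a} y).$$

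Now I can put things together. On the one hand, by continuity of left multiplication by $T^{n_\a}$ applied to the net defining $q$, we have
$$qpy \ = \ \lim_\a T^{n_\a}(py) \ = \ \lim_\a p(T^{n_\a} y).$$
On the other hand, since $T^{n_\a} y \to x$ and $p$ is continuous at $x$,
$$\lim_\a p(T^{n_\a} y) \ = \ p(x) \ = \ p(qy) \ = \ pqy.$$
Comparing these two displays gives $pqy = qpy$, which is the desired identity.

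I do not expect a serious obstacle: the only subtle point is the centrality of $T^n$ in $E$, and this follows immediately from the separate continuity properties of the enveloping semigroup recalled in the paragraph preceding the Ellis--Numakura lemma. Everything else is a direct application of the hypothesis that $p$ is continuous precisely at the point $x = qy$ where the net $T^{n_\a} y$ accumulates.
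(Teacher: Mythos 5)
Your proof is correct and follows essentially the same route as the paper's: take a net $T^{n_\a} \to q$ pointwise, note $T^{n_\a}y \to qy = x$ so that continuity of $p$ at $x$ gives $pT^{n_\a}y \to pqy$, and use the identity $pT^{n_\a} = T^{n_\a}p$ together with pointwise convergence at $py$ to get $T^{n_\a}py \to qpy$. The only difference is that you spell out the centrality of the iterates $T^n$ in $E(X,T)$, which the paper uses without comment; your justification of it is sound.
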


\proof If $\{ g^{i} \}$ is a net in $\Gamma$ which converges to $q$
pointwise then $\{ g^iy \} \to qy = x$ and so by continuity of $p$ at $x$,
$pg^iy \to pqy$.  But $pg^iy = g^ipy \to qpy$ since the elements of $\Gamma$ commute with $p$.

$\Box$ \vspace{.5cm}

\begin{prop} \label{prop02} If $x^*$ is a transitive
point for $(X,\Gamma)$ and $p \in E(X,\Gamma)$
then the following are equivalent.
\begin{itemize}
\item[(i)] $p$ is continuous on $X$.
\item[(ii)]  For all $q \in E(X,\Gamma) \ pq = qp$ on $X$.
\item[(iii)]  For all $q \in E(X,\Gamma) \ pqx^* = qpx^*$.
\end{itemize}
\end{prop}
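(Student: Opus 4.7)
The implications $(i) \Rightarrow (ii) \Rightarrow (iii)$ are the easy part. For $(i) \Rightarrow (ii)$, fix $q \in E(X,T)$ and $y \in X$. Set $x = qy$; then by the assumed continuity of $p$ at $x$, Lemma \ref{lem01} immediately yields $pqy = qpy$. Since $y$ was arbitrary, $pq = qp$ on $X$. The implication $(ii) \Rightarrow (iii)$ is the trivial specialization $y = x^*$.

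The substantive content is $(iii) \Rightarrow (i)$, and the plan is to promote the commutation at the single point $x^*$ to global continuity of $p$ by using the transitive point together with compactness of $E(X,T)$. Since $x^*$ is transitive, the evaluation map $ev_{x^*} : E(X,T) \to X$, $q \mapsto qx^*$, is a continuous surjection (surjectivity follows because any $x \in X$ is a limit of some net $T^{n_\alpha} x^*$, and a subnet of $\{T^{n_\alpha}\}$ converges in the compact space $E(X,T)$ to an element $q$ with $qx^* = x$). Thus every $y \in X$ has the form $qx^*$ for some $q \in E(X,T)$, and our goal is to show $p$ is continuous at each such $y$.

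Fix $y = qx^*$ and let $y_\alpha \to y$ be an arbitrary net in $X$. Choose $q_\alpha \in E(X,T)$ with $q_\alpha x^* = y_\alpha$. By compactness of $E(X,T)$ in the product topology, any subnet of $\{q_\alpha\}$ has a further subnet $q_\beta \to q'$ in $E(X,T)$; pointwise convergence at $x^*$ then gives $q_\beta x^* \to q' x^*$, and since $q_\beta x^* = y_\beta \to y$ and $X$ is Hausdorff, $q' x^* = y$. Now apply hypothesis $(iii)$ at both $q_\beta$ and $q'$:
$$p y_\beta \;=\; p q_\beta x^* \;=\; q_\beta \,(p x^*) \;\longrightarrow\; q'\,(px^*) \;=\; p q' x^* \;=\; p y,$$
where the middle convergence is just pointwise evaluation of $q_\beta \to q'$ at the fixed point $px^* \in X$. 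Hence every subnet of $\{p y_\alpha\}$ has a further subnet converging to $py$, so $p y_\alpha \to p y$. This proves $p$ is continuous at $y$, and since $y \in X$ was arbitrary, $p$ is continuous on $X$.

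The main obstacle is the one just navigated: the elements of $E(X,T)$ are generally far from continuous, so one cannot argue directly that $p q_\alpha x^* \to p q x^*$ from $q_\alpha x^* \to q x^*$. The trick is to sidestep this by commuting past $p$ first via $(iii)$, reducing the convergence question to evaluating the net $\{q_\beta\}$ at the single point $p x^*$, where pointwise convergence in $E(X,T)$ is enough.
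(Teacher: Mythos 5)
Your proof is correct and takes essentially the same route as the paper's: the easy implications via Lemma \ref{lem01}, and for (iii) $\Rightarrow$ (i) the identical argument — reduce to showing every convergent subnet of $\{py_\alpha\}$ has limit $py$, lift $y_\alpha$ to $q_\alpha \in E(X,T)$ through the transitive point, pass to a convergent subnet $q_\beta \to q'$, and use (iii) twice to rewrite $pq_\beta x^* = q_\beta(px^*) \to q'(px^*) = pq'x^* = py$, so that only pointwise evaluation at the fixed point $px^*$ is needed. Your closing remark correctly identifies the same trick the paper exploits.
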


\proof (i) $\Rightarrow$ (ii): This follows from Lemma \ref{lem01}.

(ii) $\Rightarrow$ (iii): Obvious.

(iii) $\Rightarrow$ (i): Suppose the net $\{ x^i \}$ in $X$ converges to $x$.
To show that then $p x_i \to p x$, it suffices to show that every
convergent subnet has limit $p x$. So we can assume that $\lim p
x_i$ exists. Since $x^*$ is a transitive point, $ev_{x^*}$ is surjective and
there are $q^i \in E(X,T)$ with $x^i = q^i x^*$. Let
$\{ q^{i'} \}$ be a convergent
subnet in $E(X,\Gamma)$ with limit $q$. Then, by continuity of $ev_{x^*}$, $qx^* = x$. By assumption (iii)
$\lim px^i = \lim p x^{i'}
= \lim p q^{i'} x^* = \lim q^{i'} p x^* = qp x^* = pq x^* = px$.

$\Box$ \vspace{.5cm}


\subsection{The enveloping semigroup of WAP systems}\label{subWAP}
$\qquad$

\vspace{.5cm}

A dynamical system
$(X,\Gamma)$ is called {\it WAP (weakly almost periodic)}
when the elements of $E(X,\Gamma)$ are all continuous functions on $X$.
Clearly for a WAP system $(X, \Gamma)$ the multiplication on its
enveloping semigroup is continuous in each variable separately; i.e.
$E(X, \Gamma)$ is an abelian {\it semi-topological semigroup}.
The converse however is not necessarily true;
see Proposition \ref{prop04}.

N.B. It still need not be true that the
action $E(X,\Gamma) \times X \to X$ is jointly continuous, even in the case of a WAP cascade.

\begin{cor} \label{cor02a}If $(X,\Gamma)$ is a point transitive system with
a transitive point $x^*$ then the following are equivalent.
\begin{itemize}
\item[(i)] $(X,T)$ is WAP
\item[(ii)] $E(X,T)$ is abelian.
\item[(iii)] For every $p, q \in E(X,T) \ pqx^* = qpx^*$.
\end{itemize}
When these conditions hold  $ev_{x^*} :(E(X,\Gamma),\Gamma) \to (X,\Gamma)$ is an isomorphism and
there is a unique minimal subset of $X$.
\end{cor}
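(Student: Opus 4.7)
The plan is to reduce the three-way equivalence to repeated application of Proposition \ref{prop02}, since that result already does all the work for a single fixed $p$. For (i) $\Rightarrow$ (ii): if every $p \in E(X,T)$ is continuous, then Proposition \ref{prop02} (i)$\Rightarrow$(ii) applied to each $p$ gives $pq=qp$ on $X$ for all $q$, so $E$ is abelian. The implication (ii) $\Rightarrow$ (iii) is immediate by evaluating at $x^*$. For (iii) $\Rightarrow$ (i), fix $p \in E$; the hypothesis (iii) of the corollary is exactly hypothesis (iii) of Proposition \ref{prop02} for this $p$, so $p$ is continuous on $X$. Since $p$ was arbitrary, $(X,T)$ is WAP.

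For the isomorphism statement, I would first note that $ev_{x^*}$ is a continuous action map and is surjective because $O_T(x^*)$ is dense. Injectivity is where I use commutativity: if $px^*=qx^*$ and $y \in X$, pick $r \in E(X,T)$ with $rx^*=y$ (extract a subnet from the net $T^{n_i}x^* \to y$ inside the compact semigroup $E$). Then
$$
py \;=\; prx^* \;=\; rpx^* \;=\; rqx^* \;=\; qrx^* \;=\; qy,
$$
so $p=q$ on $X$. A continuous bijection between compact Hausdorff spaces is a homeomorphism, and $ev_{x^*}$ intertwines $T_*$ and $T$, so it is a dynamical isomorphism.

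For uniqueness of the minimal subset, the idea is to use the structure of the minimal ideal of the commutative compact semi-topological semigroup $E$. By Lemma \ref{idemp} $E$ contains idempotents, and by commutativity the intersection of any two ideals is nonempty, so $E$ has a unique minimal ideal $K$. Standard arguments (using minimality of $K$ and commutativity) show $K$ is a group with some idempotent identity $u$. Then $Kz$ is a closed invariant subset of $X$ for every $z$, and for $y = kz \in Kz$ one has $Ky = (Kk)z = Kz$ because $K$ is a group, so $\overline{O(y)} \supseteq Ky = Kz$ and $Kz$ is minimal. To see uniqueness, take any minimal $M \subseteq X$ and any $y \in M$; since $M$ is $E$-invariant, $uy \in M$, and writing $y = qx^*$ with $q \in E$ (by transitivity) gives $uy = (uq)x^* \in Kx^*$ because $uq \in K$. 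Thus $M$ and $Kx^*$ share the point $uy$ and, being minimal, must coincide.

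The main obstacle is the last paragraph: the three equivalences are just bookkeeping on top of Proposition \ref{prop02}, and injectivity of $ev_{x^*}$ is a one-line consequence of commutativity, but the uniqueness of the minimal set needs the existence and group structure of the minimal ideal in the commutative enveloping semigroup. Once that structural fact is in hand, intersecting with the orbit closure of $x^*$ pins down the unique minimal subset as $Kx^*$.
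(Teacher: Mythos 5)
Your proposal is correct, and for the equivalence of (i)--(iii) it coincides with the paper, which likewise disposes of all three implications by citing Proposition \ref{prop02}. Your injectivity argument for $ev_{x^*}$ is a mild variant: the paper derives $p=q$ from $px^*=qx^*$ by observing that $p$ and $q$ agree on the dense orbit $O_T(x^*)$ (they commute with $T$) and then invoking continuity of $p,q$ (i.e.\ WAP), whereas you commute $p$ past an arbitrary $r \in E(X,T)$ with $rx^*=y$; both are fine, and yours has the small advantage of using only abelianness of $E(X,T)$, with continuity entering only to upgrade the continuous bijection $ev_{x^*}$ to a homeomorphism. The genuine divergence is in the uniqueness of the minimal set. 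The paper's argument is a two-line direct one: given minimal sets $M_1, M_2$ with $x_i \in M_i$, choose $p_i \in E(X,T)$ with $p_i x^* = x_i$; then $p_2p_1x^* \in M_1$ and $p_1p_2x^* \in M_2$ (closed invariant sets are $E$-invariant), and commutativity makes these the same point, so $M_1 \cap M_2 \neq \emptyset$ and $M_1 = M_2$ — no idempotents, no ideal theory beyond what Proposition \ref{prop02} already provides. You instead develop the kernel of the commutative compact right-topological semigroup: existence of idempotents via Lemma \ref{idemp}, a unique minimal ideal $K$ (products of ideals lie in their intersection), the group structure of $K$ (from $kK=K$ for $k \in K$), minimality of $Kz$ (via $\overline{O_T(y)} = Ey$ and $Kk=K$), and the identification of any minimal $M$ with $Kx^*$ through $uq \in K$. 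All of these verifications are sound. What your heavier route buys is an explicit description: the unique minimal set is $Kx^*$ and the kernel is a group, structural information the short argument does not surface; what the paper's route buys is brevity and minimal machinery.
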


\proof The equivalence of (i), (ii) and (iii) follows from Proposition \ref{prop02}

If $(X,\Gamma)$ is WAP and $px^* = qx^*$ then $p = q$ on $\Gamma \cdot x^*$ because
the elements of $\Gamma$ commute with $p$ and $q$. Since $\Gamma \cdot x^*$ is dense, $p = q$ by continuity.
That is, $ev_{x^*}$ is
injective and so is an isomorphism.

If $x_i \in M_i$ for minimal sets $M_1, M_2$ there exist $p_1, p_2
\in E(X,\Gamma) $ s.t. $p_i(x^*) = x_i$ and so $p_2(x_1) = p_2(p_1(x^*))
\in M_1$ while $p_1(x_2) = p_1(p_2(x^*)) \in M_2$. Since $E(X,T)$ is
commutative, $M_1 \cap M_2 \not= \emptyset$ and so $M_1 = M_2$.

$\Box$ \vspace{.5cm}

\begin{prop}\label{prop03} If $ev_{ x^*} : E(X,\Gamma) \to X$ is an
homeomorphism (e.g. if $(X,\Gamma)$ is WAP with transitive point $x^*$)  and $\{ q^i \}$ is a net
in $E(X,\Gamma)$ such that $\{q^i x^*\}$  converge to a point $x \in X$
then $\{ q^i (z)\}$
 converges in $ X$ for every $z \in X$. In fact, $\{ q^i  \}$ converges pointwise to the
unique $p \in E(X,\Gamma)$ such that $p(x^*) = x$.
Thus, if $p \in E(X,T)$ and  $\{ q^i (x^*) \}$ converges to $p(x^*)$ in
$X$ then $\{ q_i  \} \to p$ in $E(X,\Gamma)$.
 \end{prop}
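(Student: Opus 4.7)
The plan is to prove the second (more general) statement first and then deduce the first as a special case by taking $q_i = T^{n_i}$.

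Recall that $E(X,T)$ carries the topology of pointwise convergence inherited from $X^X$. Under this topology, the evaluation map $ev_{x^*} \colon E(X,T) \to X$, $q \mapsto q(x^*)$, is automatically continuous, since convergence in $X^X$ forces convergence at every coordinate, in particular at $x^*$. By hypothesis $ev_{x^*}$ is a homeomorphism, so the inverse $ev_{x^*}^{-1} \colon X \to E(X,T)$ is continuous as well. This is the only nontrivial ingredient; the rest is formal.

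For the general statement, suppose $\{q_i\} \subset E(X,T)$ and $q_i(x^*) \to p(x^*)$ in $X$. Applying the continuous map $ev_{x^*}^{-1}$ gives
\begin{equation*}
q_i \ = \ ev_{x^*}^{-1}(q_i(x^*)) \ \longrightarrow \ ev_{x^*}^{-1}(p(x^*)) \ = \ p
\end{equation*}
in $E(X,T)$. Since the topology on $E(X,T)$ is pointwise convergence, this means precisely that $q_i(z) \to p(z)$ for every $z \in X$.

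For the first statement, set $q_i = T^{n_i} \in E(X,T)$. By assumption $q_i(x^*) = T^{n_i}(x^*) \to x$ in $X$. Since $ev_{x^*}$ is a bijection onto $X$ there is a unique $p \in E(X,T)$ with $p(x^*) = x$, so the hypothesis of the general statement is satisfied. We conclude $T^{n_i} \to p$ in $E(X,T)$, i.e.\ $T^{n_i}(z) \to p(z)$ for every $z \in X$, which is the desired pointwise convergence. The parenthetical remark that WAP together with a transitive point $x^*$ implies $ev_{x^*}$ is a homeomorphism is provided by Corollary \ref{cor02a}, so the statement indeed applies in that setting.

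The main (and essentially only) conceptual point is recognizing that the injectivity/continuity package of $ev_{x^*}$ automatically upgrades pointwise convergence at the single point $x^*$ to pointwise convergence at every point, which is exactly the conclusion sought. There is no genuine obstacle beyond unpacking the definition of the $X^X$-topology on $E(X,T)$.
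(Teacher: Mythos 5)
Your proof is correct and is exactly the paper's argument: the paper's entire proof reads ``Obvious by inverting the homeomorphism $ev_{x^*}$,'' and your write-up simply unpacks that one-liner — continuity of $ev_{x^*}^{-1}$ upgrades convergence at $x^*$ to convergence in $E(X,T)$, hence pointwise convergence everywhere, with the first assertion as the special case $q_i = T^{n_i}$.
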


\proof Obvious by inverting the homeomorphism $ev_{x^*}$.

$\Box$ \vspace{.5cm}

A dynamical system
$(X,\Gamma)$ is called {\it AP (almost periodic)}
or {\it equicontinuous} if  $\Gamma$ acts equicontinuously on $X$.
Clearly an AP system is WAP and its enveloping semigroup
is a (commutative) compact topological group. Note
that there are (non-transitive) systems $(X, \Gamma)$
with $E(X, \Gamma)$ a (commutative) compact topological group
which are not equicontinuous. This is the case e.g. in
Example \ref{ex2}(a) below
(see also \cite[Example 6.5]{GMU}).
However, for a transitive system,
$E(X, \Gamma)$ is a commutative compact group iff
$(X,\Gamma)$ is equicontinuous.


\vspace{.5cm}

Even in the cascade case, $ev_{x^*}$ a homeomorphism need not imply WAP.

\begin{ex}\label{ex1}
Let $X$ be a compact, connected metric space and $T= id_X$, the identity map.  So
$E(X,T) = \{ id_{X} \}$.
Let $\{ x_i : i \in \Z \}$ be a  sequence of distinct points  in $X$ such that
$\lim_{|i| \to \infty} d(x_i,x_{i+1}) = 0$
and
so that the positive and negative tails are dense in $X$. Thus, $\{ x_i \}$ is a dense asymptotic chain for
$(X,T)$.  Following Example \ref{chainconstruct} we embed $(X,T)$ as a subsystem of $(X^*,T)$ with $X^* = X \cup O_T(x^*)$ and
$\{ T^ix^* \}$ asymptotic to $\{ x_i \}$.

 Now assume that
$y \in X$ and $T^{n_k}x^*$ converges to $y$ ( and so with $|n_k| \to \infty$). Then
$ T^{n_k + N} x^*$ converges to $T^N y = y$. Furthermore, for any $z \in X_0$,
$T^{n_k}z = z$ converges to $z$. Thus, $T^{n_k}$ converges pointwise to the function which is the identity
on $X$ and which is constantly $y$ on the orbit of $x^*$. In particular,
$ev_{x^*} : (E( X^*,T),T) \to (X^*,T)$
is an isomorphism.  On the other hand,
$E(X^*,T)$
is not commutative and none of the elements of
$E(X^*,T)$ are continuous
except for the iterates $T^n$.
\end{ex}

$\Box$  \vspace{.5cm}

 \begin{ex}\label{ex2}An abelian
 enveloping semigroup
 does not imply WAP in general.

(a)
Let the circle be $\R/\Z$. Let $X = \R/\Z \times \Z^*$ where $\Z^*$ is the
one-point compactification of $Z$. Define $T$ to be the identity on $\R/\Z \times \{ \infty \}$ and by
$(t,n) \mapsto (t + 3^{-(|n|+ 1)},n)$. On each circle the map is just a rotation,
hence equicontinuous, and so is WAP.
Therefore the enveloping semigroup is commutative.
Consider the sequence $\{ T^{\Sigma_{i=0}^{k} \ 3^i } \}$. On $\R/\Z \times \{ n \}$ this is
eventually constant at the rotation $ t \mapsto t + \Sigma_{i=0}^{|n|} \ 3^{ -(|n| - i + 1)} $. As $|n| \to \infty$
this approaches the rotation $t \mapsto t + \frac{1}{2}$.  But on the circle $\R/\Z \times \{ \infty \}$
every element of the enveloping semigroup restricts to the identity.
\vspace{.25cm}

(b)
A countable example is the \emph{spin} $(Z,T)$ of the identity on $\Z^*$. $Z$ is a subset of $ \Z^* \times \Z^*$
\begin{equation}
\begin{split}
Z \quad = \quad (\Z^* \times \{ \infty \}) \ \cup \ \bigcup_{n \in \Z} \{ [-|n|,+|n| \ ] \times \{ n \} \},\hspace{1cm} \\
T(x) \quad = \quad \begin{cases}  \ (\infty, \infty) \qquad  \ \ \  \mbox{for} \quad x \ = \ (\infty, \infty) \\
(t + 1, \infty) \qquad  \  \mbox{for} \quad x \ = \ (t, \infty) \\
(t + 1, n)    \qquad \mbox{for} \quad x \ = \ (t, n) \ \mbox{with} \ -n \leq t < n, \\
(-n, n)  \qquad  \ \  \ \mbox{for} \quad x \ = \ (n,n). \end{cases}
\end{split}
\end{equation}
\end{ex}

$\Box$ \vspace{.5cm}

The following proposition describes the equivalences for what might be called
{\it local WAP}.

\begin{prop}\label{prop04} For a system $(X,\Gamma)$ the following are equivalent.
\begin{itemize}
\item[(i)] Multiplication for the enveloping semigroup is continuous in each variable;
i.e. $E(X, \Gamma)$ is a semi-topological semigroup.

\item[(ii)] Every element of the enveloping semigroup has a continuous restriction on the orbit closure of each element of
$X$.

\item[(iii)] The enveloping semigroup is commutative.

\item[(iv)] Each orbit closure in $X$ is a WAP system.

\end{itemize}
\end{prop}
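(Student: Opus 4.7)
\proof
The plan is to establish the four-way equivalence via the cycle
(iv) $\Leftrightarrow$ (ii), (iv) $\Leftrightarrow$ (iii), (ii) $\Leftrightarrow$ (i),
with Corollary \ref{cor02a} as the main engine. First I would record the guiding
observation: for each $x \in X$ the orbit closure $Y = \overline{O_T(x)}$ is invariant
under every $p \in E(X,T)$, since $Y$ is $T$-invariant and $E(X,T)$ is the pointwise
closure of $\{T^n\}$. Hence restriction defines a surjective semigroup homomorphism
$E(X,T) \to E(Y, T|_Y)$, and $x$ is a transitive point of $(Y, T|_Y)$. In particular
Corollary \ref{cor02a} is applicable to each such subsystem.

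From this setup (ii) $\Leftrightarrow$ (iv) is immediate: $(Y, T|_Y)$ is WAP iff every
element of $E(Y, T|_Y)$ is continuous on $Y$, and by surjectivity of the restriction
map this is exactly the assertion that every $p \in E(X,T)$ is continuous on $Y$. Next,
for (iv) $\Leftrightarrow$ (iii), applying Corollary \ref{cor02a} to $(\overline{O_T(x)}, T)$
with transitive point $x$ shows that this subsystem is WAP iff $pq(x) = qp(x)$ for all
$p, q \in E(X,T)$; quantifying over $x \in X$ converts (iv) into the condition $pq = qp$
on $X$, which is (iii).

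For (i) $\Leftrightarrow$ (ii) I would first recall that right multiplication
$p \mapsto pq$ on the enveloping semigroup is automatically continuous, so (i) reduces
to continuity of $p \mapsto qp$ for each $q \in E(X,T)$. The direction
(ii) $\Rightarrow$ (i) is direct: if $p_i \to p$ pointwise in $E(X,T)$, then for each
$x$ the net $p_i(x)$ converges to $p(x)$ inside $\overline{O_T(x)}$, and continuity of
$q|_{\overline{O_T(x)}}$ gives $qp_i(x) \to qp(x)$. For the converse, fix $q \in E(X,T)$
and $x \in X$ and let $y_i \to y$ in $\overline{O_T(x)} = E(X,T) \cdot x$. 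I would choose
$p_i \in E(X,T)$ with $p_i(x) = y_i$, use compactness of $E(X,T)$ to extract subnets
$p_{i'} \to p$ along which automatically $p(x) = y$, and then (i) gives
$qp_{i'} \to qp$ pointwise, hence $q(y_{i'}) \to q(y)$. A standard Hausdorff
subnet-limit argument then upgrades this to $q(y_i) \to q(y)$, proving continuity of
$q$ on $\overline{O_T(x)}$.

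The main obstacle is this last subnet manipulation: because the evaluation
$p \mapsto p(x)$ from $E(X,T)$ onto $\overline{O_T(x)}$ need not be injective, a lift
$(p_i)$ of $(y_i)$ cannot be assumed to converge, and one must pass to subnets and
invoke the principle that if every subnet of $(q(y_i))$ has a further subnet with
limit $q(y)$ then the whole net converges to $q(y)$. Everything else reduces cleanly to
Corollary \ref{cor02a} applied orbit-closure by orbit-closure.
$\Box$
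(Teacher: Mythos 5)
Your proof is correct, and it shares the paper's skeleton --- reduce everything to orbit closures (where Proposition \ref{prop02}/Corollary \ref{cor02a} apply, since each orbit closure is $E(X,T)$-invariant with a transitive point) --- but it closes the cycle differently. The paper runs (iii) $\Rightarrow$ (ii) $\Rightarrow$ (i) $\Rightarrow$ (iii), and its return step is a one-line commutation trick: if $T^{n_j} \to q$ then $pT^{n_j} = T^{n_j}p$ for all $j$ (powers of $T$ commute with every element of $E(X,T)$), and two-sided continuity at $p$ lets one pass to the limit on both sides to get $pq = qp$. You instead prove (i) $\Rightarrow$ (ii) directly: lift a convergent net $y_i \to y$ in $\overline{O_T(x)} = E(X,T)\cdot x$ to $p_i \in E(X,T)$ with $p_i(x) = y_i$, extract convergent subnets $p_{i'} \to p$ (necessarily $p(x)=y$), use separate continuity to get $qp_{i'} \to qp$, and invoke the compact-Hausdorff principle that convergence of every further subnet of $q(y_i)$ to the common value $q(y)$ forces $q(y_i) \to q(y)$. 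This is essentially the mechanism of the paper's proof of Proposition \ref{prop02} (iii) $\Rightarrow$ (i), with separate continuity of multiplication standing in for commutativity; it is longer than the paper's trick but makes no use of the special role of the $T^{n_j}$. Two further points in your favor: you make explicit the surjectivity of the restriction homomorphism $E(X,T) \to E(\overline{O_T(x)}, T)$, which the paper's terse opening sentence ("each of these conditions holds for $(X,T)$ iff it holds for the restriction to each orbit closure") leaves implicit, and you correctly flag that only the nontrivial half of (i) --- continuity of $r \mapsto qr$ --- needs proof, the other half being automatic in the pointwise topology.
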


\proof  Since each orbit closure is invariant for the enveloping semigroup and since the topology of the latter is
pointwise convergence, each of these conditions holds
for
$(X,\Gamma)$ iff it holds for the restriction to each orbit closure.
This restricts to the topologically transitive case for which (iii) implies (ii) by Proposition \ref{prop02}.
Because of pointwise convergence, (ii) implies (i) is obvious.  If $p,q$ are in the enveloping semigroup and $g^j$ is a
net converging to $q$ then $p g^{j} = g^{j}p$ and
the separate
continuity of multiplication at p implies $pq = qp$.

$\Box$ \vspace{.5cm}

In the case of a cascade $(X,T)$, we
write
$(E(X,T),
\allowbreak
A(X,T))$
for the
enveloping semigroup and the ideal which is the
 {\it adherence semigroup} (= the limit points
of $\{ T^n \}$ as $ n \to \pm \infty$) (see Definition \ref{adherencedef} in Appendix
\ref{appendix-StoneCech}).
Let $T_*$ on $E(X,T)$ be the homeomorphism given by $T_*(p) =  Tp = pT$. Thus,
$y \in R_T(x)$ iff $y = px$ for some $p \in A(X,T)$ and $A(X,T) = R_{T_*}(id_X)$.
Note that $(E(X,T),T_*)$ is a - usually
non-metrizable - cascade.
\vspace{.5cm}

\subsection{The Birkhoff Center, CP and CT systems}\label{subsec,rec}
$\qquad$
 \vspace{.5cm}

We now restrict our attention to cascades.

A point $x \in X$ was defined above to be
{\em recurrent} when it is in the closure of $\{ T^n(x) : n \not= 0 \}$. It follows that $x$ is
recurrent iff
 $x \in R_T(x)$ and so iff there exists $p \in A(X,T)$ such that $px = x$.
That is, $Iso_x = \{ p \in E(X,T) : px = x \}$ intersects $A(X,T)$ in a nonempty, closed subsemigroup.
Hence, there exists an idempotent $u \in A(X,T)$ such that $ux = x$.
On the other hand, $x$ is non-recurrent iff
the orbit $\mathcal{O}_T(x)$ is disjoint from
the limit set $R_T(x)$. The closure of the set of recurrent points
is the \emph{Birkhoff Center}. We will denote by $Cent_T$ the Birkhoff center for $(X,T)$.

A  open set $A \subset X$ was defined to be \emph{wandering} if $\{ T^n(A) : n \in \Z \}$ is a
bi-infinite sequence of pairwise disjoint sets.
Otherwise, there exists
$n > 0$ such that $A \cap T^n(A) \not= \emptyset$ and $A$ is called \emph{non-wandering}.  A point $x$ is
\emph{non-wandering} if every open neighborhood of $x$ is non-wandering.  A recurrent point is clearly non-wandering.
Recall that $(X,T)$ is called \emph{central} if every point is non-wandering.
In that case, with $X$ metrizable, the set of recurrent points
forms
a dense
$G_{\delta}$ subset, by  Proposition \ref{transprop}. Even in the non-metrizable case, Ellis proved that the set of recurrent
points is dense if $(X,T)$ is central, see, e.g.
\cite[ Proposition 5.18]{A-97}.


Let $(X,T)$ be a cascade with $X$ metrizable and hence separable.
Define $z_{CAN}(X)$ to be the complement of the set of isolated points in $X$. Let $z_{NW}(X)$ be the complement of the union
of all wandering open sets, i.e. the set of non-wandering points.
Note that if a point is isolated and non-periodic then it is wandering.
Thus, if there are no isolated periodic points, then $z_{NW}(X) \subset z_{CAN}(X)$.
Since $T$ is a homeomorphism,
the set of isolated points is invariant.  Hence,  both $z_{NW}(X)$ and $ z_{CAN}(X)$ are closed invariant sets.

Let
$z_{LIM}(X) = \ol{R_T(X)}$.
If an open set $U$ meets $\ol{R_T(X)}$  then there exist $x, y \in X$ such that
$y \in U \cap R_T(x)$ and so for infinitely many $n \in \Z, \ T^n(x) \in U$. In particular, $U$ is non-wandering.
 It follows that
 $z_{LIM}(X) \subset z_{NW}(X)$.

 For each of these operators $z$
we define the descending transfinite sequence of closed sets by
\begin{equation}\label{z}
z_0(X) \ = \ X, \quad z_{\a + 1}(X) \ = \ z(z_{\a}(X)), \quad
z_{\b}(X) \ = \ \bigcap_{\a < \b} z_{\a}(X),
\end{equation}
for $\b$ a limit ordinal.

We say that the sequence stabilizes at $\b$ when $z_{\b}(X) = z_{\b + 1}(X)$ in which case it
is constant from then on. The first $\b$ at which stabilization occurs for the $CAN/ NW/ LIM$ sequence is called
the $CAN/ NW/ LIM$
{\em level}. Since $X$ is a separable metric space, each level is a countable ordinal
(because for any limit ordinal $\b $ less than or equal to the level
$\{ X \setminus z_{\a}(X) : \a < \b \}$ is a strictly increasing open cover of the Lindel\"{o}f space $X \setminus z_{\b}(X)$,
and so there is a sequence of ordinals $\{ \a_n < \b \}$ with limit $\b$. Hence, $\b$ is smaller than the first uncountable
ordinal
and so the level is a countable ordinal).

We let
$z_{\infty}(X) = z_{\b}(X)$ when the sequence stabilizes at $\b$.
Clearly $z_{LIM, \a}(X) \allowbreak \subset z_{NW, \a}(X)$ for all
$\a$.
Recall that when $(X,T)$ is non-wandering, i.e. $z_{NW}(X) \allowbreak = X$, then the recurrent points are dense.
Since all recurrent points are contained in $z_{LIM,\infty}(X)$ it follows that $ z_{LIM,\infty}(X) = z_{NW,\infty}(X)$ is
the closure of the set of recurrent points, i.e. the Birkhoff Center. While the two transfinite sequences are eventually
equal, they can stabilize at different levels.

At each stage of the $z_{CAN}$ sequence, the set of isolated points is countable. Hence,  $X \setminus z_{CAN,\infty}(X)$ is
a countable union of countable sets and so is itself countable. On the other hand, $z_{CAN,\infty}(X)$ has no
isolated points. So  every non-empty subset which is open in $z_{CAN,\infty}$ contains a Cantor
set. Thus, $X \allowbreak \setminus z_{CAN,\infty}$ is the maximum countable open subset of $X$ and
$$z_{CAN,\infty} = \{ x \in X : \ {\text
{every neighborhood of $x$ is uncountable}}\}.$$

In particular, if $X$ is compact and countable then $z_{CAN,\infty} = \emptyset$.
 Since the intersection of the decreasing family of
nonempty closed sets has a nonempty intersection, $\b_{CAN}(X)$ is not a limit ordinal if $X$ is compact and countable.

\begin{df}\label{df,height}
We will call the ordinal $\b_{LIM}(X)$ at which the $z_{LIM}$ sequence stabilizes, the \emph{height} of $(X,T)$.
\end{df}

Call $(X,T)$ \emph{semi-trivial} (hereafter ST) if $R_T = X \times \{ e \}$ for a point, a fixed point,
$e \in X$. That is, for every $x \in X, \ \ R_T(x) = \{ e \}$.  Call $(X,T)$ \emph{center periodic} (hereafter CP)
if the only recurrent points are periodic. Call $(X,T)$ \emph{center trivial}
(hereafter CT) if there is a unique recurrent point $e$, necessarily a fixed point, and so the Birkhoff center
is $\{ e \}$. Call $(X,T)$ \emph{min center trivial} (hereafter minCT) if there is a unique minimal point $e$,
necessarily a fixed point.

Clearly, ST implies CT and CT implies CP and minCT. A nontrivial system is ST iff it is a CT system of height $1$.
For a minCT system we will denote by $u$ the retraction to the fixed point $e$. In general, if $p \in E(X,T)$ is minimal, then
$p x$ is a minimal point of $X$ for all $x \in X$. So if $(X,T)$ is minCT, then $u$ is the
only minimal element of $E(X,T)$. Thus, $(E(X,T),T_*)$ is minCT.

For any cascade $(X,T)$, if a point $x \in X$ is periodic, then its orbit closure is finite.
If $x$ is non-recurrent then its orbit is disjoint
from $R_T(x)$.  In exactly these cases, $x$ is isolated in its orbit closure. In particular,
an isolated point is recurrent iff it is periodic. It follows that a cascade is CP iff every
point is isolated in its orbit closure.

If $(X,T)$ is central,
then the recurrent points are dense and so every isolated point is recurrent and so is periodic.

 If a metrizable $(X,T)$ is CP and
central
then the recurrent points - which are all periodic - form a dense
$G_{\delta}$ and so by the Baire Category Theorem, $\{ x : T^n(x) = x \}$ has nonempty interior for some positive $n$.
In fact, the union of such interiors
is dense in $X$. If there are only countably many periodic points then this open dense set is countable and Polish and so
the isolated points are dense in $X$. Also, every isolated point is  periodic because $X$ is central

The identity on any compact space defines a CP system and the finite product of CP's is CP
(not the infinite product since the product of periodic orbits can contain
an adding machine). Any subsystem or factor of a CP system is CP
(since any recurrent point in the factor lifts to some recurrent point in
the top). Inverse limit does not work. Again,
an adding machine is the inverse limit of periodic orbits.

\begin{remark}\label{cp,weakmix} A nontrivial CP system $(X,T)$ can never be weak mixing, i.e. $(X \times X, T \times T)$ is
never topologically transitive. If $x^*$ is a transitive point for a CP system $(X,T)$ then it is isolated in $X$. If $x^*$ is an
isolated, transitive point for a nontrivial  system $(X,T)$ then $T(x^*) \not= x^*$ and so $U = \{ x^* \}$ and $V = \{ T(x^*) \}$
are nonempty open subsets of $X$, but $N(U \times U, U \times V) = \emptyset$. \end{remark}

$\Box$ \vspace{.5cm}

The CT condition  is closed under arbitrary products and subsystems.
In particular, the enveloping semigroup of a CT system is CT.
The retraction $u$ to the fixed point $e \in X$ is the unique fixed point
in $E(X,T)$ (the system is minCT).
Also, it is the unique idempotent in
$A(X,T)$.
If $\pi : (X,T) \to (Y,S)$ is an action map and $X$ is CT then $Y$ is.
In general, $(X,T)$ is CT iff $(Y,S)$ is CT and $\pi^{-1}(e)$ is a CT subsystem of $X$.

Mapping $(X,T)$ to the factor system on $X/Cent_T$,
where $Cent_T$ is collapsed to a single point,
defines a functor from
compact systems to CT systems. An action map $X \to Y$ with $Y$ CT factors through the projection from $X$ to $X/Cent_T$
and so the functor is adjoint to the inclusion functor.

If $(X,T)$ is a countable CT system, then $e$ is not an isolated point in any invariant closed subset of $X$ except $\{ e \}$ itself.
Thus, if the Cantor sequence stabilizes at $\b + 1$ then $z_{CAN,\b} = \{ e \}$ and conversely.  In that case, for any $\a \leq \b$
$z_{LIM,\a} \subset z_{NW,\a} \subset z_{CAN,\a}$. That is, up to $\a = \b$ the isolated points are all
wandering.

A CT system $(X,T)$ has height $0$ iff $X = \{e\}$, i.e. the system is trivial, and has height  $1$ iff it is
ST.

\begin{prop}\label{prop05} (a) If $(X,T)$ is an ST system then it is WAP.

(b) If $(X,T)$ is a CT system with height at most 2 then
$E(X,T)$ is commutative.
If, in addition, $(X,T)$ is topologically transitive then
it is WAP. \end{prop}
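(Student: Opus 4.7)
The plan is to handle (a) by direct inspection of the adherence semigroup, and (b) by unpacking the height-$2$ hypothesis to show that a double application of an element of $A(X,T)$ is constant, and then appealing to Corollary \ref{cor02a}.

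For part (a), assume $R_T = X \times \{e\}$. Any $p \in A(X,T)$ is a pointwise limit of iterates $T^{n_i}$ with $|n_i| \to \infty$, so for each $x \in X$ we have $p(x) \in R_T(x) = \{e\}$. Thus $p$ equals the constant map $u \equiv e$, which is continuous. Since $E(X,T) = \{T^n : n \in \Z\} \cup A(X,T) = \{T^n : n \in \Z\} \cup \{u\}$ consists entirely of continuous maps, $(X,T)$ is WAP.

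For part (b), suppose $(X,T)$ is CT with height at most $2$. Unpacking the $z_{LIM}$ sequence, $z_{LIM,1}(X) = \overline{R_T(X)}$ and $\{e\} = Cent_T = z_{LIM,\infty}(X) = z_{LIM,2}(X) = \overline{R_T(z_{LIM,1}(X))}$. Hence $R_T(y) \subset \{e\}$ for every $y \in z_{LIM,1}(X)$, and since $R_T(y)$ is nonempty by compactness, in fact $R_T(y) = \{e\}$. The key observation is that for any $p \in A(X,T)$ and any $x \in X$, $p(x) \in R_T(x) \subset \overline{R_T(X)} = z_{LIM,1}(X)$, so applying any $q \in A(X,T)$ gives $q(p(x)) \in R_T(p(x)) = \{e\}$, i.e.\ $qp \equiv e$. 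Symmetrically $pq \equiv e$, so $pq = qp$ whenever $p,q \in A(X,T)$. When $q = T^n$ is an iterate, $T^n p = p T^n$ holds by the standard pointwise-continuity argument: if $p = \lim T^{m_i}$ then $(pT^n)(x) = \lim T^{m_i + n}(x) = (T^n p)(x)$. Iterates of $T$ commute with each other. Since $E(X,T) = \{T^n : n \in \Z\} \cup A(X,T)$, this exhausts all pairs and shows $E(X,T)$ is abelian.

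For the final assertion of (b), if $(X,T)$ is also topologically transitive, pick a transitive point $x^*$; since $E(X,T)$ is abelian, condition (ii) of Corollary \ref{cor02a} is satisfied, and that corollary gives that $(X,T)$ is WAP. The only subtle step is verifying the inclusion $p(X) \subset z_{LIM,1}(X)$ for $p \in A(X,T)$, which is where the bound "height at most $2$" is genuinely used; everything else is a bookkeeping exercise combining the definition of WAP with the fact that iterates of $T$ commute with arbitrary elements of $E(X,T)$.
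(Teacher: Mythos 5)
Your proof is correct and follows essentially the same route as the paper: part (a) is the observation that $E(X,T) = \{T^n : n \in \Z\} \cup \{u\}$, and part (b) rests on showing $pq = u = qp$ for all $p,q \in A(X,T)$ (which you derive carefully from $p(x) \in z_{LIM,1}(X)$ and $z_{LIM,2}(X) = \{e\}$, the step the paper leaves implicit), followed by the transitive-point criterion of Proposition \ref{prop02}/Corollary \ref{cor02a}. Your appeal to Corollary \ref{cor02a} rather than Proposition \ref{prop02} is an immaterial difference, since the corollary is a direct restatement of the proposition for transitive systems.
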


\proof (a) If $(X,T)$ is ST, then  $E(X,T) = \{ T^n : n \in \Z \} \cup \{ u \}$  where $u$ is the retraction onto $e$.
So every element of $E(X,T)$ is continuous.

(b) If $p, q \in A(X,T)$ then $pq = u = qp$. Hence, the semigroup is abelian.
So if $(X,T)$ is topologically transitive, then it is WAP by Proposition \ref{prop02}.

$\Box$ \vspace{.5cm}

Let $S$ denote the shift homeomorphism on
$\{ 0, 1 \}^{\Z}$, i.e. $S(x)_i = x_{i+1}$. We will be focusing much of our attention on nonempty subsystems of the full shift
$(\{ 0, 1 \}^{\Z},S)$.  These are called \emph{subshifts}.

\begin{ex}
In his work \cite{Sh} Shapovalov shows that within the class of countable subshifts
one can find, for any countable ordinal $\a$, a subshift $X_\a \subset \{0,1\}^\Z$
whose Birkhoff degree, i.e. its NW
level, is $\a + 1$.
Now it is easy to verify that all of these subshifts $X_\a$ constructed by Shapovalov are in fact
ST and therefore also WAP. One can make them topologically transitive by attaching
a single orbit. Thus we conclude that
{\em the class of WAP, topologically transitive subshifts is rich enough to present
every countable Birkhoff degree}.
Note however that being semi-trivial Shapovalov's original examples  all are of height 2 and they become of height 3 when
an orbit is attached to make them topologically transitive.
As we will show later (Theorem \ref{towtheo28}) the class of WAP, topologically transitive
subshifts is also rich enough to present every countable height.
\end{ex}


%
%

\begin{ex}\label{ex3}  Various non-WAP examples.

Let $e = \bar 0, x[0] = 0^{\infty} \dot{1} 0^{\infty}$, i.e. $x[0]_0 = 1$ and $x[0]_i = 0$ otherwise.
Here $\bar \alpha$ for a finite binary word $\alpha$
is the periodic orbit $\cdots\alpha\alpha\alpha\cdots \in \{ 0, 1 \}^{\Z}$.
Thus
$\bar 0$ is the constant sequence $(\dots,0,0,0,\dots)$.
Let $X(0)$ be the ST subshift generated by $x[0]$. Thus, $(X(0),S)$ is
isomorphic to translation on the one point compactification
$\Z^*$ of $\Z$.

(a) For $k = 1,2,\dots$, let $b^k_j = 1 $ for $j = 10^{nk}, n \in \Z$
and $= 0 $ otherwise. Let $(X,S)$ be the generated subshift.
$R_S(X) = X(0)$ and so $(X,S)$ has height 2.
We have
$b^k \to x[0]$ as $k \to \infty$. The sequence $S^{10^{n!}} \to p$ in $A(X,T)$ with
$p(b^k) = x[0]$ for all $k$ and $p(x[0]) = e$. So $p$ is not continuous at $x[0]$, despite the fact that all of the points of
$X \setminus X(0)$ are isolated. That is, the assumption of topological transitivity in Proposition \ref{prop05} (b) is necessary.
\vspace{.25cm}

(b) A topologically transitive system of height 1
with minimal set not a fixed point need not be WAP. Let $c$ be given by $c_i = 1$ for
$i =  2n, -1 - 2n$ for $n \in \N$ and $= 0$ otherwise, i.e. $c = (01)^{\infty}(10)^{\infty}$. The orbit closure of
$c$ consists of $O_S(c)$ together with the periodic orbit
$\{ \ol{01}, \ol{10} \}$.
$S^{-2k}(c) \to \ol{01}$,
$S^{2k}(c) \to \ol{10}$. $S^{-2k} \to p$ and $S^{2k} \to q$ both $p, q$ are
identity on the periodic orbit. Hence, $q = pq \not= qp = p$
on $c$.
\vspace{.25cm}

%
%

(c) For $(Y,S)$  any compact metric system, let $(X,T)$ be the one point compactification of
$(Y \times \Z, S \times t)$ with $t$ the translation on $\Z$. This is an ST system.
If $S = id_Y$ it is easy to build a countable sequence of
periodic orbits with limit set $(X,T)$.  The expanded system is CP with an uncountable center although there are only countably
many periodic orbits.
\vspace{.25cm}

(d) Call $x \in \{ 0, 1 \}^{\Z}$
\emph{selective}
if for any
$n \geq 0$ the word $1 0^n 1$ occurs at most once in $x$.
Let $X$ be the set of all
selective
$x$. Clearly, if $x$ is
selective
then $R_S(x) \subset X(0)$. Note that if
$A \subset \Z$
is
such that all the nonzero differences $a_i - a_j$ are distinct, then
the characteristic function
$\chi(A)$
of $A$ in $\{ 0, 1,\}^{\Z}$, is a
selective
element. $(X,S)$ is an uncountable
CT subshift with height 2. Hence every orbit closure
of a point in $X$ is WAP. It is not hard to see that $X$ itself is not WAP.
Let $x \in X$ such that $x_{n_k} = 1$ with $n_k \to \infty$. Let $z^N_i$ agree with $x_i$
for $|i| \leq N$ and equal $0$ otherwise.  Then $\{ z^N \}$ is a sequence in $X$ which
converges to $x$. If a subnet of $T^{n_k}$ converges to $p \in E(X,S)$ then $p(x) = x[0]$ but
$p(z^N) = \bar 0$ for all $N$.  Hence, $p$ is not continuous.
\vspace{.25cm}

(f) Let $(Y,S)$ be any CP subshift. Let $\{ w_i \}$ count the finite words in $Y$. Then
$ Y \cup \bigcup \{ \ol{w_i} \}$ is a CP subshift with dense periodic points.\end{ex}

$\Box$ \vspace{.5cm}

If $(X,T)$ is a metrizable CT then  it is
chain transitive, because the fixed point $e$ lies in $\omega T(x) \cap \alpha T(x)$ for every $x \in X$. So if $x_1, x_2 \in X$,
there is an $\epsilon$ chain from $x_1$ to $e$ and from $e$ to $x_2$ for every $\epsilon > 0$.
Hence, we can attach a single orbit of isolated points and obtain a metrizable CT, $(X^*,T)$ which is
topologically transitive so that
of $X^* $ the disjoint union of $X$ and the dense orbit of isolated points $O_T(x^*)$. See Example \ref{chainconstruct}.

\begin{ex}\label{ex4} It may happen that we cannot choose the extension
so that $(X^*,T)$ is WAP.

Let $(X,T)$ be a CT WAP with fixed point $e$ and which is not semi-trivial.
That is, there exists $p$ in the
$A(X,T)$ with $p \not= u$ and so $p(X) \setminus \{e \}$ is nonempty. Let $\bar X = X_1 \vee X_2$, two copies of
$X$ with the fixed points identified.  For any map $g$ on $X$ which fixes $e$, let $\bar g$ on $\bar X$ be copies of
$g$ on each term. The system $(\bar X, \bar T)$ is clearly a CT WAP and $p \mapsto \bar p$ is an isomorphism from
 $E(X,T)$ onto  $E(\bar X, \bar T)$. Notice the $\bar p(\bar X) \setminus X_i$
is nonempty for $i = 1,2$.

Now let $(X^*,  T^*)$ contain
$(\bar X, \bar T)$ and with $X^* \setminus \bar X$ consisting of a single dense orbit $O_{T^*}(x^*)$.

Let $q$ be an element of the
enveloping semigroup of $E(X^*,T^*)$ with $q(x^*) \in X_1$. Then $q$ maps the whole orbit of $x^*$ into
$X_1$ and if $q$ is continuous then $q(X^*) \subset X_1$. Thus, every continuous element of the enveloping
semigroup $E(X^*, T^*)$ maps all of $X^*$ either into $X_1$ or into $X_2$. Every element of the
enveloping semigroup of $(\bar X, \bar T)$ extends to some element of the enveloping semigroup of $(X^*, T^*)$.
Thus, if $ p^*$ extends $\bar p$ it cannot be continuous and so $(X^*, T^*)$ is not WAP.\end{ex}

$\Box$ \vspace{.5cm}

We will say that $A(X,T)$ \emph{distinguishes points} when $p(x_1) = p(x_2)$ for all $p \in A(X,T)$ implies
$x_1 = x_2$.  It suffices that some $p \in A(X,T)$ be injective. If $X$ has any non-trivial, but semi-trivial subspace
then $A(X,T)$ does not distinguish points.

Let $T_*$ be composition with $T$ on $E(X,T)$.
Clearly $id_X$ is a transitive point for $T_*$. If $(X,T)$ is not
weakly rigid, i.e. $id_X$ is not a recurrent point for $T_*$, then
$id_X$ is an isolated transitive point for $T_*$ and $A(X,T) = R_{T_*}(id_X)$ is a proper subset of
$E(X,T)$.

Now we assume that $(X^*,T)$ is obtained from $(X,T)$ as above by attaching a single dense orbit of isolated points.
Assume that $x^*$ is a transitive point for $(X^*,T)$. Then $ev_{x^*} : (E(X^*,T), \allowbreak T_*) \to (X^*,T)$ is a factor map
sending $A(X^*,T)$ to $X = R_T(x^*)$. If $(X^*,T)$ is WAP then the map is an isomorphism by Proposition \ref{prop02}.

Now assume that the subspace $(X,T)$ is WAP.  As is true for any subsystem the restriction map $\rho : A(X^*,T) \to A(X,T)$ is
surjective.

\begin{prop}\label{miscprop1} Assume that $(X^*,T)$ is  topologically
transitive with an isolated transitive point $x^*$ such that the subsystem $(X,T)$
with $X = R_T(x^*)$ is WAP. The map $\rho$ is injective, and so is an isomorphism, iff $(X^*,T)$ is WAP
and, in addition, $A(X,T)$ distinguishes
points of $X$. \end{prop}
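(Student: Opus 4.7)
The plan is to prove both directions by exploiting how commutativity of the enveloping semigroup of $(X,T)$ and of $(X^*,T)$ interacts with the restriction map $\rho$. Throughout I will use freely the stated surjectivity of $\rho$; the isomorphism $ev_{x^*}$ from Corollary \ref{cor02a} whenever WAP combined with a transitive point are present; and the fact that WAP forces the enveloping semigroup to be abelian (Proposition \ref{prop04}), so that $E(X,T)$ is abelian by assumption.

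For the easier implication $(\Leftarrow)$, suppose $(X^*,T)$ is WAP and $A(X,T)$ distinguishes points of $X$, and take $p_1,p_2 \in A(X^*,T)$ with $\rho(p_1)=\rho(p_2)$. Since $ev_{x^*}$ is an isomorphism, it suffices to show $y_1 := p_1(x^*)=p_2(x^*)=: y_2$, both of which lie in $X$. For any $q \in A(X^*,T)$, commutativity of $E(X^*,T)$ gives $q(y_i) = qp_i(x^*) = p_iq(x^*)$; since $q(x^*) \in X$ this equals $\rho(p_i)(q(x^*))$, which is independent of $i$. As $\rho$ is surjective, $r(y_1)=r(y_2)$ for every $r \in A(X,T)$, and the distinguishing hypothesis forces $y_1=y_2$.

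For the harder implication $(\Rightarrow)$, assume $\rho$ is injective, and first show $(X^*,T)$ is WAP. For $p,q \in A(X^*,T)$ the restrictions lie in the abelian $E(X,T)$, so $\rho(pq) = \rho(p)\rho(q) = \rho(q)\rho(p) = \rho(qp)$; injectivity of $\rho$ yields $pq=qp$. Commutativity with iterates $T^n$ is automatic, hence $E(X^*,T)$ is abelian, and Corollary \ref{cor02a} upgrades this to WAP using the transitive point $x^*$. For the distinguishing property, I argue by contradiction: if $x_1 \ne x_2$ in $X$ satisfy $r(x_1)=r(x_2)$ for every $r \in A(X,T)$, pick $p_i \in A(X^*,T)$ with $p_i(x^*)=x_i$, which exists because $X=R_T(x^*)=A(X^*,T)\cdot x^*$. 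For any $x \in X$ write $x = q(x^*)$ with $q \in A(X^*,T)$; commutativity (just established) yields
$$ p_i(x) = p_i q(x^*) = q p_i(x^*) = q(x_i) = \rho(q)(x_i), $$
which is independent of $i$ by hypothesis. Hence $\rho(p_1)=\rho(p_2)$ while $p_1(x^*) \ne p_2(x^*)$, contradicting injectivity.

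The main delicate point is sequencing. In the $(\Rightarrow)$ direction, WAP of $(X^*,T)$ must be established before I can invoke the identity $p_i q(x^*) = q p_i(x^*)$ in the distinguishing half, since elements of $A(X^*,T)$ are not a priori continuous and the commutation relation rests on abelianness of $E(X^*,T)$ rather than on any regularity of $p_i$ at $q(x^*)$. Once this order is respected, the rest is straightforward bookkeeping on the enveloping semigroup.
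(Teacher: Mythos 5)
Your proof is correct and takes essentially the same route as the paper: injectivity of $\rho$ transports abelianness of $A(X,T)$ to $A(X^*,T)$ to obtain WAP, and then the commutation identity $p_i q(x^*) = q p_i(x^*)$ converts $\rho(p_1)=\rho(p_2)$ into the statement that $A(X,T)$ does not distinguish $p_1(x^*)$ from $p_2(x^*)$, which is exactly the paper's symmetric equivalence, merely unpacked into the two implications separately. Your sequencing observation (WAP must be established before invoking commutativity in the distinguishing step) also matches the order of the paper's argument.
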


\proof  Since $X$ is WAP, $A(X,T)$ is abelian. If $\rho$ is injective then $A(X^*,T)$ is abelian and so $(X^*,T)$ is WAP
by Proposition \ref{prop02}.

Now assume $(X^*,T)$ is WAP.  We show that $\rho$ is injective iff $A(X,T)$ distinguishes the points of $X$.

Let $p_1, p_2 \in A(X^*,T)$. Since $A(X^*,T)$ is abelian, $p_1(q(x^*)) = p_2(q(x^*))$ for all $q \in A(X^*,T)$ iff
$q(p_1(x^*)) = q(p_2(x^*))$ for all $q \in A(X^*,T)$. The first says $\rho(p_1) = \rho(p_2)$ and the second says
$p_1(x^*), p_2(x^*) \in X_1$ are not distinguished by $A(X,T)$.  $\rho$ is injective says that the first implies
$p_1 = p_2$ while $A(X,T)$ distinguishes points says that the second implies $p_1(x^*) = p_2(x^*)$ and so by continuity
$p_1 = p_2$. This proves the equivalence.

$\Box$ \vspace{.5cm}

\begin{cor}\label{misccor2} Assume $(X,T)$ is WAP, is not weakly
rigid and is such that
$A(X,T)$ distinguishes points. If there
exists $(X^*,T)$ topologically transitive with an isolated transitive point $x^*$ such that $(X,T)$ is the subsystem
with $X = R_T(x^*)$ then $(X^*,X,T)$ is isomorphic to
$(E(X,T),A(X,T),T_*)$.
\end{cor}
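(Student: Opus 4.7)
The plan is to construct the desired isomorphism as the composition
\[
\Phi \ = \ ev_{x^*} \circ \rho^{-1} \ : \ (E(X,T), A(X,T), T_*) \ \longrightarrow \ (X^*, X, T),
\]
where $\rho : E(X^*, T) \to E(X, T)$ is the restriction map and $ev_{x^*} : E(X^*, T) \to X^*$ is evaluation at the transitive point $x^*$. The crucial preliminary step will be to show that $(X^*, T)$ is itself WAP. Once that is in hand, Proposition \ref{miscprop1} immediately yields that $\rho$ restricted to $A(X^*, T)$ is an isomorphism onto $A(X, T)$, since the hypothesis includes the distinguishing property of $A(X,T)$. On the iterate orbit $\{T^n\}$, $\rho$ is the canonical bijection: the assumption that $(X, T)$ is not weakly rigid together with the fact that $x^*$ is an isolated transitive point of $(X^*, T)$ forces the iterates to be isolated in both enveloping semigroups. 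Corollary \ref{cor02a} then furnishes an isomorphism $ev_{x^*} : (E(X^*, T), T_*) \to (X^*, T)$ which carries $A(X^*, T)$ onto $R_T(x^*) = X$, and composing with $\rho^{-1}$ produces $\Phi$ as an isomorphism of triples.

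The main work is therefore to establish that $(X^*, T)$ is WAP. By Corollary \ref{cor02a}(iii), applied to the transitive point $x^*$, it is enough to verify that $pq(x^*) = qp(x^*)$ for every $p, q \in E(X^*, T)$. When either $p$ or $q$ is an iterate of $T$ the equality is immediate, because iterates of $T$ commute with every element of $E(X^*, T)$. So reduce to $p, q \in A(X^*, T)$. Writing $z = p(x^*)$ and $y = q(x^*)$, which lie in $X$ because $A(X^*, T)(x^*) \subset R_T(x^*) = X$, the required equality reads $p(y) = q(z)$, and both sides lie in $X$ and are computed inside the WAP subsystem via $p(y) = p|_X(y)$ and $q(z) = q|_X(z)$. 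Writing $p = \lim_\alpha T^{n_\alpha}$ and $q = \lim_l T^{k_l}$ in $E(X^*, T)$, the sequences $T^{n_\alpha + k_l}(x^*) = T^{n_\alpha}(T^{k_l}(x^*))$ have their inner limits landing in $X$: for fixed $\alpha$ the limit as $l \to \infty$ is $T^{n_\alpha}(y)$, and for fixed $l$ the limit as $\alpha \to \infty$ is $T^{k_l}(z)$. One therefore reduces $p(y) = q(z)$ to a double-limit interchange, and Grothendieck's criterion for WAP of $(X,T)$ applied at the intermediate points $y, z \in X$ forces the interchange and hence the equality.

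The main obstacle is precisely this double-limit interchange. A naive application of the distinguishing hypothesis to $p(y)$ and $q(z)$ collapses, via the abelianness of $E(X, T)$, to an equality of elements of $A(X^*,T)$ which agree on $X$ but whose values at $x^*$ are exactly the injectivity-of-$\rho$ statement one is trying to prove, producing apparent circularity. The escape is to avoid reducing via distinguishing and instead perform the two iterated limits of $T^{n_\alpha + k_l}(x^*)$ directly, observing that although $x^*$ lies outside $X$, each inner limit already enters $X$, so the outer limit is computable inside the WAP subsystem and Grothendieck's criterion for $(X,T)$ supplies the commutation. Once $(X^*, T)$ is established to be WAP, Proposition \ref{miscprop1} and Corollary \ref{cor02a} assemble routinely to yield the claimed isomorphism of triples.
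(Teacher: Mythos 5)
Your assembly step is fine and agrees with the paper: the paper's proof likewise produces the isomorphism by composing $ev_{x^*}$ (an isomorphism from $(E(X^*,T),A(X^*,T),T_*)$ onto $(X^*,X,T)$) with the inverse of the restriction map $\rho$ (an isomorphism onto $(E(X,T),A(X,T),T_*)$), with non-weak rigidity separating the iterates $\{T^n\}$ from the adherence semigroups. The genuine gap is in your main step, the distinguishing-free proof that $(X^*,T)$ is WAP. You claim that because both iterated limits of $T^{n_\alpha + k_l}(x^*)$ land in $X$, Grothendieck's double-limit criterion for the subsystem $(X,T)$ forces the interchange. It does not: that criterion governs arrays $f(T^{n_i}x_j)$ whose base points $x_j$ lie in $X$, whereas your array is anchored at $x^*$ (equivalently at the points $T^{k_l}x^*$), which lie on the isolated orbit outside $X$. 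All that WAP-ness of $(X,T)$ gives is that the two iterated limits exist and equal $\rho(p)(qx^*)$ and $\rho(q)(px^*)$ respectively; nothing identifies them. Indeed your claimed implication --- $(X,T)$ WAP, $X^* = O_T(x^*) \cup X$ with $x^*$ an isolated transitive point and $X = R_T(x^*)$, implies $(X^*,T)$ WAP --- is refuted by the paper's own Example \ref{ex4}: there $(\bar X,\bar T)$ is WAP, the extension $(\hat X,\hat T)$ has exactly this structure (built via Example \ref{chainconstruct}), and it is shown that $(\hat X,\hat T)$ is not WAP.

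The moral is that the hypothesis that $A(X,T)$ distinguishes points, which you deliberately set aside as producing ``apparent circularity,'' cannot be avoided in this step. Proposition \ref{miscprop1} makes injectivity of $\rho$ \emph{equivalent} to the conjunction of WAP-ness of $(X^*,T)$ with the distinguishing property, so any correct proof of the corollary must use distinguishing somewhere to obtain either WAP-ness of $X^*$ or the injectivity of $\rho$ and $ev_{x^*}$ directly. The circularity you feared is dissolved in the paper not by a limit interchange at $x^*$ but by the two dual quantified conditions in the proof of Proposition \ref{miscprop1} --- $p_1(qx^*) = p_2(qx^*)$ for all $q \in A(X^*,T)$ versus $q(p_1x^*) = q(p_2x^*)$ for all $q \in A(X^*,T)$ --- where distinguishing converts the second condition into $p_1x^* = p_2x^*$ and continuity finishes. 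As written, your step 1 is false, so the proposal does not go through.
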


\proof
Since $X$ is not weakly rigid, $\rho$ is an isomorphism from \\ $(E(X^*,T),A(X^*,T),T_*)$ onto $(E(X,T),A(X,T),T_*)$.
On the other hand, $ev_{x^*}$ is an isomorphism from $(E(X^*,T),A(X^*,T),T_*)$ onto $(X^*,X,T)$.

$\Box$ \vspace{.5cm}
%

\subsection{Coalescence,  LE, HAE and CT-WAP systems}\label{sec,coal}
$\qquad$

\vspace{.5cm}

Given a metric dynamical system $(X,T)$, a point $x \in X$ is an {\em equicontinuity point}
if for every $\epsilon > 0$ there is $\delta >0$ such that $d(x',x) < \delta$ implies $d(T^nx',T^nx)< \ep$
for every $n \in \Z$.
A system $(X,T)$ is called {\em equicontinuous} if it is a metric system and every point in $X$ is an equicontinuity point
(and then it is already {\em uniformly equicontinuous} meaning that the $\delta$
in the above definition does not depend on $x$). It is called
\emph{almost equicontinuous}, hereafter AE, when  it is a metric system and there
is a dense set of points in $X$ at which $\{ T^n : n \in \Z \}$ is equicontinuous.
Following \cite{GM-06}
we will call $(X,T)$ \emph{hereditarily almost equicontinuous}, hereafter HAE, when
every subsystem (i.e. closed invariant subset) is again an AE system.
As was shown in \cite{AAB} every metrizable WAP is HAE (see also \cite[Chapter 1, Sections 8 and 9]{Gl-03}).

An isolated point in a metric system  is an equicontinuity point and so if the isolated points are dense then the system is AE.
Hence, if $(X,T)$ is countable then every subsystem is AE
i.e. $(X,T)$ is HAE.

A system $(X,T)$ is \emph{expansive} if  it is a metric system and
there exists  $\ep > 0$ such that for every $x_1 \not= x_2$,
 $d(T^n(x_1),T^n(x_2))
> \ep$ for some $n \in \Z$. Any subshift is expansive.
The following is obvious.

\begin{lem}\label{lemexpansive} If $(X,T)$ is expansive then $x \in X$ is an equicontinuity point iff it is isolated.
\end{lem}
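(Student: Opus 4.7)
The plan is to prove the two directions separately, with the nontrivial one being a short contradiction using the expansivity constant.

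For the easy direction, suppose $x$ is isolated. Then there is $\delta > 0$ with $B_\delta(x) = \{x\}$, so the condition $d(x',x) < \delta$ forces $x' = x$ and the equicontinuity requirement $d(T^n x', T^n x) < \ep$ is satisfied vacuously for every $\ep > 0$ and every $n \in \Z$. Hence $x$ is an equicontinuity point.

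For the converse, let $\ep > 0$ be an expansivity constant for $(X,T)$, so that for any $x_1 \neq x_2$ there is $n \in \Z$ with $d(T^n x_1, T^n x_2) > \ep$. Assume $x$ is an equicontinuity point. Applying the equicontinuity condition at $x$ to $\ep$ (or to $\ep/2$ if one prefers strict inequality) yields a $\delta > 0$ such that $d(x',x) < \delta$ implies $d(T^n x', T^n x) \leq \ep$ for all $n \in \Z$. Suppose for contradiction that some $x' \in B_\delta(x)$ with $x' \neq x$ existed; then expansivity would produce some $n$ with $d(T^n x', T^n x) > \ep$, contradicting the equicontinuity bound. Hence $B_\delta(x) = \{x\}$, so $x$ is isolated.

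Since both implications are straightforward once expansivity is invoked, there is no real obstacle here; the only mildly delicate point is to make sure the inequality in the equicontinuity condition (which is strict) and the inequality in the definition of expansivity (also strict) are compatible, which is handled by choosing the equicontinuity $\ep$ slightly smaller than the expansivity constant.
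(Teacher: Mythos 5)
Your proof is correct and is exactly the argument the paper has in mind (the paper labels this lemma as obvious and gives no written proof): isolated points are vacuously equicontinuity points, and conversely the expansivity constant directly contradicts equicontinuity at any non-isolated point. Your worry about strict versus non-strict inequalities is actually moot, since equicontinuity gives $d(T^n x', T^n x) < \ep$ while expansivity produces some $n$ with $d(T^n x', T^n x) > \ep$, which is already an outright contradiction at the same $\ep$.
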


$\Box$ \vspace{.5cm}

From this follows the result from \cite{GM} that a subshift is HAE iff it is countable.

\begin{prop}\label{propHAE} An expansive dynamical system $(X,T)$ is HAE iff $X$ is countable.
In particular, a subshift is HAE iff it is countable. \end{prop}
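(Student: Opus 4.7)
The plan is to prove both implications separately, with the nontrivial content residing entirely in the forward direction.

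For the direction $X$ countable $\Rightarrow$ HAE, I would simply refer to the observation made in the paragraph immediately preceding Lemma \ref{lemexpansive}. Any closed invariant subset $Y \subset X$ is countable and Polish, so every nonempty open subset of $Y$ contains an isolated point of $Y$; hence the isolated points are dense in $Y$. Since isolated points are automatically equicontinuity points, $(Y, T|_Y)$ is AE. This holds for every subsystem, so $(X,T)$ is HAE. Expansivity is not needed for this direction.

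For the direction HAE $\Rightarrow$ $X$ countable, I would argue by contrapositive. Assuming $X$ is uncountable, I would invoke the Cantor--Bendixson theorem for Polish spaces to write $X = P \sqcup C$, where $P$ is a nonempty perfect set (the \emph{perfect kernel}) and $C$ is countable. The key fact is that $P$ is characterized in purely topological terms as the intersection of the transfinite iterates of the derived-set operator, so it is preserved by every self-homeomorphism; in particular $T(P) = P$, and $(P, T|_P)$ is a bona fide subsystem of $(X,T)$. Because $P$ has no isolated points, Lemma \ref{lemexpansive} applied to the expansive subsystem $(P, T|_P)$ (which inherits the expansivity constant of $(X,T)$) shows that $P$ contains no equicontinuity points at all. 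Since $P \neq \emptyset$, the subsystem $(P, T|_P)$ fails to be AE, and therefore $(X,T)$ is not HAE.

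The ``in particular'' clause is then immediate, since any subshift is expansive (with expansivity constant any $\varepsilon < 1$ in the natural metric on $\{0,1\}^{\Z}$).

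There is no real obstacle here: the proof essentially repackages Lemma \ref{lemexpansive} together with the invariance of the Cantor--Bendixson perfect kernel under homeomorphisms. The only point that deserves a sentence is the verification that the perfect kernel is $T$-invariant, which is standard. Both implications are one-paragraph arguments, with the bulk of the technical content already isolated in the preceding lemma.
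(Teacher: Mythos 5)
Your proof is correct, and both directions match the paper in overall structure: the countable direction is the same density-of-isolated-points observation, and the uncountable direction reduces, exactly as in the paper, to Lemma \ref{lemexpansive} via a nonempty closed invariant subsystem without isolated points. The one genuine difference is how that subsystem is produced. The paper does not use the Cantor--Bendixson kernel: it picks an arbitrary Cantor set $C \subset X$ and takes $X_1 = \overline{\bigcup_n T^n(C)}$, which is invariant by construction; perfectness of $X_1$ then follows because an isolated point of a closure must lie in the underlying union, while each $T^n(C)$ is perfect. Your route instead takes the canonical perfect kernel $P$ and gets invariance for free from the topological invariance of the transfinite derived-set hierarchy ($T(P)$ is the perfect kernel of $T(X)=X$, so $T(P)=P$). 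The trade-off is mild: your construction is canonical and makes invariance automatic, at the cost of invoking Cantor--Bendixson, whereas the paper's saturation trick needs only the fact (already quoted earlier in the paper) that an uncountable Polish space contains a Cantor set, plus the two-line verification that $X_1$ has no isolated points. Your parenthetical remarks are also accurate: expansivity is indeed unused in the countable direction, restriction of an expansive system to a subsystem preserves the expansivity constant, and with the metric of (\ref{tow15}) any $\varepsilon < 1$ is an expansivity constant for a subshift, since distinct points differ at some coordinate and shifting that coordinate to position $0$ forces distance $1$.
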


\proof It was observed above that a countable system is HAE.

 Now assume that $X$ is uncountable and so contains a Cantor set $C$.
 If $X_1$ is the closure of $\bigcup_n \ \{ T^n(C) \}$, then
 the subsystem $(X_1,T)$ is expansive and contains no isolated points.  So by Lemma \ref{lemexpansive} it has no equicontinuity
 points.  Thus, $(X,T)$ is not HAE.

 $\Box$ \vspace{.5cm}

Thus:

\begin{prop}\label{WAPctbl}
A WAP subshift is countable.
\end{prop}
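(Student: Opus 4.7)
The plan is to assemble this statement directly from two facts that have already been recorded in the excerpt, so there is essentially nothing new to do beyond combining them. First, by the Akin--Auslander--Berg result cited in the introduction (and mentioned again just above Proposition \ref{propHAE}), every WAP system is HAE. Second, every subshift is expansive, and Proposition \ref{propHAE} asserts that an expansive compact metric system is HAE if and only if it is countable.

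So the argument I would write is: let $(X,S)$ be a WAP subshift. Since $(X,S)$ is WAP, it is HAE. Since $(X,S)$ is a subshift, it is expansive. Applying Proposition \ref{propHAE} to the expansive system $(X,S)$, the HAE property forces $X$ to be countable.

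There is no real obstacle here, since both ingredients have already been proved (or cited) earlier in the excerpt; the statement is essentially a packaged corollary of Proposition \ref{propHAE} together with the WAP $\Rightarrow$ HAE implication. If anything, the only thing to be careful about is to make sure the WAP $\Rightarrow$ HAE implication is being invoked as an already established fact (it is, via \cite{AAB}) rather than reproved, and to note that the key topological input specific to subshifts---expansiveness---is exactly what lets Lemma \ref{lemexpansive} identify equicontinuity points with isolated points, which is the engine driving Proposition \ref{propHAE}. So the whole proof fits in one or two sentences.
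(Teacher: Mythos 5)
Your proposal is correct and is exactly the paper's argument: the proposition appears immediately after Proposition \ref{propHAE} with only the word ``Thus,'' relying on the WAP $\Rightarrow$ HAE implication from \cite{AAB} (stated earlier in Section \ref{sec,coal}) combined with the fact that a subshift is expansive, hence HAE iff countable. No gaps; your care in invoking WAP $\Rightarrow$ HAE as an established citation rather than reproving it matches the paper's treatment.
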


$\Box$ \vspace{.5cm}

Following \cite{GW-LE}
we
call $(X,T)$ \emph{locally equicontinuous} (hereafter LE) if  it is a metric system
and each point $x$ is an equicontinuity point in
its orbit closure or, equivalently, if each orbit closure is an almost equicontinuous subsystem.
The equivalence follows from the Auslander-Yorke
Dichotomy Theorem, \cite{AY}, which says that
in a topologically transitive system
 the set of equicontinuity
points either coincides with the set of transitive points or else it is empty.

\vspace{.5cm}

\begin{remark}\label{cp,ae} From the latter condition, it follows that an HAE system is LE. Any CP system
is LE since each point is isolated in its orbit closure.
From Proposition \ref{propHAE} it follows that any uncountable CP subshift is LE but not HAE.
\end{remark}

 \vspace{.5cm}

 A system $(X,T)$ is \emph{coalescent} when any surjective action map $\pi$ on $(X,T)$ is an isomorphism.

\begin{prop}\label{prop06} A topologically transitive
metric system which is WAP is coalescent. \end{prop}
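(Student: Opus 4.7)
The plan is to exploit Corollary \ref{cor02a}: since $(X,T)$ is WAP and topologically transitive with transitive point $x^*$, the evaluation map $ev_{x^*} : E(X,T) \to X$ is an isomorphism and $E(X,T)$ is abelian. I want to show that any surjective action map $\pi : (X,T) \to (X,T)$ is injective, and the natural idea is to identify $\pi$ with an element of $E(X,T)$ and then use commutativity.

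First I would show that $\pi \in E(X,T)$. Since $\pi$ is a continuous equivariant surjection, it carries the orbit $O_T(x^*)$ onto $O_T(\pi(x^*))$, so $y^* := \pi(x^*)$ has dense orbit and is itself a transitive point. Next, because $ev_{x^*}$ is surjective, there exists $q \in E(X,T)$ with $q(x^*) = y^* = \pi(x^*)$. Using equivariance of both maps, $\pi(T^n x^*) = T^n y^* = q(T^n x^*)$ for all $n \in \Z$, so $\pi$ and $q$ agree on the dense orbit of $x^*$. Since $\pi$ is continuous by hypothesis and $q$ is continuous by WAP, they agree on $X$, so $\pi = q \in E(X,T)$.

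Now I would prove injectivity. Suppose $\pi(x_1) = \pi(x_2)$. Using that $ev_{x^*}$ is an isomorphism, write $x_i = p_i(x^*)$ for unique $p_i \in E(X,T)$. Then $\pi \circ p_i = p_i \circ \pi$ by commutativity of $E(X,T)$, so
\[
 p_1(y^*) \ = \ p_1(\pi(x^*)) \ = \ \pi(p_1(x^*)) \ = \ \pi(p_2(x^*)) \ = \ p_2(y^*).
\]
Since $y^*$ is a transitive point and $(X,T)$ is WAP, applying Corollary \ref{cor02a} again to $y^*$ shows that $ev_{y^*}$ is injective on $E(X,T)$, hence $p_1 = p_2$ and therefore $x_1 = x_2$. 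Thus $\pi$ is a continuous bijection on a compact Hausdorff space, hence an isomorphism.

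The only non-routine step is the first one: recognizing that the surjective factor map $\pi$ already lives inside $E(X,T)$. Once this is in hand, coalescence is an immediate consequence of the commutativity of $E(X,T)$ combined with the fact that the image $y^* = \pi(x^*)$ is again transitive, which makes $ev_{y^*}$ injective as well. No transfinite induction or limit relation machinery from the later sections is needed.
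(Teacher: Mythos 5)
Your proof is correct and takes essentially the same route as the paper: the decisive step in both is to identify the surjective factor map $\pi$ with an element of $E(X,T)$, using that $\pi$ and some $q \in E(X,T)$ with $q(x^*) = \pi(x^*)$ are continuous action maps agreeing on the dense orbit of $x^*$. The only cosmetic difference lies in the injectivity step: the paper uses the transitivity of $\pi(x^*)$ to produce an explicit $q' \in E(X,T)$ with $q'\pi = id_X$ (an inverse), whereas you invoke commutativity of $E(X,T)$ together with injectivity of $ev_{\pi(x^*)}$ --- both of which rest on the same Corollary \ref{cor02a}.
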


\proof
Let $x^*$ be a transitive point.
There exists $p$ in the enveloping
semigroup with $p(x^*) = \pi(x^*)$. Because $p$ is continuous it is an action map and so since $p$ and $\pi$ agree on the
dense orbit of $x^*$, $p = \pi$. Since $p$ is surjective, $p(x^*)$ is a transitive point and so there exists $q$ such that
$qp(x^*) = x^*$ and so $qp = id$. Hence, $p$ is injective with inverse $q$.

$\Box$ \vspace{.5cm}

\begin{ex}\label{ex5}  In general a WAP system need not be coalescent.

If $(X,T)$ is WAP then the countable product $(X^{\N},T^{\N})$ is
WAP and the shift map is a surjective action map which is not injective. If $X$ is CT with  fixed point $e$
 then the \emph{infinite wedge} which is $\{ x \in X^{\N} : x_i \not= e $ for at most one $i \}$ is  a closed
invariant set which is shift invariant as well. This is also WAP and not coalescent. In addition, it is countable if $X$ is.
\end{ex}

$\Box$ \vspace{.5cm}

\begin{lem}\label{tt}
If a  dynamical system $(X,T)$ contains an increasing net
of topologically transitive  subsystems $\{ X^i \}$ with
$\bigcup_i \ \{ X^i \}$ dense in $X$, then $X$ is also topologically transitive.
\end{lem}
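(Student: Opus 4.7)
The plan is to argue directly from the definition of topological transitivity, by showing that any pair of nonempty open sets in $X$ meet a common subsystem $X^i$ from the net and using the transitivity of that subsystem.

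First I would fix nonempty open sets $U, V \subset X$. Since $\bigcup_i X^i$ is dense in $X$, both $U$ and $V$ meet this union: there exist indices $i_1, i_2$ with $U \cap X^{i_1} \neq \emptyset$ and $V \cap X^{i_2} \neq \emptyset$. Because the net is increasing (and hence directed), I can pick a common upper bound $i_0$ with $X^{i_1} \cup X^{i_2} \subset X^{i_0}$, so that both $U \cap X^{i_0}$ and $V \cap X^{i_0}$ are nonempty.

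Next, since each $X^i$ carries the subspace topology, $U \cap X^{i_0}$ and $V \cap X^{i_0}$ are nonempty open subsets of the subsystem $(X^{i_0}, T)$. By hypothesis this subsystem is topologically transitive, so there exists $n \in \Z$ with
\[
T^{-n}(U \cap X^{i_0}) \cap (V \cap X^{i_0}) \neq \emptyset.
\]
Any point in this intersection lies in $T^{-n}U \cap V$, which is therefore nonempty. As $U, V$ were arbitrary, $(X, T)$ is topologically transitive.

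There is essentially no obstacle: the only subtlety is to use the directedness of the net to combine the two indices $i_1, i_2$ into a single $i_0$, and to note that each $X^i$ is $T$-invariant so that transitivity there provides an $n$ that works in $X$ as well.
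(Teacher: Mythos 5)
Your proof is correct and follows essentially the same route as the paper's: pick a single subsystem $X^{i_0}$ meeting both $U$ and $V$, apply its transitivity, and observe the resulting intersection point witnesses transitivity in $X$. You merely make explicit the directedness step (combining $i_1, i_2$ into $i_0$) that the paper leaves implicit in its phrase ``for some $i$''.
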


\proof
Let $U, V \subset X$ be two nonempty open subsets. For some $i$
$$
U \cap X^i \ne \emptyset \quad  {\text{and}} \quad
V \cap X^i \ne \emptyset.
$$
As $X^i$ is topologically transitive, there exists $k \in \Z$ with $T^k (U \cap X^i)
\cap (V \cap X^i) \ne \emptyset$ and, a fortiori, also
$T^k(U) \cap V \ne \emptyset$.

$\Box$ \vspace{.5cm}

\begin{prop}\label{max}
Every dynamical system is a union of maximal topologically
transitive subsystems.
\end{prop}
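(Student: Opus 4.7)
The plan is to apply Zorn's lemma pointwise, using Lemma \ref{tt} to handle chain unions. The point to notice is that every $x \in X$ already sits in at least one topologically transitive subsystem, namely its orbit closure $\overline{O_T(x)}$: by definition $x$ has dense orbit in $\overline{O_T(x)}$, which (being a separable metric system) is the metric version of topological transitivity. So the collection
\[
\F_x \ = \ \{ \ Y \subset X \ : \ Y \ \text{is a topologically transitive subsystem containing} \ x \ \}
\]
is nonempty for every $x \in X$.

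Next I would verify that $\F_x$, ordered by inclusion, satisfies the hypotheses of Zorn's lemma. Given a chain $\{ X^i \}_{i \in I} \subset \F_x$, set $Y = \overline{\bigcup_i X^i}$. This $Y$ is closed by construction and invariant because each $X^i$ is invariant and $T$ is a homeomorphism, so $Y$ is a subsystem. It contains $x$ since each $X^i$ does. The chain $\{ X^i \}$ is an increasing net of topologically transitive subsystems whose union is dense in $Y$, so Lemma \ref{tt} (applied with $Y$ as the ambient system) yields that $Y$ is topologically transitive. Hence $Y \in \F_x$ and is an upper bound for the chain.

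By Zorn's lemma, $\F_x$ admits a maximal element $M_x$. This $M_x$ is automatically a \emph{maximal} topologically transitive subsystem of $X$: any topologically transitive subsystem $M' \supset M_x$ would contain $x$ and so lie in $\F_x$, forcing $M' = M_x$ by maximality in $\F_x$. Since every $x \in X$ lies in such an $M_x$, we conclude $X = \bigcup_{x \in X} M_x$, which is the desired decomposition.

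No step here is really an obstacle; the only thing worth checking carefully is that the closure of the union of a chain of invariant sets is invariant (immediate from continuity of $T$ and $T^{-1}$) and that Lemma \ref{tt} genuinely delivers topological transitivity of $Y$, for which the density of $\bigcup_i X^i$ in $Y$ is the hypothesis built into the definition of $Y$. Everything else is a standard Zorn argument.
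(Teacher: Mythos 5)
Your proof is correct and follows essentially the same route as the paper: both apply Zorn's lemma to the family of topologically transitive subsystems, using Lemma \ref{tt} to show chains have upper bounds and the orbit closure $\overline{\mathcal{O}_T(x)}$ to guarantee each point lies in some transitive subsystem. One small remark: your parenthetical appeal to metrizability is unnecessary, since for a $\Z$-action a dense orbit implies topological transitivity in the open-sets sense by a purely formal argument ($T^mx \in U$, $T^nx \in V$ give $T^{n-m}U \cap V \neq \emptyset$), so the argument works for arbitrary compact systems exactly as the paper's does.
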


\proof
Let $(X,T)$ be a dynamical system and consider the family
$\T$ of topologically transitive subsystems of $X$. Using Lemma \ref{tt} it is easy to check
that this family is inductive. Hence, by Zorn's Lemma, every topologically transitive subsystem
of $X$ is contained in a maximal element of $\T$. In particular, for $x \in X$, the orbit closure
of $X$ is contained in a maximal element of $\T$.

$\Box$ \vspace{.5cm}

We use this proposition to obtain the following results on
{\em E-Coalescence} (i.e.
the property that every continuous surjective element of $E(X,T)$ is
injective).

Recall that a dynamical system $(X,T)$ is called
(i)
{\em weakly rigid} if there
is a net $\{ T^{n_i} \}$ with
$|n_i|
\to \infty$
and $\{ T^{n_i}(x) \} \to x$ for every
$x \in X$, or, equivalently, if $id_X \in A(X,T)$.
(ii)
{\em rigid} if the net can be chosen to
be a sequence,
and (iii)
{\em uniformly rigid} if the convergence can be taken to be uniform
(see \cite{GMa}).
Recall that if $X$ is a
topologically transitive AE system, and
a fortiori a metrizable, topologically transitive WAP, then
it is uniformly rigid
(see \cite{GM}, \cite{GW-Sen} and \cite{AAB1}).

\begin{theo}\label{coaleth}Let  $(X,T)$ be an AE  system.
Assume that $p \in A(X,T)$ is  continuous and surjective.
\begin{itemize}

\item[(i)] If $(X,T)$  is topologically transitive then $p$ is injective and so is an isomorphism.
If $T^{n_j}$ is a net converging pointwise to $p$ then it converges uniformly to $p$ and $T^{-n_j}$ converges uniformly to
$p^{-1}$. Thus, $p^{-1}, id_X \in E(X,T) = A(X,T)$.

\item[(ii)] If $X_1$ is a maximal topologically transitive subset of $X$ then $p(X_1) = X_1$.

\item[(iii)] If $(X,T)$ is HAE then $p$ is an isomorphism and if  $T^{n_j}$ is a net converging pointwise to $p$ then
 $T^{-n_j}$ converges pointwise to $p^{-1}$. Thus, $p^{-1}, id_X \in E(X,T) = A(X,T)$ and the system is weakly rigid.
\end{itemize}

\end{theo}

\proof (i) Let $x^*$ be a transitive point for $X$. Since $p$ is surjective and continuous, $px^*$ is a transitive point
and so there exists a sequence $T^{n_i}$ with $T^{n_i}px^*$ converging to $x^*$. Let
$\ep > 0$. By the Auslander-Yorke
Dichotomy Theorem, \cite{AY} $x^*$ is an equicontinuity
point 
and
for sufficiently large $i$, $T^{n_i}pT^kx^* = T^kT^{n_i}px^*$ is within $\ep$ of $T^kx^*$
for all $k$. Since the orbit of $x^*$ is dense it
follows that $T^{n_i}p$ converges uniformly to $id_X$. Hence, $p$ is injective and so is an isomorphism by compactness.
 If $q$ is any limit point of $T^{n_i}$ in $E(X,T)$ then continuity of $p$ implies that $pq$, which is the limit
 of $pT^{n_i} = T^{n_i}p$,
 is the identity, and so $q = p^{-1}$. 
Thus, $p^{-1} \in E(X,T)$. Hence, $id_X = p^{-1}p \in A(X,T)$ and so $(X,T)$ is weakly rigid and $E(X,T) = A(X,T)$.
Since $pT^{n_i} = T^{n_i}p$ converges uniformly to $pp^{-1}$, uniform continuity of $p^{-1}$ implies
$T^{n_i}$ converges uniformly to $p^{-1}$. Hence,
$\lim_{i,j \to \infty} T^{n_i - n_j} = id_X$ and so the system is uniformly
rigid.  Finally, if a net $T^{m_i}$ converges to $p$ pointwise then $p^{-1}T^{m_i}x^*$ is eventually close to $x^*$ and so,
as above, $p^{-1}T^{m_i}$ converges to $id_X$ uniformly. It follows that $T^{m_i}$
converges to $p$ uniformly and $T^{-m_i}$ converges
uniformly to $p^{-1}$.

(ii) Let $x^*$ be a transitive point for $X_1$. Since $p$ is surjective,
there exists $x_1 \in X$ such that $px_1 = x^*$. The orbit closure
of $x_1$ is a topologically transitive subset of $X$ and it contains
$px_1 = x^*$ and so contains $X_1$, which is the orbit closure of $x^*$.
Hence, by maximality, $X_1 = \overline{O_T(x_1)}$. Since $p$ is a
continuous action map, $p(X_1) = p(\overline{O_T(x_1)}) = \overline{O_T(x^*)} = X_1$.

(iii) 
By \ref{max}
each point is contained in a maximal topologically transitive subset of $X$, which is
necessarily closed.

Now if for some points $x_1, x_2 \in X$ we have $z = px_1 = px_2$, then
let $X_i$ be a maximal topologically transitive subset of $X$ which contains $x_i$. Since each $X_i$ is closed and
invariant, $z \in X_1 \cap X_2$.
 By (ii) $p$ is surjective on each $X_i$ and so by (i)  there exist $q_i \in A(X,T)$ such that on $X_{i} \ q_ip$ is the identity.
In particular, $q_iz = x_i$. Hence,
$q_2px_1 = x_2$ and so $x_2$ as well as $x_1$ is in $X_{1}$. Since $q_1p$ is the identity on $X_{1}$ it follows that $x_1 = x_2$.
Thus, $p$ is an isomorphism. Now let  $T^{m_i}$ be a net converging to $p$ pointwise. By part (i) $T^{-m_i}$ converges to
$p^{-1}$ uniformly on each orbit closure  and so pointwise on $X$.  Hence, $p^{-1}$ and $id_X = p^{-1}p$ are in $A(X,T)$.  Hence,
$E(X,T) = A(X,T)$ and $(X,T)$ is weakly rigid.

$\Box$ \vspace{.5cm}

\begin{cor}\label{cor,e-coa1}
Every metrizable WAP dynamical system is E-coalescent.
\end{cor}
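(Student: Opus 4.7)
The plan is to reduce the corollary immediately to Theorem \ref{coaleth}(iii). Let $(X,T)$ be WAP and let $p \in E(X,T)$ be surjective. Since the system is WAP, $p$ is automatically continuous, so the only remaining task is to show that $p$ is injective.

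I would split on whether $p$ is a genuine iterate or a limit point. If $p = T^n$ for some $n \in \Z$ there is nothing to prove: $p$ is already a homeomorphism with inverse $T^{-n}$. Otherwise, by definition of the adherence semigroup, $p \in A(X,T)$, i.e.\ $p$ is a pointwise limit of some net $T^{n_i}$ with $|n_i| \to \infty$. At this point I invoke the fact, stated earlier in the paper (the Akin--Auslander--Berg theorem \cite{AAB}), that every WAP system is HAE. Consequently Theorem \ref{coaleth}(iii) applies verbatim to the continuous surjective element $p \in A(X,T)$ and yields that $p$ is an isomorphism, in particular injective.

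There is really no obstacle here: the heavy lifting has already been done in Theorem \ref{coaleth}(iii), whose proof passed through maximal topologically transitive subsets to combine local injectivity (from the AE hypothesis) into global injectivity. The corollary only requires us to recognize that the WAP hypothesis supplies both missing ingredients at once, namely continuity of every element of $E(X,T)$ and the HAE property of the system, so that the theorem applies to every surjective $p \in E(X,T)$.
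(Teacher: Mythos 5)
Your proof is correct and follows essentially the same route as the paper: WAP supplies both HAE (via \cite{AAB}) and the continuity of every element of $E(X,T)$, and then Theorem \ref{coaleth}(iii) does all the work. Your explicit case split between $p = T^n$ and $p \in A(X,T)$ is a small (correct) refinement the paper leaves implicit, since $E(X,T) = \{T^n : n \in \Z\} \cup A(X,T)$ and iterates are trivially injective.
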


\begin{proof}
If $(X,T)$ is a metrizable WAP dynamical system then (i) it
 is
HAE, and (ii) every $p \in E(X,T)$ is continuous.
Now apply Theorem \ref{coaleth}.
\end{proof}

$\Box$ \vspace{.5cm}

\begin{cor}\label{cor,e-coa}
Let $(X,T)$ be a WAP system and $p \in A(X, T)$.

The restriction of $p$ to the subsystem $Z = \bigcap_{n \in \N} p^n X$
is surjective on $Z$ and so is an automorphism of $Z$. In particular the system $(Z,T)$ is weakly rigid.

If, in addition, $(X,T)$ is  CP, then
every point of $Z$ is periodic.
If, moreover, $(X,T)$ is topologically transitive then  $Z$ consists of a single
periodic orbit which is the unique minimal subset of $X$ and  is independent of the choice of $p \in A(X,T)$.

\end{cor}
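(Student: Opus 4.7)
The plan is to analyze $Z$ by first establishing its basic invariance properties, and then invoking Theorem \ref{coaleth}(iii) together with Corollary \ref{cor02a} to extract the dynamical conclusions.

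First I would verify the structure of $Z$. Since $(X,T)$ is WAP the map $p$ is continuous, and it commutes with $T$ as every element of $E(X,T)$ does. Hence each $p^n X$ is a closed, $T$-invariant subset of $X$, the family $\{p^n X\}_{n \in \N}$ is decreasing, and $Z = \bigcap_n p^n X$ is a nonempty closed $T$-invariant set. The inclusion $p(Z) \subseteq Z$ is immediate. For the reverse, given $z \in Z$, for each $n$ write $z = p^{n+1} x_n$ and set $y_n = p^n x_n \in p^n X$, so $p(y_n) = z$. By compactness a subnet $y_{n_i}$ converges to some $y \in X$. For each fixed $k$ we have $y_{n_i} \in p^k X$ eventually, so closedness gives $y \in p^k X$; this holds for all $k$, so $y \in Z$. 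Continuity of $p$ now yields $p(y) = \lim p(y_{n_i}) = z$, so $p(Z) = Z$.

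Next I would establish the first two assertions. By compactness of $E(X,T)$, every element of $E(Z,T)$ is the restriction to $Z$ of some element of $E(X,T)$, and such restrictions are continuous because $(X,T)$ is WAP. Thus $(Z,T)$ is itself WAP, and so HAE. The restriction $p|_Z$ lies in $A(Z,T)$ (restricting a net $T^{n_i} \to p$ with $|n_i| \to \infty$), is continuous and, as just shown, surjective. Theorem \ref{coaleth}(iii) applied to $(Z,T)$ then yields that $p|_Z$ is an automorphism of $Z$ and that $id_Z \in A(Z,T)$, i.e., $(Z,T)$ is weakly rigid.

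Now suppose $(X,T)$ is CP. Weak rigidity of $(Z,T)$ produces a net $T^{n_i}|_Z \to id_Z$ pointwise with $|n_i| \to \infty$, so every $z \in Z$ is a recurrent point of $(X,T)$, hence periodic by CP. Assume in addition that $(X,T)$ is topologically transitive. By Corollary \ref{cor02a} there is a unique minimal subset $M \subseteq X$; since $M$ consists of recurrent points and the system is CP, $M$ is a single periodic orbit. For each $z \in Z$ the orbit closure $\overline{O_T(z)}$ is minimal and must equal $M$, giving $Z \subseteq M$. Conversely, $M$ is finite, so after passing to a subnet $T^{n_i}|_M$ is eventually constant; hence $p|_M = T^{j}|_M$ for some $j \in \Z$, which is a bijection of $M$. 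Therefore $p^n(M) = M$, so $M \subseteq p^n X$ for every $n$, and $M \subseteq Z$. Thus $Z = M$ is a single periodic orbit. The main technical point is the surjectivity $p(Z) = Z$, handled by the diagonal-compactness argument above; once this is in hand, everything reduces to routine applications of Theorem \ref{coaleth}(iii), Corollary \ref{cor02a}, and the definitions of CP and weak rigidity.
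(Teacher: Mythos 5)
Your argument for the metrizable case is correct and follows essentially the same route as the paper: pass to $Z$, note $p|_Z$ is continuous, surjective and lies in $A(Z,T)$, and apply Theorem \ref{coaleth}(iii). In fact you supply two details the paper compresses: the paper dismisses $pZ = Z$ with ``clearly,'' whereas your subnet argument (pull $z$ back to $y_n = p^n x_n \in p^n X$ with $p(y_n) = z$, extract a convergent subnet, use that each $p^kX$ is closed) is a genuine and valid proof of the surjectivity; and you justify that $(Z,T)$ is HAE by showing it is WAP via restriction of $E(X,T)$, where the paper implicitly uses that HAE is hereditary. Your closing argument that $Z = M$ is also fine, though the re-inclusion $M \subseteq Z$ comes for free: $Z$ is a nonempty closed invariant set, hence contains a minimal set, which by Corollary \ref{cor02a} must be $M$; your detour through finiteness of $M$ and $p|_M = T^j|_M$ is correct but unnecessary.

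There is, however, a genuine gap: the corollary is stated for an arbitrary WAP system, and in this paper a dynamical system is a homeomorphism of a compact, not necessarily metrizable, space. Theorem \ref{coaleth} is stated and proved only for compact \emph{metrizable} systems --- its proof works with a metric throughout (moduli of equicontinuity, uniform convergence), and indeed AE and HAE are only defined for metric systems here. So your steps ``$(Z,T)$ is WAP, and so HAE'' and ``Theorem \ref{coaleth}(iii) applied to $(Z,T)$'' are unavailable when $X$ is not metrizable, and your proposal as written only proves the metrizable case. The paper closes this case by a separate reduction: since $\Z$ is countable, $(X,T)$ is an inverse limit of a net of metrizable factors $(X^i,T^i)$, which are WAP because WAP passes to factors; the set $Z$ projects onto the corresponding sets $Z^i$ (for a decreasing sequence of compacta, the image of the intersection under a continuous surjection is the intersection of the images), so $p|_Z$ is an inverse limit of automorphisms and hence an automorphism, and weak rigidity is preserved under inverse limits. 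Some such reduction (or a direct non-metrizable extension of Theorem \ref{coaleth}) is needed to complete your proof; everything else in your argument, including the surjectivity of $p$ on $Z$ and the CP/transitive conclusions, already works verbatim for nets in the general compact setting.
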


\proof Assume first that $X$ is metrizable. Hence, $(X,T)$ is HAE because it is WAP.

Clearly $Z$ is a (nonempty) subsystem and $pZ =Z$. Thus, by
Theorem \ref{coaleth} $p | Z$ is an automorphism of $Z$.
Since $p \in A(X,T)$ it follows that $Z$ is weakly rigid and so every point of $Z$ is recurrent.
So if the system is CP then every point of $Z$ is periodic.

If, in addition, $X$ is transitive then by Corollary \ref{cor02a} $X$ contains a unique
minimal subset and so $Z$ consists of a single periodic orbit which is that minimal subset.

Since the group $\Z$ is countable, $(X,T)$ is  an inverse limit of a net
 $\{ (X^i,T^i) \} $
 of
 metrizable
systems which are WAP since the latter property
is preserved by factors. The set $Z$ projects onto the corresponding set
$Z^i$. Thus, the restriction of $p$ to $Z$ is the inverse limit of isomorphisms and
so is an isomorphism. The inverse limit of weakly rigid systems is weakly rigid. So in the CP case
the points of $Z$ are periodic.
Having a unique minimal subset is preserved by inverse limits and so if $X$ is transitive then $Z$ is a single
periodic orbit as before.

$\Box$ \vspace{0.5cm}

\begin{Qs}\label{Qs,I}
$\qquad$
\begin{enumerate}
\item
Is there a
metric
WAP system which is central, but which is not rigid?
\item
If a homeomorphism for $X$ is in $A(X,T)$ then is its inverse
also in $A(X,T)$
(and so it is weakly rigid)?
For WAP
or even
for
HAE the answer is yes, by Theorem \ref{coaleth} above.

\end{enumerate}
\end{Qs}

\vspace{.5cm}

\begin{lem}\label{limitlem1} Assume $(X,T)$ is a CP system.
If there is an infinite sequence $\{ x_i : i \in \N \}$ in $X$ such that
$x_i \in R_T(x_{i+1})$ for all $i$, then $x_1$ is a periodic point and all $x_i$'s are in the
orbit of $x_1$. \end{lem}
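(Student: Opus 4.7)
The plan is to proceed in three stages.

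\emph{Stage 1 (transitivity of $R_T$).} I would first show that if $y\in R_T(z)$ and $z\in R_T(w)$ then $y\in R_T(w)$. Given $T^{m_k}z\to y$ with $|m_k|\to\infty$ and $T^{n_j}w\to z$ with $|n_j|\to\infty$, the continuity of each iterate $T^{m_k}$ (since $T$ is a homeomorphism) yields $T^{m_k+n_j}w=T^{m_k}(T^{n_j}w)\to T^{m_k}z$ as $j\to\infty$. A diagonal choice $j=j(k)$ with $|n_{j(k)}|>|m_k|+k$ produces $T^{m_k+n_{j(k)}}w\to y$ with $|m_k+n_{j(k)}|\to\infty$. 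Iterating this transitivity along the given chain gives $x_i\in R_T(x_j)$ for every $1\le i<j$.

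\emph{Stage 2 (pigeonhole to periodicity).} If two indices coincide, $x_i=x_j$ with $i<j$, then Stage 1 gives $x_i\in R_T(x_j)=R_T(x_i)$, so $x_i$ is recurrent and hence periodic by the CP hypothesis. For a periodic point, $R_T(x_i)=O_T(x_i)$ is the finite orbit, so $x_1\in R_T(x_i)=O_T(x_i)$ makes $x_1$ periodic in the same orbit. For each $k\ge 1$, pigeonhole over the (finite) value set $\{x_i\}$ then produces $j\ge k$ with $x_j$ in this orbit, whence $x_k\in R_T(x_j)=O_T(x_j)=O_T(x_1)$.

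\emph{Main obstacle.} The whole argument hinges on showing that $\{x_i:i\in\N\}$ takes only finitely many distinct values. I would argue by contradiction. The single-orbit case is easy: if all $x_i$ lay in one orbit $O_T(x_N)$, writing $x_i=T^{k_i}x_N$ and using shift-invariance $R_T(T^k y)=R_T(y)$, the chain gives $T^{k_i}x_N\in R_T(x_N)$ for every $i$; since some $k_i\ne 0$ and $R_T(x_N)$ is $T$-invariant, this forces $x_N\in R_T(x_N)$, contradicting non-recurrence under CP plus distinctness. Otherwise, extract a Bolzano--Weierstrass subsequence $x_{n_k}\to x^*$ of distinct values and pick $|m_k|\to\infty$ with $d(T^{m_k}x_{n_{k+1}},x_{n_k})<1/k$; as both $x_{n_k}$ and $x_{n_{k+1}}$ converge to $x^*$, one obtains $T^{m_k}x_{n_{k+1}}\to x^*$ with $x_{n_{k+1}}\to x^*$ and $|m_k|\to\infty$, an ``almost-recurrent'' structure near $x^*$. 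The decisive step---and the technical heart of the proof---is to convert this almost-recurrence into genuine recurrence of some point of the chain via a compactness/idempotent argument in the adherence semigroup $A(X,T)$, which is a compact right-topological semigroup and so contains idempotents by Lemma~\ref{idemp}. Bridging the gap between ``$T^{m_k}x_{n_{k+1}}$ returns near $x_{n_{k+1}}$'' and a bona fide fixed point $uy=y$ for some idempotent $u\in A(X,T)$ and some $y$ in the chain, in a system where iterates need not be equicontinuous, is where I expect the main difficulty to lie.
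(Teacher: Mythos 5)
Your Stages 1 and 2 are correct: the diagonal argument does give $R_T\circ R_T\subset R_T$ (a fact the paper also uses), and the pigeonhole reduction correctly shows that \emph{if} the set $\{x_i\}$ is finite, then some value repeats, is recurrent, hence periodic by CP, and the whole chain falls into that periodic orbit. But the proposal does not prove the lemma: everything is funneled into the claim that the chain takes only finitely many values, and that claim — which is the entire content of the statement — is left as an acknowledged open ``main obstacle.'' Moreover, the route you sketch for closing it is unlikely to work as stated. From $T^{m_k}x_{n_{k+1}}\to x^*$ and $x_{n_{k+1}}\to x^*$ with $|m_k|\to\infty$ you cannot extract an idempotent $u\in A(X,T)$ with $ux^*=x^*$: elements of $A(X,T)$ are pointwise limits of iterates applied to a \emph{fixed} point, and without continuity of the limit maps (which is not available here — the system is not assumed WAP) returns of the \emph{varying} points $x_{n_{k+1}}$ near $x^*$ give no control on $px^*$ for $p\in A(X,T)$. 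The cluster point $x^*$ need not lie on the chain, and nothing forces it to be recurrent a priori; that such a configuration is impossible is precisely what has to be proved, and the CP hypothesis never enters your sketch at the decisive moment.

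For comparison, the paper's proof avoids any recurrence extraction. It first disposes of the periodic case (if some $x_i$ is periodic, all earlier terms lie in its finite orbit, via $R_T(x_i)=O_T(x_i)$), so one may assume, after renumbering, that no $x_i$ is periodic. Then $A_i=\ol{O_T(x_i)}$ is a \emph{strictly} increasing sequence of closed invariant sets: $x_i\in R_T(x_{i+1})$ gives $A_i\subset A_{i+1}$, while $x_{i+1}\in A_i$ would force $x_{i+1}\in R_T(x_{i+1})$ (since $R_T(x_{i+1})$ is closed and invariant and contains $x_i$), i.e. $x_{i+1}$ recurrent, contradicting CP plus non-periodicity. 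Each $A_i$ is topologically transitive, so by Lemma \ref{tt} the closure $A$ of $\bigcup_i A_i$ is topologically transitive; a transitive point $z$ of $A$ is \emph{isolated} in $A$ — this is where CP is used, since in a CP system every point is isolated in its orbit closure — hence $z$ lies in the dense union, i.e. in some $A_j$; but then $A=\ol{O_T(z)}\subset A_j\subsetneq A$, a contradiction. This strictly-increasing-transitive-sets argument is the idea your proposal is missing. (A smaller point: the lemma is stated for general compact $X$; the paper reduces to the metrizable case via factors, whereas your Bolzano--Weierstrass step silently assumes a metric.)
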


\proof

First assume that $X$ is metrizable.

If $x_i$ is periodic then all $x_j$'s with $j < i$ are in the orbit of $x_i$. Hence, if infinitely many of the
$x_i$'s are periodic then they all are and all lie in the same periodic orbit.
We show that the alternative to
infinitely
many periodic points cannot happen.


If instead there are only finitely many periodic points in the sequence then
by omitting finitely many initial terms and re-numbering we can assume that none are periodic.
Let $A_i$ be the orbit closure of $x_i$.

If $x_{i+1} \in R_T(x_{i+1})$
then it is positively or negatively recurrent. Since $X$ is CP the only recurrent points are periodic. Hence,
if $x_{i+1}$ is not periodic then $x_{i+1}$ is not in the orbit closure of $x_{i}$ which we label $A_i$.
Since $x_i \in A_{i+1}$ we have $A_i \subset A_{i+1}$. Since $x_{i+1} \in A_{i+1} \setminus A_i$, each inclusion is strict.
Each $A_i$ is topologically transitive with transitive point $x_i$. Therefore the closure of the union
$A = \ol{ \bigcup{A_i}}$ is topologically transitive. Let $z$ be a transitive point for $A$.  It is isolated in
$A \ = \ \ol{ \bigcup{A_i}}$ and so must lie in some $A_j$, but since the $\{ A_i \}$ sequence is
a strictly increasing sequence of closed invariant sets, it cannot be in any of them.

For the general case, we can assume that some $x_i$ is not periodic.
There is a metrizable factor for which the image of
$x_i$ is not periodic and this contradicts the metric result above.

$\Box$ \vspace{.5cm}

\begin{prop}\label{limitprop2} Assume that
$(X,T)$ is a CP, WAP system.
  If $\{p_i : i \in \N \}$
is
a sequence in
$A(X,T)$ then $\bigcap_{n=1}^{\infty} \
p_n p_{n-1} \cdots p_1 X$
is a closed subset consisting of periodic points.
\end{prop}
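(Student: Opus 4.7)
The plan is to reduce everything to an application of Lemma \ref{limitlem1} by producing, for each $y$ in the intersection, an infinite backward chain of preimages under the $p_i$'s. Write $q_n = p_n p_{n-1} \cdots p_1$. Since $A(X,T)$ is a left ideal, each $q_n \in A(X,T)$, and by the WAP hypothesis each $p_i$ (and hence each $q_n$) is continuous on $X$. Consequently each $Y_n := q_n(X)$ is compact, and the intersection $W = \bigcap_n Y_n$ is automatically closed. So the work is to show that every $y \in W$ is periodic.

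Fix $y \in W$ and set, for each $n \geq 1$,
\[
C_n \ = \ \{ (z_0,z_1,\dots,z_n) \in X^{n+1} \ : \ z_0 = y \ \mbox{and} \ p_i(z_i) = z_{i-1} \mbox{ for } 1 \leq i \leq n \}.
\]
Continuity of the $p_i$'s (WAP) makes each $C_n$ closed in $X^{n+1}$, and $C_n \neq \emptyset$ because $y \in Y_n$ exhibits a $z_n \in X$ with $q_n(z_n) = y$, whose successive partial images $z_i = (p_i \circ p_{i-1} \circ \cdots \circ p_1)(z_n) \cdots$ (one reads them off the formula) yield a point of $C_n$. The obvious projections $C_{n+1} \to C_n$ form an inverse system of nonempty compact Hausdorff spaces, so by the standard finite-intersection-property compactness argument (Tychonoff), the inverse limit $C_\infty = \varprojlim C_n$ is nonempty.

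Pick any $(z_0,z_1,z_2,\dots) \in C_\infty$. Then $z_0 = y$ and $p_i(z_i) = z_{i-1}$ for every $i \geq 1$. Because $p_i \in A(X,T)$ is a limit of iterates $T^{n^{(i)}_k}$ with $|n^{(i)}_k| \to \infty$, we have $z_{i-1} = p_i(z_i) \in R_T(z_i)$ for every $i \geq 1$. Setting $x_i = z_{i-1}$ we obtain an infinite sequence $\{ x_i : i \in \N\}$ in $X$ with $x_i \in R_T(x_{i+1})$ for all $i$. Since $(X,T)$ is CP, Lemma \ref{limitlem1} applies and gives that $x_1 = y$ is periodic. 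As $y \in W$ was arbitrary, $W$ consists of periodic points.

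The main obstacle in this plan is guaranteeing the infinite backward chain: pointwise $R_T(\,\cdot\,)$ is closed but the full relation $R_T$ is not, so a naive diagonal extraction from the finite chains could fail. This is where it is crucial to phrase the preimage problem as an inverse system of compact sets $C_n$ (whose closedness uses WAP via continuity of the $p_i$'s) and invoke Tychonoff; once the infinite chain exists, the CP hypothesis and Lemma \ref{limitlem1} finish the argument with no further work. Note that no separate metrizability reduction is needed, since the inverse-limit step works in arbitrary compact Hausdorff spaces and Lemma \ref{limitlem1} is already proved in full generality.
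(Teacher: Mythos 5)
Your overall strategy is the paper's own: produce an infinite backward chain $x_i \in R_T(x_{i+1})$ through the intersection by a compactness argument and then quote Lemma \ref{limitlem1}; your inverse-limit packaging of the compactness step is a harmless variant of the paper's identity $p_i(X_{i+1}) = X_i$ for the sets $X_{i,n} = p_i p_{i+1}\cdots p_n X$. But there is a genuine error at the one point where commutativity of the enveloping semigroup is needed, and your writeup both omits it and substitutes an incorrect formula. A point $y \in Y_n = p_n p_{n-1}\cdots p_1 X$ gives you $z_n$ with $p_n\cdots p_1(z_n) = y$; what membership in $C_n$ requires is a point $z_n'$ with $p_1 p_2 \cdots p_n(z_n') = y$, since unwinding $z_0 = y$, $p_i(z_i) = z_{i-1}$ yields the composition with $p_1$ applied \emph{last}. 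Your displayed ``partial images'' $z_i = (p_i \circ p_{i-1} \circ \cdots \circ p_1)(z_n)$ do not lie in $C_n$: they satisfy $z_i = p_i(z_{i-1})$, so the chain runs forward rather than backward, and they give $z_0 = z_n$, not $z_0 = y$. Without further input, $p_n\cdots p_1 X$ and $p_1\cdots p_n X$ need not coincide, so $C_n \neq \emptyset$ does not follow from $y \in Y_n$ as written.

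The repair is available and is exactly the step the paper makes explicit (``Since the semigroup is abelian, $X_{i,n} = p_n p_{n-1}\cdots p_i X$''): since $(X,T)$ is WAP, every element of $E(X,T)$ is continuous and hence $E(X,T)$ is abelian by Proposition \ref{prop04}, so $q_n = p_n\cdots p_1 = p_1\cdots p_n$ and $Y_n = p_1\cdots p_n X$. Then choose $z_n$ with $p_1\cdots p_n(z_n) = y$ and set $z_i = p_{i+1} p_{i+2} \cdots p_n(z_n)$ for $0 \leq i \leq n-1$; these do satisfy $z_0 = y$ and $p_i(z_i) = z_{i-1}$, so $C_n \neq \emptyset$. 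With this one-line fix the remainder of your argument is correct: each $C_n$ is closed by continuity of the $p_i$ (again WAP), an inverse sequence of nonempty compact Hausdorff spaces has nonempty limit, $z_{i-1} = p_i(z_i) \in R_T(z_i)$ because $p_i \in A(X,T)$ and $R_T(x) = A(X,T)x$, and Lemma \ref{limitlem1} then yields periodicity of $y$; and, as you note, no separate metrizability reduction is needed beyond the one already contained in the proof of Lemma \ref{limitlem1}.
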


\proof For $i = 1,2,\dots$ and $n \geq i$ let $X_{i,n} =
p_i p_{i+1} \cdots p_n X$.
$X_i = \bigcap_{n=i}^{\infty} \ X_{i,n}$. Since each $p_i$ is
continuous, each $X_{i,n}$ is a closed, invariant subspace
and the sequence is decreasing in $n$ and $p_i(X_{i+1,n}) = X_{i,n}$ when $n \geq i + 1$.
Hence, continuity and compactness imply that $p_i(X_{i+1}) = X_i$. Since the semigroup is abelian, $X_{i,n} =
p_n p_{n-1} \cdots p_i X$.
Let $x_1 \in X_1$. By induction we can build
a sequence $x_i \in X_i$ such that $p_i(x_{i+1}) = x_i$ and so $x_i \in R_T(x_{i+1})$. From
Lemma \ref{limitlem1} it follows that $x_1$ is periodic.

$\Box$ \vspace{.5cm}

\begin{cor}\label{limitcor3} Assume that $(X,T)$ is a CP, WAP system. If $\{ x_i : i \in \N \}$ is a sequence
in $X$ such that $x_{i+1} \in R_T(x_{i})$ for all $i$, then $\bigcap_i \ \ol{O(x_i)}$ (the
orbit closures of the $x_i$'s)
is a closed subset consisting of periodic points.\end{cor}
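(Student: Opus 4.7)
The plan is to reduce directly to Proposition \ref{limitprop2} by reinterpreting $\bigcap_i \overline{O(x_i)}$ as (a subset of) an intersection of the form $\bigcap_n p_n p_{n-1} \cdots p_1(X)$ for a suitable sequence $\{p_i\} \subset A(X,T)$.

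First, for each $i$, since $x_{i+1} \in R_T(x_i)$, the definition of $R_T$ supplies an element $p_i \in A(X,T)$ with $p_i(x_i) = x_{i+1}$. Because $(X,T)$ is WAP, every $p_i$ is continuous on $X$, and since $p_i \in E(X,T)$ it commutes with $T$. These two facts together imply that $p_i$ carries each orbit closure onto an orbit closure: indeed, $p_i(\overline{O(x_i)})$ is closed (continuous image of a compact set), it is $T$-invariant (since $p_i T = T p_i$), and it contains $p_i(x_i) = x_{i+1}$, so it contains $\overline{O(x_{i+1})}$; conversely, continuity of $p_i$ gives $p_i(\overline{O(x_i)}) \subset \overline{O(x_{i+1})}$. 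Hence
\[
p_i(\overline{O(x_i)}) \;=\; \overline{O(x_{i+1})},
\]
and in particular $\overline{O(x_1)} \supset \overline{O(x_2)} \supset \cdots$ is a nested decreasing sequence of closed invariant subsystems.

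Iterating the previous identity yields
\[
\overline{O(x_{n+1})} \;=\; (p_n p_{n-1} \cdots p_1)\bigl(\overline{O(x_1)}\bigr) \;\subset\; (p_n p_{n-1} \cdots p_1)(X).
\]
Consequently
\[
\bigcap_{i \in \N} \overline{O(x_i)} \;\subset\; \bigcap_{n=1}^{\infty} \; p_n p_{n-1} \cdots p_1(X).
\]
By Proposition \ref{limitprop2}, applied to the sequence $\{p_i\} \subset A(X,T)$ in the CP-WAP system $(X,T)$, the right-hand set consists entirely of periodic points. Therefore so does the left-hand set, and it is clearly closed as an intersection of closed sets. This completes the argument.

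There is essentially no serious obstacle: the only point requiring care is the identity $p_i(\overline{O(x_i)}) = \overline{O(x_{i+1})}$, which relies crucially on the WAP hypothesis (to obtain continuity of $p_i$). Without continuity one would have only $p_i(x_i) = x_{i+1}$ as a pointwise statement and could not push forward orbit closures, so this is exactly where the CP-WAP assumption enters — beyond the application of Proposition \ref{limitprop2} itself.
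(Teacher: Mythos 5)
Your proof is correct and takes essentially the same route as the paper's: both choose $p_i \in A(X,T)$ with $p_i(x_i) = x_{i+1}$ and reduce to Proposition \ref{limitprop2} by showing $\bigcap_i \ \ol{O(x_i)} \subset \bigcap_n \ p_n p_{n-1} \cdots p_1(X)$. The only cosmetic difference is that you obtain the inclusion by pushing orbit closures forward via the identity $p_i(\ol{O(x_i)}) = \ol{O(x_{i+1})}$ on all of $X$ (so you never need the abelianness of $E(X,T)$ explicitly), whereas the paper first restricts to $X = \ol{O(x_1)}$ and observes $x_{n+1} \in X_{1,n}$ in the notation of the proof of Proposition \ref{limitprop2}.
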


\proof The sequence $ \{ \ol{O(x_i)} \}$ is decreasing and so we can restrict to the case $X =  \ol{O(x_1)}$.
This is a topologically transitive  WAP system.
By assumption, there
exist
$p_i \in
A(X,T)$ such that $p_i(x_i) = x_{i+1}$. In the notation of the proof of
Proposition \ref{limitprop2}, $x_{n+1} \in X_{1,n}$ for all $n \geq 1$. Hence,
$\bigcap_n \ \ol{O(x_n)} \ \subset \bigcup_n  X_{1,n}$
and the latter consists of periodic points by  Proposition \ref{limitprop2}.

$\Box$ \vspace{.5cm}

%
%

 \vspace{0.5cm}

 Recall that for $(X,T)$ a cascade, we defined the  relation $R_T$ on $X$ with $R_T(x)$ the set of limit points of the
 bi-infinite orbit sequence $\{ T^i(x) : i \in \Z \}$. For $A \subset X$ we let $R_T^*(A) = \{ x : R_T(x) \subset A \}$.
 Let $R_T^{*n}(A) = R_T^*(R_T^{*(n-1)}(A)) = \{ x : R_T^n(x) \subset A \}$. Observe that
 $R_T^*(A) = \bigcap \{ p^{-1}(A) : p \in A(X,T) \}$.
 Hence, in the WAP case with all
 members of $A(X,T)$ continuous, $R^*_T(A)$ is closed when $A$ is.

A subset $A \subset X$ is called \emph{orbit-closed} if $x \in A$
implies $ \ol{O(x)} \ \subset \ A$.

 An orbit-closed set is clearly invariant, but need not be closed.  On the other hand, a closed, invariant set is
 orbit-closed. In general, a set $A$ is orbit-closed iff it is invariant and $x \in A$ implies $R_T(x) \subset A$.
  Thus, a subset $A$ is orbit-closed iff it is invariant and $A \subset R_T^*(A)$.

  \begin{lem}\label{isolem2a} Let $A \subset X$. The set $R_T^*(A)$ is orbit-closed, i.e.
 \begin{equation}\label{iso2}
 x \ \in \ R_T^*(A) \qquad \Rightarrow \qquad \ol{O(x)} \ \subset \ R_T^*(A)
 \end{equation}
  \end{lem}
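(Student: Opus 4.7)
The plan is to exploit two separate observations: that $R_T$ is orbit-invariant, and that a limit point of the orbit not already in the orbit is itself a limit of iterates with indices going to infinity. Glued together by a diagonal argument, these yield the closure of $R_T^{*}(A)$ along any orbit passing through it.

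First I would record the identity $R_T(Tx) = R_T(x)$: shifting an index sequence $\{n_i\}$ by one preserves both the convergence of $T^{n_i}x$ and the condition $|n_i|\to\infty$. Consequently $R_T^{*}(A)$ is $T$-invariant, so the orbit $O(x)$ is already contained in $R_T^{*}(A)$ whenever $x$ is. It remains to show that an arbitrary $z\in\overline{O(x)}\setminus O(x)$ belongs to $R_T^{*}(A)$. Such a $z$ is the limit of a net $T^{n_i}x\to z$ in which $\{n_i\}$ must be unbounded (otherwise a bounded subnet would put $z$ in $O(x)$); passing to a subnet we may take $|n_i|\to\infty$, so at least $z\in R_T(x)\subset A$. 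The content of the lemma is the stronger statement $R_T(z)\subset A$.

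To get this stronger statement, fix any $w\in R_T(z)$ and write $w=\lim_j T^{m_j}z$ with $|m_j|\to\infty$. For each fixed $j$, continuity of $T^{m_j}$ gives $T^{m_j}z=\lim_i T^{n_i+m_j}x$. The key step is a diagonal extraction: for each $j$, I choose $i(j)$ far enough along the net that simultaneously (i) $T^{n_{i(j)}+m_j}x$ lies in a prescribed neighborhood of $T^{m_j}z$, and (ii) $|n_{i(j)}|>|m_j|+j$. Both conditions can be met, the first by the displayed limit and the second because $|n_i|\to\infty$. Condition (ii) forces $|n_{i(j)}+m_j|\geq |n_{i(j)}|-|m_j|>j$, so the indices go to infinity, while (i) together with $T^{m_j}z\to w$ gives $T^{n_{i(j)}+m_j}x\to w$. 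Hence $w\in R_T(x)\subset A$. Since $w$ was arbitrary, $R_T(z)\subset A$ and $z\in R_T^{*}(A)$.

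The only substantive step is coordinating the diagonal so that proximity to $w$ and unboundedness of the index hold at once; the slack afforded by $|n_i|\to\infty$ is precisely what lets me separate these two requirements. No additional ingredient is needed in the non-metrizable case: one works with the directed set of open neighborhoods of $w$ and reindexes by a product directed set, but the geometric content is identical.
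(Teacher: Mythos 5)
Your proof is correct and takes essentially the same route as the paper's: the identity $R_T(Tx) = R_T(x)$ gives $T$-invariance of $R_T^*(A)$, and your diagonal extraction is exactly a hands-on verification of the inclusion $R_T \circ R_T \subset R_T$, which the paper simply quotes to conclude that $x \in R_T^*(A)$ implies $R_T(x) \subset R_T^*(A)$. The only difference is that you prove that composition law explicitly (and decompose $\ol{O(x)} \subset O(x) \cup R_T(x)$ by the bounded/unbounded dichotomy) rather than citing it, which is a fair trade of brevity for self-containedness.
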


 \proof Since $R_T(T^i(x)) = R_T(x)$ for all $i \in \Z$ it follows that $R^*_T(A)$ is invariant. Since
 $R_T \circ R_T \subset R_T$, $y \in R_T(x)$ implies $R_T(y) \subset R_T(x)$.  It follows that $x \in R_T^*(A)$
 implies $y \in R_T^*(A)$.

 $\Box$ \vspace{.5cm}

%

We will say that $A$ is \emph{limit determined}  or \emph{L-determined} if it
is invariant and $x \in A$ iff $R_T(x) \subset A$. That is, $A$ is L-determined when it is orbit closed and $R_T(x) \subset A$
only when $x \in A$. Thus, a subset $A$ is L-determined iff it is invariant and $A = R_T^*(A)$. Clearly,
\begin{equation}\label{capeq}
B \subset A \ \text{and} \ A \ \text{L-determined} \quad \Longrightarrow \quad  R_T^*(B) \subset A.
\end{equation}

The entire space $X$ is L-determined. For a collection  $\{ A_i \}$  of subsets of $X$, the intersection is
$\bigcap_i \ A_i$
invariant, or orbit-closed or L-determined, if each $A_i$ satisfies the corresponding property.
The union $\bigcup_i \ A_i$ is invariant or orbit-closed if
each $A_i$ satisfies the corresponding property.

It follows that for any set $A$ there is a minimum L-determined set which
contains it, i.e. the intersection of all L-determined sets containing it. For an orbit-closed set, we can obtain this
set via the usual transfinite construction.

Assume that $A \subset X$ is orbit-closed. Define
\begin{align}\label{zstar}
\begin{split}
&z^*_0(A) \quad = \quad A, \\
&z^*_{\a + 1}(A) \quad =\quad R_T^*(z^*_{\a}(A)), \\
&z^*_{\b}(A) \quad = \quad \bigcup_{\a < \b} z^*_{\a}(A),
\end{split}
\end{align}
for $\b$ a limit ordinal. Every $z^*_{\a}$ is orbit-closed by Lemma \ref{isolem2a} and so the
sequence is increasing until it stabilizes at the minimum L-determined set which contains $A$.

A closed invariant set $K \subset X$ is an \emph{isolated invariant set}
if there is an open set $U$ containing $K$ such that $K$ is the maximum
invariant subset of $U$, i.e. $K = \bigcap_{i \in \Z} \ T^i(U)$.  If $\{ K_n \}$
is a decreasing sequence of closed invariant sets with intersection $K$ isolated,
then $K_n \subset U$ implies $K_n = K$ and so eventually $K_n = K$. In an expansive system,
like a subshift,
any fixed point is an isolated invariant set.

\begin{lem}\label{seqlem} Let $(X,T)$ be a CT, WAP subshift
with fixed point $\{e\}$.
For any
 infinite sequence $p_1, p_2, \dots$  of elements of $A(X,T)$ there
there exists an $n$ such that $p_1p_2\cdots p_n(X) =
\{ e \}$. \end{lem}

\proof Proposition \ref{limitprop2} implies that $\{ e \}$ is the intersection of the decreasing
sequence $\{ p_1p_2\cdots p_n(X) \} $.
If $V$ is any neighborhood of $e$ then the invariant set
 $p_1p_2\cdots p_n(X)$ is contained in $V$ for sufficiently large $n$. Since $e$ is an isolated invariant set,
 we can choose $V$ so that $\{ e \}$ is the only invariant subset of $V$.

$\Box$ \vspace{.5cm}

%



Now we restrict to the case where $(X,T)$ is a metrizable CT, WAP with a single minimal set consisting of a fixed point $e$.
We will call such a system \emph{CT-WAP}.

Fix a closed neighborhood $V(e)$ of $e$.  let $A(e)$ denote the maximal invariant
subset of $X$ which is contained in  $ V(e)$,
i.e.
 \begin{equation}\label{df,Ae}
 A(e) = \bigcap_{i \in \Z} \ T^i(V(e))
 \end{equation}

If  $V(e)$ is clopen,
$A(e)$ is an isolated invariant set.
If $\{ e \}$ is an isolated invariant set, e.g. if $(X,T)$ is a subshift, then we  choose $V(e)$
so that $A(e) = \{ e \}$.

\begin{prop}\label{isoprop1} Assume $(X,T)$  is CT-WAP.
Let $A$ be any  invariant set which contains $A(e)$.  If $x \not\in A$ then there exists
$q \in E(X,T)$ such that $qx \not\in A \cup V(e)$ but for all $p \in A(X,T)$, $pqx \in A$,
i.e. $R_T(qx) = q(R_T(x)) \subset A$. That is,  $qx \in R^*_T(A) \setminus (A \cup V(e))$.
\end{prop}

\proof Clearly,
\begin{equation}\label{iso1}
A \quad \subset \quad \bigcap_{\ell \in \Z} \ T^{\ell}(A \cup V(e)) \quad \subset \quad A \cup A(e) \quad = \quad A
\end{equation}
So for some $i \in \Z, \ T^i(x) \not\in A \cup V(e)$. If $R_T(T^i(x)) \subset A \cup V(e)$
then let $q = T^i$. Otherwise,
there exists $q_1$ such that $q_1(T^i(x)) \not\in A \cup V(e)$. Continue inductively defining
$q_n$ such that
$q_n \cdots q_1(T^i(x)) \not\in A \cup V(e)$ whenever $R_T(q_{n-1} \cdots q_1(T^i(x))$ is not
contained in $A \cup V(e)$. This process must terminate.

Assume it did not. By Proposition \ref{limitprop2} $\{ e \} = \bigcap_n \{ q_n \cdots q_1(X) \}$ and so eventually
$ q_n \cdots q_1(X)$ is a closed invariant set contained in $V(e)$.
By invariance, $ q_n \cdots q_1(X) \subset A(e)$, contradicting the definition of $q_n$.

If  $q_n \cdots q_1(T^i(x)) \not\in A \cup V(e)$ but
$R_T(q_{n} \cdots q_1(T^i(x)) \subset  A \cup V(e)$, then
 let $q = q_{n} \cdots q_1T^i$.
Since $R_T(qx) \subset A \cup V(e)$ and $R_T(qx)$ is an invariant set, equation \ref{iso1} implies
$R_T(qx) \subset A$.

Since the system is WAP, $q$ is an action map and so $q(R_T(x)) = R_T(qx)$.

$\Box$ \vspace{.5cm}


%

\begin{prop}\label{isoprop2}  Assume $(X,T)$  is CT-WAP.
Let $A$ be any  orbit-closed set which contains $A(e)$ and let $\{ z^*_{\a}(A) \}$ be the sequence defined by
(\ref{zstar}). For any ordinal $\a$, if $x \not\in z^*_{\a}(A)$  then there exists $q \in E(X,T)$ such that
\begin{equation}\label{iso3}
\begin{split}
 qx \ \not\in \ z^*_{\a}(A) \cup V(e) \qquad \mbox{and} \qquad
R_T(qx) \ \subset \ z^*_{\a}(A)  \\
\mbox{i.e.} \qquad qx \in z^*_{\a + 1}(A)  \setminus ( z^*_{\a}(A)  \cup V(e) ). \hspace{2cm}
\end{split}
\end{equation}

For any such $q$,  $R_T(qx)$ meets
$z^*_{\b+1}(A)  \setminus (z^*_{\b}(A)  \cup V(e))$ for every $\b < \a$.
\end{prop}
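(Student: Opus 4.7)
The plan is to reduce both assertions to Proposition \ref{isoprop1a} after verifying that each $A_\a$ satisfies its hypotheses. As preparation, I would show by transfinite induction on $\a$ that $A_\a$ is invariant in the strong sense that $x \in A_\a \Rightarrow \overline{O(x)} \subset A_\a$. The base case $A_0 = A_\ep$ is clear from the definition; at a successor step, Lemma \ref{isolem2a} yields this property for $A_{\a+1} = R_T^*(A_\a)$; at a limit ordinal it passes to monotone unions. Two consequences drop out: first, the sequence is monotone non-decreasing, since $x \in A_\a$ forces $R_T(x) \subset A_\a$ and hence $x \in R_T^*(A_\a) = A_{\a+1}$; second, every $A_\a$ contains $A_0 = A_\ep$.

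For the first assertion, given $x \notin A_\a$, I would apply Proposition \ref{isoprop1a} with $A = A_\a$. This produces $q \in E(X,T)$ with $qx \in R_T^*(A_\a) \setminus (A_\a \cup V_\ep(e)) = A_{\a+1} \setminus (A_\a \cup V_\ep(e))$, which is exactly the content of (\ref{iso3}).

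For the second assertion, set $y = qx$ and fix $\b < \a$. By monotonicity $A_{\b+1} \subset A_\a$; if $R_T(y) \subset A_\b$ then $y \in R_T^*(A_\b) = A_{\b+1} \subset A_\a$, contradicting $y \notin A_\a$. Hence some $z \in R_T(y)$ satisfies $z \notin A_\b$, and applying the already-proved first assertion at level $\b$ to the point $z$ yields $q' \in E(X,T)$ with $q'z \in A_{\b+1} \setminus (A_\b \cup V_\ep(e))$. Writing $z = py$ with $p \in A(X,T)$ and using that $A(X,T)$ is a left ideal of $E(X,T)$, we have $q'p \in A(X,T)$ and therefore $q'z = (q'p)y \in R_T(y)$, supplying the required element of $R_T(y) \cap (A_{\b+1} \setminus (A_\b \cup V_\ep(e)))$.

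The only real obstacle is the transfinite bookkeeping for the family $\{A_\a\}$; once invariance and monotonicity are in place, the dynamical content is fully packaged in Proposition \ref{isoprop1a}, while the transfer from $R_T(z)$ back into $R_T(y)$ is nothing more than the left ideal property of the adherence semigroup.
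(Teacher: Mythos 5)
Your proof is correct, and while its first half matches the paper, the second half takes a genuinely different route. For the first assertion you do exactly what the paper does: after checking that each $A_\a$ is orbit-closed (your transfinite induction via Lemma \ref{isolem2a} just makes explicit what the paper asserts in the sentence before the proposition), hence invariant, contains $A_\ep$, and that the sequence is nondecreasing, you apply Proposition \ref{isoprop1a} with $A = A_\a$ to get (\ref{iso3}). For the second assertion the paper applies the first assertion directly to the point $y = qx$ itself (which lies outside $A_\b$ by monotonicity), obtaining $q_1 \in E(X,T)$ with $q_1y \in A_{\b+1} \setminus (A_\b \cup V_\ep(e))$, and then upgrades $q_1$ to the adherence semigroup: since $y \notin A_{\b+1}$ and $A_{\b+1}$ is orbit-closed, no iterate $T^i(y)$ lies in $A_{\b+1}$, so $q_1$ cannot be a power of $T$ and must lie in $A(X,T)$, whence $q_1y \in R_T(y)$. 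You instead first manufacture an intermediate point $z \in R_T(y) \setminus A_\b$ (correctly, via the contradiction $y \in R_T^*(A_\b) = A_{\b+1} \subset A_\a$), apply the first assertion to $z$, and return to $R_T(y)$ by writing $z = py$ with $p \in A(X,T)$ and invoking the left-ideal property to get $q'p \in A(X,T)$. Your version buys independence from the decomposition $E(X,T) = \{T^n : n \in \Z\} \cup A(X,T)$, using only the ideal property of $A(X,T)$ stated in Section \ref{sec,WAP}, at the cost of one extra point; the paper's version is shorter, exploiting the orbit-invariance of $A_{\b+1}$ to see that the witness produced at level $\b$ is automatically a limit of iterates. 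Both arguments are sound and rest on the same preparation.
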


\proof  We repeatedly apply Proposition \ref{isoprop1}. First we obtain $q \in E(X,T)$ which satisfies (\ref{iso3}).

For any $\b < \a$
there exists $q_1 \in E(X,T)$ such that $q_1qx \in z^*_{\b+1}(A)  \setminus (z^*_{\b}(A)  \cup V(e))$. Since $\b + 1 \leq \a$,
$qx \not\in z^*_{\b +1}(A)  $ and so $ T^i(qx) \not\in z^*_{\b + 1}(A) $ for any $i \in \Z$.
Hence, $q_1 \in A(X,T)$ and so $q_1(qx) \in R_T(qx)$.

$\Box$ \vspace{.5cm}

\begin{cor}\label{isocor3} Assume $(X,T)$  is CT-WAP.
Let $A$ be any  orbit-closed set which contains $A(e)$ and let $\{ z^*_{\a}(A)  \}$ be the sequence defined by
(\ref{zstar}). If for any ordinal $\a$, $z^*_{\a}(A) $ is a proper subset of $X$ then $z^*_{\a + 1}(A)  \setminus z^*_{\a}(A) $ is
nonempty. So the transfinite sequence $\{ z^*_{\a}(A)  \}$ is strictly increasing until the first ordinal $\a^*$ such that
$z^*_{\a^*}(A)  = X$. If $(X,T)$ is topologically transitive with transitive
point $x^*$ then $\a^*$ is the first ordinal such that $x^* \in z^*_{\a^*}(A) $ and  $\a^*$ is a non-limit ordinal.
\end{cor}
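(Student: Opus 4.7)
The plan is to extract each conclusion of the corollary directly from the preceding machinery; almost no new ideas are needed once Proposition \ref{isoprop2} is in hand. First I would prove the strict increase statement. Suppose $A_{\a}$ is a proper subset of $X$ and pick any $x \notin A_{\a}$. Then Proposition \ref{isoprop2} produces $q \in E(X,T)$ with $qx \in A_{\a+1} \setminus (A_{\a} \cup V_{\ep}(e))$, which already witnesses that $A_{\a+1} \setminus A_{\a}$ is nonempty. This immediately implies that the transfinite sequence $\{A_{\a}\}$ is strictly increasing up to the first ordinal $\a^*$ at which $A_{\a^*} = X$ and constant thereafter.

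Next I would handle the topologically transitive case. The key observation is Lemma \ref{isolem2a}: for every $\a$ (including the limit stages, since a union of sets each of which contains the orbit closure of its points has the same property) we have $x \in A_{\a} \Rightarrow \ol{O(x)} \subset A_{\a}$. Hence once a transitive point $x^*$ lies in some $A_{\a}$, we get $X = \ol{O(x^*)} \subset A_{\a}$, forcing $A_{\a} = X$. Conversely, $A_{\a} = X$ trivially contains $x^*$. Thus the first $\a$ with $x^* \in A_{\a}$ coincides with $\a^*$.

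Finally I would verify that $\a^*$ is not a limit ordinal. If $\a^*$ were a limit ordinal, then by the definition used in the construction $A_{\a^*} = \bigcup_{\b < \a^*} A_{\b}$, so $x^* \in A_{\a^*}$ would imply $x^* \in A_{\b}$ for some $\b < \a^*$, contradicting minimality of $\a^*$. There is no genuine obstacle here: the entire corollary is a direct bookkeeping consequence of Proposition \ref{isoprop2} and Lemma \ref{isolem2a}. The only mildly subtle point worth flagging is the invariance/orbit-closure property at limit stages, which I would confirm explicitly by a short induction on $\a$ before invoking it to handle the transitive point.
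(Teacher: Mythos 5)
Your proof is correct and follows essentially the same route as the paper: strict increase via Proposition \ref{isoprop2}, the identification of $\a^*$ via the orbit-closure property of equation (\ref{iso2}) (Lemma \ref{isolem2a}) applied to the transitive point, and the non-limit claim from the union definition at limit stages. Your extra remark about checking the orbit-closure property at limit ordinals is a reasonable precaution, though the paper already records that property at every stage just before the corollary.
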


\proof That the sequence is strictly increasing until $z^*_{\a}(A)  = X$ is clear from
Proposition \ref{isoprop2}. It is also clear
that if $x^* $ is a transitive point then $x^* \in z^*_{\a}(A) $ implies $X = \ol{O(x^*)} \subset a_{\a}$ by (\ref{iso2}) and so
$z^*_{\a}(A)  = X$ iff $x^* \in z^*_{\a}(A) $. If $x^* \in \bigcup_{\b < \a} z^*_{\b}(A) $ then $x^* \in a_{\b}(A) $ for some $\b < \a$ and
so $\a^*$ cannot then be a limit ordinal.

$\Box$ \vspace{.5cm}

For the case when $(X,T)$ is a
CT-WAP with the fixed point $e$ isolated
(as a closed invariant subset), e.g. a CT-WAP subshift,
we have chosen $V(e)$
 so that $A(e) =  \{ e \}$ and we let $A = A(e) = z^*_0(e)  = \{ e \}$.
 We then call the ordinal at which the $z^*_{\a}(e)$ sequence stabilizes the
\emph{height}$^*$ of $(X,T)$. Because $X$ is then countable it follows that the ordinal $\a^*$ is countable.

\begin{cor}\label{isocor4}  Assume $(X,T)$  is CT-WAP.
If the fixed point $e$
is an
isolated  invariant set then the entire
space $X$ is the only L-determined subset of $X$. \end{cor}

\proof The transfinite sequence $\{ z^*_{\a}(e) \}$  stabilizes at $X$ by Corollary \ref{isocor3}.
If $A$ is any L-determined set then for any $x \in A$, $e \in R_T(x) \subset A$ since $A$ is orbit-closed. Hence, $z^*_0(e) \subset A$
and so inductively $z^*_{\a}(e) \subset A$ for all $\a$.  Hence, $A = X$.

$\Box$
\vspace{1cm}

\subsection{Discrete suspensions and spin constructions}\label{sec,const}
$\qquad$
\vspace{.5cm}

For any $(X,T)$ and positive integer $N$ we define on $X \times [0, N-1]$ the homeomorphism $\tilde T$ by
\begin{equation}\label{spin1}
\begin{split}
\tilde T(x,i)\quad = \quad \begin{cases} (x,i+1) \qquad \mbox{for} \quad i < N - 1, \\
(T(x),0) \qquad \mbox{for} \quad i = N - 1. \end{cases} \\
\mbox{so that} \qquad \tilde T^N \quad = \quad T \times id_{[0,N-1]}. \hspace{3cm}
\end{split}
\end{equation}
$(X \times [0, N-1], \tilde T)$ is called the \emph{discrete $N$ step suspension}. It is countable if $X$ is, it is CP if
$(X,T)$ is CT.  It is WAP if $(X,T)$ is. Apply the following

\begin{lem} \label{spinlem1} A point $x \in X$ is an equicontinuity point
for $T$ on $X$ iff it is an equicontinuity point for $T^N$ on $X$.
Hence, $(X,T)$ is AE or HAE iff $(X,T^N)$ is. In general, $(X,T)$ is WAP iff $(X,T^N)$ is. \end{lem}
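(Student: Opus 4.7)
The plan is to establish the equicontinuity-point coincidence first and then deduce the remaining equivalences from it. First I would show that a point $x \in X$ is an equicontinuity point of $(X,T)$ if and only if it is one of $(X,T^N)$. The forward implication is immediate from the inclusion $\{T^{Nk}\}_{k\in\Z} \subset \{T^n\}_{n\in\Z}$. For the converse, fix $\ep>0$; by compactness of $X$ the continuous maps $T, T^2, \dots, T^{N-1}$ are uniformly continuous, so there is $\delta'>0$ with $d(a,b)<\delta' \Rightarrow d(T^i a,T^i b)<\ep$ for $0 \leq i < N$. Using equicontinuity of $T^N$ at $x$, pick $\delta>0$ with $d(x',x)<\delta \Rightarrow d(T^{Nk}x',T^{Nk}x)<\delta'$ for every $k \in \Z$; writing an arbitrary $n = Nk+i$ with $0 \leq i < N$, the two estimates combine to yield $d(T^n x',T^n x)<\ep$. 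The AE equivalence then follows at once, since AE is defined by density of equicontinuity points.

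For HAE the direction $(X,T^N)$ HAE $\Rightarrow (X,T)$ HAE is routine: any closed $T$-invariant subset is $T^N$-invariant, and the AE equivalence transfers the conclusion. For the converse, given a closed $T^N$-invariant $Y \subset X$, form the closed $T$-invariant hull $Y^* = \bigcup_{i=0}^{N-1} T^i Y$. By HAE of $(X,T)$ the subsystem $(Y^*,T)$ is AE; by the first step, its equicontinuity-point set $E$ coincides with that of $(Y^*,T^N)$, so $E$ is a $T$-invariant dense $G_\delta$ in $Y^*$. Since the equicontinuity condition for $(Y,T^N)$ at a point $y \in Y$ is weaker than the same condition for $(Y^*,T^N)$ (smaller ambient set, same subfamily of iterates), $E \cap Y$ sits inside the set of equicontinuity points of $(Y,T^N)$. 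Density of the latter in $Y$ is the delicate step; the plan is to use that by Baire each $T^i Y$ has nonempty interior in $Y^*$ and that $E$ is $T$-invariant to produce equicontinuity points of $(Y,T^N)$ arbitrarily close to any prescribed $y \in Y$.

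For the WAP equivalence I would use the identity $E(X,T) = \bigcup_{i=0}^{N-1} T^i \cdot E(X,T^N)$, which follows from the decomposition $\{T^n\}_{n\in\Z} = \bigcup_{i=0}^{N-1} T^i \{T^{Nk}\}_{k\in\Z}$ together with the fact that closure in $X^X$ commutes with finite unions and with left multiplication by the homeomorphism $T^i$. The implication $(X,T)$ WAP $\Rightarrow (X,T^N)$ WAP is then trivial since $E(X,T^N) \subseteq E(X,T)$. Conversely, if every $q \in E(X,T^N)$ is continuous then each $T^i q$ is also continuous (as $T^i$ is a homeomorphism), and hence every element of $E(X,T)$ is continuous.

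The main obstacle is the density step in the HAE direction. The coincidence of equicontinuity-point sets and the semigroup decomposition are essentially formal, but passing from density of $E$ in $Y^*$ to density of the equicontinuity-point set of $(Y,T^N)$ in $Y$ is not automatic: $Y$ need not be dense in $Y^*$, and the homeomorphisms $T^{-i}$ used to rotate $T^i Y$ back to $Y$ do not preserve small neighborhoods. One alternative, should the direct Baire-plus-invariance argument prove stubborn, is to invoke a characterization of HAE through the topology of the enveloping semigroup, which makes both implications symmetric under the same finite-union decomposition $E(X,T) = \bigcup_{i=0}^{N-1} T^i E(X,T^N)$ used in the WAP part.
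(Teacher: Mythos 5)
Your pointwise equivalence and your WAP argument are correct and coincide with the paper's own proof, which rests on exactly the same decomposition $\{T^n : n \in \Z\} = \bigcup_{k=0}^{N-1} T^k \circ \{T^{Nj} : j \in \Z\}$ and the resulting identity $E(X,T) = \bigcup_{k=0}^{N-1} T^k \circ E(X,T^N)$; your $n = Nk+i$ estimate merely makes explicit the uniform-continuity details the paper leaves to the reader, and the easy HAE direction, $(X,T^N)$ HAE $\Rightarrow$ $(X,T)$ HAE, is handled as you say. The genuine gap is the forward HAE implication, which you correctly flag but do not close.

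Concretely, two steps of your plan fail or are missing. First, the Baire claim is false as stated: in $Y^* = \bigcup_{i=0}^{N-1} T^i Y$, Baire category yields only that \emph{some} translate $T^{i_0}Y$ has nonempty interior in $Y^*$; since $T$ restricts to a homeomorphism of $Y^*$ (here $T(Y^*) = Y^*$ because $T^N Y = Y$), one may rotate to conclude $\mathrm{int}_{Y^*}\, Y \neq \emptyset$, but not that every translate, much less every relatively open piece of $Y$, has interior. Second, and more fundamentally, the dense $G_\delta$ set $E$ of equicontinuity points of $(Y^*,T)$ can a priori miss a relatively open $U \subset Y$ entirely whenever $U \subset \ol{\bigcup_{i=1}^{N-1} T^i Y}$, and $T$-invariance of $E$ moves points of $E$ among the translates with no control of their distance to $U$, so ``Baire plus invariance'' does not produce equicontinuity points of $(Y,T^N)$ in $U$; nor does replacing $Y^*$ by a smaller subsystem help directly, since equicontinuity relative to a smaller ambient set is a \emph{weaker} property than the equicontinuity relative to $Y$ that HAE of $(X,T^N)$ demands. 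To be fair, the paper's one-line proof is silent on precisely this point — it never addresses closed $T^N$-invariant sets that are not $T$-invariant — so you have located a real subtlety; and your fallback is the right repair and can be made precise: for compact metric systems HAE coincides with HNS, which by Glasner--Megrelishvili \cite{GM-06} is equivalent to metrizability of the enveloping semigroup, and since $E(X,T) = \bigcup_{k=0}^{N-1} T^k E(X,T^N)$ is a finite union of compact subspaces each homeomorphic to $E(X,T^N)$, while a compact Hausdorff space that is a finite union of closed metrizable subspaces is metrizable, the two enveloping semigroups are metrizable simultaneously, yielding both HAE implications at once. As written, however, your proposal establishes the pointwise, AE, and WAP statements but leaves $(X,T)$ HAE $\Rightarrow$ $(X,T^N)$ HAE unproved.
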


\proof Since $\{T^{Ni} : i \in \Z \} \subset \{ T^i : i \in \Z \}$, an equicontinuity point for $T$ is one for $T^N$,
$E(X,T^N) \subset E(X,T)$ and so each of the conditions for $T$ implies the corresponding condition for $T^N$. In fact,
\begin{align}\label{spin2}
\begin{split}
\{ T^i : i \in \Z \} \quad  & = \quad \{T^{Ni}\circ T^k : i \in \Z, k = 0,\dots,N-1 \}\\
\quad & = \quad \bigcup_{k = 0}^{N-1} T^k \circ \{T^{Ni} : i \in \Z \}.
\end{split}
\end{align}
It follows that if $x$ is an equicontinuity point for $T^N$ then it is for $T$. Also, we obtain
$E(X,T) \  = \ \bigcup_{k = 0}^{N-1} T^k \circ E(X,T^N)$ and so the
elements of $E(X,T)$ are continuous when those of $E(X,T^N)$ are.

$\Box$ \vspace{.5cm}

\begin{theo} \label{spintheo1a} If $(X,T)$ is a CP WAP system with a unique minimal set, a periodic orbit of period $N$, then
$(X,T)$ is isomorphic to the  discrete $N$ step suspension of a CT WAP. \end{theo}

\proof Let $\{ x_0,\dots,x_{N-1} \}$ be the periodic orbit in $X$. By Lemma
\ref{spinlem1}
$(X,T^N)$ is a WAP with $N$
minimal fixed points $ x_0,\dots,x_{N-1}$. For any $x \in X$ the restriction of $T^N$ to the $T^N$ orbit closure of $x$ is a
point transitive WAP and so with a unique minimal set, necessarily one of the $x_i$'s.
Let $X_i = \{ x \in X : x_i \in
\ol{\mathcal{O}_{T^N}(x)} \} $
Clearly, $T(X_i) = X_{i+1}$ (addition mod $N$) and the $X_i$'s are
pairwise disjoint. Each is $T^N$ invariant.
Let $u$ be a minimal element of $E(X,T^N)$. If $x \in X_i$ then $u(x)$ is a minimal element of the $T^N$
orbit closure of $x$ and so $u(x) = x_i$.  That is, $u$ retracts $X_i$ to $x_i$.  Because $(X,T^N)$ is WAP, $u$ is continuous
and so each $X_i = u^{-1}(x_i)$ is closed.

Define
$H : X_0 \times \{ 0,\dots,N-1 \} \to X$ by $H(x,i) \ = \ T^i(x)$.
This is continuous and surjective with inverse,
$x \mapsto (T^{-i}x,i)$ for $ x \in X_i$ and so $H$ is bijective. Furthermore, $H(x,i+1) \ = \ T(H(x,i))$ for $i < N-1$ and
$H(T^N(x),0) = T^N(x) = T(T^{N-1}(x)) = T(H(x,N-1))$. Thus, $H$ is an isomorphism from the discrete suspension of height $N$
of $(X_0,T^N)$ onto $(X,T)$.

$\Box$ \vspace{.5cm}

Recall that $(X,T)$
is
minCT  when there is a fixed point which is the unique minimal subset, i.e.
the mincenter is a single point.

\begin{lem}\label{spinlem2} Let $(X,T)$ be a nontrivial, metric minCT system with fixed point $e$ and let
$\ep > 0$.
There
exists an $\ep$-dense sequence of distinct points $\{ e = x_0, \dots ,x_{N - 1} \}$ in $X$ such that
with $ x_N = e$, $\{
x_0, \dots , x_N \}$ is an $\ep$ chain for $(X,T)$, i.e. $d(T(x_{i}),x_{i+1})  < \ep$ for
$i = 0, \dots, N - 1$. \end{lem}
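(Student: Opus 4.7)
The strategy is to build the chain by splicing together finite arcs of $T$-orbits, each arc starting and ending in a small neighborhood of $e$, joined by short ``jumps'' through that neighborhood. First I would record two consequences of the minCT hypothesis that do all the work: (a) every $\o$-limit set and every $\a$-limit set is a nonempty closed $T$-invariant set, so it contains a minimal subsystem; since $\{e\}$ is the only one, for each $p \in X$ one can make $T^m(p)$ arbitrarily close to $e$ for some $m \geq 0$ and simultaneously find $y = T^{-n}(p)$ arbitrarily close to $e$ for some $n \geq 0$; (b) every periodic point lies in a minimal set, so the only periodic $T$-orbit is $\{e\}$, and every other $T$-orbit is an injectively parametrized bi-infinite sequence.

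Given $\ep > 0$, I would first use continuity of $T$ at the fixed point $e$ to pick $\delta \in (0,\ep/2)$ so that $d(z,e) < \delta$ forces $d(T(z),e) < \ep/2$. Then by compactness I choose a finite $\ep/2$-dense subset $\{p_1,\dots,p_k\} \subset X$, which we may assume avoids $e$. Group these points according to their $T$-orbits, obtaining pairwise disjoint orbits $\O_1,\dots,\O_L$. On each $\O_l$ list the $p_j$'s lying in $\O_l$ as $T^{i_{l,1}}(q_l),\dots,T^{i_{l,r_l}}(q_l)$ with $i_{l,1} < \dots < i_{l,r_l}$. Using (a), enlarge the index window: choose $N_l, M_l \geq 0$ large enough that $T^{i_{l,1}-N_l}(q_l)$ and $T^{i_{l,r_l}+M_l}(q_l)$ both lie within $\delta$ of $e$. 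Let $S_l$ be the ordered list of consecutive iterates $(T^{i_{l,1}-N_l}(q_l),\dots,T^{i_{l,r_l}+M_l}(q_l))$, and form the concatenation $(x_0,x_1,\dots,x_N) := (e,S_1,S_2,\dots,S_L,e)$.

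The $\ep$-chain condition is automatic inside each $S_l$, since consecutive entries there differ by a single application of $T$. At each of the ``joining'' places (between $e$ and the start of $S_1$, between the end of $S_l$ and the start of $S_{l+1}$, and between the end of $S_L$ and the final $e$) both adjacent entries lie in the $\delta$-ball about $e$; by the choice of $\delta$, the $T$-image of the earlier one lies within $\ep/2$ of $e$, and the later one within $\delta < \ep/2$ of $e$, so $d(T(x_i),x_{i+1}) < \ep$ as required. The $\ep$-density of $\{x_0,\dots,x_{N-1}\}$ follows since every $p_j$ appears in some $S_l$ and the $p_j$'s already form an $\ep/2$-dense set.

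The remaining point, which I expect to be the main subtlety, is verifying that $x_0,\dots,x_{N-1}$ are actually distinct. This is precisely where the grouping step is essential: by (b), each $S_l$, being a finite block of consecutive iterates on the non-periodic orbit $\O_l$, consists of distinct points; distinct orbits $\O_l$ and $\O_{l'}$ are disjoint, so $S_l$ and $S_{l'}$ share no entries; and since $\O_l \neq \{e\}$ none of the $S_l$'s contains $e$. Thus the only coincidence in the concatenation is the intended one, $x_N = x_0 = e$, completing the construction.
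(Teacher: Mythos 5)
Your proof is correct and follows essentially the same route as the paper's: both arguments use the minCT fact that $e \in \a_T(x) \cap \o_T(x)$ for every $x$ to extract finite orbit arcs beginning and ending near $e$, drawn from pairwise distinct (hence disjoint, non-periodic) orbits whose points form an $\ep/2$-dense set, and then concatenate them into the chain. Your write-up is in fact a bit more careful than the paper's at the joins (the explicit $\delta$ from continuity of $T$ at $e$) and on distinctness of the $x_i$, but these are refinements of the same construction, not a different argument.
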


\proof Since $X$ is separable we can choose a finite or infinite sequence $\{ a_1, a_2, \dots \}$ of points of $X \setminus \{ e \}$
with pairwise distinct orbits and such that the union of the orbits is dense in $X \setminus \{ e \}$. Since this set is nonempty
the sequence contains at least one point. Since $e$ is the only minimal point,
$ e \in \a_T(x) \cap \o_T(x)$ for every $x \in X$.
Now truncate so that the union of the orbits of the finite sequence $\{a_1, \dots ,a_K \}$ is $\ep/2$ dense in $X \setminus \{ e \}$.
For each $a_i$ we can choose a finite piece of the orbit
$\{ y_{0,i}, \dots ,y_{K_i + 1,i}\}$
which begins and ends $\ep/2$
close to $e$ and which is $\ep/2$ dense in the orbit of $a_i$. We concatenate to
obtain the  sequence $\{ x_1, \dots ,x_{N - 1} \}$.
Then let $x_0 = e$.

$\Box$ \vspace{.5cm}

On the one-point compactification $\Z^*$ with $e$ the point at infinity and $T(t) = t+1$, define the ultra-metric $d$ by
\begin{equation}\label{spin3}
 d(i,j) \quad = \quad \begin{cases} \hspace{2cm} 0 \qquad \hspace{2.5cm} \ \ \mbox{if} \quad i = j, \\
 \max(1/(|i| + 1),1/(|j| + 1))   \qquad \mbox{if} \quad i \not= j, \end{cases}
 \end{equation}
where $1/(|i| + 1) = 0$ if $i = e$.  If $(X,T) = (\Z^*,T)$ then with
$K > 1/\ep$ and $N = 2K + 2$ we can use
the sequence $\{ e, -K, -K + 1, \dots , K \}$.

\vspace{0.5cm}

When $(X,T)$ is a nontrivial metric minCT system we define
 a \emph{ preparation} for $(X,T)$ to be a choice for each $i = 0,1, \dots$ of
 a sequence $\{ e = x^i_0, \dots ,x^i_{N_i - 1} \}$  which is an
 $ 2^{-i}$ dense sequence of distinct elements of $X$ so that $\{ e = x^i_0, \dots ,x^i_{N_i - 1}, e \}$ is
 a
 $ 2^{-i}$ chain.
 For $i = 0$ we let $N_0 = 1$ so that with $i = 0$ the sequence is $\{ e \}$.

 An
{\em ultrametric minCT system} is a minCT system $(X,T)$
 with $d \leq 1$
 a compatible
 ultra-metric on $X$ (and so $X$ is
 zero-dimensional).

 As we will dealing with different metrics at the same time we will use for $\ep > 0$
\begin{equation}\label{Vep}
V_{d, \ep} = \{ (a,b) : d(a,b) < \ep \}
\end{equation}

so that $V_{d, \ep}(a)$ is the
$\ep$ ball centered at $a$. For an ultrametric $V_{d,\ep}$ is a clopen equivalence relation
and so each ball $V_{d, \ep}(a)$ is a clopen set.

Let  $(X_1,T_1), (X_2,T_2)$ be nontrivial ultrametric
minCT systems with fixed points $e_1, e_2$. Assume that
$(X_2,T_2)$ is given a preparation. The ultrametrics on $X_1$ and $X_2$ are labeled $d_1$ and $d_2$, respectively.
From the product  $X_1 \times X_2)$ we have the projections
$\pi_1$ and $\pi_2$ to $X_1$ and $X_2$, respectively.

Fix  $1 > \ep > 0$. On $X_1 \times X_2$
 we will use the ultrametric $d = \max(\pi_1^*d_1, \ep \pi_2^*d_2)$,
 i.e. $d((x_1,x_2),(y_1,y_2)) = \max(d_1(x_1,y_1), \ep d_2(x_2,y_2)$.
 so that $\pi_1$ has Lipschitz constant $1$ and for any
 $\d > 0$
 \begin{equation}\label{spin3a}
 V_{d, \d }(e_1,e_2) \quad \subset \quad \pi_1^{-1}(V_{d_1,\d}(e_1))  \hspace{4cm}
 \end{equation}
 with equality if $\d \geq \ep$.

 We will define the \emph{$\ep$ spin of $(X_2,T_2)$ into
$(X_1,T_1)$} to be the ultrametric system $(X,T)$ where $X$ is the closed subset of $X_1 \times X_2$
and the homeomorphism $T$ on $X$ are described below.

In $X_1$ we define
the sequence of pairwise disjoint clopen sets: $A_0 = X_1 \setminus V_{d_1,\ep}(e_1)$
and for $i = 1,2,\dots, A_i = V_{d_1,\ep 2^{-i + 1}}(e_1) \setminus V_{d_1,\ep 2^{-i}}(e_1)$. So
$X_1 = \{ e_1 \} \ \cup \ \bigcup_{i = 0}^{\infty} \ A_i$.
Now let
\begin{equation}\label{spin4}
\begin{split}
X = (\{ e_1 \} \times X_2 ) \ \cup \ (\bigcup_{i = 0}^{\infty} \ A_i \times \{ \ x^i_0, \dots, x^i_{N_i - 1} \}),\hspace{3cm} \\
T(x,y) \quad = \quad \begin{cases} (e_1, T_2(y)) \qquad \mbox{when} \quad x = e_1, \hspace{2cm} \\
(x,x^i_{k + 1}) \qquad \mbox{when} \quad x \in A_i, \ \ y = x^i_k \quad \mbox{with} \ k < N_i - 1, \\
(T_1(x),e_2) \qquad \mbox{when} \quad x \in A_i, \ \ y = x^i_{N_i - 1}. \hspace{1cm} \end{cases}
\end{split}
\end{equation}
It is obvious that $X$ is a closed subset of $X_1 \times X_2$ and easy to check that $T$ is invertible with
$T^{-1}(x,e_2) = (T_1^{-1}(x), x^i_{N_i - 1})$ when $T_1^{-1}(x) \in A_i$. Continuity of $T$ is clear on each
$A_i \times \{ \ x^i_0, \dots, x^i_{N_i - 1} \}$, because each $A_i \times \{ x^i_j \}$ is a
clopen subset of $X$.  Continuity at the points of $\{ e_1 \} \times X_2$ follows from
the following estimate.

\begin{lem}\label{spinlem2a}(a) Let $\d \geq \ep$. If $(x,y) \in X$ with
$x \in V_{d_1,\d}(e_1) \cap T_1^{-1}(V_{d_1,\d}(e_1))$ then
$(x,y) \in V_{d,\d}(e_1,e_2) \cap T^{-1}(V_{d,\d}(e_1,e_2))$.

(b) For $i \geq 1$, let $\d$ with $2^{-i}
> \d > 0$ be a $2^{-i}$ modulus of uniform continuity
for $ T_2$ on $ X_2$. For $(x,y) \in X, \tilde y \in X_2$
\begin{gather}\label{spin4b}
\begin{split}
x \in V_{d_1,\ep 2^{-i+1}}(e_1) \cap T_1^{-1}(V_{d_1, 2^{-i+1}}(e_1)), \ y \in V_{d_2,\d}(\tilde y)  \\ \Longrightarrow
\quad T(x,y) \in V_{d, 2^{-i + 1}}(e_1,T_2(\tilde y)).
\end{split}
\end{gather}

\end{lem}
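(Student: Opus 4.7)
The plan is to prove both parts by direct case analysis, using the definition of $T$ together with the ultrametric structure of $X_1 \times X_2$ under the product metric $\max(\pi_1^* d_1, \ep \pi_2^* d_2)$.

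For part (a) the key input is equation~(\ref{spin3a}), which under the hypothesis $\d \geq \ep$ yields the equality $V_\d(e_1, e_2) = \pi_1^{-1}(V_\d(e_1))$. Both assertions then reduce to checking that the first coordinate of the relevant point lies in $V_\d(e_1)$. For $(x, y)$ this is immediate from the hypothesis $x \in V_\d(e_1)$. For $T(x, y)$ I would go through the three branches of the definition of $T$: if $x = e_1$ the first coordinate of $T(x, y)$ is $e_1$; if $x \in A_i$ and $y = x_k^i$ with $k < N_i - 1$ the first coordinate is $x$, already in $V_\d(e_1)$; and if $y = x_{N_i - 1}^i$ the first coordinate is $T_1(x)$, handled by the hypothesis $x \in T_1^{-1}(V_\d(e_1))$.

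For part (b) I would first extract the arithmetic constraint that the hypothesis $x \in V_{\ep 2^{-i+1}}(e_1)$, together with the definitions $A_j = V_{\ep 2^{-j+1}}(e_1) \setminus V_{\ep 2^{-j}}(e_1)$ for $j \geq 1$ and $A_0 = X_1 \setminus V_\ep(e_1)$, forces $x \in A_j$ with $j \geq i$ whenever $x \neq e_1$ (using $i \geq 1$ and $\ep \leq 1$). I would then split into cases. When $x = e_1$, the image is $(e_1, T_2 y)$ and the conclusion reduces to $\ep\, d_2(T_2 y, T_2 \tilde y) < 2^{-i+1}$, which is immediate from uniform continuity of $T_2$ with modulus $\d$ for $2^{-i}$, since $\ep \leq 1$. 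When $x \in A_j$ with $j \geq i$ and $y = x_k^j$ for some $k < N_j - 1$, the image is $(x, x_{k+1}^j)$: the first coordinate is already controlled by the hypothesis, and the second coordinate is estimated via the ultrametric triangle through $T_2(y) = T_2(x_k^j)$, combining the chain step $d_2(x_{k+1}^j, T_2(x_k^j)) < 2^{-j} \leq 2^{-i}$ with the modulus estimate $d_2(T_2(x_k^j), T_2(\tilde y)) < 2^{-i}$. When instead $y = x_{N_j - 1}^j$, the image is $(T_1(x), e_2)$: the first coordinate is controlled by the hypothesis $x \in T_1^{-1}(V_{2^{-i+1}}(e_1))$, and the second coordinate is handled by the analogous ultrametric triangle using the terminal chain step $d_2(T_2(x_{N_j - 1}^j), e_2) < 2^{-j}$.

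The main technical care is in the bookkeeping rather than in any single estimate. One must keep track of where the scaling factor $\ep$ appears (only on the $X_2$ factor of the product metric), exploit the fact that in an ultrametric strict inequalities are preserved by the triangle inequality so that chain steps and modulus-of-continuity steps combine cleanly to yield strict bounds bounded by $2^{-i}$, and recognize why the hypothesis is stated asymmetrically in $\ep$: the tighter bound on $x$ is exactly what forces $j \geq i$ and thus governs the estimate on the second coordinate of $T(x, y)$, while the weaker bound on $T_1(x)$ is precisely what the first coordinate of the boundary branch $(T_1(x), e_2)$ requires.
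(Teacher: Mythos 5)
Your proof is correct and follows essentially the same route as the paper's: part (a) by applying the equality case of (\ref{spin3a}) to $x$ and to $T_1(x)$ across the branches of $T$, and part (b) by the case split at $x = e_1$ together with the combination of a $2^{-j}$ chain step ($j \geq i$) and the modulus-of-continuity estimate. You merely make explicit two details the paper's terse proof leaves implicit --- that $x \in V_{\ep 2^{-i+1}}(e_1)\setminus\{e_1\}$ forces $x \in A_j$ with $j \geq i$, and the boundary branch $(T_1(x), e_2)$ using the terminal chain step --- which is harmless elaboration, not a different argument.
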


\proof (a) follows from (\ref{spin3a}) applied to $x$ and to $T_1(x)$.

(b) If $x = e_1$ then $T(x,y) = (e_1, T_2(y))$ and $d(T_2(y),T_2(\tilde y)) \leq 2^{-i}$ if $y \in V_{d_2,\d}(\tilde y)$.

If
$x \in V_{d_1, \ep 2^{-i+1}}(e_1) \setminus \{ e_1 \}$ then $X_2$ coordinates
of the fiber in $X$ over $x$ is a $2^{-i}$ chain for $T_2$ and so if
$y \in V_{d_2, \d}(\tilde y)$ then the second coordinate of $T(x,y)$ is within $2^{-i+1} = 2^{-i} + 2^{-i}$ of $T_2(\tilde y)$.

In particular, if $j \geq i$ and $(x,y) = (x,x^j_{N_j - 1})$ with $x \in A_j$ then
$d_2(y,\tilde y) = d_2(x^j_{N_j - 1}, \tilde y) < \d$ and by definition of the chains in a preparation,
$d_2(T_2(x^j_{N_j - 1}), e_2)
\allowbreak
< 2^{-j}$. Since $T(x,y) = (T_1(x),e_2)$ we again have that
the second coordinate of $T(x,y)$ is  within $2^{-i+1} = 2^{-i} + 2^{-i}$ of $T_2(\tilde y)$.

$\Box$ \vspace{.5cm}

 So we obtain
 the ultrametric system $(X,T)$.

 From (\ref{spin3a}) it follows that the restriction
 \begin{equation}\label{spin4a}
 \begin{split}
 \pi_1 : X \setminus V_{d,\ep}(e_1,e_2) \ = \ (\pi_1)^{-1}(X_1 \setminus V_{d_1,\ep}(e_1)) \
 \to  \ X_1 \setminus V_{d_1,\ep}(e_1) \quad \mbox{is bijective,} \\
 \mbox{ and on it} \qquad  \pi_1 \circ T \quad = \quad T_1 \circ \pi_1, \hspace{4cm}\\
x \in X_1 \setminus V_{\ep}(e_1) \quad \Longrightarrow \quad (\pi_1)^{-1}(x) \ = \ \{ (x,e_2) \}, \quad  T(x,e_2) \ = \ (T_1(x),e_2).
 \end{split}
 \end{equation}

 On the rest of the space the map $\pi_1 : X \to X_1$ does not define an action map,
 but we obviously have for $x \in X_1 \setminus \{ e_1 \}$:
 \begin{equation}\label{spin5}
\begin{split}
\pi_1^{-1}(\{ T_1^i(x) : i = 0,1,\dots \}) \quad = \quad \{ T^i(x,e_2) : i = 0,1,\dots
\}, \hspace{2.2cm}\\
\pi_1^{-1}(\{ T_1^{-i}(x) : i = 1,2,\dots \}) \quad = \quad \{ T^{-i}(x,e_2) : i = 1,2,\dots
\}. \hspace{2cm}\\
\end{split}
\end{equation}

\begin{prop}\label{spinprop3} If  $x \in X_1 \setminus \{ e_1 \}$, then
  \begin{equation}\label{spin6}
\pi_1^{-1}
(\o T_1 (x)) \quad = \quad \o T (x,e_2), \qquad \pi_1^{-1}
(\a T_1 (x)) \quad = \quad \a T (x,e_2).
\end{equation}

If $A$ is a  $T_1$ invariant subset of $X_1$ then $\pi_1^{-1}(A)$ is a  $T$ invariant subset of $X$.

If $B$ is a  $T$ invariant subset of $X$ then $\pi_1(B)$ is a  $T_1$ invariant subset of $X_1$.
\end{prop}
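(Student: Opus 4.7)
I tackle (\ref{spin6}) first, and then the invariance statements follow directly from the formula (\ref{spin4}) for $T$.

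For the bookkeeping, let $j_i$ be the unique index with $T_1^i(x) \in A_{j_i}$, and set $M_0 = 0$, $M_{i+1} = M_i + N_{j_i}$ for $i \ge 0$, together with an analogous backward sequence $M_i^-$ for $i \le 0$. An easy induction using (\ref{spin4}) gives $T^{M_i}(x,e_2) = (T_1^i(x),e_2)$ and $T^{M_i + l}(x,e_2) = (T_1^i(x), x^{j_i}_l)$ for $0 \le l < N_{j_i}$, and correspondingly for negative $i$. Since $N_j \ge 1$, $M_i \to \infty$ as $i \to \infty$, so every $n \in \Z$ has a unique expression $n = M_{i(n)} + l(n)$ with $i(n) \to \pm \infty$ as $n \to \pm \infty$; this is the concrete form behind (\ref{spin5}).

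For $\o T(x,e_2) \subset \pi_1^{-1}(\o T_1(x))$, if $T^{n_k}(x,e_2) \to (y_1, y_2)$ with $n_k \nearrow \infty$, continuity of $\pi_1$ combined with the identity above gives $T_1^{i(n_k)}(x) \to y_1$, and $i(n_k) \to \infty$, whence $y_1 \in \o T_1(x)$. For the reverse inclusion, let $(y_1,y_2) \in \pi_1^{-1}(\o T_1(x))$ and choose $i_k \nearrow \infty$ with $T_1^{i_k}(x) \to y_1$. If $y_1 \ne e_1$, then $y_1 \in A_j$ for a unique $j$, clopenness of $A_j$ gives $j_{i_k} = j$ for large $k$, and the condition $(y_1, y_2) \in X$ forces $y_2 = x^j_l$ for some $0 \le l < N_j$, so $T^{M_{i_k} + l}(x,e_2) = (T_1^{i_k}(x), x^j_l) \to (y_1, y_2)$. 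If $y_1 = e_1$, then $y_2 \in X_2$ may be arbitrary; since $A_j \subset V_{\ep 2^{-j+1}}(e_1)$, the convergence $T_1^{i_k}(x) \to e_1$ lets me thin to a subsequence with $j_{i_k} \to \infty$, and the $2^{-j_{i_k}}$-density of the preparation $\{x^{j_{i_k}}_0, \dots, x^{j_{i_k}}_{N_{j_{i_k}} - 1}\}$ produces $l_k$ with $d_2(x^{j_{i_k}}_{l_k}, y_2) \le 2^{-j_{i_k}}$; in the product ultra-metric, $T^{M_{i_k} + l_k}(x,e_2) \to (e_1, y_2)$. The $\a T_1(x)$ identity follows by applying the same argument to $T^{-1}$ with the backward indices $M_i^-$.

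For the invariance claims, if $A \subset X_1$ is $T_1$-invariant and $(x, y) \in \pi_1^{-1}(A)$, the formula (\ref{spin4}) shows that $\pi_1(T^{\pm 1}(x, y)) \in \{x, T_1^{\pm 1}(x)\}$ when $x \ne e_1$ and equals $e_1$ when $x = e_1$; in either case the value lies in $A$, so $T(\pi_1^{-1}(A)) = \pi_1^{-1}(A)$. Conversely, if $B \subset X$ is $T$-invariant and $x \in \pi_1(B)$, choose $(x, y) \in B$: at most $N_{j_0}$ applications of $T$ land on $(T_1(x), e_2) \in B$, and symmetrically for $T^{-1}$, so $T_1^{\pm 1}(x) \in \pi_1(B)$ and $T_1(\pi_1(B)) = \pi_1(B)$. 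The single delicate point is the $y_1 = e_1$ case in the reverse inclusion of (\ref{spin6}), where the $2^{-j}$-density and chain properties of the preparation sequences are essential; the remaining steps are bookkeeping built on (\ref{spin4}) and (\ref{spin5}).
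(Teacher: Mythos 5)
Your proof is correct and takes essentially the same route as the paper's: your explicit $M_i$ bookkeeping is just the quantitative form of (\ref{spin5}), your clopen-$A_j$ case corresponds to the paper's observation that all points of a fiber $\pi_1^{-1}(z)$, $z \neq e_1$, lie on a single orbit, and your use of the $2^{-j}$-density of the preparation is exactly the paper's density argument giving $\{e_1\} \times X_2 \subset \o T(x,e_2)$. The only point to tidy is the case $x = e_1$ in the $\pi_1(B)$ invariance claim, where $j_0$ is undefined but the conclusion is immediate since $T_1(e_1) = e_1$ and $\pi_1(T(e_1,y)) = e_1$.
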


\proof The equations (\ref{spin5}) clearly imply that $\pi_1$ maps the limit point set
$\o T (x,e_2)$ onto
$\o T_1 (x)$. Then they imply that if $z \in X_1 \setminus \{ e_1 \}$ then all the points of $\pi_1^{-1}(z)$
lie in
the same orbit.  Finally, for $z \in V_{ d_1, 2^{-i}}(e_1) \setminus \{ e_1 \}$ the set $\pi_2(\pi_1^{-1}(z))$ is
$\ep 2^{-i}$ dense in $X_2$. This proves the result for
$\o T (x)$ and the result for
$\a T (x)$ is similar.

The invariant set results are obvious from (\ref{spin5}).

$\Box$ \vspace{.5cm}

\begin{cor}\label{spincor4} $(X,T)$ is a minCT system.

If $(X_1,T_1)$ and $(X_2,T_2)$ are CT systems then $(X,T)$ is a CT system.  Furthermore, if the Birkhoff center sequences for
$(X_1,T_1)$ and $(X_2,T_2)$ stabilize at
ordinals $\o_1$ and $\o_2$
respectively, then
the Birkhoff center sequence for $(X,T)$ stabilizes at $\o_1 + \o_2$.
\end{cor}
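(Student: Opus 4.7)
My plan has three parts, one for each assertion in the corollary.

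\emph{minCT.} I would start with any minimal subset $M \subseteq X$. By Proposition \ref{spinprop3}, $\pi_1(M)$ is a closed $T_1$-invariant subset of $X_1$, so it contains the unique minimal set $\{e_1\}$. Hence $M \cap (\{e_1\} \times X_2) \neq \emptyset$, and since this intersection is closed and $T$-invariant (with $T$ acting as $T_2$ on the second coordinate), minimality forces $M \subseteq \{e_1\} \times X_2$. The unique minimal set of $(X_2,T_2)$ is $\{e_2\}$, yielding $M = \{(e_1,e_2)\}$.

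\emph{CT.} Assume now that both $X_i$ are CT, and let $(x,y)$ be a $T$-recurrent point, with $T^{n_k}(x,y) \to (x,y)$ and $|n_k| \to \infty$. From the definition of $T$, the first coordinates of the forward orbit of $(x,y)$ run through $x, T_1(x), T_1^2(x), \dots$ each for a finite block of iterations, so the first coordinates of $T^{n_k}(x,y)$ form a sequence $T_1^{m_k}(x) \to x$. If the $m_k$ are bounded, then $x$ is $T_1$-periodic; if unbounded, $x$ is $T_1$-recurrent. Either way $x = e_1$ since $(X_1,T_1)$ is CT. Then $(e_1,y)$ lies in $\{e_1\} \times X_2 \cong X_2$, and its recurrence forces $y = e_2$.

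\emph{Height.} I would prove two transfinite induction claims. \emph{Claim A:} for $\a \leq \o_1$, $z_{LIM,\a}(X) = \pi_1^{-1}(z_{LIM,\a}(X_1))$. In the successor step, for $x \in z_{LIM,\a}(X_1) \setminus \{e_1\}$ Proposition \ref{spinprop3} gives $R_T(x,e_2) = \pi_1^{-1}(R_{T_1}(x))$, and this already contains $\pi_1^{-1}(e_1) = \{e_1\} \times X_2$ because $e_1 \in R_{T_1}(x)$ for every $x$ in a minCT system. For $(e_1,y)$ one has $R_T(e_1,y) = \{e_1\} \times R_{T_2}(y) \subseteq \{e_1\} \times X_2$. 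Collecting contributions gives $R_T(\pi_1^{-1}(z_{LIM,\a}(X_1))) = \pi_1^{-1}(R_{T_1}(z_{LIM,\a}(X_1)))$. The main technical point, and the part I expect to be the obstacle, is the closure identity
\[
\overline{\pi_1^{-1}(A)} \ = \ \pi_1^{-1}(\overline{A}) \quad \text{whenever } e_1 \in A.
\]
This rests on the preparation built into the construction: the fiber over $x \in A_i$ carries the $2^{-i}$-dense sample $\{x^i_0,\dots,x^i_{N_i-1}\}$ of $X_2$, so as $x \to e_1$ any target $(e_1,y)$ can be approached. Limit ordinals are handled by commutation of $\pi_1^{-1}$ with intersections. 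At $\a = \o_1$ we obtain $z_{LIM,\o_1}(X) = \{e_1\} \times X_2$.

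\emph{Claim B:} for $\a \leq \o_2$, $z_{LIM,\o_1 + \a}(X) = \{e_1\} \times z_{LIM,\a}(X_2)$. This is a straightforward transfinite induction inside the closed invariant subsystem $\{e_1\} \times X_2$, which under $T$ is isomorphic to $(X_2,T_2)$, so its $z_{LIM}$ iterations exactly match those of $X_2$. At $\a = \o_2$ the sequence reaches the Birkhoff center $\{(e_1,e_2)\}$ and stabilizes. Finally, the fact that each step strictly decreases below $\o_1$ in $X_1$ and below $\o_2$ in $X_2$ transfers, via Claims A and B, to strict decrease of $z_{LIM,\a}(X)$ for every $\a < \o_1 + \o_2$, confirming that the height of $(X,T)$ is exactly $\o_1 + \o_2$.
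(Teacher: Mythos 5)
Your proposal is correct and takes essentially the same route as the paper: the minCT and CT parts use the projection/fiber structure of Proposition \ref{spinprop3} (equation (\ref{spin6})) exactly as the paper does, and your Claims A and B are precisely the transfinite induction that the paper compresses into the observation that $R_T(\pi_1^{-1}(A)) = \pi_1^{-1}(R_{T_1}(A))$ for closed invariant $A \subset X_1$, so that the sequence reaches $\{e_1\} \times X_2$ exactly at $\o_1$ and then tracks $(X_2,T_2)$ for $\o_2$ more steps. Your explicit isolation of the closure identity $\overline{\pi_1^{-1}(A)} = \pi_1^{-1}(\overline{A})$ (valid here since the fibers over $A_i$ are finite, the $A_i$ are clopen, and the prepared fibers are $2^{-i}$-dense near $e_1$) merely makes visible a step the paper leaves implicit.
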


\proof If $M$ is a minimal subset of $X$ then by Proposition \ref{spinprop3} $\pi_1$ is a minimal subset of $X_1$ and so
$M \subset \{e_1 \} \times X_2$ where $T$ is isomorphic to $T_2$ and so the $M = \{ (e_1,e_2) \}$.

Now assume $(X_1,T_1)$ and $(X_2,T_2)$ are CT systems. If $(x,y)$ is a recurrent point for $T$ then $x_1$ is a recurrent
point for $X_1$ by (\ref{spin6}). Hence,  $x = e_1$ and $y$ is a recurrent point for $T_2$. So $y = e_2$. Hence, $(X,T)$ is
CT. If $A$ is a closed $T_1$ invariant subset of $X_1$ then the limit point set $R_{T}(\pi_1^{-1}(A))$ is the
limit point set $\pi_1^{-1}(R_{T_1}(A))$. So exactly at $\o_1$ the Birkhoff center sequence for $X$ arrives at $\{e_1 \} \times X_2$.
It then stabilizes at $(e_1,e_2)$ after $\o_2$ more steps.

$\Box$ \vspace{.5cm}

Notice that by replacing the metric $d$ on $X$ by the equivalent metric
$$\min(1,\max_{n = 0}^{\infty} 2^{-n} (T^{n})^*d)$$
we can assume that the metric is bounded by 1 and that $T$ has Lipschitz constant
at most 2. The new metric is an ultrametric if $d$ was.

Let $\{ (X_n,T_n) : n = 1,2,\dots \}$ be a sequence of ultrametric minCT systems with ultrametric $d_n \leq 1$ on $X_n$ and with each
$T_n$ having Lipschitz constant at most 2.
Assume  that for
$n > 1$ each
$(X_n,T_n)$ is given a preparation. Let $(Z_1,U_1) = (X_1,T_1)$, let $(Z_2,U_2)$ be the $2^{-1}$ spin of $(X_2,T_2)$ into
$(Z_1,U_1)$ with $\xi_2 : Z_2 \to Z_1$ be the first coordinate projection. Thus, $Z_2 \subset X_1 \times X_2$. Inductively,
let $(Z_{n+1},U_{n+1})$ be the $2^{-n}$ spin of $(X_{n+1},T_{n+1})$ into $(Z_n,U_n)$ which we can regard as a
subset of the product $\Pi_n = X_1 \times
\cdots \times X_n \times X_{n+1}$ equipped with the ultrametric
$\max(\pi_1^*d_1,2^{-1}\pi_2^*d_2,\dots,2^{-n-1}\pi_{n+1}^*d_{n+1})$. Let $\xi_{n+1} :Z_{n+1} \to Z_n$ be the restriction of
the coordinate projection from $\Pi_{n+1} \to \Pi_n$ which has Lipschitz constant 1.
Note again that  the $\xi_n$'s are not action maps, but by (\ref{spin4a}) the restriction
$\xi_n : (\xi_n)^{-1}(Z_n \setminus V_{2^{-n}}(e_1, \cdots,e_n)) \to Z_n \setminus V_{2^{-n}}(e_1,\dots,e_n)$ is injective
and on it $\xi_n \circ U_{n+1} = U_n \circ \xi_n$.

Let $Z_{\infty}$ denote the inverse limit, regarded as a closed subset of $\Pi_{\infty} = \Pi_{i = 1}^{\infty}\ X_i $
 equipped with the
ultrametric $\max\{ 2^{-i + 1}\pi_i^*d_i : i = 1,2,\dots \}$ which yields the product topology. The space $Z_{\infty}$
 consists of the points $z$
such that $\xi_n(z) = (z_1,\dots,z_n) \in Z_n$ for $ n = 1,2,\dots $. We let $e \in Z_{\infty}$ denote the point $(e_1,e_2,\dots)$.

Assume that
$z \in Z_{\infty}$ with $z \not= e$ and let $n$ be the smallest value such that $x = \xi_n(z) \not= \xi_n(e) = (e_1,\dots,e_n) $.
Let $\d  = \frac{1}{2} d(x, (e_1,\dots,e_n) )$. Let
 $i > n$ be such that $2^{-i} \leq \d$.
Since the projections have Lipschitz constant 1,
 is
disjoint from $V_{\d}(e_1,\dots.,e_{n+k})$ for every positive integer $k$.
Once $n + k \geq i$ it follows that
$(\xi_{n+k-1} \circ \cdots \circ \xi_n)^{-1}(V_{\d}(x))$
is disjoint from $V_{2^{-n-k}}(e_1,\dots,e_{n+k})$. By (\ref{spin4a})
if $\tilde x \in Z_{n+k} \setminus V_{2^{-n-k}}(e_1,\dots,e_{n+k})$ and
$\tilde z = (\tilde x,e_{n+k+1}, e_{n+k+2},\dots)  \in Z_{\infty}$
then $\xi_{n+k}^{-1}(\tilde x) = \{ \tilde z \}$ and by (\ref{spin4a})
 $U(\tilde z)$ is unambiguously defined by $U(\tilde z) = (U_{n+k}(\tilde x),e_{n+k+1}, e_{n+k+2},\dots)$. Since
each $U_{n+k}$ has Lipschitz constant at most 2, it follows that on each
$Z_{\infty} \setminus \xi_{n+k}^{-1}(V_{2^{-n-k}}(e_1,\dots,e_{n+k}))$
$U$ has Lipschitz constant at most 2. Finally,
\begin{equation}\label{spin7}
d(U(\tilde z),e) \ = \ d(U_{n+k}(\tilde x),(e_1,\dots,e_{n+k})) \ \leq \ 2d( \tilde x,(e_1,\dots,e_{n+k})) \ = \ 2d(\tilde z,e)
\end{equation}
shows that $U$ has Lipschitz constant at most 2 on all of $Z_{\infty}$.

Finally, with essentially the same proof as that of Corollary \ref{spincor4} we have

\begin{cor}\label{spincor5} $(Z_{\infty},U)$ is a minCT system.

If each $(X_n,T_n)$  is a CT system then $(Z_{\infty},U)$ is a CT system.  Furthermore, if the Birkhoff center sequences for
$(X_n,T_n)$  stabilize at the ordinals $\o_n$, then
the Birkhoff center sequence for $(Z_{\infty},U)$ stabilizes at $Lim_{n \to \infty} \ \o_1 + \o_2 +
 \cdots  + \o_n$.
\end{cor}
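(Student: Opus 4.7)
The plan is to reduce Corollary~\ref{spincor5} inductively to the single-step Corollary~\ref{spincor4} and then to pass to the inverse limit. I would proceed in three parts, corresponding to the three assertions.

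\emph{First, minCT and CT.} By iterating Corollary~\ref{spincor4}, each finite stage $(Z_n,U_n)$ is minCT, and is CT whenever each $(X_i,T_i)$ for $i\leq n$ is. For $(Z_\infty,U)$: if $M\subset Z_\infty$ is a minimal subset, then the iterated form of Proposition~\ref{spinprop3} shows that each projection $\xi_n(M)$ is $U_n$-invariant and must contain a minimal subset of $Z_n$, which by induction is $\{(e_1,\dots,e_n)\}$; hence $M\subseteq \bigcap_n \xi_n^{-1}\{(e_1,\dots,e_n)\} = \{e\}$. In the CT case, the same iterated Proposition~\ref{spinprop3} argument applied to a recurrent $z\in Z_\infty$ gives that $\xi_n(z)$ is recurrent in $Z_n$ and so equals $(e_1,\dots,e_n)$, forcing $z=e$.

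\emph{Second, a bookkeeping step.} A straightforward induction on $n$, using the last part of Corollary~\ref{spincor4}, shows that the Birkhoff center sequence for $(Z_n,U_n)$ stabilizes exactly at $\beta_n:=\o_1+\dots+\o_n$. Setting $\beta_\infty := \sup_n \beta_n$ (a countable ordinal), the goal is to show the Birkhoff sequence of $(Z_\infty,U)$ stabilizes at $\beta_\infty$.

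\emph{Third, the height computation.} The natural closed $U$-invariant subsets are $F_n:=\xi_n^{-1}\{(e_1,\dots,e_n)\}$; they form a decreasing sequence with $\bigcap_n F_n=\{e\}$. By (\ref{spin4a}) iterated, $\xi_n$ restricted to $Z_\infty\setminus F_n$ is a \emph{bona fide} dynamical isomorphism onto $Z_n\setminus\{(e_1,\dots,e_n)\}$, while the iterated form of Proposition~\ref{spinprop3} gives the corresponding identification of the limit-point relations. Transfinite induction on $\alpha$ then yields
\[
z_{LIM,\alpha}(Z_\infty)\setminus F_n \;=\; \xi_n^{-1}\bigl(z_{LIM,\alpha}(Z_n)\setminus \{(e_1,\dots,e_n)\}\bigr).
\]
Combined with the inductive hypothesis on $Z_n$, this gives $z_{LIM,\beta_n}(Z_\infty)\subseteq F_n$, and intersecting over $n$ yields $z_{LIM,\beta_\infty}(Z_\infty)\subseteq\{e\}$. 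For the matching lower bound, for any $\alpha<\beta_\infty$ one has $\alpha<\beta_n$ for some $n$; the same identification above, together with the strict descent of the Birkhoff filtration on $Z_n$ below stage $\beta_n$, produces points of $z_{LIM,\alpha}(Z_\infty)\setminus F_n\neq\emptyset$, so the sequence has not yet stabilized. Thus it stabilizes exactly at $\beta_\infty$.

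The main obstacle is the transfinite-induction identity of the third part: the projection $\xi_n$ is not an action map, and the preparation-induced reparametrization of time between $U_n$ on $Z_n$ and $U$ on $Z_\infty$ must be carefully tracked. The key technical inputs are (\ref{spin4a}) (which makes $\xi_n$ a genuine dynamical isomorphism outside a small neighbourhood of the base point) and Proposition~\ref{spinprop3} (which identifies limit sets through $\xi_n$ up to the fiber over the base point); their iterated use lets one push the transfinite induction through and isolate $F_n$ as the residue at stage $\beta_n$.
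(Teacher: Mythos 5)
Your proposal is correct and follows essentially the paper's own route: the authors prove Corollary \ref{spincor5} by asserting it has ``essentially the same proof as that of Corollary \ref{spincor4}'', i.e.\ precisely the iterated use of Proposition \ref{spinprop3} and of (\ref{spin4a}) that you carry out, with the finite stages $(Z_n,U_n)$ stabilizing at $\omega_1+\cdots+\omega_n$ and the inverse limit at the supremum. Your explicit introduction of the fibers $F_n=\xi_n^{-1}\{(e_1,\dots,e_n)\}$ and the transfinite identification of the $z_{LIM}$ filtration away from them is just the bookkeeping the paper leaves implicit, not a different method.
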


$\Box$ \vspace{1cm}

\section{Labels and their dynamics}\label{sec,labels}

\subsection{The space of labels}\label{ss,labels}

$\qquad$

\vspace{.5cm}

Let $\Z, \Z_+, \N$ denote the sets of integers, of non-negative integers and of positive integers, respectively.
Let $\Z_{+\infty} = \Z_+ \cup \{ \infty \} = \N \cup \{ 0, \infty \}$.
On the vector space $\R^{\N}$ we will use the  lattice structure, with  $x \geq y, x \leq y, x \vee y, x \wedge y,$
the pointwise relations and the pointwise operations of  maximum and minimum for vectors $x, y \in \R^{\N}$. As usual
$x > y$ means $x \geq y$ and $x \not= y$ so that the inequality is strict for at least one coordinate. The support
of a vector $x \in \R^{\N}$, denoted $supp \ x$, is $\{ \ell : x_{\ell} \not= 0 \}$.
We identify the number $n \in \N$ with the  function in $\Z_+^{\N}$ which is constant with value $n$.

We will call $ \mm \in \Z^{\N}$ an \emph{$\N$-vector} when it is non-negative and has finite support, that is,
when $\mm \geq 0$   and
$supp \ \mm = \{ \ell : \mm_{\ell} > 0 \}$ is finite. We call $\# supp \ \mm$ the \emph{size} of $\mm$ and call
$|\mm| = \Sigma_{\ell} \ \mm_{\ell}$ the \emph{norm} of $\mm$.

If $S \subset \N$ we let $\chi(S)$ be the characteristic function of $S$ with $\chi(\ell) = \chi(\{ \ell \})$.
Thus, $\chi(S) = \Sigma_{\ell \in S} \ \chi(\ell) $ is an $\N$-vector when $S$ is a finite set.

We denote by $FIN(\N)$ the discrete abelian monoid of all $\N$-vectors with vector addition and identity $ \00 $.
It is also a lattice via
the pointwise order relations described above.

For an $\N$-vector $\mm$ and $L$ a nonempty subset of $\N$ we
define $\mm \wedge L$   to be the
$\N$-vector with
\begin{equation}\label{label01a}
(\mm \wedge L)_{\ell} \quad = \quad
\begin{cases} \ \mm_{\ell} \quad \ \ \mbox{for} \quad \ell \ \in L, \\
\quad 0 \qquad  \mbox{for} \quad \ell \not\in L. \end{cases}
\end{equation}
In particular, for
a positive integer $\ell^*,$ $\mm \wedge [1,\ell^*]$ is given by
\begin{equation}\label{label01}
(\mm \wedge [1,\ell^*])_{\ell} \quad = \quad
\begin{cases} \ \mm_{\ell} \quad \ \ \mbox{for} \quad \ell \ \leq \ \ell^*, \\
\quad 0 \qquad  \mbox{for} \quad \ell \
> \ \ell^*. \end{cases}
\end{equation}

\vspace{0.5cm}

\begin{df}\label{labeldef01} A set $\M$ of $\N$-vectors is called a
\emph{label}  when   $\00 \ \leq \ {\mathbf m}^1 \ \leq \ {\mathbf m}$ and ${\mathbf m} \in \M$
 imply ${\mathbf m}^1 \in \M$. We call this the \emph{Heredity Condition}.
\end{df}


\vspace{0.5cm}

We use the term ``label'' because we will be using them to label certain associated subshifts.

For example, $0 = \{ \00 \}$ and $\emptyset$ are labels.

\begin{df}\label{df,f-contain}
\begin{itemize}
\item[(a)] Let  $Supp \ \M = \{ \ supp \ \mm \ : \ \mm \in \M \ \}$, the set of supports of members of $\M$. Thus,
 $Supp \ 0= \{ \emptyset \}$ and  $Supp \ \emptyset = \emptyset$.

\item[(b)] For $S \subset \Z_{+ \infty}^{\N} $,
Let
$\langle S \rangle \ = \ \{ \mm \in FIN(\N) : \mm \leq \nu \  {\text{ for some}}\ \r \in S \}$.
We call
$\langle S \rangle$ the {\emph{ label generated by}} $S$. In particular, if $\r  : \N \to \Z_{+ \infty}$ we
will write $\langle  \r \rangle$ for
$\langle  S \rangle$ when $S = \{ \r \} $ so that $\langle \r \rangle = \{\mm \in FIN(\N) : \mm \leq \r\}$.

\item[(c)]  We will say that
$Supp \ \M$ \emph{f-contains} a set $L \subset \N$ when every finite subset of $L$ is a member of $Supp \ \M$. That is,
$\P_f L \subset Supp \ \M$ where $\P_f L$ is the set of finite subsets of $L$.
Equivalently, $\M \supset \langle \chi(L) \rangle$.

\item[(d)] For  $N \in \Z_+$, let
$\B_N = \langle (N - 1) \chi([1,N]) \rangle$.  That is,
$\mm \in \B_N$ iff $\mm < N$, i.e. $\mm_{\ell} < N$ for all $\ell \in \N$, and
and
$supp \ \mm \subset [1,N]$.

In particular, $\B_0 = \emptyset$ and $\B_1 = 0$. Thus, $\{ \B_N \}$ is an increasing sequence of finite labels with union
$FIN(\N)$, the maximum label.


\end{itemize}
\end{df}

\begin{df}\label{labeldef01b}Let  $\M$ be a label.
\begin{itemize}
\item[(a)]$\M$ is \emph{bounded} if there exists $\r \in \Z_+^{\N}$ such that $0 \ \leq \ \mm \ \leq \ \r$ for all $\mm \in \M$
i.e. $\M \subset \langle \r \rangle$. Note that $\r_{\ell} < \infty$ for all $\ell \in \N$.
We call this the \emph{Bound Condition}.

\item[(b)] $\M$ is \emph{of finite type} if it
there does not exist an infinite increasing sequence in $\M$, or equivalently,
 any infinite nondecreasing sequence in $\M$ is eventually constant. We call this the \emph{Finite Chain Condition}.

\item[(c)]  $\M$ is \emph{size bounded} if there exists
$n \in \N$ such that $size(\mm) \leq n$ for all $\mm \in \M$. We call this the \emph{Size Bound Condition}
\end{itemize}
 \end{df}
  \vspace{.5cm}

  Clearly, a finite label  is of finite type.

 For example, $\emptyset$ and $0 = \{ \00 \}$ are finite labels. $\M \not= \emptyset$ iff $\00 \in \M $.
 We call $\M$ a
 \emph{positive label} when it is neither $\emptyset$ nor $0$.

For a label $\M$ we let $[[ \M ]]$ denote the set of labels which are contained in $\M$.
Clearly, if $\M$ is bounded, of finite type, size bounded or finite then all of the labels in $[[ \M ]]$ satisfy the
corresponding property.

 Define the \emph{roof} $\r(\M) : \N \to \Z_{+\infty}$ of a label $\M$ by
 $$\r(\M)_{\ell} = \sup_{\mm \in \M} \  \{ \mm_{\ell} \} = \sup \{ r \in \Z_+ \ : \  r \chi(\ell) \in \M \ \} \leq \infty$$
Thus, $\M$ is bounded iff $\r(\M)_{\ell} < \infty $ for all $\ell$ in which case the roof is the
 minimum of the functions $\r \in \Z_+^{\N}$ which bound the elements of $\M$.  Clearly, $\r(0) = 0$. 
We will use that convention that  $ \sup \ \emptyset = 0$
 so that $\r(\emptyset) = 0$ as well.

 \begin{lem}\label{labellem02} If a label is of finite type then it is bounded.
 If a label is bounded and size bounded then it
  is of finite type. \end{lem}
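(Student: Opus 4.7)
The proof splits cleanly into two independent implications; both are short once one extracts what boundedness and the size bound give us in combination with the heredity condition. No serious obstacle is anticipated, but the second part uses heredity more subtly through the geometry of nested supports.

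For the first implication, my plan is to argue by contrapositive. Suppose $\M$ is not bounded, so by definition of the roof there is some coordinate $\ell \in \N$ with $\r(\M)_{\ell} = \infty$. Then for arbitrarily large $r \in \N$ we can find $\mm \in \M$ with $\mm_{\ell} \geq r$, and by the heredity condition applied to the vectors $s\chi(\ell) \leq \mm$ for $s = 1, \dots, r$, we get $s\chi(\ell) \in \M$. Letting $r \to \infty$, the chain $\chi(\ell) < 2\chi(\ell) < 3\chi(\ell) < \cdots$ is a strictly increasing infinite sequence in $\M$, contradicting the finite chain condition. Hence finite type implies bounded.

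For the second implication, assume $\M$ is bounded by some $\mu \in \Z_+^{\N}$ and that $\operatorname{size}(\mm) \leq n$ for every $\mm \in \M$. Let $\mm^1 \leq \mm^2 \leq \cdots$ be any nondecreasing sequence in $\M$. The pointwise order forces the supports to be nested: $\operatorname{supp}(\mm^i) \subseteq \operatorname{supp}(\mm^{i+1})$ for all $i$. Since each support has cardinality at most $n$, this increasing chain of finite subsets of $\N$ must stabilize; say $\operatorname{supp}(\mm^i) = S$ for all $i \geq N$ and some finite $S$ of size at most $n$.

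Once the support is fixed, each $\mm^i$ with $i \geq N$ is determined by its values on $S$, and each coordinate $\mm^i_{\ell}$ for $\ell \in S$ lies in the finite set $\{0, 1, \dots, \mu_{\ell}\}$. Thus the tail $\{\mm^i : i \geq N\}$ lives inside a finite set of $\N$-vectors (of cardinality at most $\prod_{\ell \in S}(\mu_{\ell}+1)$), and a nondecreasing sequence in a finite poset must be eventually constant. This verifies the finite chain condition and completes the proof. The only place where one must be a bit careful is observing that boundedness alone is not enough for part (b): without the size bound the supports could grow without limit (one could take $\mm^i = \chi([1,i])$), which is exactly why the size bound hypothesis appears. $\Box$
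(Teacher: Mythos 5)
Your proof is correct and follows essentially the same route as the paper: the first implication is the identical contrapositive, producing the infinite increasing chain $\{\,i\chi(\ell) : i \in \N\,\}$ from $\r(\M)_{\ell} = \infty$, and the second is the paper's counting argument (entries bounded by $\mu$, size bounded by $n$) merely recast via stabilization of the nested supports. The support-stabilization packaging is a harmless reorganization of the same idea, not a different method.
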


 \proof If $\r(\M)_{\ell} = \infty$ then $\{ i \chi(\ell) : i \in \N \}$ is an infinite increasing sequence in $\M$.

 Now assume that $\M$ is bounded by $\r \in \Z^{\N}_+$. If $\mm^1 < \mm^2 <  \cdots $ is an
 infinite sequence of $\N$-vectors then at each step either some
  entry increases or the size increases.  Since the entries in $\M$ are bounded by  $\r$ and the size
 is assumed bounded the sequence must eventually leave $\M$. Hence, the Finite Chain Condition holds.

 $\Box$ \vspace{.5cm}

 For a label $\M$ and $\ell^* \in \N$ we define
 \begin{equation}\label{label02}
 \M \wedge [1,\ell^*]  \quad = \quad \begin{cases} \qquad \qquad  \emptyset \hspace{2cm} \mbox{when} \ \ell^* = 0,\\
 \{ \ \mm \wedge [1,\ell^*] \ : \ \mm \in \M \ \} \quad \mbox{when} \ \ell^* > 0.
 \end{cases}
 \end{equation}
 Thus, for $\ell^* \in \N, \ \M \wedge [1,\ell^*] \ = \ \{ \ \mm \in \M \ : \ supp \ \mm \subset [1, \ell^*] \ \}$.

 For a label $\M$ and an $\N$-vector $\rr$ we define
 \begin{equation}\label{label03}
 \M - \rr \quad = \quad \{ \ \ww \in FIN(\N) \ :  \ \ \ww + \rr \in \M \ \}.\hspace{2cm}
 \end{equation}
 Thus, $\M - \rr$ is the set of all non-negative vectors of the form $\mm - \rr$ for ${\mathbf m} \in \M$.
 Clearly, $(\M - \rr) - \ss = \M - (\rr + \ss) = (\M - \ss) - \rr$ (and so we can omit the parentheses) since each is the
 set of $\N$-vectors $\ww $ such that $\ww + \rr + \ss \in \M$.

 Let
 $max \ \M$ denote the set of $\N$-vectors which are maximal in $\M$. That is,
 $\nn \in max \ \M$ if
 \begin{equation}\label{label04}
 \mm \geq \nn \ \mbox{and} \ \mm \in \M \qquad \Longleftrightarrow \qquad \mm = \nn.\hspace{1cm}
 \end{equation}

 \begin{prop}\label{labelprop03} Let $\M$ be a  label and let $\rr > \00 $ be an $\N$-vector.
 \begin{itemize}
 \item[(a)]If $\M$ is of finite type then for every $\mm  \in \M$
  there exists
 $\nn \ \in max\ \M$ such that $\mm \leq \nn$.
 Hence, $\M \ = \ \langle max \ \M \rangle$.
 Thus, a label $\M$ of finite type is determined by $\max \ \M$. Also,  every $A \in Supp \ \M$ is contained in
some $B \in Supp \ \M$ which is maximal with respect to inclusion in $Supp \ \M$.

 \item[(b)] If  $\M$ f-contains some infinite set $L \subset \N$ then $\M$ is not of finite type.
Conversely, if $\M$ is bounded
  but not of finite type then it f-contains some infinite set $L$.
Thus, a bounded label is of finite type iff it does not f-contain an infinite set.

 \item[(c)] If $\M$ is  bounded and $\ell^* \in \N$ then $\M \wedge [1,\ell^*]$ is a finite label.

 \item[(d)] $\M - \rr$ is a label contained in $\M$  with $\max \ \M \ \subset \ \M \setminus (\M - \rr)$.
 If $\M$ is nonempty and bounded then $\M - \rr$ is a proper subset of $\M$.

 \item[(e)] $\M - \rr  \ \not= \ \emptyset$ iff $\rr \in \M$.

  \item[(f)] If $\Phi$ is a  set of labels then $\bigcup \ \Phi $ and $\bigcap \ \Phi $ are labels.

 \item[(g)] If $\Phi$ is a finite set of labels then $\bigcup \ \Phi $ is of finite type iff all of the labels in
 $\Phi$ are.
 \end{itemize}
 \end{prop}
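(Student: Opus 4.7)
\proof The plan is to verify the eight clauses in order, since each builds on the basic heredity structure of labels.

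For (a), I would run a Zorn-style ascent: given $\mm \in \M$, if $\mm$ is not maximal, pick a strictly larger element of $\M$; iterating produces an increasing sequence in $\M$ which by the Finite Chain Condition is eventually constant, and its terminal value is a maximal element $\nn \geq \mm$. This immediately gives $\M = \langle \max \M \rangle$, and the statement about $Supp \ \M$ is the same argument applied to the partially-ordered set of finite subsets of $\N$ under inclusion, using that $A \subset B$ in $Supp \ \M$ whenever $\chi(A) \leq \chi(B)$ and $\chi(B) \in \M$.

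For (b), the easy direction is: if $\M$ f-contains an infinite $L$ and $\ell_1, \ell_2, \dots$ is an enumeration of $L$, then the vectors $\chi(\{\ell_1, \dots, \ell_n\})$ form an infinite strictly increasing sequence in $\M$, violating the Finite Chain Condition. For the converse, suppose $\M$ is bounded by $\mu$ and carries an infinite strictly increasing sequence $\mm^1 < \mm^2 < \cdots$. The supports $supp \ \mm^i$ are nondecreasing; if their union $L$ were finite, the $\mm^i$ would lie in the finite set $\{\ww : supp \ \ww \subset L, \ \ww \leq \mu\}$, a contradiction. Hence $L$ is infinite, and given any finite $F \subset L$ there is an $i$ with $F \subset supp \ \mm^i$, so $\chi(F) \leq \mm^i$ and heredity gives $\chi(F) \in \M$; thus $\M$ f-contains $L$. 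Clause (c) is immediate from the fact that each defining condition is monotone under inclusion, and (d) follows because elements of $\M \wedge [1,\ell^*]$ have support in the finite set $[1,\ell^*]$ and each coordinate bounded by $\r(\M)_\ell < \infty$.

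For (e), heredity of $\M - \rr$ is immediate ($\ww \leq \ww' $ and $\ww' + \rr \in \M$ give $\ww + \rr \leq \ww' + \rr \in \M$, so $\ww + \rr \in \M$, i.e.\ $\ww \in \M - \rr$). The containment $\M - \rr \subset \M$ is just $\ww \leq \ww + \rr \in \M$. If $\nn \in \max \M$ were in $\M - \rr$, then $\nn + \rr \in \M$ with $\nn + \rr > \nn$ (since $\rr > \00$), contradicting maximality. For the properness claim with $\M$ nonempty bounded, I would iterate: pick $\mm^0 \in \M$ and as long as $\mm^k \in \M - \rr$ define $\mm^{k+1} = \mm^k + \rr \in \M$; since $\rr > \00$ some coordinate of $\mm^0 + k\rr$ grows without bound, which eventually exceeds the bound on $\M$, so some $\mm^k \in \M \setminus (\M - \rr)$. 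Part (f) is immediate, noting that $\M - \rr$ nonempty forces $\rr \in \M$ (take $\ww \geq \00$ with $\ww + \rr \in \M$ and apply heredity), and the converse puts $\00 \in \M - \rr$.

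For (g), heredity is stable under arbitrary unions and intersections by a one-line check. For (h), since $\Phi$ is finite, an infinite strictly increasing chain in $\bigcup \Phi$ must, by pigeonhole, have an infinite subchain in some $\M \in \Phi$; conversely any $\M \in \Phi$ is contained in $\bigcup \Phi$, so by (c) inherits finite type from it. The main obstacle in the whole proposition is the nontrivial direction of (b) — linking the combinatorial Finite Chain Condition to the geometric f-containment of an infinite set — but the boundedness hypothesis lets me control the growth coordinate-by-coordinate and forces the supports to exhaust an infinite set, as sketched above.

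$\Box$
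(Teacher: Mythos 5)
Your proof is correct and takes essentially the same route as the paper's: the finite-chain ascent for (a), the strictly increasing characteristic-vector chain together with the support-union argument for both directions of (b), and the pigeonhole argument for (h) are exactly the paper's arguments. The only cosmetic difference is in the properness claim of (e), where you iterate $\mm^{k+1} = \mm^k + \rr$ from an arbitrary starting point while the paper considers the multiples $n\rr$ starting from $\00$ — the same unbounded-coordinate contradiction with $\r(\M)_{\ell} < \infty$ either way.
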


 \proof (a): If $\mm$ is not maximal then  there exists $\mm_1 \in \M$ with $\mm_1 > \mm$.
 Continue if $\mm_1$ is not maximal.  This sequence can continue only finitely many steps by the Finite Chain Condition.
 It terminates at a maximal vector $\nn$. Similarly, if
 $A \in Supp \ \M$ is not contained in a maximal element then there is an increasing sequence
in $\{ A_0, A_1,\dots \}$ in $Supp \ \M$  with $A = A_0$. Then $\mm^k = \chi(A_k)$ is a strictly increasing sequence in $\M$.

 (b) If $\M$ f-contains $L = \{ \ell_1, \ell_2,\dots \}$  then $\nn^k = \Sigma_{i = 1}^{k}  \ \chi(\ell_i)$ is a strictly
increasing infinite sequence in $\M$ and so $\M$ is not of finite type.  Conversely, that if $\nn^k \in \M$ is an infinite
increasing sequence in $\M$ then $\M$ f-contains the union $L$ of the increasing sequence $ \{ supp \ \nn^k \}$ of finite sets.
If $\M$ is bounded then $L$ must be an infinite set.
%

 (c):  For any $\mu \in \Z_+^{\N}$ and $\ell^* > 0$, if $N = \max_{\ell = 1}^{\ell^*} \{ 1 + \mu_{\ell} \}$, then
 $\M \wedge [1,\ell^*] \subset \B_N$.

 (d) $\M - \rr$ satisfies the Heredity Condition and so is a label contained in $\M$. If $\mm \in \M - \rr$
 then $\mm + \rr$ is an element of $\M$ with $\mm + \rr > \mm$ since $\rr > \00 $.
 Hence, $\mm$ is not a maximal element of $\M$.

 Now assume  $\M$ is nonempty and bounded. Then $\00 = 0 \rr \in \M$.
 If $\rr_{\ell} > 0$ then for $n \in \N$ such that $n > \r(\M)_{\ell}/\rr_{\ell} $,
 $n \rr \not\in \M$. So there is a maximum $N \geq 0$ such that $N \rr \in \M$. It follows that $N \rr \in \M \setminus (\M - \rr)$.

 (e) If $\rr \in \M$ then $0 \in \M - \rr$. If $\mm \in \M - \rr $ then
 $\mm + \rr \in \M$ and so $\mm + \rr \geq \rr$ implies $\rr \in \M$.


 (f) : Obvious.

 (g) If $\Phi$ is finite collection of labels of finite type
 and $\{ \mm^i \}$ is a strictly increasing sequence of
 $\N$-vectors
 then it can remain in each member of
 $\Phi$ for only finitely many terms.  As $\Phi$ is finite, the sequence must eventually leave $\bigcup \Phi$. Hence, the
 union satisfies the Finite Chain Condition.

 If the union is of finite type then each member of $\Phi$ is of finite type by (b).

 $\Box$ \vspace{.5cm}

\begin{ex}\label{ex6} A label  $\M$ which is generated by $max \ \M$ need not be of finite type.

(a) If $\M = \langle \{ \ 2\chi(k+1) + \Sigma_{\ell = 1}^k \ \chi( \ell): k \in \N \}
\rangle$
then $\M$ is not of finite type although every $\mm \in \M$ is bounded
by an element of $max \ \M$.

(b) Let $\M = \langle \{ \ \chi(2k+1) + \Sigma_{\ell = 1}^k \ \chi(2 \ell): k \in \N \} \} \rangle$.
Clearly $\M $ f-contains $L$ the infinite set of even numbers, but every
$A \in Supp
\ \M$ is contained in
some $B \in Supp \ \M$ which is maximal with respect to inclusion in $Supp \ \M$. \end{ex}

$\Box$ \vspace{.5cm}

%
%
%
%

%

We denote by $\LAB$  the space of labels.  On $\LAB$ we define an ultrametric by
\begin{equation}\label{label06}
d(\M_1, \M_2) \  = \  \inf \ \{ \ 2^{-N} \ : \ N \in
\Z_+
 \  \mbox{and} \  \M_1 \cap \B_N = \M_2 \cap \B_N \ \}.
\end{equation}

Notice that since $\B_0 = \emptyset, \ \M_1 \cap \B_0 = \M_2 \cap \B_0$ is always
true.

\begin{lem}\label{labellem03a} (a) $d(\M_1, \M_2) = 0$ iff $\M_1 = \M_2$.

(b) The label $\emptyset$ is an isolated point of $\LAB$ with $d(\emptyset,\M) = 1$ for all $\M \not= \emptyset$.

(c) If $\NN_1 \subset \NN$ are finite labels and $\mm \in FIN(\N)$ then each of the following  is a clopen subset of $\LAB$:
\begin{gather*}
\{ \M : \M \cap \NN = \NN_1 \}, \quad
\{ \M : \M \cap \NN = 0 \}, \quad
\{ \M :  \NN \subset \M \},\\
\{ \M :  \mm \in \M \}, \quad
\{ \M :  \mm \not\in \M \}.
\end{gather*}

Furthermore, the set $\{ (\M, \M_1) : \M \cap \NN = \M_1 \cap \NN \}$ is a clopen subset of
$\LAB \times \LAB$.

(d) For any label $\M$ the set $[[\M]]$ of labels contained in $\M$ is a closed subset of $\LAB$. The set
$$ INC = \{ (\M_1,\M_2) : \M_1 \subset \M_2 \} $$
is a closed subset of $\LAB \times \LAB$.

(e) The set of finite labels is a countable dense subset of $\LAB$.

(f) The set of bounded labels is a dense $G_{\d}$ subset of $\LAB$.
\end{lem}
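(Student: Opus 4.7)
The plan is to exploit the fact that the ultrametric is built from the increasing family of finite sub-labels $\{\B_N\}$ with $\bigcup_N \B_N = FIN(\N)$. The guiding principle is that any set defined by finitely many conditions of the form ``$\mm \in \M$'' or ``$\mm \notin \M$'' is clopen, because each such $\mm$ lies in some $\B_N$, and so the condition is determined by the trace $\M \cap \B_N$, which is by definition constant on the closed-and-open $2^{-N}$-ball around $\M$. Once this principle is in hand, parts (a)--(c) amount to routine unpacking.

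For (a), $d(\M_1,\M_2) = 0$ forces $\M_1 \cap \B_N = \M_2 \cap \B_N$ for every $N$, and $\bigcup_N \B_N = FIN(\N)$ yields $\M_1 = \M_2$; the converse is immediate. For (b), any nonempty label contains $\00$ by the Heredity Condition while $\emptyset$ does not, so the traces on $\B_N$ differ for every $N \geq 1$, and only $N = 0$ survives in the infimum, giving $d(\emptyset,\M) = 1$. For (c), I will choose $N$ large enough that the finite label $\NN$ (respectively, the vector $\mm$) is contained in $\B_N$; each set on the list is then a union of $2^{-N}$-balls, hence clopen. The product version uses that the natural metric on $\L\A\B \times \L\A\B$ is the max of the two coordinate distances, so a $2^{-N}$-ball in the product is a product of $2^{-N}$-balls.

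For (d) I will write
\begin{equation*}
[[\M]] \ = \ \bigcap_{\nu \notin \M} \{\M_1 \in \L\A\B : \nu \notin \M_1\}, \qquad
INC \ = \ \bigcap_{\nu \in FIN(\N)} \bigl(\, \{\M_1 : \nu \notin \M_1\} \times \L\A\B \ \cup \ \L\A\B \times \{\M_2 : \nu \in \M_2\} \,\bigr),
\end{equation*}
each a countable intersection of clopen sets by (c), and hence closed. For (e), the key point is that $\M \cap \B_N$ is itself a finite label: heredity survives because $\B_N$ is downward closed (from $\mm \leq N\chi([1,N])$ and $\mm_1 \leq \mm$ one gets $\mm_1 \leq N\chi([1,N])$). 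Since $d(\M, \M \cap \B_N) \leq 2^{-N}$, the finite labels are dense, and they are clearly countable. For (f), I will express boundedness via the roof as $\r(\M)_\ell < \infty$ for every $\ell$, giving
\begin{equation*}
\{\M : \M \ \text{is bounded}\} \ = \ \bigcap_{\ell \in \N}\bigcup_{r \in \N} \{\M : r\chi(\ell) \notin \M\},
\end{equation*}
a countable intersection of open sets by (c), hence $G_\delta$; density follows from (e) since finite labels are bounded.

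There is no genuine obstacle here: the whole lemma essentially records that the ultrametric has been tailored so that any condition depending on finitely many vectors in $FIN(\N)$ is clopen, and (d)--(f) then follow by writing the relevant sets as countable Boolean combinations of such conditions.
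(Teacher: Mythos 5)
Your proposal is correct and follows essentially the same route as the paper: every part reduces to the observation that a condition involving finitely many vectors of $FIN(\N)$ is determined by the trace $\M \cap \B_N$ and hence clopen, with (d) and (f) obtained as countable Boolean combinations of such conditions (you phrase them as intersections of clopen sets where the paper describes the complements as unions, a purely cosmetic De Morgan dual), and (e) via the finite label $\M \cap \B_N$ lying within $2^{-N}$ of $\M$. Your verification that $\B_N$ is downward closed, and your use of the roof characterization $\r(\M)_\ell < \infty$ for boundedness, match the paper's argument exactly.
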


\proof (a) Every $\mm \in \B_N$ for some $N$.

(b) If $\M \not= \emptyset$ then $\00 \in \M \cap \B_1 $.

(c) If $\NN \subset \B_N$ then the $2^{-N}$ ball around $\M$ is either contained in or
disjoint from $\{ \M : \M \cap \NN = \NN_1 \}$.
So $\{ \M : \M \cap \NN = \NN_1 \}$ is clopen. With $\NN_1 = 0$ or $\NN$ these become  $\{ \M : \M \cap \NN = 0 \}$
and $\{ \M :  \NN \subset \M \}$. Finally, let $\NN = \langle \mm \rangle$.

The set of pairs such that $\M \cap \NN = \M_1 \cap \NN$ is the union of the set of pairs such that
$\M \cap \NN = \NN_1 = \M_1 \cap \NN$ taken of the finite set of labels $\NN_1 \subset \NN$.

(d) The complement of  $[[\M]]$ is the union of $\{ \M_1 :  \mm \in \M_1 \}$ as $\mm$ varies over $FIN(\N) \setminus \M$.
The complement of $INC$ is the union of $\{ \M_1 : \mm \in \M_1 \} \times \{ \M_2 : \mm \not\in \M_2 \}$ as $\mm$ varies
over $FIN(\N)$.

(e) $\M \cap \B_N$ is a finite label in the $2^{-N}$ ball about $\M$.  The set of finite labels
is countable since $FIN(\N)$ is countable.

(f) For each $\ell, \{ \M : \r(\M)_{\ell} = \infty \} = \bigcap_k \{ \M : k \chi(\ell) \in \M \}$ is
a closed set.  So the set of bounded
labels is $G_{\d}$. It is dense because it contains the set of finite labels.

$\Box$ \vspace{.5cm}

 Let $\M^i$ be a sequence of labels, or more generally a net of labels with $i$ in a directed set.   Define the labels
 \begin{equation}\label{label07}
 \begin{split}
 LIMSUP \ \{ \M^i \} \quad = \quad \bigcap_i  \ \bigcup_{j \geq i} \ \{ \M^j \} \ , \\
  LIMINF \ \{ \M^i \} \quad = \quad \bigcup_i  \  \bigcap_{j \geq i} \ \{ \M^j \} \ .
   \end{split}
   \end{equation}

   Clearly, $ {\mathbf m} \in LIMSUP$ iff frequently ${\mathbf m}  \in \M^i$ and
  $ {\mathbf m} \in LIMINF$ iff eventually ${\mathbf m} \in \M^i$ and so $LIMINF \subset LIMSUP$.

  As usual, if we go to a subsequence $\{ \M^{i'} \}$ with $LIMSUP'$ and $LIMINF'$ then
  $LIMINF \subset LIMINF' \subset LIMSUP' \subset LIMSUP$.

 \begin{prop}\label{labelprop04} Let $\{ \M^i \}$ be a sequence of labels.

 \begin{itemize}
 \item[(a)]The following are equivalent.
 \begin{enumerate}
  \item[(1)] The sequence  $\{ \M^i \}$ is a convergent sequence.

 \item[(2)] The sequence  $\{ \M^i \}$ is a Cauchy sequence.

 \item[(3)] For every finite label $\NN $, the sequence $\{ \M^i  \cap \NN \}$ of finite labels is eventually constant.

 \item[(4)] For every $\mm \in FIN(\N)$ either eventually $\mm\in \M^i$ or eventually $\mm \not\in \M^i$.

 \item[(5)] $LIMSUP = LIMINF$.
 \end{enumerate}
 The common value $LIMSUP = LIMINF$ is then the limit,
 and is then denoted  $ LIM \ \{ \M^i \}$.

 \item[(b)] If  $\M^i \subset \M^{i+1}$ then $LIM \{ \M^i \} = \bigcup \{ \M^i \}$.
 If $\M^i \supset \M^{i+1}$ then  $LIM \{ \M^i \} \allowbreak = \bigcap \{ \M^i \}$.

 \end{itemize}
 \end{prop}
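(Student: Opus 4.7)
My plan is to unify parts (a) and (b) by working with the pointwise membership ``is $\mm$ in $\M^i$?'' and exploiting the fact that the ultrametric balls are precisely the level sets of the map $\M \mapsto \M \cap \B_N$.

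For the equivalence of (1) and (2), I would first observe the elementary fact that $\{\M^i\}$ is Cauchy iff for every $N \in \N$ the sequence $\{\M^i \cap \B_N\}$ is eventually constant. This is immediate from the definition (\ref{label06}), since $d(\M^i,\M^j) < 2^{-N}$ means exactly $\M^i \cap \B_N = \M^j \cap \B_N$. The extension from ``$\NN = \B_N$'' to an arbitrary finite label $\NN$ is free: any finite label $\NN$ is contained in some $\B_N$ (take $N = \max(\max\{|\mm| : \mm \in \NN\}, \max\bigcup_{\mm \in \NN}\mathrm{supp}\,\mm)$), so eventual constancy on $\B_N$ forces eventual constancy on $\NN$; conversely $\B_N$ itself is a finite label.

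For (2) $\Leftrightarrow$ (3), the key observation is that for each single vector $\mm$, (2) applied to $\NN = \langle \mm\rangle$ says that the status of ``$\mm \in \M^i$'' is eventually constant in $i$; equivalently $\mm$ lies in $LIMSUP$ iff it lies in $LIMINF$, which is exactly (3). Conversely, given (3), any finite $\NN$ has finitely many elements, and pointwise eventual constancy together with the pigeonhole principle (finitely many ``eventually'' statements combine into one) gives eventual constancy of $\M^i \cap \NN$. To identify the limit in the Cauchy case, set $\M^* = LIMSUP = LIMINF$; then for each $N$, applying the argument above to the finite set $\B_N$ yields $\M^i \cap \B_N = \M^* \cap \B_N$ for all sufficiently large $i$, so $d(\M^i, \M^*) \to 0$. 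One should also note (briefly) that $\M^*$ is indeed a label: heredity is preserved under arbitrary intersections and unions, and both $LIMSUP$ and $LIMINF$ are built from such operations applied to labels.

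Part (b) is then almost immediate. If $\M^i \subset \M^{i+1}$, then $\bigcap_{j \geq i} \M^j = \M^i$ and $\bigcup_{j \geq i} \M^j = \bigcup_k \M^k$ for every $i$, so both $LIMINF$ and $LIMSUP$ equal $\bigcup_k \M^k$; by (a) the sequence is Cauchy with this as its limit. The decreasing case is symmetric. I do not anticipate a serious obstacle; the only subtlety is keeping straight that the ultrametric $d$ is defined through the specific basic labels $\B_N$, and that the equivalence with arbitrary finite $\NN$ is what allows one to pass from metric Cauchyness to the purely set-theoretic $LIMSUP/LIMINF$ description.
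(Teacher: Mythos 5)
Your proof is correct and follows essentially the same route as the paper's: (1) $\Leftrightarrow$ (2) directly from the definition of the ultrametric via $\NN \subset \B_N$, (2) $\Leftrightarrow$ (3) by reducing to pointwise eventual constancy of membership and using finiteness of $\NN$, and part (b) by computing $LIMSUP$ and $LIMINF$ directly for monotone sequences. Your added remarks (the explicit bound placing $\NN$ inside $\B_N$, and the check that the common value is a label) are fine and merely make explicit what the paper leaves implicit in the setup preceding the proposition.
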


 \proof  (a)  (1) $\Rightarrow$ (2): Obvious.

 (2) $\Rightarrow$ (3): Since $\NN \subset \B_N$ for some $N$ this
 is obvious from the definition of the ultrametric.

 (3) $\Rightarrow$ (4): If $\mm \in \B_N$ then since $\M^i \cap \B_N$ is eventually constant,
    either eventually $\mm \in \M^i$ or eventually $\mm \not\in \M^i$.

    (4) $\Rightarrow$ (5): (4) says that $\mm$ frequently in $\M^i$ implies $\mm$ is eventually in $\M^i$.

 (5) $\Rightarrow$ (1): Assume that $\M = LIMSUP = LIMINF$.  Let $\NN_1 = \M \cap \NN$. If $\mm \in \NN_1$ then eventually
 $\mm \in \M^i$ and if $\mm \in \NN \setminus \NN_1$ then eventually $\mm \not\in \M^i$.  Since $\NN$ is a finite set
 it follows that eventually $\M^i \cap \NN = \NN_1$.

 (b):  For an increasing sequence the $LIMSUP = LIMINF$ is the union and for a decreasing sequence $LIMSUP = LIMINF$ is the
 intersection.

 $\Box$ \vspace{.5cm}

 The results of part (a) apply more generally to nets.  In the metric space $\LAB$ we need only consider sequences, but
 later we will be considering pointwise convergence of maps in $\LAB^{ \LAB}$ and there we will need nets.

 \begin{cor}\label{labecor04a} The maps defined from $\LAB \times \LAB$ to $\LAB$ by $(\M_1,\M_2) \mapsto \M_1 \cap \M_2$ and
$(\M_1,\M_2) \mapsto \M_1 \cup \M_2$ are continuous. \end{cor}

 \proof Let $\{ (\M_1^i, \M^i_2) \}$ be a sequence converging to $(\M_1,\M_2)$ and let $N \in \N$.
 Eventually, $\M_1^i \cap \B_N = \M_1 \cap \B_N$ and,  eventually, $\M_2^i \cap \B_N = \M_2 \cap \B_N$. So, eventually,
$ (\M_1^i \cup \M_2^i) \cap \B_N = (\M_1^i \cap \B_N) \cup (\M_1^i \cap \B_N) $ equals $ (\M_1 \cup \M_2) \cap \B_N$ and similarly
for the intersection.

 $\Box$ \vspace{.5cm}

  \begin{prop}\label{labelprop05} Let $\{ \M^i \}$ be a  sequence of labels.

   $\mm \in LIMSUP \{ \M^i \}$ iff there is a subsequence $\{ \M^{i'} \}$ which
  is convergent with $ \mm \in LIM \ \{ \M^{i'} \} $.

$\mm \not\in LIMINF \{\M^i\}$ iff there is a subsequence
  $\{ \M^{i'} \}$ which is convergent with $\mm \not\in LIM \ \{ \M^{i'} \}$.

  \end{prop}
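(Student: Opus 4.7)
The plan is to combine two ingredients that have already been set up: first, the compactness of $(\L \A \B, d)$ (asserted in the introduction, and visible from the fact that under the ultrametric \eqref{label06} labels are identified with a closed subset of the Cantor space $2^{FIN(\N)}$, since the Heredity Condition amounts to a closed condition); and second, the observation from Lemma \ref{labellem03a}(c) that for each fixed $\mm \in FIN(\N)$ the set $\{\NN \in \L \A \B : \mm \in \NN\}$ is clopen, so both it and its complement are closed.

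For the first equivalence, the backward direction is easy. If $\{\M^{i'}\}$ is a convergent subsequence whose limit $\M = LIM\{\M^{i'}\}$ contains $\mm$, then by Proposition \ref{labelprop04}(a) $\mm \in LIMINF\{\M^{i'}\}$, so $\mm \in \M^{i'}$ for all but finitely many $i'$; hence $\mm \in \M^i$ for infinitely many $i$, i.e.\ $\mm \in LIMSUP\{\M^i\}$. For the forward direction, if $\mm \in LIMSUP\{\M^i\}$, choose indices $i_1 < i_2 < \cdots$ with $\mm \in \M^{i_k}$ for all $k$, and use compactness of $\L \A \B$ to extract a further subsequence $\{\M^{i'}\}$ converging to some $\M$. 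Because $\{\NN : \mm \in \NN\}$ is closed, $\mm \in \M$, and then $\mm \in LIM\{\M^{i'}\}$.

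The second equivalence is obtained by the mirror-image argument with $\{\NN : \mm \notin \NN\}$ in place of $\{\NN : \mm \in \NN\}$. Backward: if a convergent subsequence $\{\M^{i'}\}$ has limit $\M$ with $\mm \notin \M$, closedness of $\{\NN : \mm \notin \NN\}$ gives $\mm \notin \M^{i'}$ eventually, so the set of indices $i$ with $\mm \notin \M^i$ is infinite, which by definition of $LIMINF$ means $\mm \notin LIMINF\{\M^i\}$. Forward: if $\mm \notin LIMINF\{\M^i\}$, extract a subsequence with $\mm \notin \M^{i_k}$ for every $k$, then pass to a convergent sub-subsequence by compactness; the limit again omits $\mm$.

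No step presents a real obstacle. The only point worth flagging is that compactness of $\L \A \B$ is used but not proved in the excerpt; however, once one recognises the ultrametric \eqref{label06} as the restriction to $\L \A \B$ of the pointwise-convergence topology on $2^{FIN(\N)}$, compactness is immediate from Tychonoff together with the closedness of the Heredity Condition.
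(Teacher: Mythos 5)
Your proof is correct, but it takes a genuinely different route from the paper's. The paper proves the proposition \emph{without} assuming compactness of $\L \A \B$: it fixes an enumeration $\{\mm_1, \mm_2, \dots\}$ of $FIN(\N)$ with $\mm_1 = \mm$, inductively builds a nested sequence of infinite index sets $SEQ_1 \supset SEQ_2 \supset \cdots$ so that for each $p \leq n$ the membership of $\mm_p$ in $\M^i$ is constant over $i \in SEQ_n$, and then diagonalizes to produce a convergent subsequence whose limit contains (respectively omits) $\mm$. This is essentially the standard proof of sequential compactness of $2^{\N}$ carried out inline, and the ordering matters: in the paper, compactness of $\L \A \B$ is \emph{deduced from} this proposition (Corollary \ref{labelcor04a} reads ``$\L \A \B$ is compact by Proposition \ref{labelprop05}''), so a proof that invokes compactness would be circular if spliced in as-is. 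You correctly sense this issue and defuse it by supplying an independent compactness argument: the Heredity Condition is a closed (indeed, coordinatewise clopen) condition in the product space $2^{FIN(\N)}$, and the metric topology of (\ref{label06}) coincides with the restricted product topology because each $\B_N$ is finite and every $\mm$ lies in some $\B_N$, so Tychonoff gives compactness of $\L \A \B$. Granting that lemma, both directions of both equivalences go through exactly as you argue, with Lemma \ref{labellem03a}(c) supplying the clopenness of $\{\NN : \mm \in \NN\}$ and Proposition \ref{labelprop04}(a) identifying the metric limit with $LIMSUP = LIMINF$. What your route buys is modularity: compactness is established once, cleanly, and the proposition becomes a two-line consequence; what it costs is a reorganization of the paper's development, since Corollary \ref{labelcor04a} would have to be restated as a prior lemma rather than a consequence. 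The mathematical content of the two arguments is the same diagonalization, packaged differently.
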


  \proof  The $LIMSUP$ of a subsequence is contained in the $LIMSUP$ of the original sequence
  and the $LIMINF$ of a subsequence contains the $LIMINF$ of the original sequence.
Thus, sufficiency is clear in each case.

  Let $\{ \mm_1, \mm_2,\dots \}$ be a numbering of the countable set
  $FIN(\N)$ with $\mm_1 = \mm$. Since $\mm \in \M^i$  frequently,
  we can choose  $SEQ_1$ an infinite subset of $\N$ so that $\mm_1 \in \M^i$ for all $i \in SEQ_1$.
  If  eventually $\mm_2 \in \M^i$ for  $i \in SEQ_1$ let these values of $i$ define $SEQ_2 \subset SEQ_1$.
  Otherwise, let $SEQ_2$ be the $i \in SEQ_1$ such that ${\mathbf m}_2 \not\in \M^i$. Inductively
  we define a decreasing sequence $SEQ_n$ of infinite subsets of $\N$ such that $p \leq n$ implies either
  $\mm_p \in \M^i$ for all $i \in SEQ_n$
  or for no $i \in SEQ_n$.  Diagonalizing, we obtain a convergent
  subsequence whose limit contains $\mm$. That is, if $i_n$ be the $n^{th}$ element of $SEQ_n$, then
  $\{ M^{i_n} \}$ is convergent and the limit contains $\mm$.

 Alternatively, if $\mm \not\in LIMINF$ we begin by choosing $SEQ_1$ so that
 $\mm_1 \not\in \M^i$ for all $i \in SEQ_1$ and continue as before.

  $\Box$ \vspace{.5cm}

\begin{cor}\label{labelcor04a}
$\LAB$ is a compact, zero-dimensional metric space with $\emptyset$ the only
isolated point.
\end{cor}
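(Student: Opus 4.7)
My plan is to assemble most of the statement from Lemma~\ref{labellem03a} and Proposition~\ref{labelprop05}, and then to supply the one genuinely new ingredient, namely that $\emptyset$ is the \emph{only} isolated point.

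That $d$ is a (ultra)metric is Lemma~\ref{labellem03a}(a); the countable family of finite labels is dense by Lemma~\ref{labellem03a}(e), so $\L \A \B$ is separable; and for zero-dimensionality I observe that the closed ball $\{ \M' : d(\M',\M) \leq 2^{-N} \}$ coincides with $\{ \M' : \M' \cap \B_N = \M \cap \B_N \}$, which is clopen by Lemma~\ref{labellem03a}(c) since $\B_N$ is a finite label. Lemma~\ref{labellem03a}(b) already gives that $\emptyset$ is isolated, with $d(\emptyset, \M) = 1$ for every $\M \neq \emptyset$.

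For compactness I would verify sequential compactness. Given a sequence $\{ \M^i \}$ of labels, enumerate $FIN(\N) = \{ \mm_1, \mm_2, \dots \}$ and, exactly as in the proof of Proposition~\ref{labelprop05}, inductively extract nested infinite subsets $SEQ_1 \supset SEQ_2 \supset \cdots$ of $\N$ so that along $SEQ_n$ the membership of $\mm_n$ in $\M^i$ is constant. Diagonalizing yields a subsequence along which every $\mm \in FIN(\N)$ is either eventually in or eventually out of $\M^i$; then $LIMSUP = LIMINF$ along this subsequence, and the common set is automatically hereditary since each $\M^i$ is, so it is a label. By Proposition~\ref{labelprop04}(a) the subsequence converges to it.

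It remains to show that no $\M \neq \emptyset$ is isolated. Fix such an $\M$ and $N \in \N$; I must exhibit a label $\M' \neq \M$ with $\M' \cap \B_N = \M \cap \B_N$. Choose any $\ell > N$ and split on the roof value $r := \r(\M)_\ell \in \Z_{+\infty}$. If $r < \infty$, set $\M' = \M \cup \langle (r+1)\chi(\ell) \rangle$, which is a label by Proposition~\ref{labelprop03}(g); the only new nonzero vector added is $(r+1)\chi(\ell)$, whose support $\{\ell\}$ is not contained in $[1,N]$, so $\M' \cap \B_N = \M \cap \B_N$ while $(r+1)\chi(\ell) \in \M' \setminus \M$. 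If instead $r = \infty$, set $\M' = \{ \mm \in \M : \mm_\ell = 0 \}$; this is hereditary by a direct check, it omits $\chi(\ell) \in \M$, and it agrees with $\M$ on $\B_N$ because every $\mm \in \B_N$ automatically has $\mm_\ell = 0$. In either case $\M$ is not isolated, completing the proof. The only step that requires real thought is precisely this dichotomy: both ``enlarge above $\ell$'' and ``truncate at $\ell$'' constructions are needed, since for an unbounded $\M$ one cannot simply adjoin a new vector in the $\ell$-direction; once the split on $\r(\M)_\ell$ is made, everything else is bookkeeping from the earlier lemma and propositions.
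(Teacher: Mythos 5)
Your proof is correct, and while the compactness, separability, zero-dimensionality and the isolation of $\emptyset$ are handled exactly as in the paper (the diagonal extraction you spell out is verbatim the content of Proposition~\ref{labelprop05}, which the paper simply cites), your treatment of the non-isolation of nonempty labels takes a genuinely different route. The paper only perturbs \emph{finite} nonempty labels: for finite $\NN$ it notes that $\NN \cup \{ \chi(\ell) \}$ converges to $\NN$ as $\ell \to \infty$, and then relies implicitly on the reduction that an isolated $\M$ must satisfy $\M = \M \cap \B_N$ for some $N$ (since $\M \cap \B_N$ lies in the $2^{-N}$ ball about $\M$, by the density argument of Lemma~\ref{labellem03a}(e)), hence must itself be a finite label. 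Your argument instead perturbs an \emph{arbitrary} nonempty $\M$ directly, via the dichotomy on $\r(\M)_\ell$ at a coordinate $\ell > N$: adjoin $\langle (r+1)\chi(\ell) \rangle$ when the roof is finite, truncate to $\{ \mm \in \M : \mm_\ell = 0 \}$ when it is infinite. You are right that the dichotomy is unavoidable for a direct argument --- for a label such as $\langle \chi(\N) \rangle$ one has $\chi(\ell) \in \M$ for every $\ell$, so the paper's ``adjoin $\chi(\ell)$'' move produces nothing new, which is exactly why the paper restricts to finite labels and reduces. What each approach buys: the paper's is shorter because it reuses the density of finite labels and thereby needs only one perturbation; yours is self-contained at each $\M$, makes the failure mode of the naive construction explicit, and incidentally shows that every nonempty label is a two-sided limit of labels both strictly containing and strictly contained in it.
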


\proof  By Propositions  \ref{labelprop05}  every sequence in $\LAB$
has a convergent subsequence, i.e. the metric space satisfies the Bolzano-Weierstrass property and so is
compact. It is zero-dimensional because it has an ultrametric. If $\NN$ is a finite, nonempty label
then $\NN \cup \{ \chi(\ell) \}$ is a sequence of finite labels which are eventually distinct. The sequence
 converges to $\NN$ as $\ell \to \infty$ and so $\NN$ is not an isolated point.
Since the set of finite labels is dense, it follows that  no nonempty
label is isolated.

$\Box$ \vspace{.5cm}

\begin{lem}\label{labellem06a} Let $\Phi$ be a compact subset of $\LAB$. If $\{ \mm^i : i \in\N \}$ is a nondecreasing sequence
in $\bigcup \ \Phi$ then there exists $\M \in \Phi$ such that $\mm^i \in \M$ for all $i$. \end{lem}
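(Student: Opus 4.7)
The plan is to use compactness of $\Phi$ to extract a convergent subsequence of labels chosen to witness membership of each $\mm^i$, then invoke the Heredity Condition together with the characterization of the limit as $LIMINF$ to show that the limit label contains every $\mm^i$.

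More concretely, first I would, for each $i \in \N$, pick some $\M^i \in \Phi$ such that $\mm^i \in \M^i$; this is possible exactly because $\mm^i \in \bigcup \Phi$. Since $\Phi$ is a compact subset of the metrizable space $\L \A \B$ (compact by Corollary \ref{labelcor04a}), the sequence $\{\M^i\}$ admits a convergent subsequence $\{\M^{i'}\}$ with limit $\M \in \Phi$. By Proposition \ref{labelprop04}(a), convergence means $\M = LIM \{\M^{i'}\} = LIMINF \{\M^{i'}\}$.

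Now I would fix an arbitrary index $j$ and show $\mm^j \in \M$. For every $i'$ with $i' \geq j$, monotonicity of the original sequence gives $\mm^j \leq \mm^{i'}$, and by construction $\mm^{i'} \in \M^{i'}$. The Heredity Condition applied to $\M^{i'}$ then yields $\mm^j \in \M^{i'}$. Thus $\mm^j$ lies in $\M^{i'}$ for all sufficiently large $i'$, so $\mm^j \in LIMINF \{\M^{i'}\} = \M$. Since $j$ was arbitrary, $\mm^i \in \M$ for all $i$, and $\M \in \Phi$.

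There is no real obstacle; the only thing to watch is that the original index $j$ is not a priori among the subsequence indices $\{i'\}$, but this is harmless since I only need $\mm^j \leq \mm^{i'}$ for all sufficiently large $i'$, which is automatic from the nondecreasing hypothesis. The key ingredients are (i) compactness of $\Phi$ to get a convergent subsequence, (ii) Heredity to transfer $\mm^{i'} \in \M^{i'}$ down to $\mm^j \in \M^{i'}$, and (iii) the identification of the limit with $LIMINF$ from Proposition \ref{labelprop04}(a).
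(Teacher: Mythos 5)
Your proof is correct and follows essentially the same route as the paper's: choose witnesses $\M^i \ni \mm^i$, extract a convergent subsequence by compactness, and use heredity plus the nondecreasing hypothesis to place each fixed $\mm^j$ eventually in the subsequence, hence in the limit (identified with $LIMINF$ via Proposition \ref{labelprop04}). The paper's argument is identical, merely stated more tersely.
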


\proof Assume $\mm^i \in \M^i \in \Phi$ for all $i$. By compactness some subsequence $\{ \M^{i'} \}$ converges to
$\M \in \Phi$. For each $k$, $i' > k$ implies $\mm^k \leq \mm^{i'} \in \M^{i'}$. That is, each $\mm^k$ is eventually
in $\M^{i'}$ as $i' \to \infty$. Hence, $\mm^k \in \M$ for all $k$.

$\Box$ \vspace{.5cm}

 \begin{prop}\label{labelprop06b} Let $\L \subset \LAB$ be either the set of bounded labels or the set
 of labels of finite type. A subset $\Phi \subset \L$ is compact iff $\Phi$ is closed in the relative topology of
 $\L$ and $\bigcup \ \Phi \in \L$.  In particular, the union of a compact set of bounded labels is a bounded label.
 \end{prop}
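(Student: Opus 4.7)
The plan is to reduce compactness to closedness in the ambient compact space $\L \A \B$ and to exploit Lemma \ref{labellem06a} for the two implications. Throughout I will use that both classes $\L$ (bounded labels, and labels of finite type) are hereditary: if $\M_1 \subset \M_2$ and $\M_2 \in \L$, then $\M_1 \in \L$ (immediate for boundedness; for finite type it is Proposition \ref{labelprop03}(c)).

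For the forward direction, suppose $\Phi$ is compact. Compactness in $\L \A \B$ forces $\Phi$ to be closed in $\L \A \B$, hence in the relative topology of $\L$. To show $\bigcup \Phi \in \L$, I would argue by contradiction in both cases. If $\L$ is the bounded labels and $\bigcup \Phi$ is unbounded, then some coordinate $\ell$ has $\r(\bigcup \Phi)_\ell = \infty$, so by heredity the strictly increasing sequence $\{k \chi(\ell) : k \in \N\}$ lies in $\bigcup \Phi$; Lemma \ref{labellem06a} then produces $\M \in \Phi$ containing every $k \chi(\ell)$, contradicting boundedness of $\M$. If $\L$ is the finite-type labels and $\bigcup \Phi$ fails the Finite Chain Condition, pick a strictly increasing sequence $\mm^1 < \mm^2 < \cdots$ in $\bigcup \Phi$ and apply Lemma \ref{labellem06a} to produce $\M \in \Phi$ containing the whole sequence, again contradicting $\M \in \L$.

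For the reverse direction, suppose $\Phi$ is closed in the relative topology of $\L$ and $\bigcup \Phi \in \L$. To see $\Phi$ is compact, it suffices to show it is closed in $\L \A \B$. So let $\M^i \in \Phi$ with $\M^i \to \M$ in $\L \A \B$. By Proposition \ref{labelprop04}(a), $\M = LIMSUP\{\M^i\} \subset \bigcup_i \M^i \subset \bigcup \Phi$, so by heredity $\M \in \L$. Then $\M$ is a limit in $\L$ of points of $\Phi$, and since $\Phi$ is closed in the relative topology of $\L$, $\M \in \Phi$.

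Finally, the ``in particular'' clause is immediate: if $\Phi$ is a compact set of bounded labels then the forward direction gives $\NN := \bigcup \Phi$ bounded, and this $\NN$ is by definition a uniform bound for $\Phi$. The only thing to be slightly careful about is the difference between ``closed in $\L$'' and ``closed in $\L \A \B$'': a closed subset of $\L$ need not be closed in $\L \A \B$ in general (since $\L$ itself is only $G_\delta$ in $\L \A \B$ by Lemma \ref{labellem03a}(f)), and the hypothesis $\bigcup \Phi \in \L$ is precisely what bridges this gap --- this is the one nontrivial point of the argument.
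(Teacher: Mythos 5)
Your proof is correct and follows essentially the same route as the paper: the forward direction is identical (Lemma \ref{labellem06a} applied to the strictly increasing sequences $\{k\chi(\ell)\}$, resp.\ an arbitrary strictly increasing sequence in $\bigcup \Phi$), and your reverse direction, which shows $\Phi$ is closed in $\L \A \B$ via $LIM = LIMSUP \subset \bigcup_i \M^i$ from Proposition \ref{labelprop04}(a), is just a sequential rephrasing of the paper's argument that $\Phi$ is a relatively closed subset of the compact set $[[\bigcup \Phi]] \subset \L$ (Lemma \ref{labellem03a}(d)). Your closing observation that the hypothesis $\bigcup \Phi \in \L$ is exactly what bridges the gap between relative closedness in the merely $G_\delta$ set $\L$ and closedness in $\L \A \B$ correctly identifies the crux of the statement.
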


 \proof  If $\bigcup \ \Phi \in \L$ then the compact set $[[\bigcup \ \Phi ]]$ is contained in $\L$ and
 since $\Phi \subset [[\bigcup \ \Phi  ]]$ is closed relative to
$\L$,
 it is closed
 relative to $[[ \bigcup \ \Phi ]]$ and so is itself compact.

 Now assume that $\Phi$ is compact.

 If $\bigcup \ \Phi $ is not bounded then for some $\ell \in \N$ the
 strictly  increasing sequence $\{ \mm^i = i \chi(\ell) \}$ is in
 $\bigcup \ \Phi $ and so Lemma \ref{labellem06a} implies that the sequence is contained in some $\M \in\Phi$ and
 so $\M$ is unbounded.

 Similarly, if $\bigcup \ \Phi $ is not of finite type then there exists a
 strictly  increasing sequence $\{ \mm^i \}$ in  $\bigcup \ \Phi $ and so
 again Lemma \ref{labellem06a} implies that the sequence is contained in some $\M \in\Phi$ and
 so $\M$ is not of finite type.

 In each case the contrapositive says that $\Phi \subset \L$ implies  $\bigcup \ \Phi \in \L$.

 $\Box$ \vspace{1cm}

\subsection{The action of $FIN(\N)$ on $\LAB$}\label{ssFIN}

$\qquad$

\vspace{.5cm}

  Given an $\N$-vector $\rr$ we define the map $P_{\rr}$ on $\LAB$ , by $P_{\rr}(\M) = \M - \rr$.

\begin{prop}\label{labelprop06} For each $\rr \in FIN(\N)$ the function $P_{\rr}$ on $\LAB$ is continuous.
In particular, if $\{ \M^i \}$
is a convergent sequence of labels then $\{ \M^i - \rr \}$ is convergent with $LIM \{ \M^i - \rr \} \ = \ LIM \{ \M^i \} - \rr$.
\end{prop}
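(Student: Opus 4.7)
The plan is to show that $P_{\rr}$ is continuous with respect to the ultrametric on $\L \A \B$; the convergence statement is then automatic, since a continuous map sends convergent sequences to convergent sequences with the expected limit, giving $LIM \{ \M^i - \rr \} = P_{\rr}(LIM \{ \M^i \}) = LIM \{ \M^i \} - \rr$. Continuity is in fact uniform, since the ultrametric is translation-invariant and the modulus will depend only on $\rr$.

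The key calculation is purely bookkeeping with the sets $\B_N = \langle N \chi([1,N]) \rangle$. Given $N \in \N$, I want to find $N' \in \N$ such that whenever $\M_1 \cap \B_{N'} = \M_2 \cap \B_{N'}$, we also have $(\M_1 - \rr) \cap \B_N = (\M_2 - \rr) \cap \B_N$. The observation is that translation by $\rr$ maps $\B_N$ into a slightly larger box: if $\ww \in \B_N$, then $\ww \leq N$ with $supp\ \ww \subset [1,N]$, so $\ww + \rr$ has each coordinate at most $N + \max_{\ell} \rr_{\ell}$ and support contained in $[1,N] \cup supp\ \rr$. Choosing any $N' \geq N + \max_{\ell} \rr_{\ell}$ with $supp\ \rr \subset [1,N']$ (and $N' \geq N$) gives $\ww + \rr \in \B_{N'}$ for every $\ww \in \B_N$.

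Given this $N'$, suppose $\M_1 \cap \B_{N'} = \M_2 \cap \B_{N'}$. For any $\ww \in \B_N$, the element $\ww + \rr$ lies in $\B_{N'}$, so $\ww + \rr \in \M_1$ iff $\ww + \rr \in \M_2$; by the definition \eqref{label03}, this says $\ww \in \M_1 - \rr$ iff $\ww \in \M_2 - \rr$. Hence $(\M_1 - \rr) \cap \B_N = (\M_2 - \rr) \cap \B_N$, i.e.\ $d(P_{\rr}(\M_1), P_{\rr}(\M_2)) \leq 2^{-N}$. This proves that $d(\M_1,\M_2) \leq 2^{-N'}$ implies $d(P_{\rr}(\M_1), P_{\rr}(\M_2)) \leq 2^{-N}$, which is uniform continuity.

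There is no serious obstacle: $P_{\rr}$ is already known to map $\L \A \B$ into itself by Proposition \ref{labelprop03}(e), and the only mildly delicate point is to bound both the coordinatewise size and the support of $\ww + \rr$ simultaneously when choosing $N'$, which is handled by the estimate above.
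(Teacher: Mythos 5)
Your proof is correct and follows essentially the same route as the paper: the paper takes $N(\rr)$ to be the minimal $N$ with $\rr \in \B_{N(\rr)}$ and shows $N' = N + N(\rr)$ works, which is exactly your choice of $N'$ bounding coordinates and support of $\ww + \rr$ simultaneously. The paper even notes the same uniform (Lipschitz) continuity you observe, so there is nothing to add.
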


\proof Let $N(\rr)$ be the minimum value such that $ \rr \in \B_{N(\rr)}$. Notice that $N(\mm + \rr) \leq N(\mm) + N(\rr)$ because
$\mm + \rr < N(\mm) + N(\rr)$ and $supp \ \mm + \rr = (supp \ \mm) \cup (supp \ \rr) \subset [1,\max(N(\mm),N(\rr))]$.

It follows that for labels $\M_1, \M_2$

\begin{align}\label{label08}
\begin{split}
\M_1 \cap \B_{N + N(\rr)} \ &= \ \M_2 \cap \B_{N + N(\rr)} \\
\Longrightarrow \quad ( \M_1 - \rr) \cap &\B_{N} \ = \ ( \M_2 - \rr)  \cap \B_{N}.
\end{split}
\end{align}
For if $\mm \in (\M_1 - \rr) \cap \B_{N}$ then $\mm + \rr \in \M_1 \cap \B_{N + N(\rr)} = \M_2 \cap \B_{N + N(\rr)}$.
Hence, $\mm \in \M_2 - \rr$. Since $\mm \in \B_N$, it follows that $\mm \in (\M_2 - \rr) \cap \B_{N}$. Symmetrically for
reverse inclusion.

From (\ref{label08}) it follows that $d(\M_1,\M_2) < 2^{-N-N(\rr)}$ implies $d(P_{\rr}(\M_1),P_{\rr}(\M_2))
\allowbreak
 < 2^{-N}$.
This shows that $P_{\rr}$
is Lipschitz with Lipschitz constant at most $2^{N(\rr)}$.

$\Box$ \vspace{.5cm}

\begin{cor}\label{labelcor06a} The map $P : FIN(\N) \times \LAB  \ \to \ \LAB$ given by
$ (\rr , \M) \mapsto \M - \rr = P_{\rr}(\M) $
is a continuous monoid action of $FIN(\N)$ on $\LAB$.

The action  is \emph{faithful} i.e.
if $P_{\rr}(\M) = P_{\ss}(\M)$ for all $\M$ then $\rr = \ss$. \end{cor}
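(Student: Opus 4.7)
The plan is to verify three separate things: (i) joint continuity of $P$, (ii) the monoid action axioms $P_{\00} = id_{\L \A \B}$ and $P_{\rr + \ss} = P_{\rr} \circ P_{\ss}$, and (iii) faithfulness.

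For (i), recall that $FIN(\N)$ is a discrete space, so the product topology on $FIN(\N) \times \L \A \B$ is simply the disjoint union of copies of $\L \A \B$ indexed by $\rr \in FIN(\N)$. Joint continuity of $P$ therefore reduces exactly to continuity of each $P_{\rr}$ in the single variable $\M$, which is the content of Proposition~\ref{labelprop06}. For (ii), both axioms were already observed in the paragraph following equation~(\ref{label03}): $P_{\00}(\M) = \{\ww : \ww + \00 \in \M\} = \M$, and the displayed identity $(\M - \rr) - \ss = \M - (\rr + \ss)$ translates directly into $P_{\ss} \circ P_{\rr} = P_{\rr + \ss}$ (using commutativity of addition in $FIN(\N)$).

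For (iii), I will argue by contrapositive: given $\rr \neq \ss$ in $FIN(\N)$, I exhibit a single label $\M$ with $P_{\rr}(\M) \neq P_{\ss}(\M)$. The natural test label is $\M = \langle \rr \rangle = \{\ww \in FIN(\N) : \ww \leq \rr\}$. By definition, $P_{\rr}(\langle\rr\rangle) = \{\ww : \ww + \rr \leq \rr\} = \{\00\}$. Now split into cases. If $\ss \not\leq \rr$, then no $\ww \geq \00$ satisfies $\ww + \ss \leq \rr$, so $P_{\ss}(\langle\rr\rangle) = \emptyset \neq \{\00\}$. If instead $\ss \leq \rr$ (and $\ss \neq \rr$), then $\rr - \ss > \00$; pick any coordinate $\ell$ with $(\rr - \ss)_{\ell} > 0$, so $\chi(\ell) \leq \rr - \ss$, and hence $\chi(\ell) + \ss \leq \rr$, giving $\chi(\ell) \in P_{\ss}(\langle\rr\rangle) \setminus P_{\rr}(\langle\rr\rangle)$. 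Swapping the roles of $\rr$ and $\ss$ covers the remaining case $\rr \leq \ss$, $\rr \neq \ss$. In every case the two images differ, proving faithfulness.

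There is essentially no hard step here: continuity is immediate from the discreteness of $FIN(\N)$ combined with Proposition~\ref{labelprop06}, the action axioms are already embedded in the earlier discussion of the operator $\M - \rr$, and faithfulness follows from a single carefully chosen principal label. The only subtlety worth flagging is remembering that $FIN(\N)$ carries the discrete topology, which is what allows the joint continuity claim to be obtained so cheaply from the one-variable result.
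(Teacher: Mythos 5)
Your proof is correct and follows essentially the same route as the paper: continuity and the action axioms are obtained exactly as there (from Proposition~\ref{labelprop06} and the identity $(\M - \rr) - \ss = \M - (\rr + \ss)$), and faithfulness uses the same witness label $\M = \langle \rr \rangle$ with $P_{\rr}(\langle\rr\rangle) = 0$. Your contrapositive case split ($\ss \not\leq \rr$ versus $\ss \leq \rr$, with the final ``swapping'' case in fact already subsumed by the first) is just a minor repackaging of the paper's direct argument that $P_{\ss}(\langle\rr\rangle) = 0$ forces $\ss \leq \rr$ and then $\rr - \ss \in P_{\ss}(\langle\rr\rangle) = \{\00\}$.
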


\proof It is an action since  $(\M - \rr_1) - \rr_2 = \M - (\rr_1 + \rr_2)$ for $\N$-vectors $\rr_1, \rr_2$ and
$\M - \00 = \M$. It is a
continuous action by Proposition \ref{labelprop06}.

For a label $\rr$ let $\M$  be the finite label $\langle  \rr \rangle$.  Since $\{ \rr \} = max \ \M$,
$P_{\rr}(\M) = 0$. If $P_{\ss}(\M) = 0$ then $\ss \in \M$ and so $\ss \leq \rr$.  Clearly,
$\rr - \ss \in  P_{\ss}(\M)$ and so $\rr - \ss = \00 $.

$\Box$ \vspace{.5cm}
%

Notice that $FIN(\N)$ is the free abelian monoid generated by $\{ \ \chi(\ell) \ : \ \ell \in \N \}$ and it is
a submonoid of the free abelian group consisting of the members of $\Z^{\N}$ with finite support. Furthermore,
$\rr_1 + \ss = \rr_2 + \ss$ implies $\rr_1 = \rr_2$.  In particular, $\rr + \rr = \rr$ only when $\rr = \00 $.
Also, $\rr + \ss = \00 $ iff $\rr = \ss = \00 $. Thus, $FIN(\N)$ is a cancelation monoid without inverses and
Lemmas \ref{applem03} and \ref{applem04} apply to $FIN(\N)$.

Giving $FIN(\N)$ the discrete topology, we obtain on the Stone-\v{C}ech compactification $\b FIN(\N)$ the structure of
an Ellis semigroup with product
which extends the addition on $FIN(\N)$ and is such that $Q \mapsto QR$ is continuous for any $R \in \b FIN(\N)$.
Let $\b' FIN(\N) \ = \ \b FIN(\N) \setminus \{ 0 \}$ and $\b^* FIN(\N) \ = \ \b FIN(\N) \setminus  FIN(\N)$.
Notice that since $FIN(\N)$ is discrete, it is the set of isolated points in $\b FIN(\N)$. The elements of
$FIN(\N)$  are continuous on $\b FIN(\N)$ and commute with all elements of $\b FIN(\N)$. Thus, the submonoid $FIN(\N)$ acts
continuously on $\b FIN(\N)$.
The action of $FIN(\N)$ extends to an Ellis action of $\b FIN(\N)$ on $\LAB$. See Appendix \ref{appendix-StoneCech}

\begin{prop}\label{labeltheo06x} (
(a) If $\NN \subset \M$ then $Q(\NN) \subset Q(\M) \subset \M$ for all $Q \in \b FIN(\N)$.

(b)  The sets $\b' FIN(\N)$ and $\b^* FIN(\N)$ are closed, $FIN(\N)$ invariant subsets of $\b FIN(\N)$ and so are ideals in the
Ellis semigroup.

(c)  Every nonempty, closed sub-semigroup of $\b FIN(\N)$ contains an idempotent and all the idempotents of $\b' FIN(\N)$
lie in $\b^* FIN(\N)$.

(d) For all $Q \in \b FIN(\N)$, $Q(\M) = FIN(\N)$ iff $\M = FIN(\N)$.
\end{prop}

\proof
(a) Clearly, $\NN \subset \M$ implies $P_r(\NN) \subset P_r(\M) \subset \M$ and  the inclusions are preserved in the limit by
Lemma \ref{labellem03a}.

(b) See  Lemmas \ref{applem03} and \ref{applem04} .

(c) The existence of idempotents is the
Ellis-Numakura Lemma,
Lemma \ref{applem01}.  We saw above that $\00 $ is the only idempotent
in $FIN(\N)$ and so there are no idempotents in $\b' FIN(\N) \setminus \b^* FIN(\N).$

(d) For all $\rr, \ P_{\rr}(FIN(\N)) = FIN(\N)$ and so $FIN(\N)$ is fixed by all $Q \in \b FIN(\N)$. If $\M \not= FIN(\N)$
then $Q(\M) \subset \M$ and so does not equal $FIN(\N)$.

$\Box$ \vspace{.5cm}

Recall that
\begin{equation}\label{extra}\begin{split}
\rr \in \M \quad \Longleftrightarrow  \quad \00 \in P_{\rr}(\M). \\
\rr \not\in \M \quad \Longleftrightarrow  \quad \emptyset = P_{\rr}(\M).
\end{split}\end{equation}
For a discrete set $\Gamma$, a subset $A$ of $\Gamma$ and its complement have disjoint
closures in $\b \Gamma$ because the characteristic function of $A$ extends to a continuous function
on $\b \Gamma$. It follows that for any label $\M$, the closure in $\b FIN(\M)$ of $\M \subset FIN(\M)$
is the clopen set $\{ Q \in \b FIN(\N) : \00 \in Q(\M) \}$ with complement $\{ Q \in \b FIN(\N) :
\emptyset = Q(\M) \}$. In particular, if $\M$ is a finite label then $Q(\M) = \emptyset$ for all
$Q \in \b^* FIN(\N)$.

 Let
  $\Theta(\M) $ be the closure in the space of labels of the set
  $\{ \ P_{\rr}(\M) = \M - \rr  \ : \ \rr $ an $\N$-vector $ \}$. That is, $\Theta(\M)$ is the orbit closure of $\M$ with
  respect to the $FIN(\N)$ action or, equivalently, $\Theta(\M) = \b FIN(\N)\cdot \M $. Since $[[\M]]$ is closed
  and invariant, $\Theta(\M) \subset [[\M]]$.
  Even in the finite case, it can happen that the inclusion is proper.

\begin{ex}\label{proper}
Set $\M = \langle
\chi(1) +  \chi(2),
2\chi(2) +  \chi(3)  \rangle$, and let
$\NN = \langle
\chi(1) +  \chi(2),
\chi(2) +  \chi(3) \rangle$.
It is easy to check that $\NN \in [[\M]] \setminus \Theta(\M)$.
\end{ex}

For $FIN(\N)$,  $P_{\rr}(FIN(\N)) = FIN(\N)$ for all $\rr \in FIN(\N)$ implies $\Theta(FIN(\N))
\allowbreak = \{ FIN(\N) \}$.

The action of $FIN(\N)$ on $\LAB$ restricts to an action on the closed invariant set $\Theta(\M)$ for any label $\M$.

  \begin{lem}\label{labellem05c} (a) For any label $\M \not = FIN(\N)$, $\emptyset \in \Theta(\M)$.

  (b)If $\M$ is nonempty and
  bounded then   $0 \in \Theta(\M)$.

  (c) If $\M$ is nonempty and
  bounded then the action of $FIN(\N)$ on $\Theta(\M)$ is weakly faithful. In fact, $P_{\rr}(\M) = \M$ only
  for $\rr = 0$. \end{lem}

  \proof (a) : If $\rr \not\in \M$ then $\M - \rr = \emptyset$ and so $\emptyset \in \Theta(\M)$.

  (b): If
 $\rr \in max \ \M$ then $\M - \rr = 0$.

 Now assume that $\M$ is bounded so that each $ \M \wedge [1,i]$ is a finite label. Let $\rr^i$ be maximal element
 of $ \M \wedge [1,i]$.
Clearly $ \00 \in LIMINF \{ \M - \rr^i \}$. On the other hand if $\ww \in LIMSUP \{ \M - \rr^i \}$
then for some $j$ we have $supp \  \ww \subset [1,j]$. Frequently $\ww + \rr^i \in \M$, and so there
exists
 $i \ge j$, $\ww + \rr^i \in \M \wedge [1,i]$. Maximality implies $\ww = \00 $. That is, $0 = LIM \{ \M - \rr^i \}$.

 (c): Proposition \ref{labelprop03}(d) implies that $P_{\rr}(\M)$ is a proper
 subset of $\M$ when $\M$ is bounded and non-empty and
 $\rr > 0$.


 $\Box$ \vspace{.5cm}

 {\bfseries Remark:}  Notice that $P_{\rr}(\M) = 0 $ iff $\rr \in max \ \M$ and so if $max \ \M = \emptyset$ then
 $\M - \rr \not= 0$ for any $ \rr \in FIN(\N)$.
 \vspace{.5cm}

  Let $\Theta'(\M) $ be the closure in the space of labels of the set
  $\{ \ \M - \rr  \ : \ \rr \in FIN(\N) \ $ with $ \rr > \00  \}$. Thus, $\Theta(\M) = \Theta'(\M) \cup \{ \M \}$ and
 $\Theta'(\M) = \b' FIN(\N) \cdot \M  $.

\begin{df}\label{df,recurrent}
Call $\M$ a \emph{recurrent label} if $\M \in \Theta'(\M)$.
Equivalently, $\M$ is recurrent if there exists a sequence $\{ \rr^i > \00 \}$ such
that $\M = LIM \{ \M - \rr^i \}$ and so for all $\mm \in \M$ eventually $\mm + \rr^i \in \M$.
 \end{df}

 This is the notion of recurrence for
 the
 system $(\LAB, FIN(\N))$ as defined in Section 1.

   If $\mm \in max \ \M$
  then $\{ \NN : \mm \in \NN \}$ is a clopen subset of $\LAB$ which contains $\M$ and is disjoint from $\Theta'(\M)$.
  So if $\M$ is a recurrent label then $max \ \M = \emptyset$. In particular, if
  $\M$ is of finite type and nonempty then $\M \not\in \Theta'(\M)$ and so is not recurrent.

 For example, $\M = FIN(\N)$ is a recurrent label since then $\M = P_{\rr}(\M)$ for all $\rr \in \M$.
 By Proposition \ref{labelprop03} (d) $P_{\rr}(\M)$ is a proper subset of $\M$ when $\M$ is bounded
and
$\rr > \00 $
and so then
cannot equal $\M$. Nonetheless, there are bounded recurrent labels.

 For example, if $\M = \langle \chi(L) \rangle$ for some infinite $L \subset \N$ then
   \begin{equation}\label{labelchiL}
 \M =  LIM \{ \M - \chi(\ell^i) \}
 \end{equation}
 when $\{ \ell^i \}$ is a sequence of distinct elements of $L$.
%

  Define the \emph{isotropy set of $\M$}
  \begin{equation}\label{labeliso}
  ISO(\M) \ = \ \{ \ Q \in \b FIN(\N) \ : \ Q(\M) = \M \ \}
  \end{equation}

 \begin{prop}\label{labelprop05a1} Let $\M$ be a label.

\begin{enumerate}
 \item[(a)] $ISO(\M)$ is a closed submonoid of $\b FIN(\N) $ and for $Q_1, Q_2 \in \b FIN(\N)$
 the product  $ Q_1 Q_2$ is in $ ISO(\M)$ iff both $ Q_1$ and $Q_2$ are in $ISO(\M)$.

\item[(b)] The following are equivalent.
\begin{itemize}
\item[(i)] $\M$ is recurrent.

\item[(ii)] There exists $Q \in \b' FIN(\N)$ such that $Q(\M) = \M$.

\item[(iii)] $ISO(\M) \cap \b' FIN(\N)$ is a nonempty, closed subsemigroup of \allowbreak $\b FIN(\N)$.

\item[(iv)] There exists an idempotent $Q \in \b^* FIN(\N)$ such that $Q(\M) = \M$.
 \end{itemize}

 \item[(c)] If $Q$ is an idempotent in $\b^* FIN(\N)$ then $Q(\M)$ is a recurrent element of $\Theta'(\M)$.
 In particular, if
  $\M$ is of finite type then $Q(\M) = \emptyset$.

 \item[(d)] If $\NN$ is a recurrent label with $\NN \subset \M$,
 then there is a recurrent label $\M_{1} \in \Theta(\M)$ such that $\NN \subset \M_{1}$.

 \item[(e)] If $\rr \in FIN(\N)$, then $ISO(\M) \subset ISO(P_{\rr}(\M))$. So if $\M$ is a recurrent label then
 $P_{\rr}(\M)$ is a recurrent label.
  \end{enumerate}
  \end{prop}

  \proof (a) Since $Q \mapsto Q(\M)$ is continuous, $ISO(\M)$ is closed.  It is clearly closed under composition
  and so is a semigroup.  Since $P_{\00 } = id, \ ISO(\M)$ contains $ \00 $ and so is a monoid.

  For $Q, P \in \b FIN(\N)$ if $Q(\M) \not= \M$
 then $\NN = Q(\M)$ is a proper subset of $\M$. It follows that $PQ(\M) = P(\NN) \subset \NN $ and so it,too,
 is a proper subset of $\M$ for any $P$. Also,
 $P(\M) \subset \M$ and so $QP(\M) \subset Q(\M) = \NN$ is also a proper subset of $\M$.
 Thus, $\{ Q \in \b FIN(\N) : Q(\M) \not= \M \}$ is a two-sided ideal (though it is not closed when $\M$ is recurrent).
 It follows that $Q_1 Q_2(\M) = \M$ implies $Q_1(\M) = \M$ and $Q_2(\M) = \M$.

 (b) (i) $\Rightarrow$ (ii): If $\{ \M - \rr^i \} \to \M$ with all $\rr^i > 0$ then
 $Q(\M) = \M$ for any limit point in $\b FIN(\N)$ of
the sequence $\{ P_{\rr^i} \}$. Since, $0$ is an isolated point of $\b FIN(\N)$, it follows that $Q \in \b' FIN(\N)$.

 (ii) $\Rightarrow$ (iii): As the intersection of two closed subsemigroups, $ISO(\M) \cap \b' FIN(\N)$ is a closed
 subsemigroup.  It is nonempty by (ii).

  (iii) $\Rightarrow$ (iv): The nonempty, closed subsemigroup $ISO(\M) \cap \b' FIN(\N)$ contains an idempotent which
  cannot lie in
  $FIN(\N) \setminus \{ \00 \}$
  and so is in
  $\b^* FIN(\N)$.

   (iv) $\Rightarrow$ (i): $\Theta'(\M) =  \b' FIN(\N) \cdot \M$ and so $\M \in \Theta'(\M)$ if a nonzero idempotent
   fixes $\M$.

   (c): If $Q \in \b'FIN(\N)$ then $Q(\M) \in \Theta'(\M)$. If $Q$ is an idempotent then $Q(Q(\M)) = Q(\M)$ implies that
   $Q(\M)$ is recurrent. If $\M$ is of finite type, then every nonempty label in $[[ \M ]]$ is of finite type and so
   is not recurrent. It follows that when $\M$ is of finite type $Q(\M)$ must be empty.

   (d): If $\NN$ is recurrent then there exists an idempotent $Q \in \b^* FIN(\N)$ such that $Q(\NN) = \NN.$
   Since $\NN \subset \M$, $\NN = Q(\NN) \subset Q(\M) \subset \M$.  Let $\M_1 = Q(\M)$.

   (e): If $Q(\M) = \M$ then $Q(P_{\rr}(\M)) = P_{\rr}(Q(\M)) = P_{\rr}(\M)$ because $Q$ and $P_{\rr}$ commute.

 $\Box$ \vspace{.5cm}

\begin{df}\label{df,st-rec}
  Call $\M$ a \emph{strongly recurrent label} if $\M$ is bounded and infinite and for every $\mm \in \M$,
  there is a finite set $F(\mm) \subset \N$
  such that $\M - \mm \supset \{ \ \ww \in \M \ : \ (supp \  \ww) \ \cap \  F(\mm) = \emptyset \} $.

 Call a label $\NN$ a
 \emph{strongly recurrent set for a bounded label}
 $\M$ if $\NN$ is infinite, $\NN \subset \M$ and for every
 $\mm \in \M$,
  there is a finite set $F(\mm) \subset \N$
  such that $\M - \mm \supset \{ \ \ww \in \NN \ : \ (supp \  \ww) \ \cap \  F(\mm) = \emptyset \} $.
 Thus, $\M$ is strongly recurrent iff $\M$ itself is a strongly recurrent set for $\M$.
\end{df}
\vspace{.5cm}

Notice that $\NN \subset \M$ implies that $\NN$ is bounded and $\M$ is infinite.

 \begin{prop}\label{labelprop05a2} Let $\M$ be a  nonempty label.

\begin{enumerate}
\item[(a)] If $   \M \ = \ LIM \{ \M - \rr^i \}$ and $\M$ is bounded, then $ LIMSUP \{ supp \  \rr^i \} = \emptyset$.
If, in addition, $\rr^i > \00 $ for all $i$ then $\bigcup_i supp \  \rr^i $ is infinite.

 \item[(b)]  Assume $\NN$ is a strongly recurrent set for a bounded label $\M$. If
 $\{ \rr^i \}$ is a sequence of elements of $\NN$ such that
 $LIMSUP \{ supp \  \rr^i \} = \emptyset$, then
 $\M = LIM \ \{\M - \rr^i \}$.
 In particular, $\M$ is recurrent if it has a strongly recurrent set,
 and every strongly recurrent label is recurrent.

\item[(c)] Assume $\M$ is bounded and infinite.
$\M$ is strongly recurrent iff for every sequence
$\{ \rr^i \}$ of elements of $\M$,
 $LIMSUP \{ supp \  \rr^i \} = \emptyset$ implies
  $\M = LIM \ \{\M - \rr^i \}$.

 \item[(d)] If $\M$ is bounded and recurrent then there exists an infinite set $L \subset \N$ such that
 $\langle \chi(L) \rangle $ is a strongly recurrent set for
 $\M$.

 \item[(e)] The following conditions are equivalent.
 \begin{itemize}

  \item[(i)] If $\mm_1, \mm_2 \in \M$ with disjoint supports then $\mm_1 + \mm_2 \in \M$.

 \item[(ii)]  $\M$ is a sublattice of $FIN(\N)$, i.e. if  $\mm_1, \mm_2 \in \M$ then
 $\mm_1 \vee \mm_2 \in \M$.

  \item[(iii)] $\M \ = \ \langle \r \rangle $ for some $\r \in \Z_{+\infty}^{\N} $.

 \item[(iv)] $\M \ = \ \langle \r(\M) \rangle $.

 \end{itemize}

If these conditions hold, and, in addition, $\M$ is bounded and infinite,  then $\M$ is strongly recurrent.
\end{enumerate}

 \end{prop}

 \proof (a) If $\ell \in supp \  \rr^i $, then
  $\r(\M)_{\ell} \cdot \chi(\ell) \in \M   \setminus (\M - \rr^i)$. So it cannot happen that
  $\ell \in supp \  \rr^i $ infinitely often.  Thus, for any $\ell \in \N$, eventually $\ell \not\in supp \  \rr^i $.

If $\rr^i > \00 $ then $supp \  \rr^i$ is nonempty.  If an infinite sequence of nonempty subsets of $\N$ has a finite union, then
some $\ell \in \N$ must lie in infinitely many of them.

(b) For any $\mm \in \M$, $\mm \in \M - \rr^i$ as soon as $F(\mm) \ \cap \  supp \  \rr^i = \emptyset$ which happens eventually
since  $LIMSUP \{ supp \  \rr^i \} = \emptyset$.

(c) If  $\M$ is bounded and infinite but not strongly recurrent then there exists $\mm \in \M$ and
for every $F$ finite subset of $\N$ there exists $\nn \in \M$ with $supp
\  \nn \ \cap \  F = \emptyset$
but with $\mm + \nn \not\in \M$. Note that this implies  $\nn > \00 $.

Let $F_1 = supp \ \mm$ and choose a positive $\rr_1 \in \M$ with support disjoint
from $F_1$ and is such that $\mm + \rr^1 \not\in \M$.
Let $F_2 = F_1 \cup supp \ \rr^1$.  Inductively, we build an increasing sequence of finite sets $\{ F^i \}$ and
positive elements $\rr^i \in \M$ such that $supp \ \rr^i \subset F^{i+1} \setminus F^i$ and $\mm + \rr^i \not\in \M$.
Since the supports are disjoint,  $LIMSUP \{ supp \  \rr^i \} = \emptyset$. Because $\mm \not\in   LIM \ \{\M - \rr^i \}$,
the limit is not $\M$.

The converse follows from (b).

(d) Since $\M$ is recurrent there exists a sequence $\{ \rr^i > \00 \}$
be such that $\M = LIM \{ \M - \rr^i \}$. Since $\M$ is bounded, it follows from (a)
that $\bigcup_i supp \  \rr^i $ is infinite.

Choose $\ell_1 \in supp \  \rr^1$
and let $i_1 = 1$ and $\NN_1 = \{ \chi(\ell_1) \} \cup \M \wedge [1,1]$.

There exists $\rr^{i_2}$ with
$\rr^{i_2}_{\ell_2} > 0$
and
$\ell_2$ not in the support of a member of
$\NN_1$ and is such that $\mm +
\rr^{i_2} \in \M$ for all $\mm \in  \NN_1$. Let
$\NN_2 = \{ \mm + \chi(\ell_2) : \mm \in \NN_1 \} \cup \M \wedge [1,2]$.

Inductively, we can choose $\ell_{k+1}$ such that
 $\mm + \chi(\ell_{k+1}) \in \M$ for all $\mm \in  \NN_k$ and with $\ell_{k+1}$ not in the support of any member of $\NN_k$.
 Let
 $  \NN_{k+1}  = \{ \mm + \chi(\ell_{k+1}) : \mm \in \NN_k \} \cup \M \wedge [1,k+1]$.

 By the inductive construction
 if $\mm \in \NN_{k-1}$ then $\mm + \Sigma_{i = k}^j \chi(\ell_i) \in \M$
 for all $j \geq k$. With $\mm = 0$ this says that $\Sigma_{i = 1}^j \chi(\ell_i) \in \M$ for all $j$.

 Let $L = \{ \ell_k \}$ and
 $\NN = \langle \chi(L) \rangle$.
 For any $\mm \in \M$ there exists $k$ such that $\mm \in \NN_{k-1}$. Let $F(\mm) = [1,\ell_{k-1}]$.
If $\ww \in \NN$ with support disjoint from $F(\mm)$ then $\ww \leq \Sigma_{i = k}^j \chi(\ell_i)$ for some $j > k$.
Hence, $\ww + \mm \in \M$  and so $\NN$
is a strongly recurrent set for $\M$.

 (e)
 (iv)  $\Rightarrow$ (iii)  $\Rightarrow$ (ii): Obvious.

(ii) $\Rightarrow $ (i): If $\mm_1$ and $\mm_2$ have disjoint supports
 then $\mm_1 \vee \mm_2 = \mm_1 + \mm_2$.

(i)  $\Rightarrow $ (iv): For any  $\M$ and $n \in \N$, $n \leq\r(\M)_{\ell}$ implies $n \chi(\ell) \in \M$.
 So if $\mm \leq \r(\M)$ then
 (i) (and induction) implies that $\mm = \Sigma \ \{ \ \mm_{\ell} \chi(\ell) \ : \ \ell \in supp \ \mm \ \}$ is in $\M$.
%

Now assume that $\M$ is bounded and infinite so that $\bigcup \ Supp \ \M$. For any $\mm \in \M$, let $F(\mm) = supp \  \mm$.
By (i), $\ww \in \M$ and $supp \ \ww \cap F(\mm) = \emptyset$ imply $\mm + \ww \in \M$.

 $\Box$ \vspace{.5cm}

\begin{remark} Let $FIN^{\ell} \ = \ \{ \ \mm \in FIN(\N) \ : \ supp \ \mm \ \cap \  [1,\ell] \ = \ \emptyset \ \}$. It is
not hard to show from the above Proposition
\ref{labelprop05a2} that, if $\M$ is a strongly recurrent label $\M$, then
$\bigcap_{\ell} \ \ol{\M \ \cap \  FIN^{\ell}} \ = \ ISO(\M)$,
where the closure is taken in $\b FIN(\N)$. \end{remark}
\vspace{.5cm}

The following directly illustrates the relationship between recurrence and the failure of the finite type condition.

\begin{cor}\label{labelcor05a3}  If $\{ \mm^i \}$ is a strictly increasing infinite sequence in a label $\M$
 then $\r  \in \Z_{+ \infty}^{\N}$ defined by
 $\r_{\ell} = \sup_i \{ \mm^i_\ell \}$ satisfies $\r \ \leq \ \r(\M)$ and
 $\NN \ = \ \langle \r \rangle \ = \ \bigcup_i \{ \langle \mm^i \rangle  \}$
  is a  recurrent label with $\NN \subset \M$ and $\mm^i \in \NN$ for
 all $i$. If $\M$ is bounded then $\NN$ is strongly recurrent.
\end{cor}

 \proof  Since the sequence is strictly increasing it is clear that
 $$\langle \r \rangle \ = \ \bigcup_i \{ \langle \mm^i \rangle  \} \subset \M.$$
 If $\M$ is bounded, then $\langle \r \rangle$ is bounded and infinite and so is strongly recurrent by Proposition
 \ref{labelprop05a2} (e).

 %

 $\Box$ \vspace{.5cm}

 \begin{cor}\label{labelcor05b} (a) For a label $\M$ the following are equivalent:
 \begin{itemize}
 \item[(i)] $\M$ is of finite type.

  \item[(ii)] The only recurrent label which is a subset of $\M$ is $\ \emptyset$.

  \item[(iii)] The only recurrent label which is contained in
 $\Theta(\M)$ is $ \emptyset$.

   \item[(iv)] $Q(\M) = \emptyset$ for every idempotent $Q$ in $\b^* FIN(\N)$.
\end{itemize}

 (b) For a label $\M, \ \ max \ \M = \emptyset$ iff $\M$ is the union of the recurrent labels contained in $\M$.
 Thus:
\begin{gather*}
 \{\M : \text{finite type}\} \subset \{\M : max \ \M \not=\emptyset\} = \\
 \{\M : \text{not a union of recurrent labels}\}
 \subset \{\M : \text{not recurrent}\}.
\end{gather*}
\end{cor}

 \proof (a): (i) $\Rightarrow$ (iv): Part (c) of Proposition \ref{labelprop05a1}.

 (iv) $\Rightarrow$ (ii): If $Q(\M) = \emptyset$ then $Q(\NN) = \emptyset$ for all $\NN \subset \M$.

 (ii) $\Leftrightarrow$ (iii): Proposition \ref{labelprop05a1} (d).

 (ii) $\Rightarrow$ (i): If $\M$ is not of finite type then it contains a strictly increasing infinite sequence
 and so by Corollary \ref{labelcor05a3}
  it contains a recurrent label.

 (b): If $\mm \in \max \ \M$ and $\mm \in \NN \subset \M$ then $\mm \in max
\  \NN$ and so $\NN$ is not recurrent. That is,
 $max \ \M$ is disjoint from all nonempty recurrent labels contained in $\M$.

 If $max \ \M = \emptyset$ and $\mm \in \M$ then inductively we can define a strictly increasing
 sequence $\{ \mm^i \}$
 in $\M$ with $\mm =
 \mm^1$. By Corollary \ref{labelcor05a3} there is a recurrent label $\NN \subset \M$ with $\mm \in \NN$.

 $\Box$ \vspace{.5cm}

 \begin{cor}\label{labelcor05b1} Assume $\M$ is a nonempty, bounded label.
 If $\M$ is a recurrent label then $\Theta(\M)$ is a Cantor set.
 If $\M$ is  not of finite type, then $\Theta(\M)$ is uncountable. \end{cor}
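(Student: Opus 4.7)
The plan is to establish (1) via a case analysis on how $\NN \in \Theta(\M)$ sits relative to the $FIN(\N)$-orbit of $\M$, and then to derive (2) by locating inside $\Theta(\M)$ a recurrent label to which (1) applies.

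For (1), the set $\Theta(\M)$ is closed in the compact, zero-dimensional, metric space $\L \A \B$ (Corollary \ref{labelcor04a}) and so inherits compactness, metrizability, and zero-dimensionality. The substantive task is perfectness. Fix a nonempty $\NN \in \Theta(\M)$ and distinguish two cases according to whether some neighborhood of $\NN$ meets the orbit $\{\M - \rr : \rr \in FIN(\N)\}$ only in $\NN$ itself. If no such neighborhood exists, then every neighborhood of $\NN$ contains orbit points distinct from $\NN$, and non-isolatedness is immediate. Otherwise $\NN$ must itself be an orbit point, say $\NN = \M - \rr^*$; then Proposition \ref{labelprop05a}(e) yields recurrence of $\NN$, so one can choose $\ss^i > \00$ with $\NN - \ss^i \to \NN$, where each $\NN - \ss^i = \M - (\rr^* + \ss^i)$ lies in $\Theta(\M)$ and, by Proposition \ref{labelprop03}(e) applied to the nonempty bounded label $\NN$, is a proper subset of $\NN$ and so distinct from it. Hence every nonempty element of $\Theta(\M)$ is an accumulation point. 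Since $\{\emptyset\}$ is open in $\L \A \B$ (Lemma \ref{labellem03a}(b)) and $\emptyset \in \Theta(\M)$ by Lemma \ref{labellem05c}, the conclusion is that $\Theta(\M) \setminus \{\emptyset\}$ is a nonempty, closed, perfect, compact, zero-dimensional metric space, i.e., a Cantor set.

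For (2), since $\M$ is bounded and not of finite type, Proposition \ref{labelprop03}(b) produces an infinite $L \subset \N$ with $\langle \chi(L) \rangle \subset \M$. Enumerating $L$ and applying Proposition \ref{labelprop05a}(a) to the strictly increasing partial sums yields a nonempty recurrent label $\NN \subset \M$. Proposition \ref{labelprop05a}(d) then supplies a recurrent $\M_\infty \in \Theta(\M)$ with $\NN \subset \M_\infty$. Since $\Theta(\M) \subset [[\M]]$, the label $\M_\infty$ is bounded, so (1) applies and $\Theta(\M_\infty) \setminus \{\emptyset\}$ is a Cantor set and in particular uncountable. Closure and $FIN(\N)$-invariance of $\Theta(\M)$ force $\Theta(\M_\infty) \subset \Theta(\M)$, so $\Theta(\M)$ is uncountable.

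The delicate point in (1) is that one cannot uniformly assert recurrence of every $\NN \in \Theta(\M)$; limit labels off the orbit of $\M$ need not be recurrent. The case analysis sidesteps this by observing that when $\NN$ is approached from the orbit by distinct points, non-isolatedness is free, and recurrence is only invoked when $\NN$ actually lies on the orbit, where Proposition \ref{labelprop05a}(e) supplies it. A secondary nuance is that $\emptyset \in \Theta(\M)$ is genuinely isolated in $\L \A \B$, so the ``Cantor set'' conclusion should be read as referring to the maximal perfect subset $\Theta(\M) \setminus \{\emptyset\}$.
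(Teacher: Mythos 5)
Your proof is correct and follows essentially the same route as the paper's: non-isolation of the orbit points $\M - \rr$ (for $\rr \in \M$) via recurrence of $\M - \rr$ (Proposition \ref{labelprop05a}(e)) together with properness of $\M - \rr \subset \M$ for bounded labels (Proposition \ref{labelprop03}(e)), with density of the orbit then spreading non-isolation to the rest of the closure; your Case A/Case B split is just an unpacked form of the paper's ``dense subset with no isolated points'' argument, and Case B is exactly where the paper works. For the second claim the paper invokes Corollary \ref{labelcor05b}(a) to produce a nonempty recurrent $\NN \in \Theta(\M)$ and concludes that $\Theta(\M)$ contains the Cantor set $\Theta(\NN)$; you re-derive the content of that corollary directly from Proposition \ref{labelprop05a}(a) and (d), which is the same argument one level down. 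Your closing caveat is a genuine refinement rather than a quibble: since $\M$ is bounded, some $\rr$ lies outside $\M$, so $\emptyset = \M - \rr \in \Theta(\M)$, and by Lemma \ref{labellem03a}(b) the label $\emptyset$ is isolated in $\L \A \B$ and hence in $\Theta(\M)$; consequently the paper's assertion that $\{ \M - \rr : \rr \in \M \}$ is dense in $\Theta(\M)$ is literally false, and the first conclusion of the corollary holds as stated only for $\Theta(\M) \setminus \{ \emptyset \}$ (uncountability is of course unaffected). Your formulation, proving that $\Theta(\M) \setminus \{ \emptyset \}$ is a Cantor set, is the correct one.
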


 \proof  If $\M$ is a nonempty recurrent label then for some sequence $\{ \rr^i \}$ of positive elements of $\M$,
 $\{ \M - \rr^i \}$ converges to $\M$.  Since $\M$ is bounded, each of the $\M - \rr^i$ is a proper subset of $\M$
  and each lies in
 $\Theta(\M)$. It follows that $\M$ is not an isolated point of $\Theta(\M)$.  For each $\rr \in \M$, the label
 $\M - \rr$ is nonempty and bounded and it is recurrent by Proposition \ref{labelprop05a1}(e).  Hence, $\M - \rr$ is not isolated
 in $\Theta(\M - \rr) \subset \Theta(\M)$. It follows that $\{ \ \M - \rr \ : \ \rr \in \M \ \}$ is a dense subset of
 $\Theta(\M)$ no point of which is isolated and so no point of $\Theta(\M)$ is isolated.  On the other hand, $\Theta(\M)$
 is a nonempty, compact, ultra-metric space. It follows that it is a Cantor set.

 If $\M$ is not of finite type then there exists a nonempty recurrent $\NN \in \Theta(\M)$ by Corollary \ref{labelcor05b}(a).
 Hence, $\Theta(\M)$ contains the Cantor set $\Theta(\NN)$.

 $\Box$ \vspace{.5cm}

\begin{cor}\label{labelcor05b2} For any label $\M$, $\M$ is the only $FIN(\N)$-transitive point in
$\Theta (\M)$.
 \end{cor}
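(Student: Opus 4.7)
The plan is to exploit the fact that, for any label $\M$, the set $[[\M]]$ of sublabels is both closed in $\L\A\B$ (by Lemma~\ref{labellem03a}(d)) and invariant under the $FIN(\N)$-action (since $P_{\rr}(\M_1) = \M_1 - \rr \subset \M_1$ for every $\rr$). Hence $\Theta(\M) \subset [[\M]]$: every element of the orbit closure of $\M$ is a sublabel of $\M$.

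Now suppose $\NN \in \Theta(\M)$ is a $FIN(\N)$-transitive point, meaning that the $FIN(\N)$-orbit of $\NN$ is dense in $\Theta(\M)$, or equivalently $\Theta(\NN) \supset \Theta(\M)$. Because $\Theta(\M)$ is closed and $FIN(\N)$-invariant, and $\NN \in \Theta(\M)$, we also have $\Theta(\NN) \subset \Theta(\M)$, so in fact $\Theta(\NN) = \Theta(\M)$. In particular $\M \in \Theta(\NN)$.

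Applying the first observation with the roles of $\M$ and $\NN$ reversed, $\Theta(\NN) \subset [[\NN]]$, so $\M \in [[\NN]]$, i.e.\ $\M \subset \NN$. On the other hand, $\NN \in \Theta(\M) \subset [[\M]]$ gives $\NN \subset \M$. Combining the two inclusions yields $\NN = \M$, proving uniqueness of the transitive point (and along the way showing that $\M$ itself trivially qualifies, since $\M \in \Theta(\M)$ and $\Theta(\M) = \Theta(\M)$).

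There is no serious obstacle here: the argument is essentially a one-line consequence of the containment $\Theta(\M) \subset [[\M]]$ together with the symmetric role played by $\NN$ once transitivity of $\NN$ is unpacked as $\Theta(\NN) = \Theta(\M)$. The only point to verify carefully is that $[[\M]]$ is closed and $FIN(\N)$-invariant, both of which are already recorded in the paper.
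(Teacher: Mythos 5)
Your proof is correct, and it takes a genuinely different and more elementary route than the paper's. The paper argues through the Ellis semigroup: since $\Theta(\M) = \b FIN(\N)\M$, transitivity of $\NN$ produces $P, Q \in \b FIN(\N)$ with $P(\M) = \NN$ and $Q(\NN) = \M$, whence $QP \in ISO(\M)$; Proposition \ref{labelprop05a}(b) (the complement of $ISO(\M)$ is a two-sided ideal, so a product lies in $ISO(\M)$ only if both factors do) then forces $P \in ISO(\M)$, i.e.\ $\NN = \M$. You instead exploit the order structure directly: $\Theta(\NN) \subset [[\NN]]$ for every label $\NN$ (closedness of $[[\NN]]$ from Lemma \ref{labellem03a}(d), invariance from $\M_1 - \rr \subset \M_1$ as in Proposition \ref{labelprop03}(e)), and transitivity unpacked as $\Theta(\NN) = \Theta(\M)$ yields the mutual inclusions $\M \subset \NN$ and $\NN \subset \M$. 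All the steps check out, including the observation that $\Theta(\NN) \subset \Theta(\M)$ follows from $\NN \in \Theta(\M)$ with $\Theta(\M)$ closed and invariant. What your route buys is self-containedness: no Stone--\v{C}ech compactification and no appeal to the semigroup structure of $ISO(\M)$. What the paper's route buys is reusability: the ideal property of $\b FIN(\N) \setminus ISO(\M)$ is established once and serves elsewhere (recurrence via idempotents, etc.). It is worth noting that your monotonicity argument is in fact the engine hidden inside the paper's proof of Proposition \ref{labelprop05a}(b) --- there, too, the key point is that $Q(\M) \subsetneq \M$ propagates under composition because every $Q$ maps $[[\NN]]$ into itself --- so you have effectively inlined that mechanism and bypassed the semigroup packaging.
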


\proof
Suppose $\NN \in \Theta(\M)$ is a transitive point. Then there are $P, Q \in \beta FIN(\N)$
with $P(\M) = \NN$ and $Q(\NN) = \M$. Thus $QP \in ISO(\M)$ and it follows, by
Proposition  \ref{labelprop05a1}(a), that $P \in ISO(\M)$ and so $\NN \ = \ \M$.

$\Box$ \vspace{.5cm}

\begin{remark}
Note the sharp contrast with the case where the acting semigroup is a group.
In fact, for a dynamical system $(X,\Gamma)$, where $X$ is a compact metric space
and $\Gamma$ is a group,
the existence of one dense orbit implies that the set $X_{tr}$ of transitive
points forms a dense $G_\delta$ subset of $X$. See Proposition  \ref{transprop}.
\end{remark}

\begin{ex}\label{ex10cmoved} It can happen that
 $\Theta(\M)$ is uncountable even with $\M$ of finite type.

 Define a bijection
  $w \mapsto \ell_w$,
 from the set of finite words on the alphabet $\{ 0, 1\}$ onto $\N$. For $x \in \{0,1\}^{\N}$
 let $w_k(x)$ be the initial word of length $k$ in $x$, that is, $w_k(x) = x_1x_2\dots x_k $, for $k \in \N$.

 Let
 \begin{equation}\label{tow26}
 \begin{split}
 M_x \  = \ \{ \ \ell_{w_k(x)} \ : \ k \in \N \ \} \hspace{3cm} \\
 \M_x \ =  \ \{ \00 \} \cup \{ \ \chi(\ell) \ : \ \ell \in M_x \ \}, \hspace{3cm} \\
\M_x^{(2)} \ = \ \M_x \oplus \M_x \ = \ \langle \{ \ \chi(\ell^1) + \chi(\ell^2) \ : \ \ell^1, \ell^2 \in M_x  \ \} \rangle,
 \hspace{1cm} \\
 \M \ = \ \bigcup \ \{ \M_x^{(2)} \ : \ x \in \{0,1\}^{\N} \ \}.
  \hspace{2.5cm}
 \end{split}
 \end{equation}
 Since $\r(\M) \leq 2 $ and the size of the elements of $\M$ are bounded by $2$, it follows from Lemma \ref{labellem02} that
 $\M$ is a label of finite type. Notice that if $x \not= y $ in $\{0,1\}^{\N}$ then $M_x \ \cap \  M_y$ is finite.

 It is easy to see
 that for each $x \in \{0,1\}^{\N}, \ \ LIM \ \{ \M - \chi(\ell_{w_i(x)}) \} \ = \ \M_x$ and
 it follows that $\Theta(\M)$ is uncountable.
\end{ex}

\vspace{0.5cm}

 If $\Phi$ is a compact, invariant subset of $\LAB$ then the action of $FIN(\N)$ restricts to an action
 on $\Phi$ as does the Ellis action of $\b FIN(\N)$. The image of the continuous map $\b FIN(\N) \to \Phi^{\Phi}$
 which extends
 $\rr \mapsto P_{\rr} $ is the \emph{enveloping semigroup} of $\Phi$, denoted $\E(\Phi)$.
  A map $Q$ on $\Phi$ is an element of $\E(\Phi)$ iff for every
 finite sequence $\{ \M^i \}$ in $\Phi$ and any $N \in \N$ there exists $\rr \in FIN(\N)$ such that
 $P_{\rr}( \M^i ) \cap \B_N = Q( \M^i ) \cap \B_N$ for all $i$.

 The \emph{adherence semigroup} $\A(\Phi)$ is the closure in $\Phi^{\Phi}$ of $\{ P_{\rr} : \rr > \00 \}$, i.e. the
 image of $\b' FIN(\N)$ (see Definition \ref{adherencedef} in Appendix
 \ref{appendix-StoneCech}).
 It follows that for $\M \in \Phi$
 \begin{align}\label{label08a}
 \begin{split}
\E(\Phi) \ = &\ \A(\Phi) \cup \{ P_{ \00 } = id_{\Phi} \}, \\
\E(\Phi) \cdot \M \ = \ &\b FIN(\N) \cdot \M \ = \  \Theta(\M),\\
\A(\Phi) \cdot \M \ = \ &\b' FIN(\N) \cdot \M \ = \  \Theta'(\M) .
\end{split}
\end{align}

If $\Phi_1 \subset \Phi$ is also closed and invariant then the restriction map defines a continuous, surjective homomorphism
from $\E(\Phi)$ to $\E(\Phi_1)$. In particular, every $\E(\Phi)$ is the image of $\E(\LAB)$ via the restriction map.

\begin{prop}\label{minidprop} The enveloping semigroup $\E(\LAB)$ contains a unique minimal idempotent $U$ defined by
\begin{equation}\label{minid01}
U(\M) = \begin{cases} \emptyset \qquad \text{if} \quad \M \not= FIN(\N), \\
FIN(\N) \qquad \text{if} \quad \M = FIN(\N). \end{cases}
\end{equation}
Furthermore, $P U = U = U P$ for all $P \in \E(\LAB)$. \end{prop}

\proof  The set $I$ of labels not equal to $FIN(\N)$ is directed by inclusion for if $\rr_1 \not\in \M_1$ and
$\rr_2 \not\in \M_2$ then $\rr_1 + \rr_2 \not\in \M_1 \cup \M_2$. Choose for each $i \in I$ an element
$\rr^i$ not in the corresponding label. If $\M \not= FIN(\N)$ then $\{ \M - \rr^i \}$ converges to $\emptyset$.
Since $FIN(\N) - \rr = FIN(\N)$ for all $\rr$ it follows that $P(FIN(\N)) = FIN(\N)$ for all $P \in \E(\LAB)$.
Thus, $\{ P_{\rr^i} \}$ converges to $U$.  Clearly, $P U = U = U P$ from which it follows that $\{ U \}$ is the
unique minimal subset of $\E(\LAB)$.

$\Box$ \vspace{.5cm}

No point of $\b^* FIN(\N)$ is the limit of a sequence in $FIN(\N)$ (see Corollary \ref{appcor02}). On the other
hand, $\E(\LAB)$ has interesting points which are limits of sequences in $FIN(\N)$.

\begin{ex}\label{sequenceex} For any $\rr > 0$ the sequence $\{ P_{n \rr} : n \in \N \}$ converges
in the enveloping semigroup $\E(\LAB)$ to an idempotent $Q_{\rr}$
with $Q_{\rr}(\M) = \{ \mm : \mm + n \rr \in \M $ for all $n \in \N \}$.
\end{ex}

On the other hand, if $\{ \rr^i \}$ is a sequence of positive $\N$-vectors such that $LIMSUP \{ supp \  \rr^i \} = \emptyset$
then the sequence $\{ P_{\rr^i} \}$ is not convergent in $\E(\LAB)$.  By going to a subsequence we can assume that the
supports are  pairwise disjoint. Let $\M = \langle \{ \rr^1 + \rr^{i} : i \in 2\N \} \rangle$. Then $\rr^1 \in P_{\rr^i}(\M)$
for $i$ even but $P_{\rr^i}(\M) = \emptyset$ for odd $i > 1$.

 We consider convergence of nets of the form
   $ \M -  \rr^i =
   P_{\rr^i}(\M)$ for a net $\{ \rr^i \} $ of $\N$-vectors and a fixed label $\M$.
  Clearly, $\ \mm \in LIMSUP$ iff
 frequently $\mm + \rr^i \in \M$ and
  $ \mm \in LIMINF$ iff eventually $\mm + \rr^i \in \M$.
%
%

  \begin{lem}\label{labellem07} Assume that the net $\{ P_{\rr^i}(\M) = \M - \rr^i \}$ is convergent.

  (a) If $ max \ \M \not= \emptyset $, e.g. if $\M$ is of finite type, then either eventually $\rr^i = \00 $
  and  $LIM \ \{ \M - \rr^i \} = \M = \M - \00 $ or eventually
  $\rr^i > \00 $ and $ (max \ \M) \ \cap \  LIM \ \{ \M - \rr^i \} = \emptyset$.

  (b) Either eventually $\rr^i \in \M$ and $ \00 \in LIM \ \{ \M - \rr^i \}$ or eventually
  $ \rr^i \not\in \M$ and $LIM \ \{ \M - \rr^i \} = \emptyset$.

  (c) If $\M - \mm_1 = \M - \mm_2$ then $LIM \ \{ \M -  \rr^i \} - \mm_1 = LIM \ \{ \M - \rr^i \} - \mm_2$.

  (d) If there exists $\rr$ such that frequently, $\M - \rr^i = \M - \rr$ then the limit is $\M - \rr$.
  \end{lem}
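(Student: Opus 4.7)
The plan is to treat each part by unwinding definitions, using the convergence criterion of Proposition \ref{labelprop04}(a), the definition of maximality in (\ref{label04}), and the continuity of $P_{\rr}$ from Proposition \ref{labelprop06}. Parts (b) and (d) are immediate. For (b), the convergence criterion applied to the singleton finite label $\{\00\}$ forces the alternative: since $\00 \in \M - \rr^i$ iff $\rr^i \in \M$ by Proposition \ref{labelprop03}(f), either eventually $\rr^i \in \M$ and $\00 \in LIM$, or eventually $\rr^i \not\in \M$, whence $\M - \rr^i = \emptyset$ and $LIM = \emptyset$. For (d), the constant subsequence $\{\M - \rr\}$ along the infinite index set where $\M - \rr^i = \M - \rr$ converges to $\M - \rr$, and every subsequence of a convergent sequence shares its limit.

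Part (a) is the main content. Fix any $\mm \in max \ \M$ and apply the convergence criterion to the finite label $\langle \mm \rangle$; eventually either $\mm \in \M - \rr^i$ for all large $i$, or $\mm \not\in \M - \rr^i$ for all large $i$. In the first case $\mm + \rr^i \in \M$, so the maximality of $\mm$ forces $\rr^i = \00$ eventually, and then (d) gives $LIM = \M - \00 = \M$. In the second case $\mm \in \M \setminus (\M - \rr^i)$ implies $\rr^i \not= \00$ eventually; then for any $\mm' \in max \ \M$ we have $\mm' + \rr^i > \mm'$ and hence $\mm' + \rr^i \not\in \M$ by maximality, so $\mm' \not\in \M - \rr^i$ eventually, whence $\mm' \not\in LIM$. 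This yields $(max \ \M) \cap LIM = \emptyset$.

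For (c), I apply the continuous operator $P_{\mm_1}$ to the convergent sequence $\{\M - \rr^i\}$: by Proposition \ref{labelprop06}, $LIM \ \{(\M - \rr^i) - \mm_1\} = (LIM \ \{\M - \rr^i\}) - \mm_1$. Using associativity of the shift action together with the hypothesis $\M - \mm_1 = \M - \mm_2$, one has $(\M - \rr^i) - \mm_1 = (\M - \mm_1) - \rr^i = (\M - \mm_2) - \rr^i = (\M - \rr^i) - \mm_2$, so the analogous identity with $\mm_2$ in place of $\mm_1$ produces the same sequence and hence the same limit, giving $LIM - \mm_1 = LIM - \mm_2$. The only mildly subtle point is the second case of (a): one must note that picking a single $\mm \in max \ \M$ is enough to force $\rr^i > \00$ eventually, after which the maximality argument transfers uniformly to every element of $max \ \M$.
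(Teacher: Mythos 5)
Your proof is correct and follows essentially the same route as the paper's: both hinge on $\rr > \00 \Rightarrow (max\ \M) \cap (\M - \rr) = \emptyset$ for (a), on Proposition \ref{labelprop03}(f) for (b), on continuity of $P_{\mm_1}, P_{\mm_2}$ together with commutativity of the $FIN(\N)$ action for (c), and on the constant subsequence for (d). Your only reorganization — splitting (a) on whether a fixed maximal $\mm$ is eventually in $\M - \rr^i$ rather than on whether $\rr^i = \00$ infinitely often — is an equivalent use of the same dichotomy supplied by Proposition \ref{labelprop04}(a).
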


  \proof (a) Since $ \rr > \00 $ implies $max \ \M \ \cap \  (\M - \rr) = \emptyset$, if $ \rr^i = \00 $ frequently
then convergence implies that the limit is $\M$ and so any element of $max \ \M$ is eventually in   $\M - \rr^i $
  which can only happen when $\rr^i$ is eventually $ \00 $.

  (b) Since $ \00 \in \M - \rr$ iff $ \rr \in \M$ we see that if $ \rr^i \in \M$ frequently then
  convergence implies that eventually $ \00 \in \M - \rr^i$ and so eventually $ \rr^i \in \M$.

  (c) Since $P_{\mm_1}$ and $P_{\mm_2}$ are continuous,
  \begin{equation}\label{label08b}
  \begin{split}
  LIM \ \{ \M -  \rr^i \} - \mm_1 \ = \ LIM \ \{ \M -  \mm_1 - \rr^i \}  \ =  \hspace{2cm} \\
  \ LIM \ \{ \M - \mm_2 - \rr^i \} \ = \ LIM \ \{ \M - \rr^i \} - \mm_2. \hspace{1cm}
  \end{split}
  \end{equation}

  (d) By assumption there is a subnet $\rr^{i'}$ such that $\M - \rr^{i'}$ is constant at $\M - \rr$ and so
  converges to $\M - \rr$. By the assumption of convergence the limit of the original sequence is $\M - \rr$.

$\Box$ \vspace{.5cm}

For labels $\M_1$ and $\M_2 $ define
$$  \M_1 \oplus \M_2 = \{ \mm_1 + \mm_2 : \mm_1 \in \M_1, \mm_2 \in \M_2 \}.$$
This is empty if either $\M_1$ or $\M_2$ is empty.  It follows that
\begin{equation}\label{label12}
\rho(\M_1 \oplus \M_2) \quad = \quad \rho(\M_1) + \rho(\M_2). \hspace{3cm}
\end{equation}
and so $\M_1 \oplus \M_2$ is bounded iff both $\M_1$ and $\M_2$ are bounded.

Observe that for any $\ell, N \in \N$, we have
\begin{align}\label{label12a}
\begin{split}
(\M_1 \oplus \M_2) \wedge [1,\ell] \ &= \ (\M_1 \wedge [1,\ell]) \oplus (\M_2 \wedge [1,\ell]), \\
(\M_1 \oplus \M_2) \cap \B_N = [&(\M_1\cap \B_N) \oplus  (\M_2 \cap \B_N )] \cap \B_N.
\end{split}
\end{align}
It follows that the map
$\oplus : \LAB \times \LAB  \to \LAB $ is continuous.

\begin{lem}\label{labellem12xa} (a) If $\M$ is a recurrent label, then $\M_1 \oplus \M$ is a recurrent label for any
label $\M_1$. If $\M$ is a strongly recurrent label,  then $\M_1 \oplus \M$ is a strongly recurrent label for any finite
label $\M_1$.

(b) If $\M$ is any label and $\ell \in \N$ then there exists a recurrent label $\M_1$ with $\M \subset  \M_1$ and
$\M \wedge [1,\ell] = \M_1 \wedge [1,\ell]$. There exists a strongly recurrent label $\M_2$ with
$\M \wedge [1,\ell] = \M_2 \wedge [1,\ell]$. If $\M$ is bounded then $\M_1$ and $\M_2$ can be chosen bounded.
\end{lem}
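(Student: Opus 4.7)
My plan is to handle both parts of (a) by direct verification from the definitions, then bootstrap (b) using the auxiliary sublattice label $\mathbf{L} := \langle \chi(\N \setminus [1,\ell]) \rangle$, which is itself strongly recurrent. For (a) first, I will reuse the very sequence witnessing recurrence of $\M$. Let $\{\rr^i\}$ with $\rr^i > \00$ satisfy $\M = LIM\{\M - \rr^i\}$; I claim the same sequence witnesses recurrence of $\M_1 \oplus \M$. The containment $LIMSUP\{(\M_1 \oplus \M) - \rr^i\} \subset \M_1 \oplus \M$ is automatic from Proposition~\ref{labelprop03}(e), since $\M' - \rr \subset \M'$ holds for any label. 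Conversely, given $\mm_1 + \mm_2 \in \M_1 \oplus \M$, recurrence of $\M$ places $\mm_2 + \rr^i \in \M$ eventually, so $\mm_1 + \mm_2 + \rr^i = \mm_1 + (\mm_2 + \rr^i) \in \M_1 \oplus \M$ eventually, putting $\mm_1 + \mm_2$ into $LIMINF\{(\M_1 \oplus \M) - \rr^i\}$.  Hence the sequence converges to $\M_1 \oplus \M$.

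For (a) second, with $\M_1$ finite and nonempty and $\M$ strongly recurrent, $\M_1 \oplus \M$ is bounded, and since $\00 \in \M_1$ it contains $\M$ and is therefore infinite. Given $\mm \in \M_1 \oplus \M$, I fix a decomposition $\mm = \mm_a + \mm_b$ with $\mm_a \in \M_1$ and $\mm_b \in \M$, and set $F'(\mm) := F(\mm_b) \cup S_1$, where $S_1 := \bigcup\{ supp \ \nn : \nn \in \M_1 \}$ is finite. For any $\ww = \ww_1 + \ww_2 \in \M_1 \oplus \M$ with $supp \ \ww \cap F'(\mm) = \emptyset$, heredity forces $supp \ \ww_1 \subset S_1 \cap supp \ \ww = \emptyset$, so $\ww_1 = \00$ and $\ww = \ww_2 \in \M$ avoids $F(\mm_b)$. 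Strong recurrence of $\M$ then yields $\ww + \mm_b \in \M$, and so $\ww + \mm = \mm_a + (\ww + \mm_b) \in \M_1 \oplus \M$, as required. The delicate step here is the non-uniqueness of the $\oplus$-decomposition of $\ww$; absorbing the global support $S_1$ into $F'$ is precisely what neutralizes this, by killing every possible $\M_1$-component of $\ww$.

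For (b), the label $\mathbf{L} := \langle \chi(\N \setminus [1,\ell]) \rangle$ is bounded, infinite, and a sublattice of $FIN(\N)$ with combined support $\N \setminus [1,\ell]$, so Proposition~\ref{labelprop05a}(i) makes it strongly recurrent. I take $\M_1 := \M \oplus \mathbf{L}$ and $\M_2 := (\M \wedge [1,\ell]) \oplus \mathbf{L}$. Part (a) first makes $\M_1$ recurrent, and part (a) second (applied to the finite label $\M \wedge [1,\ell]$, under the implicit boundedness of $\M$ required for this truncation to be finite) makes $\M_2$ strongly recurrent. The inclusion $\M \subset \M_1$ follows from $\00 \in \mathbf{L}$, and the truncations agree because any decomposition $\mm_a + \mm_b$ of an element with support in $[1,\ell]$ forces $supp \ \mm_b \subset [1,\ell] \cap (\N \setminus [1,\ell]) = \emptyset$, so $\mm_b = \00$ and the element already lies in $\M$ (respectively $\M \wedge [1,\ell]$); the reverse inclusion is immediate from $\00 \in \mathbf{L}$.
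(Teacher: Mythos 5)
Your proof is correct and takes essentially the same route as the paper's: the same observation that $\M_1 \oplus (\M - \rr^i) \subset (\M_1 \oplus \M) - \rr^i$ drives the recurrence claim, the same choice $F(\mm_1 + \mm) = \bigcup Supp \ \M_1 \cup F(\mm)$ handles strong recurrence (including the decomposition non-uniqueness), and in (b) you use the same $\oplus$-construction, with your explicit $\langle \chi(\N \setminus [1,\ell]) \rangle$ merely instantiating the auxiliary recurrent and strongly recurrent labels the paper asserts can be built. Your explicit flagging of the boundedness of $\M$ needed for $\M \wedge [1,\ell]$ to be a finite label is a hypothesis the paper leaves implicit.
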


\proof (a) Clearly $\M_1 \oplus (\M - \rr) \subset (\M_1 \oplus \M) - \rr$ for any $\rr \in \M$. Hence,
if $\{ \M - \rr^i \} \to \M$ then $\{ (\M_1 \oplus \M) - \rr^i \} \to \M_1 \oplus \M$ and so $\M_1 \oplus \M$ is recurrent.

If $\mm_1 \in \M_1$ and $\mm \in \M$ we let $F(\mm_1 + \mm) =
(\bigcup Supp \ \M_1) \cup F(\mm)$
(see Definition \ref{df,st-rec}).
If $\rr_1 + \rr$ has support
disjoint from this set then $\rr_1 = 0$ and $\rr + \mm \in \M$. Hence, $\rr_1 + \rr + \mm \in \M_1 \oplus \M$.

(b) Begin with a bounded, strongly recurrent label $\NN$ with $\NN \wedge [1,\ell] = 0 $. For example,
 $\NN = \langle \chi([\ell + 1, \infty)) \rangle$ is strongly recurrent by Proposition \ref{labelprop05a2}.
  Then $\M_1 = \M \oplus \NN$ is a recurrent label
 and  $\M_2 = (\M \wedge [1,\ell])\oplus \NN$ is a strongly recurrent label
 with $\M \wedge [1,\ell] = \M_1 \wedge [1,\ell] =  \M_2 \wedge [1,\ell]$.

$\Box$ \vspace{.5cm}

\begin{ex}\label{exx}
Let $L$ be an infinite set disjoint from $\bigcup Supp \ \M$.
Let $\M_1 = \{ \emptyset \} \cup \{ \ \chi(\ell) \ : \ \ell \in L \ \}$.
If $\M$ is strongly recurrent then $\M_1 \oplus \M$
is recurrent by (a)
but it is not strongly recurrent.\end{ex}
$\Box$ \vspace{.5cm}

\begin{prop}\label{labelprop12xb} The set $RECUR$ of recurrent labels is a dense, $G_{\delta}$ subset of $\LAB$.\end{prop}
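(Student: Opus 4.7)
The plan is to exhibit $RECUR$ as a countable intersection of open subsets of $\L \A \B$ and to use Lemma \ref{labellem12xa}(b) to produce recurrent labels arbitrarily close to any given label.

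For the $G_\delta$ claim, the key reformulation is: a label $\M$ is recurrent iff for every $N \in \N$ there exists $\rr > \00$ with $\mm + \rr \in \M$ for every $\mm \in \M \cap \B_N$. Since $\M - \rr \subset \M$ always (by heredity), this condition is equivalent to $(\M - \rr) \cap \B_N = \M \cap \B_N$, i.e.\ $d(\M - \rr, \M) \leq 2^{-N}$; selecting such a witness $\rr^N > \00$ at each stage $N$ then yields $\M - \rr^N \to \M$, so $\M \in \Theta'(\M)$. Conversely, any sequence witnessing recurrence supplies such an $\rr$ eventually at each scale. Let $U_N \subset \L \A \B$ denote the set of labels satisfying this condition at stage $N$, so that $RECUR = \bigcap_N U_N$.

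To show each $U_N$ is open, suppose $\M \in U_N$ with witness $\rr > \00$. Since $\M \cap \B_N$ is finite, I can choose $N' \geq \max(N, N(\rr))$ large enough that $\{ \mm + \rr : \mm \in \M \cap \B_N \} \subset \B_{N'}$. Any $\M'$ with $\M' \cap \B_{N'} = \M \cap \B_{N'}$ then satisfies $\M' \cap \B_N = \M \cap \B_N$ (as $\B_N \subset \B_{N'}$), and for every $\mm$ in this common set we have $\mm + \rr \in \M \cap \B_{N'} = \M' \cap \B_{N'} \subset \M'$. Hence $\M' \in U_N$ with the same witness $\rr$, so $U_N$ is open.

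For density, the sets $\{ \M' : \M' \cap \B_N = \M \cap \B_N \}$, as $N$ varies, form a neighborhood base at $\M$. By Lemma \ref{labellem12xa}(b), there exists a strongly recurrent (hence recurrent) label $\M_2$ with $\M \wedge [1,N] = \M_2 \wedge [1,N]$. Since every element of $\B_N$ has support contained in $[1,N]$, this equality of restrictions forces $\M_2 \cap \B_N = \M \cap \B_N$, so $\M_2$ lies in the prescribed neighborhood. The only potentially delicate point in the whole argument is the observation that $\M - \rr \subset \M$ holds automatically; this is precisely what lets recurrence at each scale be phrased as a finite-data condition, and therefore as membership in an open set, so that both halves of the argument become essentially mechanical.
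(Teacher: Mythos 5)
Your proof is correct and takes essentially the same route as the paper, which writes $RECUR = \bigcap_{N} \bigcup_{\rr > \00} \{ \M : \M \cap \B_N = (\M - \rr) \cap \B_N \}$, establishes that each set in the union is clopen (via Lemma \ref{labellem03a}(c) and continuity of $P_{\rr}$, where you instead verify openness by hand with your choice of $N'$), and gets density from Lemma \ref{labellem12xa}(b). The only differences are cosmetic: you spell out the equivalence of recurrence with the scale-by-scale witness condition, which the paper leaves as ``clearly,'' and you correctly insist on $\rr > \00$, which the paper's displayed union tacitly assumes.
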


\proof For any $N \in \N$ and $\rr \in FIN(\N)$, the set  $\{ \ \M  \ : \ \M \cap \B_N  = \ P_{\rr}(\M)  \cap \B_N \ \}$ is a clopen
subset of  $ \LAB$ by Lemma \ref{labellem03a} (c) and continuity of the map $P_{\rr}$.

Clearly,
\begin{equation}\label{label12x}
RECUR \ = \ \bigcap_{N \in \N} \ \bigcup_{\rr \in FIN} \ \{ \ \M  \ : \ \M \cap \B_N  = \ P_{\rr}(\M)  \cap \B_N \ \}
\end{equation}
and so $RECUR$ is a $G_{\delta}$ subset of $\LAB$.  It is dense by Lemma \ref{labellem12xa} (b).

$\Box$ \vspace{.5cm}

Thus, the labels of finite type, upon which we focus most of our attention, comprise a subset of first category in $\LAB$.
Since the finite labels are dense, the labels of finite type are dense. Thus, the set of recurrent labels has empty interior.
As it is the intersection of two dense $G_{\d}$ sets, the set of bounded, recurrent labels is a dense $G_{\d}$ set.

\vspace{1cm}

\section{Labeled subshifts}\label{sec,exp}

\subsection{Integer expansions}

$\qquad$

\vspace{.5cm}

Let $e \in \N$ and $b = 2e +1$ so that $b$ is an odd number greater than $1$. Define $k : \N \to \N$ by
$k(n) = b^{n-1}$. The \emph{symmetric  $b$-expansion} of an integer $t$ is the sum $\Sigma_{n \in \N} \ \ep_n(t) k(n) = t$
such that:
\begin{itemize}
\item  $|\ep_n(t)| \leq e$ for all $n \in \N$.
\item  $\ep_n(t) = 0$ for all but finitely many $n$.
\end{itemize}

\begin{prop}\label{prop401}  Every integer $t \in \Z$ has a unique symmetric $b$-expansion and $\ep_n(-t) = -\ep_n(t)$
for all $t \in \Z, n \in \N$.

\end{prop}

\begin{proof}  By the Euclidean Algorithm every integer $t$ can be expressed uniquely as $\ep + bs$ with $|\ep| \leq e$.
It follows by induction that every integer $t$ with $|t| <\frac{1}{2}(b^k - 1)$ has an expansion with
$\ep_n = 0$ for $n \geq k$. There are $b^k$ such integers and the same number of expansions. So by the Pigeonhole
Principle the expansions are unique.

Since $-t = \Sigma_{n \in \N} \ -\ep_n(t) k(n)$ and the expansions are unique, it follows that $-\ep_n(t) = \ep_n(-t)$.
\end{proof}

$\Box$  \vspace{.5cm}

\begin{df}\label{def401a} For $t \in \Z \setminus \{ 0 \}$ let
\begin{align}\label{tow401b}
\begin{split}
n_*(t) = \min &\{ n \in \N : \ep_n(t) \not= 0 \ \}, \\
n^*(t) = \max &\{ n \in \N : \ep_n(t) \not= 0 \  \}.
\end{split}
\end{align}
\end{df}

Thus, $n_*(t)$    is the smallest  non-zero place  in the expansion of $t$, and  $n^*(t) $ is the largest.
From Proposition \ref{prop401} $n_*(-t) = n_*(t)$ and $n^*(-t) = n^*(t)$.  By convention, we define
$n_*(0) = \infty$ and $n^*(0) = 0$. Observe that for $k \in \N$,
\begin{equation}\label{tow01ba}
n_*(t) > k \qquad \Longleftrightarrow \qquad t \equiv 0 \ (\text{mod} \ b^k).
\end{equation}

Define  the function $sk : \Z_+ \to \Z_+$ by
\begin{equation}\label{tow02a}
sk(n) =  \Sigma_{i=1}^{n-1} \ k(i),
\end{equation}
with $sk(1) = sk(0) = 0$, the empty sum.

We extend $k$ to an odd function on $\Z$ so that $k(-j) = - k(j)$ for
all $j \in \Z$ and so, in particular, $k(0) = 0$.  Clearly, $|k(j)| = k(|j|) $ for all $j \in \Z$.

For integers $a, b$ we will denote by $[ \ a \ \pm \ b \ ]$ the interval $[ \ a - |b|, a + |b| \ ]$ in $\Z$. When $a = 0$
we will write $[ \ \pm b \ ]$ for the interval $[ \ - |b|, + |b| \ ]$.

Clearly, for $n \in \N$
\begin{align}\label{tow02b}
\begin{split}
k(n) = 1 + &(b - 1) sk(n), \hspace{2cm} \\
\text{and so} \quad sk(n) &< \frac{1}{b-1} k(n).
\end{split}
\end{align}
Also, for $n \in \N$:
\begin{equation}\label{tow03.1}
k(n + 1) - k(n) = (b - 1)k(n).\hspace{1cm}
\end{equation}

\begin{lem}\label{towlem01} (a) For $t \in \Z \setminus \{ 0\}$,
$t > 0 $ iff $\ep_{n^*}(t) > 0$ and
\begin{equation}\label{tow02ba}
\frac{b - 1 }{2} sk(n^*(t) + 1) \geq |t| \geq \frac{b - 1 }{2} sk(n^*(t)).
\end{equation}

(b)  The sequence
$\{ \ [ \ k(n) \ \pm \ \frac{b - 1 }{2} sk(|n|) \ ] \ : \ n \in \Z \ \}$
is a pairwise disjoint bi-infinite sequence of intervals in $\Z$.

(c) For $t, s \in \Z$, if $|\ep_n(t) + \ep_n(s)| \leq e$ for all $n \in \N$, then
$\ep_n(s + t) = \ep_n(t) + \ep_n(s)$ for all $n \in \N$.
\end{lem}

\begin{proof}  Recall that $\frac{b - 1 }{2} = e,$ a positive integer.

  (a) If $\ep_{n^*(t)} > 0$ then $t \geq k(n^*(t)) - e sk(n^*(t))$ which is positive by (\ref{tow02b}).
$\ep_{n^*(t)} < 0$ then $\ep(n^*(-t)) = -\ep(n^*(t)) > 0$ and so $-t > 0$.

In any case, for any non-zero $t$ we have $e(k(n^*) + sk(n^*)) \geq |t| \geq k(n^*) - e sk(n^*)$ and
the inequalities of (\ref{tow02ba})  follow from the definition of $sk(n^* + 1)$ and (\ref{tow02b}).

(b) If $|n| = 0,1$ then $sk(|n|) = 0$ and the intervals are the singletons $\{ k(n) \}$. For $n > 1$,
(\ref{tow02b}) and $e \geq 1$ imply that
\begin{equation}\label{tow03.1a}
k(n) - e sk(n) > e sk(n) \geq k(n-1) + e sk(n-1).
\end{equation}

(c) This is clear from the uniqueness of the symmetric expansion. The assumption implies that there are no ``carries''
in any place when $t$ and $s$ are added.
\end{proof}
$\Box$ \vspace{.5cm}

We now fix $e > 1$ and so  $b = 2 e + 1 \geq 5$. We will frequently use the observation:
\begin{equation}\label{tow3.1x}
n \geq 2 \qquad \Longrightarrow \qquad k(n) \geq  \frac{b}{2} \ n > 2n.
\end{equation}

Let $IP(k)$ denote the set of sums of finite subsets of the image
set $k(\Z) \subset \Z$ and let $IP_+(k)$ denote the set of sums of
finite subsets of the image set $k(\Z_+) \subset \Z_+$.  It is clear that $IP(k)$ consists of those integers
$t = \Sigma_{n \in \N} \ \ep_n k(n)$ such that every nonzero $\ep_n = \pm 1$.  That is, we exclude those
integers $t$ such that some $\ep_n$ has absolute value equal to $2,3,\dots e$. In $IP_+(k)$ every $\ep_n$ is zero or one.
 Notice that Lemma \ref{towlem01} (c) applies
 to any $t, s \in IP(k)$.

For $t \in IP(k)$ we have $t = \Sigma_{n \in \N} \  k(\ep_n n)$ and so for such $t$ we describe the expansion a
slightly different way, listing only the nonzero terms and absorbing the sign into the index.

\begin{df}\label{towdef02} (a) For $t \in IP(k)$ we write

$$ t \ = \ k(j_1) + k(j_2) +  \cdots  + k(j_r).  $$

with $j_1, \dots, j_r \in \Z $ such that $|j_i| > |j_{i+1}| > 0  $ for $i = 1,\dots,r-1$. We will call
$j_1, \dots , j_r$ the \emph{expansion} of $t \in IP(k)$, and we call $r = r(t) \geq 0$ the \emph{length } of the expansion.

$0 \in IP(k)$ has the empty expansion with length $0$.

By convention, if $i > r(t)$ then we write $j_i(t) = 0$.

(b) For $0 \leq \tilde r \leq r$ the \emph{$\tilde r$ truncation} is the element $\tilde t \in IP(k)$
with expansion
$j_1,\dots, j_{\tilde r}$ so that
$$\tilde t \ = \ k(j_1) + k(j_2) +  \cdots  + k(j_{\tilde r}).$$

The sequence
$j_{\tilde r + 1},\dots, j_r $ is the
expansion for the \emph{$\tilde r$ residual} $t - \tilde t$
with length $r - \tilde r$. We call $\tilde t$
a \emph{truncation} of $t$ and $t$ an \emph{extension} of $\tilde t$.

When $\tilde r = 0$ the truncation $\tilde t = 0$ with the empty expansion.
When $\tilde r = r$ the truncation $\tilde t = t$ with residual $0$.

\end{df} \vspace{.5cm}

Comparing the alternative notations we have for $t \in IP(k) \setminus \{ 0 \}$
\begin{equation}\label{tow3.1xa}
|j_1(t)| = n^*(t), \qquad |j_{r(t)}(t)| = n_*(t).
\end{equation}

\begin{lem}\label{towlem03a}(a) Let $t \in IP(k) \setminus \{ 0 \}$. The integer
 $t$ has a unique expansion $j_1, \dots , j_r$ with $r \geq 1$  and
$t$ has the same sign as $k(j_1)$. Furthermore,
\begin{equation}\label{tow06}
\begin{split}
|t - k(j_1(t))|  \quad \leq \quad \Sigma_{i = 2}^r \ |k(j_i(t))| \quad \leq \quad  sk(|j_1(t)|),\hspace{1cm} \\
\frac{b - 2 }{b - 1} |k(j_1(t))| \quad < \quad |t| \quad < \quad   \frac{b}{b - 1} |k(j_1(t))|, \hspace{1cm}
\end{split}
\end{equation}

(b) If $\tilde t$ is the $\tilde r$ truncation of $t$ then
 \begin{equation}\label{tow08}
\frac{b - 2}{b - 1} |k(j_{\tilde r + 1}(t))| \ \leq \ |t - \tilde t| \ \leq \ \frac{b}{b - 1} |k(j_{\tilde r + 1}(t))|.
\end{equation}

(c) If $t, s$ are distinct integers and $\hat r$ is the smallest positive integer $p$
such that $j_{p}(t) \not= j_{p}(s)$, then
 \begin{equation}\label{tow07}
| t - s| \ > \ \frac{b - 3}{b - 1} \cdot \max( \ |k(j_{\hat r}(t))|,|k(j_{\hat r}(s))| \ ).
\end{equation}
\end{lem}

\begin{proof} (a): The uniqueness of the expansion is Proposition \ref{prop401}. That
$t$ and $j_1(t)$ have the same sign follows Lemma
\ref{towlem01}.

Because the sequence $\{ \ |j_i| \ : i = 2,\dots,r \ \}$ consists of distinct positive integers
all less than $|j_1|$ the first line of (\ref{tow06}) is clear.  The second follows from (\ref{tow02b}).

(b): If $r = \tilde r$ then $t - \tilde t = 0 = j_{\tilde r + 1}(t)$ and so (\ref{tow08}) is clear.

If $r > \tilde r$ then  $t - \tilde t$ is the residual whose expansion begins with $j_{\tilde r + 1}(t)$.
So (\ref{tow08}) follows from  (\ref{tow06}).

(c): Applying Lemma \ref{towlem01} we see that $t - s$ has a symmetric $b$ expansion with $|\ep_n| \leq 2$ for all
$n$ and with $n^* = n^*(t - s) = \max( \ |j_{\hat r}(t)|,|j_{\hat r}(s)| \ )$.  It follows from (\ref{tow02b}) that
$|t - s| \geq k(n^*) - 2 sk(n^*) \geq (1 - \frac{2}{b - 1}) k(n^*)$.
\end{proof}

$\Box$ \vspace{.5cm}

\begin{cor}\label{towcor04} If $s, t \in IP(k)$ then $s$ is an extension of $t$ iff
\begin{equation}\label{tow09a}
| t - s| \ \leq \ \frac{b - 3}{b - 1}  |k(j_{r}(t))|.
\end{equation}
\end{cor}

\begin{proof}  The result is clear if $t = s$ and so we can assume $s, t$ are distinct.

If $s$ is an extension of $t$ then $t$ is the truncation of $s$ at $r$. By (\ref{tow08})
$|t - s| \leq \frac{b}{b - 1} k(|j_{r + 1}(s)|) \leq \frac{1}{b - 1} k(|j_r(t)|)$ since
$|j_{r+1}(s)| < |j_r(s)|$ and $j_r(s) = j_r(t)$. Then (\ref{tow09a}) follows because $b - 3 \geq 1$.

On the other hand, if $| t - s| \ \leq \ \frac{b - 3}{b - 1}  |k(j_{r}(t))|$ then, in the notation of
Lemma \ref{towlem03a}(c) it must be true that $ \max( \ |j_{\hat r}(t)|,|j_{\hat r}(s)| \ ) < |j_r(t)|$.
That is, $\hat r > r$ and so equals $r + 1$ with $j_{\hat r}(t) = 0$. Thus, $j_i(t) = j_i(s)$ for
$i = 1,\dots,r(t)$. That is, $t$ is a truncation of $s$.

\end{proof}

$\Box$ \vspace{.5cm}

%

For a subset $A$ of $\Z$  the \emph{upper Banach density} of $A$
is
\begin{equation}\label{Banach01}
\limsup_{\# I \to \infty}  \ \frac{ \# (I \cap A)}{\# I }
\end{equation}
as $I$ varies over finite intervals in $\Z$. When the limsup is zero, or, equivalently,
when the limit exists and equals zero, we say that $A$ has
{\em Banach density zero}.


\begin{theo}\label{towtheoBanach01} The subset $IP(k)$ of $\Z$ has Banach density zero.
\end{theo}

\proof Given an interval $I \subset \Z$ in with  $N = \# I$,
if $t, s $ are distinct points in $I$ then $0 < | t - s | < N $.
Now assume that $t,s \in IP(k)$ and apply Lemma \ref{towlem03a}. Let $\hat r$ be the smallest positive integer
such that $j_{\hat r}(t) \not= j_{\hat r}(s)$. Since $b \geq 5$, $\frac{b - 3}{b - 1} \geq \frac{1}{2}$ and so
Lemma \ref{towlem03a}(c) implies that $2N > \max( \ k(|j_{\hat r}(t)|), k(|j_{\hat r}(s)|))$. Thus, the terms in the
expansions of $t$ and $s$ agree except for terms $k(j)$ with $|k(j)| < 2N$. Call these the \emph{variable} $|j|$'s.

Since $|k(j)| = b^{|j| - 1}$ and
$b \geq 5$, we have $5^{|j| - 1} < 2N$. So the number of variable $|j|$'s is bounded by
$1 + \log_5 \ (2N) = \log_5 (10N)$. For each such $j, \ k(|j|)$ can
occur with coefficient $-1, 0$ or $+1$ in the symmetric $b$ expansion of $t \in I$.
Consequently, $\# (I \cap IP(k))$ is bounded by
$3^{\log_5\ (10N)} \ = \ (10N)^{\log_5 (3)}$. Since $0 < \log_5 \ (3) < 1$, it follows that
$  \frac{(10N)^{\log_5 (3)}}{N} \ \to \ 0$ as $N \ \to \ \infty$.

Hence,
$$ \lim_{\# I \to \infty}  \ \frac{ \# (I \cap IP(k))}{\# I } \ = \ 0.$$
$\Box$ \vspace{.5cm}

\vspace{1cm}

\subsection{Labeled integers}

$\qquad$

\vspace{.5cm}

We now fix a partition of $\N$ by an infinite sequence
$$\D \quad = \quad \{ D_{\ell} : \ell \in \N \}$$
with $\# D_\ell =\infty$ for every $\ell \in \N$,
numbered so that
$\min D_{\ell} < \min D_{\ell + 1}$. For example, if $p_1 = 2, p_2, \dots$ numbers the primes in $\N$ is ascending
order we can let $D_1 = \{ 1 \} \cup 2 \N$ and for $\ell > 1$ let $D_{\ell}$ consist of those integers $n$ with
$p_{\ell}$ the smallest prime which divides $n$.

In an case, $\min D_{1} = 1$ and $\min D_{\ell} \geq \ell$.
We then let $Q(\ell,i)$ be the $i^{th}$ smallest member
of $D_{\ell}$. Thus, $Q : \N \times \N \to \N$ is a bijection such that $\ell \mapsto Q(\ell,1)$ is increasing and
for each $\ell \in \N$, $i \mapsto Q(\ell,i)$ is increasing with image $D_{\ell}$.

The \emph{support map}  $n \mapsto \ell(n)$ associates to each $n \in \N$ the member of $\D$ which contains it, so that
$n \in D_{\ell(n)}$. The map is the composition of $Q^{-1}$ with the projection onto the first coordinate.

\begin{df}\label{towdef02a} (a) If
$j_1,\dots,j_r \in \Z $ is an expansion for $t \in IP(k)$,
so that
$$ t \ = \ k(j_1) + k(j_2) +  \cdots  + k(j_r),  $$
then the \emph{length vector} for the expansion
is the $\N$-vector
$$ \rr \ = \ \rr(t) \ = \ \chi(\ell(|j_1|)) + \chi(\ell(|j_2|)) +
\cdots + \chi(\ell(|j_r|)) $$

so that $|\rr|$ is the length $r = r(t)$.

$0 \in IP_+(k) \subset IP(k)$ has the empty expansion with length $0$ and length vector $ \00 $.

\end{df}
\vspace{.5cm}

Thus, $\rr(t)_{\ell} \ = \ \# \{ i : |j_i| \in D_{\ell} \}$ in the expansion $j_1,\dots,j_r$ for $t$.

If $0 \leq \tilde r \leq r$, then the $\tilde r$ truncation, $\tilde t$, has
length vector
$\tilde \rr \ = \ \rr(\tilde t) \ = \ \chi(\ell(j_1)) + \chi(\ell(j_2)) +
\cdots
+ \chi(\ell(j_{\tilde r}))$,
so that $|\tilde \rr| = \tilde r$.
The residual
 $t - \tilde t$ has length vector $\rr - \tilde \rr \geq \00 $.

\begin{df}\label{df,A[M]}
For a label $\M$ define $A[\M] \subset \Z$ to consist of those $t \in IP(k)$ which  have length vector
$\rr(t) \in \M$. Define $A_+[\M] = A[\M] \cap IP_+(k)$. \end{df}

 If $\M = \emptyset$ then $A_+[\M] = A[\M] = \emptyset$. Otherwise $0 \in A_+[\M] \subset A[\M]$ since $\00 \in \M$.
\vspace{.5cm}

\begin{prop}\label{towprop05} Assume $t \in IP(k)$  with expansion length $r$. For any positive integer $N$,
\begin{equation}\label{tow12}
N \
< \ |j_{r}(t)| \quad \Longrightarrow \quad [\ t \ \pm \ N] \ \ \cap \  \ A[\M] \ =
\ t \ + \ (  [ \ \pm \ N \ ] \ \ \cap \  \ A[\M - \rr(t)] ),
\end{equation}
and the elements of $ [\ t \ \pm \ N]  \ \ \cap \  \ A[\M]$ are extensions of $t$ in $A[\M]$. In particular, this
set is nonempty iff $t \in A[\M]$.

If, in addition, $t \in IP_+(k)$ then
\begin{equation}\label{tow12+}
[\ t \ \pm \ N] \ \ \cap \  \ A_+[\M] \ =
\ t \ + \ (  [ \ \pm \ N \ ] \ \ \cap \  \ A_+[\M - \rr(t)] )
\end{equation}
and the elements of $ [\ t \ \pm \ N]  \ \ \cap \  \ A_+[\M]$ are extensions of $t$ in $A_+[\M]$. In particular, this
set is nonempty iff $t \in A_+[\M]$.
\end{prop}

\proof  If $N < |j_r(t)|$ then $2 \leq |j_r(t)|$ and so (\ref{tow3.1x}) implies
$$N <  |j_r(t)| \leq \frac{2}{b} k(|j_{r}(t)|) < \frac{b - 3}{b - 1} k(|j_{r}(t)|). $$

So if   $s \in [\ t \ \pm \ N]  \ \cap \  A[\M]$,
then by Corollary \ref{towcor04}
$s$ is an extension of $t$ and
$\rr(s)  \in \M$. Because $t$ is a truncation of $s$ with residual $s - t$ we have $\rr(s) = \rr(t) + \rr(s - t)$
so $s - t  \in A[\M - \rr(t)]$.

Furthermore, if $s \in [\ t \ \pm \ N]  \ \cap \  A_+[\M]$, i.e. in addition, $s \in IP_+(k)$,
then since $t$ is a
truncation of $s$ with residual $s - t$ we have $t, s - t \in IP_+(k)$.

On the other hand, if
$u = k(j_1(u)) + \cdots + k(j_p(u)) \in  [\pm N] \cap A[\M - \rr(t)]$
with length $p$ then $\rr(u) + \rr(t) \in \M$.

Furthermore, by (\ref{tow06}) applied to $u$, $|u| \leq N$ implies
$ | k(j_1(u))|
< 2|u| \leq 2 N < 2 |j_{r}(t)|$.

If $|j_1(u)| = 1$ then $|j_1(u)| = 1 < |j_r(t)|$. Otherwise, (\ref{tow3.1x}) implies
$|j_1(u)| \leq \frac{1}{2} k(|j_1(u)|) < |j_r(t)|$.

Either way, it follows that
$$j_1(t), \dots,j_r(t), j_1(u), \dots, j_p(u)$$
is the expansion
for $t + u$ with length $r + p$ and length vector $\rr(t) + \rr(u)$. Hence, $t + u \in [\ t \ \pm \ N] \ \cap \  A[\M]$.

Furthermore if $u, t \in IP_+(k)$ then $t + u \in IP_+(k)$.

$\Box$ \vspace{.5cm}

%
%
%
%
%
%

\begin{lem}\label{towlem06} If $t \in IP(k)$
with
$|t| < \min D_{\ell}$, then $\rr(t)_{\ell} = 0$.
\end{lem}

\proof We may assume $t \not= 0$. So $1 \leq |t| < \min D_{\ell}$ and $\min D_1 = 1$ imply $1 < \ell$.

If $|j_1(t)| = 1$ then $|j_1(t)| < \ell \leq \min D_{\ell}$.

If $|j_1(t)| \geq 2$ then (\ref{tow3.1x}) and (\ref{tow06}) imply
$$ |j_1(t)| \leq \frac{1}{2}|k(j_1(t))| \leq \frac{b - 1}{2(b - 2)} |t| < \min D_{\ell},$$
since $ \frac{b - 1}{2(b - 2)} < 1$.

In either case, $|j_1(t)| < \min D_{\ell}$ implies that $|j_i(t)| \not\in D_{\ell}$ for $i = 1,\dots, r(t)$.

$\Box$ \vspace{.5cm}

\begin{prop}\label{towprop07}For any label $\M$ and  integer $N \geq 2$
\begin{equation}\label{tow13}
\begin{split}
[\ \pm \ N \ ] \ \cap \  A[\M] \quad = \quad [\ \pm \ N \ ] \ \ \cap \  \ A[\M \ \cap \B_N], \\
[\ \pm \ N \ ] \ \cap \  A_+[\M] \quad = \quad [\ \pm \ N \ ] \ \ \cap \  \ A_+[\M \ \cap \B_N]
\end{split}
\end{equation}
\end{prop}

\proof If $ N < \ell$ then $N < \min D_{\ell}$. Hence, $|t| \leq N$ implies
$\rr(t)_{\ell} = 0$ by Lemma \ref{towlem06}. Thus, $ supp \ \rr(t) \subset [1,N]$.

If $ r = |\rr(t)| \geq N$. Then $|j_1| > |j_2| > \dots > |j_r| > 0$ implies $|j_1| \geq r \geq N$. In particular, $|j_1| \geq 2$.
From (\ref{tow3.1x})
we obtain
$|t| \geq \frac{b - 2}{b - 1} k(|j_1|) > |j_1| \geq N$ since $\frac{2(b - 2)}{b - 1} > 1$.
Contrapositively, $|t| \leq N$ implies
$|\rr(t)| < N$.

It follows that $|t| \leq N$ implies $ \rr(t) \in \B_N$.

$\Box$ \vspace{.5cm}

If $\mm = \00 $ then $t = 0$ is the unique time with length vector $\mm$. For a nonzero
$\N$-vector $\mm$ there are
infinitely many such times.

\begin{lem}\label{towlem08} (a) If $\mm$ is a nonzero $\N$-vector so that $|\mm| = r > 0$
then for any positive integer $N$ there exist $t \in IP_+(k)$ such that $\rr(t) = \mm$.
and $j_r(t) > N$.

(b) For labels $\M_1, \M_2$ if $\mm \in \M_1 \setminus \M_2$ and $t \in IP(k)$
with $\rr(t) = \mm$ then $t \in A[\M_1] \setminus A[\M_2]$.
In particular, $ \M_1 \ = \ \M_2$ iff $A[\M_1] = A[\M_2]$ and iff $A_+[\M_1] = A_+[\M_2]$.

(c) For any  $N \in \N$ there exists  $M \in \N$ such that for all labels $\M_1, \M_2$
\begin{equation}\label{tow14}
\begin{split}
[\ \pm \ M \ ] \ \cap \  A[\M_1] \ = \ [\ \pm \ M \ ] \ \cap \  A[\M_2] \qquad
\Longrightarrow \qquad \\
[\ \pm \ M \ ] \ \cap \  A_+[\M_1] \ = \ [\ \pm \ M \ ] \ \cap \  A_+[\M_2] \hspace{1cm}\\
\Longrightarrow \qquad  \M_1 \cap \B_N \ = \ \M_2 \cap \B_N. \hspace{2cm}
\end{split}
\end{equation}
\end{lem}

\proof  (a) We use the fact that each $D_{\ell}$ is an infinite subset of $\N$.

Write $\mm = \chi(\ell_1) +
\cdots + \chi(\ell_r)$ where $r = |\mm|$.
Choose $j_r \in D_{\ell_r}$ with $j_r > N$.
For $i = 1,...,r-1 $ inductively choose
$j_{r - i} \in D_{\ell_{r - i}}$ with
$j_{r - i} > j_{r - i + 1}$.
Then
$j_1,\dots,j_r$ is an expansion for $t \in IP_+(k)$ with
$\rr(t) = \mm$.

(b) If $\mm \in \M_1 \setminus \M_2$, then by definition $t$ is in $A[\M_1] \setminus A[\M_2]$ when  $\rr(t) = \mm$.
If $t \in IP_+(k)$ then $t$ is in $A_+[\M_1] \setminus A[\M_2]$.
Hence, if $\M_1 \not= \M_2$, then $A[\M_1] \not= A[\M_2]$ and $A_+[\M_1] \not=  A_+[\M_2]$. The reverse implications are obvious.

(c) The first implication follows by intersecting with $IP_+(k)$.

For each $\mm \in  \B_N$ choose $t = t(\mm) \in IP_+(k)$ with $\rr(t) = \mm$ and let $M = \max \{ t(\mm) : \mm \in \B_N \}$.
Now assume $[\ \pm \ M \ ] \ \cap \  A_+[\M_1] \ = \ [\ \pm \ M \ ] \ \cap \  A_+[\M_2]$.

If $\mm \in \M_1 \cap \B_N$ then
 $0 \leq t(\mm) \leq M$. Hence $t(\mm) \in [\ \pm \ M \ ] \ \cap \  A[\M_1] \ = \ [\ \pm \ M \ ] \ \cap \  A[\M_2]$.
Hence, $t(\mm) \in A[\M_2]$ and so $\mm \in \M_2$.  That is, $\M_1 \cap \B_N \ \subset \ \M_2 \cap \B_N$.
Symmetrically for the reverse inclusion.

$\Box$

\vspace{1cm}

\subsection{Subshifts}\label{Sub}

$\qquad$

\vspace{0.5cm}

On $\{ 0, 1 \}^{\Z}$ define the ultrametric
$d$
by

\begin{equation}\label{tow15}
d(x, y) \  = \  \inf \ \{ \ 2^{-N} \ : \ N \in
 \Z_+
 \  \mbox{and} \   x_t = y_t \  \mbox{ for all } \ |t| < N  \}.
\end{equation}
We denote by $S$ the shift homeomorphism on $\{ 0, 1 \}^{\Z}$.  That is, $S(x)_t = x_{t + 1}$. Hence, for any $k \in \Z$
$S^k(x)_t = x_{t + k}$.

For $A \subset \Z$ we defined $\chi(A) \in \{ 0, 1 \}^{\Z}$ by $\chi(A)_t = 1$ iff $t \in A$,
so that $\chi(A)$ is the characteristic function
of
$A$.
Thus, $\chi(\emptyset) = e$ with $e_t = 0$ for all
$t$. If $z  \in \{ 0, 1 \}^{\Z}$ then $z = \chi(A)$ for $A = \{ t \in \Z : z_t = 1 \}$.

 \begin{df}\label{df,X(A)}
For $A \subset \Z$, let $X(A)$ be the
orbit closure of $\chi(A)$, i.e. the smallest closed invariant subspace of $\{ 0, 1 \}^{\Z}$ which contains $\chi(A)$. For
$\A$ a nonempty collection of subsets of $\Z$, let $X(\A)$ denote the smallest closed invariant subspace which contains
$\chi(A)$ for all $A \in \A$. In particular, for $A \subset \Z$ with power set $\P A$, the set $X(\P A)$  is the
smallest closed invariant subset of $ \{ 0, 1 \}^{\Z} $ which contains $\chi(A_1)$ for every $A_1 \subset A$.
\end{df}

We will say that
$x = \chi(A)$ \emph{dominates} $z = \chi(A_1)$ when $A \supset A_1$. Clearly, the set of pairs $\{ (x,z) \}$ such that
$x$ dominates $z$ is closed.

For $x = \chi(A)$ we will say that $x$ has positive or zero upper Banach density when the subset $A$ has positive
or zero
upper Banach density, as defined in (\ref{Banach01})  above. The point $x$ has positive upper Banach
density iff there exists an invariant
probability measure $\mu$ on its orbit closure such
that the cylinder set $C_0 = \{ z \in \{0,1\}^{\Z} : z_0 = 1 \}$ has
positive $\mu$ measure, see \cite[Lemma 3.17]{F}.
Observe that the union of the translates $S^k(C_0)$ is the complement of $\{ e \}$.
So for an invariant probability measure $\mu$ the set $C_0$ has positive measure iff $\mu$ is not the point measure
$\d_e$ where $e$ is the fixed point $ \chi(\emptyset)$.
Thus, for example, if  $X(A)$
contains a minimal subset other than the fixed point $e$ then $A$ has positive upper Banach density.

 \vspace{.5cm}

\begin{theo}\label{towtheoBanach02}
If $A$ has Banach density zero then every point $x \in X(\P A)$ has Banach density zero.
The dynamical system $(X(\P A),S)$ is uniquely ergodic
with $\delta_e$, the point mass at $e$, as the unique invariant probability measure and so $\{ e \}$ is the
unique minimal subset of $X(\P A)$. In particular, $e$ is the only periodic point in $X(\P A)$.
Furthermore, $(X(\P A),S)$ has entropy zero.
\end{theo}

\proof
First observe that any block of a point $x \in X(A)$ is also a block in $\chi(A)$. Thus every such $x$
has Banach density zero. Moreover, every point $y \in X(\P A)$ is dominated by a point $x \in X(A)$
and therefore has also Banach density zero.
Since, by Birkhoff ergodic theorem, every ergodic measure admits a generic point
we can apply Furstenberg characterization \cite[Lemma 3.17]{F} to deduce that the only ergodic measure on $X(\P A)$ is $\delta_e$.
The ergodic decomposition theorem implies that indeed $\delta_e$ is the only invariant probability measure on $X(\P A)$.
The topological entropy of $X(\P A)$ is zero by the Variational Principle since the entropy of the unique invariant measure is zero
(see e.g.\cite{Gl-03}).
Finally, as any minimal subset carries an invariant probability measure and distinct minimal subsets are disjoint, it follows that
$\{ e \}$ is the only minimal subset of $X(\P A)$.

$\Box$ \vspace{.5cm}

\begin{df}\label{df,x[]}
For a label $\M$ let
$x[\M] = \chi(A[\M]), x_+[\M] = \chi(A_+[\M])$.
 Let $X(\M)$ and $ X_+(\M)$ denote the orbit closures $X(A[\M])$ and $X(A_+[\M])$, respectively.
 For $\Phi \subset \LAB$, let
 $X(\Phi)$ and $X_+(\Phi)$  denote  $X(\{ A[\M] : \M \in \Phi \})$ and
 $X(\{ A_+[\M] : \M \in \Phi \})$, respectively.
\end{df}

For example, $x_+[\emptyset] = x[\emptyset] = e$ and
 $x_+[0]_t = x[0]_t = 1$ for $t = 0$ and $= 0$ otherwise. In general,
 the points $x[\M]$ are symmetric in that $x[\M]_{-t} = x[\M]_{t}$.
 On the other hand, $x_+[\M]_t = 0$ if $t < 0$. Hence, $x[\M] \not= x_+[\M]$
 if $\M$ is a positive label.

\begin{cor}\label{towcorBanach03} For the subshifts $(X(\P IP(k)),S), (X(\LAB),S)$ and \ \
\ \allowbreak $(X_+(\LAB),S)$
the point measure $\d_e$ is the unique invariant probability measure and so $\{ e \}$ is the unique
minimal subset. In particular, the systems are minCT and have entropy zero. \end{cor}

\proof By
Theorem \ref{towtheoBanach01} $IP(k)$ has Banach density zero.  So
by
Theorem \ref{towtheoBanach02}
$\d_e$ is the unique invariant probability measure for $(X(\P IP(k)),S)$ and so the same is true for
the subsystems $(X(\LAB),S)$ and \\ $(X_+(\LAB),S)$

$\Box$ \vspace{.5cm}

\begin{theo}\label{towtheo09} The maps $x[\cdot], x_+[\cdot]$ defined by  $\M \mapsto x[\M]$ and $\M \mapsto x_+[\M]$
are homeomorphisms from $\LAB$ onto
their images in $\{ 0, 1 \}^{\Z}$. \end{theo}

\proof Let $\M_1, \M_2$ be labels with  $\M_1 \ \cap \ \B_N  \ = \ \M_2 \ \cap \ \B_N $ for some $N > 1$.

By Lemma \ref{towlem08} (b) $\M_1 \ \cap \ \B_N  \ = \ \M_2 \ \cap \ \B_N $
iff $A[\M_1 \ \cap \ \B_N  ]\ = \ A[ \M_2 \ \cap \ \B_N  ] $.
By Proposition
\ref{towprop07} this implies $[\ \pm \ N \ ] \ \ \cap \  \ A[ \M_1 ] \ = \ [\ \pm \ N \ ] \ \ \cap \  \ A[ \M_2 ]$
and so, by intersecting with $IP_+(k)$, that
$[\ \pm \ N \ ] \ \ \cap \  \ A_+[ \M_1 ] \ = \ [\ \pm \ N \ ] \ \ \cap \  \ A_+[ \M_2 ]$
these are equivalent to $x[\M_1]_t = x[\M_2]_t$  and $x_+[\M_1]_t = x_+[\M_2]_t$ for all $t \in [ \ \pm \ N \ ] $.
Since $d(\emptyset, 0) = 1 =
d(x[\emptyset],x[0])$, it follows that $x[\cdot]$
has Lipschitz constant $1$.

The map $\M \mapsto A_+[\M]$ is injective by Lemma \ref{towlem08} (b).  Clearly,
$A \mapsto \chi(A)$ is injective.
So compactness implies that $ x[\cdot]$ and $ x_+[\cdot]$ are homeomorphisms onto their respective images.

$\Box$ \vspace{.5cm}

\begin{cor}\label{towcor10} If $\{ \M^i \}$ is a net of
labels then $\{ \M^i \}$ is convergent iff $\{ x[\M^i] \}$ is convergent and iff
$\{ x_+[\M^i] \}$ is convergent.
\end{cor}

$\Box$ \vspace{.5cm}

The following is the crucial result which relates the
$FIN(\N)$ dynamics on $\LAB$ with the shift dynamics on the
$\{ 0, 1 \}^{\Z}$.

\begin{lem}\label{towlem11} ({\bfseries Coherence Lemma})
Let  $\{ t^i \}$ be a
net in $IP(k)$ with $r^i$ the length of $t^i$.
 If $\{ |j_{r^i}(t^i)| \} \to \infty$ then
\begin{equation}\label{tow16}
Lim_{i \to \infty} \ \sup_{\M \in \LAB} \ d( S^{t^i}(x[ \M ]) \ , \  x[ \M - \rr(t^i) ] \ ) \quad = \quad  0.
\end{equation}

Furthermore, if $t^i \in IP_+(k)$ for $i \in I$ then
\begin{equation}\label{tow16a}
Lim_{i \to \infty} \ \sup_{\M \in \LAB} \ d( S^{t^i}(x_+[ \M ]) \ , \  x_+[ \M - \rr(t^i) ] \ ) \quad = \quad  0.
\end{equation}

That is, the pair of nets
$(\{ S^{t^i}(x[ \M ]) \},\{ x[ \M - \rr(t^i) ] \}) $ is uniformly asymptotic in $X(\LAB)$ and if
$t^i \in IP_+(k)$ for all $i \in I$ then the pair
$$(\{ S^{t^i}(x_+[ \M ]) \},\{ x_+[ \M - \rr(t^i) ] \})$$
is uniformly asymptotic in $X_+(\LAB)$.
\end{lem}

\proof For any positive integer $N$, there exists $i_N$ in the directed set $I$ which indexes the net,
so that $i > i_N$ implies $|j_{r^i}(t^i)| > N$.
 Then Proposition \ref{towprop05} implies for any label $\M$
$[\ t^i \ \pm \ N] \ \ \cap \  \ A[\M] \ =
\ t^i \ + \ (  [ \ \pm \ N \ ] \ \ \cap \  \ A[\M - \rr(t^i)] )$.

This says that for all $t$ with $|t| \leq N$, $x[ \M - \rr(t^i) ]_t \ = \ x[ \M ]_{t^i + t}$.
Thus, if $i > i_N$, then $  x[ \M - \rr(t^i) ]_t = (S^{t^i}(x[ \M ]))_t$ for all $t$ with $|t| \leq N$.
So (\ref{tow16}) follows from the definition (\ref{tow15}) of the metric on $\{ 0,1 \}^{\Z}$.

The proof for $x_+$ is the same.

$\Box$ \vspace{.5cm}

\begin{lem}\label{towlem12} For a label $\M$  assume that $I$ is a directed set
and $\{ t^i : i \in I \}$ is a net in $A[\M] \setminus \{ 0 \}$ with
$r^i$ the length of $t^i$. Assume that  $\{ |j_{r^i}(t^i)| \}$ is bounded.

(a) There exists an integer $j^* \not= 0$ such that $\{ i \in I : j_{r^{i}}(t^{i}) = j^* \}$ is a cofinal subset.

(b) If $\M$ is a label of finite type and  $|t^i|  \to \infty$, then there exists
$\bar t \in A[\M]$ with length $\bar r > 0$ (and so $\bar t \not= 0$) and a subnet $\{ t^{i'} \}$, defined by restricting
to $i' \in I'$, a cofinal subset of $I$,  such that
$t^{i'} - \tilde t^{i'} = \bar t$ for all $i'$,
where $\tilde t^{i'}$ is the $\tilde r^{i'} = r^{i'} - \bar r$ truncation of
$t^{i'}$
and, in addition,
 $ |j_{\tilde r^{i'}}(\tilde t^{i'}) = j_{\tilde r^{i'}}(t^{i'}) |  \to \infty$.
\end{lem}

\proof   The bounded set $\{ j_{r^i}(t^i) \}$ is finite. It follows that there exists
 $j^{-1} \in \Z \setminus \{ 0 \}$ such that  $j_{r^i}(t^i) =  j^{-1}$ for $i$
in a cofinal subset $SEQ_{-1}$ of I. Let $\bar t_{-1} = k(j^{-1})$
and let $\tilde t_{-1}^i$ be the $\tilde r^i_{-1} = r^i -1$ truncation of $t^i$ for all $i$ in $SEQ_{-1}$.
Since $j^{-1} \not= 0$, $\bar t_{-1} \not= 0$. Furthermore, $\rr(t^i) = \rr(\tilde t_{-1}^i) + \chi(\ell(j^{-1})) \in \M$
for all $i \in SEQ_{-1}$.

For (a) we let $j^* = j^{-1}$ and $I' = SEQ_{-1}$.

Now we assume that $\M$ is of finite type and we continue in order to prove (b).

If $\{ |j_{\tilde r_{-1}^{i}}(\tilde t_{-1}^{i})  | \} \to \infty $ as $i \to \infty$ in $SEQ_{-1}$ then  let $I' = SEQ_{-1}$
use this as the required subnet. Otherwise, it is bounded on some cofinal subset and we can choose
$j^{-2} \in \Z \setminus \{ 0 \}$ such that
 $j_{\tilde r_{-1}^{i}}(\tilde t_{-1}^{i}) = j^{-2}$,
for $i$ in a cofinal subset $SEQ_{-2}$ of $SEQ_{-1}$. Let $\bar t_{-2} = k(j^{-2}) + k(j^{-1})$
and let $\tilde t_{-2}^i$ be the $\tilde r^i_{-2} = r^i -2$ truncation of $t^i$ for all $i$ in $SEQ_{-2}$.
The truncation is proper and $\bar t_{-2} \not= 0$. Furthermore,
$\rr(t^i) = \rr(\tilde t_{-2}^i) + \chi(\ell(j^{-1})) +
\chi(\ell(j^{-2})) \in \M$ for all $i \in SEQ_{-2}$.

Because $\M$ is of finite type, the Finite Chain Condition implies that this procedure must halt after finitely
many steps. That is, for some $k \geq 1$, $\{ |j_{\tilde r_{-k}^{i'}}(\tilde t_{-k}^{i'})  | \} \to \infty $
as $i' \to \infty$ in $SEQ_{-k}$.
The subnet is obtained by restricting to $I' = SEQ_{-k}$ and $\bar t = \bar t_{-k}$ with
length $\bar r = k$.

Since $\tilde t^{i'}$ is the truncation of $t^{i'}$ to $\tilde r^{i'}$,
we have
$|j_{\tilde r^{i'}}(\tilde t^{i'}) = j_{\tilde r^{i'}}(t^{i'}) | \to \infty $.

$\Box$ \vspace{.5cm}

\begin{remark} In (b) if $t^i \in A_+[\M]$ for all $i$ then
$\bar t \in A_+[\M]$ and the truncations are in $IP_+(k)$.
 \end{remark}

\vspace{.5cm}


\begin{df}\label{xiA}
For a nonempty subset $A \subset \Z$ define the
\emph{distance from $t$ to $A$} by $\xi_{A}: \Z \to \Z_+$ by
\begin{equation}\label{edistancedef}
\xi_{A}(t) = \min \{ n : t + n  \in A \ \text{or} \
 t - n \in A \}.
\end{equation}
Thus, $\xi_{A}(t) = 0 $ iff $t \in A$.
\end{df}

For $\M$ a nonempty label, we denote $\xi_{A[\M]}$ by $\xi_{\M}$ and  $\xi_{A_+[\M]}$ by $\xi^+_{\M}$.
From the symmetry of $A[\M]$ we see that $\xi_{\M}(-t) = \xi_{\M}(t)$ for all $t$. On the other hand,
$ \xi^+_{\M}(t) = |t|$ if $t \leq 0$.

Recall that $e \in \{ 0, 1 \}^{\Z}$ is the fixed point of $S$ with $e_t = 0$ for all $t$.

\begin{lem}\label{elimit}
Let $I$ be a directed set,
$\{t^i : i  \in I \}$ be a net in $\Z$ and
$\{ A^i : i  \in I \}$ be a net of nonempty subsets of $\Z$. The net
 $\{ S^{t^i}(x[A^i]) : i \in I \}$ in $\{ 0, 1 \}^{\Z}$ converges to $e$ iff $\xi_{A^i}(t_i) \to \infty$.
\end{lem}

 \proof For any net $\{z^i \} \in \{ 0, 1 \}^{\Z}, \  S^{t^i}(z^i) \to e $ iff
 for all $n \in \N$ eventually $z^i_{t^i + k} = 0$ for all $k$ with $|k| < n$.

$\Box$ \vspace{.5cm}

For $t \in  \Z \setminus \{ 0 \}$, recall that
$n_*(t)$ is the place of the smallest nonzero entry in the $b$ expansion of $t$ so that when $t \in IP(k)$,
$n_*(t) = |j_r(t)|$ with
$r = r(t)$.

 \begin{theo}\label{towtheo13a} Let $I$ be a directed set,
 $\{t^i : i \in I \}$ be a net of nonzero integers and
$\{ \M^i : i \in I \}$ be a net of labels in $\LAB$.

 If the net
  $\{ S^{t^i}(x[\M^i]) : i \in I \}$ converges to $z \in  \{ 0, 1 \}^{\Z} $ with $z \not= e$,  then the following are equivalent.

  \begin{itemize}
  \item[(i)]
  $z_0 = 1$ and  $z_t = z_{-t}$ for all $t \in \Z$.

 \item[(ii)] $z = x[\NN]$ for some label $\NN$.

 \item[(iii)] $n_*(t^i) \to \infty$.

  \item[(iv)] There exists $i_0 \in I$ so that on the subnet obtained by restricting to $i > i_0$,
    $t^i \in A[\M^i]$ and $|j_{r_i}(t^i)| \to \infty$ with $r_i$ the length of $t^i$.
 \end{itemize}

  If the net
  $\{ S^{t^i}(x_+[\M^i]) : i \in I \}$ converges to $w \in  \{ 0, 1 \}^{\Z} $ with $w \not= e$,  then the following are equivalent.

  \begin{itemize}

   \item[(i')] $w_0 = 1$ and $w_t = 0$ for all $t < 0 \in \Z$.

  \item[(ii')] $w = x_+[\NN]$ for some label $\NN$.

 \item[(iii')] $n_*(t^i) \to \infty$.

  \item[(iv')] There exists $i_0 \in I$ so that on the subnet obtained by restricting to $i > i_0$,
    $t^i \in A_+[\M^i]$ and $|j_{r_i}(t^i)| \to \infty$ with $r_i$ the length of $t^i$.

 \end{itemize}

 When either of these sets of conditions hold the label $ \NN$ of (ii) or (ii') is $ LIM_{i > i_0} \ \{ \ \M^i - \rr(t^i) \ \} $.
In particular, if $\M = \M^i$ for all $i$ then  $\NN \in \Theta'(\M)$.
\end{theo}

 \proof Since $z$ (or $w$) is not $e$, the labels $\M^i$ are eventually nonempty.

 (iii) $\Rightarrow$ (iv):  If for some subnet $\{ t^{i'} \}$, we have $\xi_{\M^i}(t^{i'}) \to \infty$ then by Lemma \ref{elimit}
the subnet $\{ S^{t^{i'}}(x[\M^i]) \}$ converges to $e$. Since $z \not= e$ this does not happen.
By the Smith Lemma
\ref{applem05}
it follows that for some $N \in \N$ the set $\{i \in I : \xi_{\M^i}(t^i) > N \}$ is not cofinal and
so for some $i_0 \in I, \xi_{\M^i}(t^i) \leq N $ for all $i \geq i_0$. It follows that there is a nonempty subset $K$
 of the finite interval $[-N,N]$ so that for each $k \in K$,
 $$I_k = \{ i \in I : t^i + k \in A[\M^i]  \ \text{ and} \ t^i + j \not\in A[\M^i] \ \text{when} \ |j| < |k| \}$$
  is cofinal. Thus, $I_k \cup I_{-k} = \{ i : \xi_{\M^i}(t^i) = |k| \}$. We will show that
 $K = \{ 0 \}$.

 Let $k \in K$.  By assumption (iii) eventually $n_*(t^i) > n^*(k)$.  This implies that the
 places where
  $t^i $
 has a nonzero entry in its $b$ expansion are beyond  the places where $k$ has a nonzero entry.
 On the cofinal set with $i \in I_k$ and $n_*(t^i) > n^*(k)$, we have that $t^i + k \in IP(k)$
and that $t^i$ is a truncation of $t^i + k$. Hence $\rr(t^i) \leq \rr(t^i + k) \in \M^i$ and so
 $t^i \in A[\M^i]$.  Thus, $|k| = \xi_{\M}(t^i) = 0$. It thus follows that $\{ i : \xi_{\M^i}(t^i) \not= 0 \}$ is not cofinal.

 That is, eventually $t^i \in A[\M^i] \subset IP(k)$. For $t^i \in IP(k), \ n_*(t^i) = |j_{r^i}(t^i)| $ and so (iv) follows.

 (iii') $\Rightarrow$ (iv'): Use the same proof as above replacing $\xi_{\M^i}$ by $\xi^+_{\M^i}$.

   (iv) $\Rightarrow$ (iii), (iv') $\Rightarrow$ (iii'): Obvious.

   (iv) $\Rightarrow$ (ii), (iv') $\Rightarrow$ (ii') with $\NN = Lim \{ \ \M^i - \rr(t^i) \ \}$:
   Since  $\{ S^{t^i}(x_+[\M^i]) \}$ is convergent the
   Coherence Lemma \ref{towlem11} implies that $\{ \ x[\M^i - \rr(t^i)] \ \}$ is convergent.
   By Corollary \ref{towcor10} it follows that
 $\{ \M^i - \rr(t^i) \}$ is convergent in $\LAB$.  If $\NN$ is its limit then Theorem
 \ref{towtheo09} implies that $\{ \ x[\M^i - \rr(t^i)] \ \}$ and so
 $\{ S^{t^i}(x_+[\M^i]) \}$ both converge to $x[\NN]$.

 (ii) $\Rightarrow$ (i),  (ii') $\Rightarrow$ (i'): Obvious.

 (i) $\Rightarrow$ (iv): Since $z_0 = 1$ it follows that eventually $S^{t^i}(x[\M^i])_0 = x[\M^i]_{t^i} = 1$
 and so eventually $t^i \in A[\M^i]$. Assume this is true for all $i \geq i_0$.

 We have to show that $ |j_{r^i}(t^i)| \to \infty$. If this is not true then on some cofinal set $ |j_{r^i}(t^i)|$ remains
 bounded.  From Lemma \ref{towlem11} there is a nonzero $j^*$ and a cofinal set $I'$ such that $j_{r^i}(t^i) = j^*$
 for all $i \in I'$. Let
 $\tilde t^i = t^i - k(j^*)$ so that $\tilde t^i$ is a truncation of $t^i$ and so
 $\tilde t^i \in A[\M^i] \subset IP(k)$. On the other hand
 $t^i + k(j^*) \not\in IP(k)$ because it has a $2$ or a $-2$ at the $|j^*|$ place in its expansion. Thus,
 for $i \in I'$ we have $S^{t^i}(x[\M^i])_{-k(j^*)} = 1$, but $S^{t^i}(x[\M^i])_{k(j^*)} = 0$.  Since $S^{t^i}(x[\M^i]) \to z$
 we have $z_{-k(j^*)} = 1$ and $z_{k(j^*)} = 0$ which contradicts the symmetry assumption on $z$.

 (i') $\Rightarrow$ (iv'): Begin as above. Now $t^i \in A_+[\M^i]$ and so $j^* > 0$.
 As before we have $S^{t^i}(x[\M^i])_{-k(j^*)} = 1$
 and so in the limit $w_{-k(j^*)} = 1$. This contradicts the assumption that $w_t = 0$ for all $t < 0$.

$\Box$ \vspace{.5cm}

\begin{cor}\label{towcor13b} For a label $\M$ the following are equivalent.
\begin{itemize}
\item[(i)] $\M$ is recurrent for the $FIN(\N)$ action on $\LAB$.

\item[(ii)] $x[\M]$ is a recurrent point
for the shift homeomorphism on $\{ 0, 1\}^{\Z}$.

\item[(iii)] $x_+[\M]$ is a recurrent point
for the shift homeomorphism on $\{ 0, 1\}^{\Z}$.
\end{itemize}
\end{cor}

\proof We may assume $\M \not= \emptyset$ since all three properties hold when $\M = \emptyset$.

(i) $\Rightarrow$ (ii), (iii): Recall that $\M$ is recurrent when $\M = LIM \{ \M - \rr^i \}$
for some sequence $\rr^i > 0$ in $FIN(\N)$.
By  Lemma \ref{towlem08} (a) and induction, we can choose a sequence
$\{ t^i \in IP_+(k) \}$ with $\rr(t^i) = \rr^i$ and with
$j_r(t^i) \to \infty$.
By the Coherence Lemma \ref{towlem11} $\{ S^{t^i}(x[\M]) \}$ is asymptotic to
$\{ x[\M - \rr^i] \}$ which converges to $x[\M]$ by Theorem \ref{towtheo09}. Similarly,
$\{ S^{t^i}(x_+[\M]) \}$
is asymptotic to
$\{ x_+[\M - \rr^i] \}$ which converges to $x_+[\M]$.

  (ii) $\Rightarrow$ (i): $x[\M]$ is recurrent when there is a sequence $\{ t^i \in \Z \setminus \{ 0 \} \}$ such that
  $\{ S^{t^i}(x[\M]) \}$ converges to $x[\M]$. By  Theorem \ref{towtheo13a} (ii) $\Rightarrow$ (iv), we have
  $t^i \in A[\M]$ with $j_r(t^i) \to \infty$. By the Coherence Lemma \ref{towlem11} again,
  $\{ S^{t^i}(x[\M]) \}$ is asymptotic to
$\{ x[\M - \rr(t^i)] \}$ and so the latter converges to $x[\M]$. By Theorem \ref{towtheo09}, $\{ \M - \rr(t^i) \}$ converges
to $\M$ and so $\M$ is recurrent.

 (iii) $\Rightarrow$ (i): Since $x_+[\M] \not= e$, $x_+[\M]$ is recurrent when there is a sequence $\{ t^i \in \N \}$ such that
  $\{ S^{t^i}(x_+[\M]) \}$ converges to $x_+[\M]$. Apply Theorem \ref{towtheo13a} (ii') $\Rightarrow$ (iv') and the
  Coherence Lemma as before to show that $\M$ is recurrent.

$\Box$ \vspace{.5cm}

Now we interpret these results in terms of the Ellis action of $\b \Z$ on $\{ 0, 1 \}^{\Z}$,  see Appendix \ref{appendix-StoneCech}.

Since $n_*(0) = \infty$, $n_*$ is a function from $\Z$ to the compact space $\Z_{+ \infty}$. For
$A$ a nonempty subset $\Z$ we can similarly regard $\xi_A$ as a function from $\Z$ to  $\Z_{+ \infty}$.
So we obtain the continuous extensions
\begin{equation}\label{ellis01}
\b n_* : \b \Z \to  \Z_{+ \infty}, \qquad \b \xi_A : \b \Z \to  \Z_{+ \infty}.
\end{equation}

The function $m_b : \Z \to \Z$ defined by $t \mapsto b t$ is an injective group homomorphism
and so extends to
\begin{equation}\label{ellis02}
\b m_b : \b \Z \to \b \Z.
\end{equation}
a continuous, injective monoid homomorphism 
which is injective by Lemma \ref{applem02}. 
Hence, its image is a closed submonoid. Define
\begin{equation}\label{ellis03}
\b_b \Z \quad = \quad \bigcap_{k=1}^{\infty} \ (\b m_b)^k(\b \Z).
\end{equation}

\begin{prop}\label{ellisprop04} The subset  $\b_b \Z$ is an uncountable, closed submonoid of $\b \Z$ on which
$\b m_b$ restricts to an isomorphism. Furthermore,
\begin{equation}\label{ellis05}
\b_b \Z \quad = \quad (\b n_*)^{-1}(\infty).
\end{equation}
\end{prop}

\proof  
As above $\b b_m$ is injective.  

If $p \in \b_b \Z \subset \b m_b(\b \Z)$ there is a unique $q \in \b \Z$ such that $p = \b m_b(q)$.
For $k \geq 2$, $p \in  (\b m_b)^k(\b \Z)$ implies $q \in (\b m_b)^{k-1}(\b \Z)$ since $\b m_b$ is
injective. Hence, $q \in  \bigcap_{k=1}^{\infty} \ (\b m_b)^k(\b \Z) = \b_b \Z$.  That is,
 $\b m_b$ is bijective on $\b_b \Z$. 

As it is the intersection of closed submonoids, $\b_b \Z$ is a closed submonoid.
Define the injection $j : \N \to \Z$ by $j(n) = b^n$. By Lemma \ref{applem02} the restriction of $\b j$ to $\b^* \N$ is an injection
of an uncountable set into $\b_b \Z$. Hence, $\b_b \Z$ is uncountable.

From (\ref{tow01ba}) it follows that $m_b^k(\Z) = \{ n_* > k \}$. Taking the closure in $\b \Z$ we obtain
(\ref{ellis05}).

$\Box$ \vspace{.5cm}

\begin{prop}\label{ellisprop06} Let $\M$ be a nonempty label and $p \in \b \Z$.

\begin{enumerate}
\item[(a)] The subsets $(\b \xi_{\M})^{-1}(\infty)$
and $(\b \xi^+_{\M})^{-1}(\infty)$ are closed, uncountable ideals of $\b^* \Z$. Furthermore,
\begin{align}\label{ellis07}
\begin{split}
(\b \xi_{\M})^{-1}(\infty) \quad = \quad & \{ p \in \b \Z : p x[\M] = e \}, \\
(\b \xi^+_{\M})^{-1}(\infty) \quad = \quad & \{ p \in \b \Z : p x_+[\M] = e \}.
\end{split}
\end{align}

\item[(b)] If $p \in \b_b \Z$ then $p x[\M] = x[\NN]$ for some label $\NN$. Conversely,
if $p x[\M] = x[\NN]$ for some nonempty label $\NN$ then $p \in \b_b \Z$.
\end{enumerate}
\end{prop}

\proof Let $\{ t^i  \}$ be a net in $\Z$ which converges to $p$. Then $\{ S^{t^i}(x[\M])  \}$
is a net in $\{ 0, 1 \}^{\Z}$ which converges to $p x[\M]$ and $\{ S^{t^i}(x_+[\M]) \}$
converges to $p x_+[\M]$. Furthermore, $\{ \xi_{\M}(t^i) \} \to \b \xi_{\M}(p)$,  $\{ \xi^+_{\M}(t^i) \} \to \b \xi^+_{\M}(p)$
and $\{ n_*(t^i) \} \to \b n_*(p)$.

(a):  From Lemma \ref{elimit} it follows that $p x[\M] = e$ iff $\b n_*(p) = \infty$ and similarly for $x^+[\M]$.
If $q \in \b \Z$ and $p x[\M] = e$ then $q p x[\M] = e$. Hence, $ \{ p \in \b \Z : p x[\M] = e \}$ is an ideal.
Since $A[\M]$ has Banach density zero, we can choose $j : \N \to \N$ increasing and so that
$\xi_{\M}(j(n) + i) = 0$ for all $i$ with $|i| \leq n$. It is clear that $\b j(\b^* \N)$ is an uncountable
subset of $ \{ p \in \b \Z : p x[\M] = e \}$. Finally, since $\M$ is nonempty, $\xi_{\M}(t)$ is finite for all
$t \in \Z$ and so $ \{ p \in \b \Z : p x[\M] = e \} \subset \b^* \Z$.  Use similar arguments for $x^+[\M]$.

(b): If $\NN$ is nonempty, it follows from Theorem \ref{towtheo13a} that if $p x[\M] = x[\NN]$ then
$\b n_* (p) = \infty$ and, conversely, if $\b n_* (p) = \infty$ and $p x[\M] \not= e$ then
$p x[\M] = x[\NN]$ for some nonempty label $\NN$. Finally, if $p x[\M] = e$ then
$p x[\M] = x[\NN]$ with $\NN = \emptyset$. Again, use similar arguments for $x^+[\M]$.

$\Box$ \vspace{.5cm}

From the $\b FIN(\N)$ Ellis action on $\LAB$ we defined the enveloping semigroup $\E(\LAB) \subset \LAB^{\LAB}$
with the unique minimal idempotent $U$ given by $U(\M) = \emptyset$ for $\M \not= FIN(\N)$, see Proposition \ref{minidprop}.
On the other hand, $P(FIN(\N)) = FIN(\N)$ for all $P \in \E(\LAB)$, including $U$.
We define $\hat U \in \LAB^{\LAB}$ by
$\hat U (\M) = \emptyset$ for all $\M$ including $FIN(\N)$ and let $\hat \E (\LAB) = \E(\LAB) \cup \{ \hat U \}$.
This is an Ellis semigroup with minimal idempotent $\hat U$.

Define:
\begin{align}\label{ellis08}
\begin{split}
D, D_+ : \b_b \Z \to & \hat\E (\LAB) \qquad \text{by} \\
D(p)(\M) = \NN \quad &\Longleftrightarrow \quad p x[\M] = x[\NN], \\
D_+(p)(\M) = \NN \quad &\Longleftrightarrow \quad p x_+[\M] = x_+[\NN]
\end{split}
\end{align}

That is,
\begin{equation}\label{ellis09}
p x[\M] \ = \ x[D(p)(\M)] \quad \text{and} \quad p x_+[\M] \ = \ x_+[D_+(p)(\M)].
\end{equation}

\begin{theo}\label{ellistheo10} The maps $D, D_+ : \b_b \Z \to  \hat\E (\LAB)$ are continuous, surjective monoid
homomorphisms.

Let $\{ t^i : i \in I \}$
be
a net in $\Z$ converging to $p$ in $\b_b \Z$.

\begin{enumerate}
\item[(a)] If $t^i \in IP(k)$ for all $i$ then $D(p) = LIM \{ P_{\rr(t^i)} \}$ in $\E (\LAB)$.

\item[(b)] If $t^i \in IP_+(k)$ for all $i$ then $D_+(p) = LIM \{ P_{\rr(t^i)} \}$ in $\E (\LAB)$.

\item[(c)] If $\{ i : t^i \not\in IP(k) \}$ is cofinal, then $D(p) = \hat U$.

\item[(d)] If $\{ i : t^i \not\in IP_+(k) \}$ is cofinal, then $D_+(p) = \hat U$.

\end{enumerate}
\end{theo}

\proof Since the map $\M \mapsto x[\M]$ is injective, Proposition \ref{ellisprop06} the maps $D$ and $D_+$
are well-defined by (\ref{ellis09}).  For $q, p \in \b_b \Z$ and an arbitrary label $\M$
\begin{equation}\label{ellis11}
x[D(qp)(\M)] = qp x[\M]) = q x[D(p)(\M)] = x[D(q)(D(p)(\M))].
\end{equation}
Hence, $D$ is a homomorphism.

If $\{ p^i \}$ is a net in $\b_b \Z$ converging to $p$, then
\begin{equation}\label{ellis12}
x[D(p)(\M)] = p x[\M] = LIM p^i x[\M] = LIM x[D(p^i)(\M)].
\end{equation}
So by Theorem \ref{towtheo09}   $D(p)(\M) =  LIM D(p^i)(\M)$.  Since $\M$ is arbitrary, $D$ is continuous.

The limit results of (a)  follow from Theorem
\ref{towtheo13a} since $n_*(t^i) \to \infty$. Furthermore, if
the limit $p x[\M]$ is not $e$ for some $\M$ then by Theorem \ref{towtheo13a} eventually $t^i$ is in $A[\M] \subset IP(k)$. So (c) follows as well.
In particular, choosing a sequence $t^n \in \N \setminus IP_+(k)$ with $n_*(t^n) \to \infty$ we see that the
for any $p$ in the set of limit points of the sequence, $p \in \b_b \Z$ and $D(p) = \hat U$.

Note that $0 \in \b_b \Z$ and $D(0) = id_{\LAB} = P_{\00 }  $.

If $P \not= id_{\LAB} \in \E(\LAB)$ then we can choose a net $\{ \rr^i > \00 : i \in I \}$ such
that $\{ P_{\rr^i} \} \to P$. On the
product directed set $I \times \N$ apply  Lemma \ref{towlem08}     to  choose $t^{(i,n)}$ so that $\rr(t^{(i,n)}) = \rr^i$ and
$j_{r}(t^{(i,n)}) > n$. Since $\{j_{r}(t^{(i,n)})\} \to \infty$, $S^{t^{(i,n)}}(x[\M])$ is asymptotic to
$ \{ x[\M - \rr(t^{(i,n)})] =  x[\M - \rr^i] \}$ which has limit $x[P(\M)]$. Also
$\{ n_*(t^{(i,n)})= j_{r}(t^{(i,n)})\} \to \infty$ implies that any limit point $p$ of the net $ \{ t^{(i,n)} \}$ in
$\b \Z$ is in fact in  $\b_b \Z$, and $D(p) = P$.

Thus, we see that $D$ is surjective.

Use similar arguments for $D_+$.

$\Box$ \vspace{.5cm}


For any label $\M \not= FIN(\N)$, $\Theta(\M)$ is a proper,
closed $FIN(\N)$ invariant subspace of $\LAB$ and so the restriction map is a continuous, surjective monoid homomorphism from $\E(\LAB) \to \E(\Theta(\M))$. We can regard it
as a map from $\hat \E(\LAB)$ with $\hat U$ and $U$ both mapping to $U$ in $\E(\Theta(\M))$. Composing with this restriction we obtain
continuous, surjective monoid homomorphisms
\begin{equation}\label{ellis13}
\begin{split}
D, D_+ : \b_b \Z \to \E(\Theta(\M)), \hspace{3cm} \\
p x[\NN] \ = \ x[D(p)(\NN)] \quad \text{and} \quad p x_+[\NN] \ = \ x_+[D_+(p)(\NN)],
\end{split}
\end{equation}
for $\NN \in \Theta(\M)$.

On the other hand, the $\Z$ action on $X(\M)$ extends to an Ellis action of $\phi : \b \Z \times X(\M) \to X(\M)$
with $\phi^{\#}: \b Z \to X(\M)^{X(\M)}$  a continuous monoid homomorphism with image the enveloping semigroup $E(X(\M),S)$.
Similarly, the  action $\Z$ on $X_+(\M)$ extends to $\phi_+ : \b \Z \times X_+(\M) \to X_+(\M)$ with
$\phi_+^{\#}(\b \Z) = E(X_+(\M),S)$.
We let
\begin{equation}\label{ellis14}
E_b(X(\M),S) = \phi^{\#}(\b_b \Z) \quad \text{and} \quad E_b(X_+(\M),S) = \phi_+^{\#}(\b_b \Z).
\end{equation}
The maps $D$ and $D_+$ clearly factor through $\phi^{\#}$ and $\phi_+^{\#}$ to define continuous, surjective
homomorphisms
\begin{equation}\label{ellis15}
\begin{split}
D : E_b(X(\M),S)  \to \E(\Theta(\M)), \quad \text{and} \quad D_+ : E_b(X_+(\M),S)  \to \E(\Theta(\M)) \\
p x[\NN] \ = \ x[D(p)(\NN)] \quad \text{and} \quad p x_+[\NN] \ = \ x_+[D_+(p)(\NN)],
\end{split}
\end{equation}
for $\NN \in \Theta(\M)$.

\begin{df}\label{dfNuc}
For $\M \in \LAB$ let $\Nuc(X(\M)) = x[\Theta(\M)]$.
We have
\begin{equation*}\label{nucleus}
\Nuc(X(\M)) = E_b(X(\M),S)x[\M] = \{x[\NN] : \NN \in \Theta(\M)\}.
\end{equation*}
We call this closed, $E_b(X(\M),S)$ invariant subset, the {\em nucleus}
of the labeled subshift $(X(\M),S)$. $\Nuc(X_+(\M))$ is defined analogously.
\end{df}

\vspace{.5cm}

\begin{df}\label{df,Phi}
 (a) If $Y$ is a subset of $\{ 0, 1 \}^{\Z} $ we let
 $$  \Phi(Y) = \{ \NN : x[\NN] \in Y \} \quad {\text{and}} \quad
 \Phi_+(Y) = \{ \NN : x_+[\NN] \in Y \}. $$

 (b) If $\Psi$ is a nonempty subset of $\LAB$ we let $\Theta(\Psi)$ denote the smallest, closed
 $FIN(\N)$ invariant subset of $\LAB$ which contains $\Psi$.


 \end{df}

 That is, $\Phi(Y)$ and $\Phi_+(Y)$ are
 the preimages of $Y$ by the maps $x[\cdot]$ and
 $x_+[\cdot]$ respectively.
 By Theorem \ref{towtheo09}
 $\NN$ is uniquely determined by $x[\NN]$ and by $x_+[\NN]$.

Recall that for $\Psi$
a nonempty subset of $\LAB$,  $X(\Psi)$ and $X_+(\Psi)$ are the closed,
shift invariant subsets of $\{ 0,1 \}^{\Z}$ generated by the
 $x[\NN]$'s and $x_+[\NN]$'s respectively with $\NN $ varying over $\Psi$.

 \vspace{.5cm}

 \begin{theo}\label{towtheo14} (a) If $Y$ is a nonempty, closed, shift invariant subset of $X(\P IP(k)) $ then
 $\Phi(Y), \Phi_+(Y)$ are closed, $FIN(\N)$ invariant
 subsets of $\LAB$ with $\emptyset \in \Phi(Y) \cap \Phi_+(Y)$. Furthermore,
 $$ X(\Phi(Y)) \cup X_+(\Phi_+(Y)) \ \subset \ Y.$$

 (b) If $\Psi$ is a nonempty subset of $\LAB$, then
 $$ X(\Psi)  \ = \ X(\Theta(\Psi))  \quad \text{and} \quad X_+(\Psi)  \ = \ X_+(\Theta(\Psi)).$$

 (c) If $\Psi$ is a closed, $FIN(\N)$ invariant subset of $\LAB$ with $\emptyset \in \Psi$ then
 $$ \Psi \ = \ \Phi(X(\Psi)) \ = \ \Phi_+(X_+(\Psi)).$$
 \end{theo}

 \proof (a):
Because the maps $x[\cdot], x_+[\cdot]$ are continuous, $\Phi(Y)$ and $\Phi_+(Y)$ are closed when
$Y$ is. Since $IP(k)$ has Banach density zero, the fixed point $e$ is contained in $Y$ and so $x[\emptyset] = x_+[\emptyset] = e$
implies that $\emptyset $ is contained in $\Phi(Y) $ and $\Phi_+(Y)$.

The final inclusion in (a) is obvious.

 If $x[\NN] \in Y$ (or $x_+[\NN] \in Y$) and $\rr$ is a nonzero $\N$-vector then by Lemma \ref{towlem08} we can choose a sequence
 $\{ t^i : i \in \N \}$ in $IP_+(k)$ with length
vector $\rr$ and with $|j_{r}(t^i)| \to \infty$.  By Theorem
\ref{towtheo13a}
$S^{t_i}x[\NN] \to x[\NN - \rr]$ (or $S^{t_i}x_+[\NN] \to x_+[\NN - \rr]$).
Since $Y$ is closed, $\NN -  \rr \in \Phi$ (resp. $\NN -  \rr \in \Phi_+$ ). Thus, $\Phi(Y)$ and $\Phi_+(Y)$ are $FIN(\N)$
invariant.

(b): With $\NN \in \Psi$ apply the above argument  to $x[\NN] \in X(\Psi)$ we see that $x[\NN - \rr] \in X(\Psi)$ for
all $\rr \in FIN(\N)$. Since $x[\cdot]$ is continuous and $X(\Psi)$ we see that $x[\M] \in X(\Psi)$ for all $\M \in \Theta(\Psi)$.
Hence, $X(\Theta(\Psi)) \subset X(\Psi)$ and the reverse inclusion is obvious. The argument for $X_+$ is similar.

(c): Clearly, $\Psi \subset \Phi(X(\Psi))$.
If $\NN \in \Phi(X(\Psi))$ with $\NN$ nonempty, then there is a sequence $\{ S^{t_i}(x[\M^i]) \}$
converging to $x[\NN]$ with $\M^i \in \Psi$.

If $t^i = 0$ infinitely often then $x[\NN]$ is the limit
of a subsequence  $\{ x[\M^{i'}] \}$. Then $\NN $ is the limit of $\{ \M^{i'} \}$ because $x[\cdot]$ is a
homeomorphism.  Since $\Psi$ is closed, $\NN \in \Psi$.

Alternatively, we may assume that the $t^i$'s are nonzero.  Theorem \ref{towtheo13a} implies that eventually
$t^i \in A[\M^i]$ and $\NN = LIM \{ \M - \rr(t^i) \}$. Since $\Psi$ is closed and $FIN(\N)$ invariant
it follows that $\NN \in \Psi$.

Thus, $\Phi(X(\Psi)) $ is contained in and so equals $ \Psi$.

Again,the argument for $X_+$ is similar.

$\Box$ \vspace{.5cm}

\begin{cor}\label{towcor14a}
(a) If a label $\M $ is not the maximum label
$FIN(\N),$
then
\begin{equation}\label{phi1}
\Phi(X(\M))
= \Phi_+(X_+(\M)) = \Theta(\M).
\end{equation}

(b) For $FIN(\N),$ the maximum label, $  \Theta(FIN(\N)) = \{ FIN(\N) \}$ and
\begin{align}\label{phi2}
\begin{split}\Phi(X(FIN(\N))) = & \ \Phi_+(X_+(FIN(\N))) = \\
\Theta(FIN(\N)) \cup &\{ \emptyset \} = \{ FIN(\N), \emptyset \}.
\end{split}
\end{align}
\end{cor}

\proof (a): From Theorem \ref{towtheo14}(b) $X(\M) = X(\Theta(\M))$.  Since $\M \not= FIN(\N)$,
there exists $\rr \in FIN(\N) \setminus \M$ and
so $\M - \rr = \emptyset$.  Hence, $\emptyset \in \Theta(\M)$. Thus, Theorem \ref{towtheo14}(c) implies
that $ \Phi(X(\Theta(\M))) = \Theta(\M)$.

(b): For $FIN(\N)$, the maximum label,
$FIN(\N) - \rr = FIN(\N)$ for all $\rr \in FIN(\N)$ and so $\Theta(FIN(\N)) = \{ FIN(\N) \}$. In particular,
$\emptyset \not\in \Theta(FIN(\N))$.

Hence, $\Psi = \{ FIN(\N), \emptyset \}$ is closed, $FIN(\N)$ invariant and contains $\emptyset$.

Clearly, $ \Phi(X(FIN(\N))) = \Phi(X(\Psi))$ which equals $\Psi$ by   Theorem \ref{towtheo14}(c) again.

The similar arguments for $\Phi_+$ are left to the reader.

$\Box$ \vspace{.5cm}

%

 \begin{theo}\label{towtheo15} Let $\M$ be a label of finite type with $(X(\M),S)$  
 and $(X_+(\M),S)$
 the associated subshifts.
 \begin{itemize}
 \item[(a)] 
 $$
 X(\M) = \{ S^k(x[\NN]) : k \in \Z,\  \NN \in \Theta(\M) \} = \bigcup_{k \in \Z} S^k \Nuc(X(\M))
 $$ 
 and
 $$
 X_+(\M) = \{ S^k(x_+[\NN]) : k \in \Z, \ \NN \in \Theta(\M) \} = \bigcup_{k \in \Z} S^k \Nuc(X_+(\M)).
 $$

 \item[(b)] If $Y$ is a nonempty, closed, shift invariant subset of $X(\M) $ then
  $$ Y = X(\Phi(Y)) =  \{ S^k(x[\NN]) : k \in \Z, \NN \in \Phi(Y) \}.$$

 \item[(c)] If $Y$ is a nonempty, closed, shift invariant subset of $X_+(\M) $ then
  $$ Y = X_+(\Phi_+(Y)) =  \{ S^k(x_+[\NN]) : k \in \Z, \NN \in \Phi_+(Y) \}.$$

 \end{itemize}
\end{theo}

\proof (a): Let $z \in X(\M)$.

If $z = e$ then $z = x[\emptyset]$ and $\emptyset \in \Theta(\M)$ since $\M \not= FIN(\N)$.

Assume that $z \not= e$ and so $z = S^k(w)$ with $w_0 = 1$. Since $w \in X(\M)$
it is the limit of $\{ S^{t^i}(x[\M]) \}$ for some sequence $\{ t^i \}$ in $\Z$. Eventually $t^i \in A[\M]$ and so
we may assume that $t^i \in A[\M]$ for all $i$.

If some subsequence of $\{ t^i \}$ is bounded then there exists $k_1 \in \Z$ such that $t^i = k_1$ infinitely often and
so $w = S^{k_1}(x[\M])$.

Now assume  that $|t^i| \to \infty$ and $t^i \not= 0$ for all $i$.

If  $n_*(t^i) = |j_{r(t^i)}(t^i)| \to \infty$ then by  Theorem \ref{towtheo13a} $w = x[\NN]$ with $\NN \in \Theta'(\M)$.

Notice that  these results only require $\M \not= FIN(\N)$.

Now assume that $\{ |j_{r(t^i)}(t^i)| \}$ has a bounded subsequence. By Lemma \ref{towlem12}
we may go to a subsequence $\{ t^{i'}  \}$ so that
$t^{i'} = \tilde t^{i'} + \bar t$ with $\tilde t^{i'} \in A[\M]$ and $n_*(\tilde t^{i'}) \to \infty$. By  Theorem \ref{towtheo13a}
again $\{ S^{\tilde t^{i'}}(x[\M]) \}$ has limit $x[\NN]$ for some $\NN \in \Theta(\M)$. Since $\bar t$ is constant,
$w = Lim \{ S^{t^{i'}}(x[\M]) \} = S^{\bar t}(x[\NN])$.

Thus, in any of these cases $w = S^{k_2}(x[\NN])$ for some $k_2 \in \Z$ and $\NN \in \Theta(\M)$.  Finally
$z =  S^{k + k_2}(x[\NN])$ as required.

The argument for $z \in X_+(\M)$ is similar.  If $w \in X_+(\M)$ with $w_0 = 1$ then we can assume that
$t^i \in A_+[\M]$ for all $i$ and proceed as above.

(b), (c): If $Y$ is an invariant subset of $X(\M)$ then $Y$ consists of the orbits of some of these $x[\NN]$.  That is,
$\Phi(Y) \subset \Theta(\M)$ and $Y$ consists of the orbits of the points $x[\NN]$ for $\NN \in \Phi(Y)$.
Thus, $Y = X(\Phi(Y))$. The $X_+$ case in (c) is similar.

$\Box$ \vspace{.5cm}


\begin{cor}\label{lattice1-1}
Given $\M \in \LAB$:
\begin{enumerate}
\item
The map $\Psi  \mapsto X(\Psi)$ is one-to-one from the collection of closed $FIN(\N)$ invariant
subsets of $\Theta(\M)$ to collection of closed $S$-invariant subsets of $X(\M)$.
\item
If $\M$ is of finite type then this map is surjective, i.e.
every closed $S$-invariant subsets $Y$ of $X(\M)$
is of the form $X(\Psi)$ for some closed $FIN(\N)$ invariant $\Psi \subset \Theta(\M)$,
with
$$
\Phi(X(\Psi)) = \Psi \ \  {\text{and}} \  \ Y = X(\Phi(Y)).
$$
\end{enumerate}
Similar statements hold for $X_+(\M)$.
\end{cor}

%
%

\begin{proof}
Combine Theorem \ref{towtheo14} (c) and Theorem \ref{towtheo15} (b).
\end{proof}

$\Box$ \vspace{.5cm}

\begin{cor}\label{towcor17} If a label $\M$ is not of finite
type then $X(\M)$ and $X_+(\M)$ each contain  non-periodic recurrent points.

If a label $\M$ is of finite type then $e$ is the only
recurrent point of $X(\M)$ or $X_+(\M)$ and so $(X(\M),S)$ and $(X_+(\M), S)$ are  CT systems.
In that case, $(X(\M),S)$ and $(X_+(\M),S)$
 are
LE and topologically transitive but not weak mixing.
\end{cor}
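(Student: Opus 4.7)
\proof (Proposal.) The plan is to assemble the two halves by invoking the bridge between the symbolic dynamics on $X(\M)$ and the $FIN(\N)$-dynamics on $\Theta(\M)$ that has been built up in the excerpt, most notably Theorem \ref{towtheo16}, Corollary \ref{towcor15}, and Corollary \ref{towcorBanach03}.

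For the first assertion, suppose $\M$ is not of finite type. By Proposition \ref{labelprop05a}(a), $\M$ then contains an infinite strictly increasing sequence whose ``roof'' $\NN$ is a nonempty recurrent label with $\NN\subset\M$. Proposition \ref{labelprop05a}(d) promotes this to a recurrent label $\M_\infty\in\Theta(\M)$ with $\NN\subset\M_\infty$ (so $\M_\infty\neq\emptyset$). Now Proposition \ref{towprop14}(b) places $x[\M_\infty]\in X(\M)$, and Theorem \ref{towtheo16} says this point is $S$-recurrent; the same argument with $x_+[\M_\infty]\in X_+(\M)$ handles the one-sided case. Non-periodicity is immediate: by Corollary \ref{towcorBanach03}, $\delta_e$ is the unique invariant probability measure on $X(\M)$ (and on $X_+(\M)$), so any periodic orbit would force a second invariant measure unless it is $\{e\}$ itself; since $\M_\infty$ is nonempty, $x[\M_\infty]_0=1$ and thus $x[\M_\infty]\neq e$.

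Now assume $\M$ is of finite type. By Corollary \ref{towcor15}(a), every $y\in X(\M)$ has the form $y=S^k x[\NN]$ for some $k\in\Z$ and $\NN\in\Theta(\M)$. If $y$ is recurrent, then so is $x[\NN]=S^{-k}y$, and Theorem \ref{towtheo16} forces $\NN$ to be a recurrent label in $\Theta(\M)$. By Corollary \ref{labelcor05b}(a), the only recurrent label in $\Theta(\M)$ is $\emptyset$, whence $\NN=\emptyset$ and $y=S^ke=e$. Thus $e$ is the unique recurrent point of $X(\M)$, i.e.\ $(X(\M),S)$ is CT; the argument for $X_+(\M)$ is identical.

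The remaining properties are then essentially free. Since a CT system has only the fixed point $e$ among its recurrent (hence periodic) points, $(X(\M),S)$ is in particular CP, and Remark \ref{cp,ae} gives LE. Topological transitivity holds by construction: $X(\M)$ is the $S$-orbit closure of $x[\M]$, so $x[\M]$ is a transitive point (and $x_+[\M]$ for $X_+(\M)$). Finally, for any nontrivial label $\M$ of finite type these systems are nontrivial CP, so Remark \ref{cp,weakmix} rules out weak mixing. The only step with any real content is the argument in the previous paragraph, and all the work has already been done in Corollary \ref{towcor15}(a), Theorem \ref{towtheo16} and Corollary \ref{labelcor05b}(a); the proof is a bookkeeping exercise of stringing these together.

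$\Box$
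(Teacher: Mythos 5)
Your proof is correct and takes essentially the same route as the paper's own argument: Proposition \ref{labelprop05a} plus Theorem \ref{towtheo16} (via Proposition \ref{towprop14}(b)) for the non-finite-type case, Corollary \ref{towcor15}(a) together with the fact that only $\emptyset$ is recurrent among sublabels of a finite type label for the CT case, and Remarks \ref{cp,ae} and \ref{cp,weakmix} for LE, transitivity and non-weak-mixing. Your unique-invariant-measure justification of non-periodicity is precisely the paper's appeal to Corollary \ref{towcorBanach03}, merely spelled out in more detail.
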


\proof If $\M$ is not of finite type then  Proposition \ref{labelcor05b} implies that there is a positive recurrent label $\NN$
with $\NN \in \Theta(\M)$. Hence, $x[\NN] \not= e$ is a recurrent point of   $ X(\M)$ by Corollary \ref{towcor13b}.

If $\M$ is of finite type then by Corollary \ref{towtheo15} every point of $X(\M)$ is on the orbit of some
$x[\M_1]$ with $\M_1 \subset \M$. These are all labels of finite type and so none is  recurrent except for $\M_1 = \emptyset$.

Since $x[\M]$ is always a transitive point for $X(\M)$, $(X(\M),S)$ is always topologically transitive.  In the finite
type case, it is CT and so is LE and not weak mixing (see Remarks \ref{cp,weakmix} and \ref{cp,ae}).

$\Box$ \vspace{.5cm}

\begin{cor}\label{towcor17a} Assume that $\M_1$ and $\M_2$ are labels with $\M_1$ of finite type.  The subshifts $(X(\M_1),S)$ and
$(X(\M_1),S)$ are isomorphic only if $\M_1 = \M_2$. \end{cor}

\proof The result is clearly true if each of $\M_1$ and $\M_2$ is either $0$ or $\emptyset$.
If $\M_2$ is not of finite type then by Corollary \ref{towcor17} $X(\M_2)$ and $X_+(\M_2)$
contain non-trivial recurrent points while
$X(\M_1)$ and $X_+(\M_1)$ do not.

We may restrict to the case where both $\M_1$ and $\M_2$ are of finite type with
$\rr > \00 $ in $\M_1 \setminus \M_2$. By Lemma \ref{towlem08} we can choose
a sequence $\{ t^n \in IP_+(k) \}$ so that $\rr(t^n) = \rr$ and $j_r(t^n) > n$. Go to a subnet $\{ t^{n_i} \}$ converging to
$p \in \b \Z$. By  the Coherence Lemma \ref{towlem11} for any label $\M$, $p(x[\M]) = x[\M -\rr]$ and
$p(x_+[\M]) = x_+[\M -\rr]$. In particular, since $\M_2 - \rr = \emptyset$, $p(x[\NN]) =
p(x_+[\NN]) = e$
for every
label $\NN \subset \M_2$. Because $\M_2$ is of finite type,
it follows from Theorem \ref{towtheo15}(a) that
$p(x) = e$ for all $x \in X(\M_2)$ and all $x \in X_+(\M_2)$.
On the other hand, $\M_1 - \rr$ is nonempty and so
$p(x[\NN]) $ and
$p(x[\M_1]) $ and
$ p(x_+[\M_1]) $
are not equal to $ e$.
Since $e$ is the unique minimal point in any $X(\M)$ it follows that
$\{ p : p(x) = e\}$
for all $x \in X(\M) \}$ is an isomorphism invariant for the cascade $(X(\M),S)$.
It follows that
$(X(\M_1),S)$ is not isomorphic to $(X(\M_2),S)$.

$\Box$ \vspace{.5cm}

\begin{Qu}
For distinct recurrent labels $\M_1$ and $\M_2$ can it happen that the
subshifts $(X(\M_1),S)$ and
$(X(\M_2),S)$
are isomorphic?
 \end{Qu}

\vspace{.5cm}

\begin{ex}\label{ex,unc}
For
the finite type label $\M$  in Example \ref{ex10cmoved} $\Theta(\M)$ is uncountable and so the subshift $X = X(\M)$ is uncountable, but nonetheless CT and LE.
In fact, each point of $X$ is an isolated point in its orbit closure,  and
$e$ is the unique recurrent point.
Thus $\M$ is a finite type label with uncountable $X(\M)$.
\end{ex}

\vspace{.5cm}

Define
\begin{equation}\label{symzer2}
\begin{split}
SY\!M = \{ \ x \in \{ 0, 1\}^{\Z} \ : \ x_{0} = 1 \ \text{and}  \ x_{-t} = x_t \ \text{for all} \ t \in \Z \ \}, \\
ZER = \{ \ x \in \{ 0, 1\}^{\Z} \ : \ x_{0} = 1 \ \text{and}  \ x_t  = 0 \ \text{for all} \ t < 0 \ \}.
\end{split}
\end{equation}Ä

\begin{lem}\label{towlem17a} $SY\!M$ and $ZER$ are closed subsets of $\{ 0, 1 \}^{\Z}$.
\begin{equation}\label{eqsymzer2}
\begin{split}
SY\!M \cap X(\LAB) = x[\LAB] \setminus \{ e = x_+[\emptyset] \}; \\
ZER \cap X_+(\LAB) = x_+[\LAB] \setminus \{ e = x_+[\emptyset] \}.
\end{split}
\end{equation}

Each non-periodic $S$ orbit  in $\{ 0, 1 \}^{\Z}$ meets
$SY\!M$ at most once. Each $S$ orbit  in $\{ 0, 1 \}^{\Z}$ meets
$ZER$ at most once \end{lem}

\proof $SY\!M$ and $ZER$ are obviously closed. Equation
\ref{eqsymzer2} follows from Theorem \ref{towtheo13a}.

If $S^{k_1}x, S^{k_2}x \in SY\!M$ with $k_2 \not= k_1$ then for all $t \in \Z$,
\begin{equation}\label{tow20a}
 x_{t + k_1} \ = \ x_{-t + k_1} \ = \ x_{(- t + k_1 - k_2) + k_2} \ = \ x_{(t - k_1 + k_2) + k_2}
 \end{equation}
 Letting $s = t + k_1$ we have that $x_s = x_{s + 2(k_2 - k_1)}$ for all $s \in \Z$. Since $k_2 \not= k_1$
 it follows that $x$ is periodic. By Corollary \ref{towcorBanach03}  $e = x[\emptyset]$ is the only periodic point in
$X(\P IP(k))$.

 The result for $ZER$ is obvious.

 $\Box$ \vspace{.5cm}

 \begin{remark}
 It follows that if $\mu$ is any non-atomic, shift-invariant probability measure on $\{ 0,1 \}^{\Z}$ then
 $\mu(ZER) = \mu(SY\!M) = 0$. Observe first that  the countable set $PER$ of
 periodic points in $\{ 0, 1 \}^{\Z}$ has measure zero because
 $\mu$ is non-atomic. Since $\{ S^k(SY\!M \setminus PER) : k \in \Z \}$ and $\{ S^k(ZER) : k \in \Z \}$
 are pairwise disjoint sequences of sets with identical measure  the common value must be zero.\end{remark}

 \vspace{.5cm}

 \begin{prop}\label{towprop17b} Let $x^*$ be a non-periodic recurrent point of $\{ 0, 1 \}^{\Z}$ and let
 $X$ be its orbit closure, so that $(X,S)$ is the closed subshift generated by $x^*$. If $K$ is a closed subset of
 $X$ such that every non-periodic orbit in $X$ meets $K$ at most once, then $K$ is nowhere dense. The
 set $X \setminus \bigcup_{k \in \Z} \ \{ S^{-k}( K) \}$ is a dense $G_{\delta}$ subset of $X$.

 In particular, $SY\!M \ \cap \  X$ is nowhere dense in $X$ and
 the set of points of $X$ whose
  orbit
 does not meet $SY\!M$
 is a dense $G_{\delta}$ subset of $X$.
 Similarly, $ZER \ \cap \  X$ is nowhere dense in $X$ and
 the set of points of $X$ whose
orbit
 does not meet $ZER$
 is a dense $G_{\delta}$ subset of $X$.\end{prop}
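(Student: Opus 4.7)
The plan is to reduce the general claim to two standard observations: a transitive point on a compact system visits every open set, and a non-periodic recurrent point does so infinitely often. Then the topological consequences follow from Baire category.

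First I would verify that $K$ has empty interior in $X$. Since $X$ is the orbit closure of $x^*$, the point $x^*$ is a transitive point. Suppose, for contradiction, that some nonempty open $U \subset X$ is contained in $K$. Choose $n \in \Z$ with $y := S^n(x^*) \in U$; note that $y$ is again a non-periodic recurrent point, and its $S$-orbit coincides with the orbit of $x^*$. Recurrence of $y$ gives integers $0 < m_1 < m_2$ with $S^{m_1}(y), S^{m_2}(y) \in U \subset K$, and non-periodicity of $y$ forces $S^{m_1}(y) \neq S^{m_2}(y)$. Thus the single non-periodic orbit of $x^*$ meets $K$ at least twice, contradicting the hypothesis on $K$. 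Since $K$ is closed with empty interior, $K$ is nowhere dense.

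Next I would address the $G_\delta$ statement. Because $S$ is a homeomorphism of $X$, each translate $S^{-k}(K)$ is closed; and since $S^{-k}$ is a homeomorphism sending $K$ to $S^{-k}(K)$, each $S^{-k}(K)$ is also nowhere dense. Writing
\begin{equation*}
X \setminus \bigcup_{k \in \Z} S^{-k}(K) \ = \ \bigcap_{k \in \Z} \bigl( X \setminus S^{-k}(K) \bigr),
\end{equation*}
we exhibit this set as a countable intersection of open dense subsets of $X$. Since $X$ is a compact metric space, hence a Baire space, the intersection is itself a dense $G_\delta$.

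Finally I would derive the two \emph{in particular} assertions by invoking Lemma \ref{towlem17a}: every non-periodic $S$-orbit in $\{0,1\}^{\Z}$ meets $SY\!M$ at most once and meets $ZER$ at most once. Applying the general statement to the closed subsets $K = SY\!M \cap X$ and $K = ZER \cap X$ of $X$ yields nowhere density of each, and the dense $G_\delta$ subset of points whose full orbit avoids each of them. No single step is a genuine obstacle; the only delicate moment is the interior argument, where one must explicitly use both recurrence (to get infinitely many returns to $U$) and non-periodicity (to ensure those returns give distinct orbit points).
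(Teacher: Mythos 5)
Your proof is correct, and your argument for nowhere density is a slight but genuine streamlining of the paper's. The paper proves the interior of $K$ is empty using only density of the orbit of $x^*$: if the interior contained two points one could find two disjoint open subsets of $K$, each met by the dense orbit, contradicting the hypothesis; this leaves the case of a one-point interior, which the paper kills separately by noting that the interior point would be isolated, forcing $x^*$ to be an isolated recurrent, hence periodic, point. Your argument instead exploits recurrence directly to produce two distinct visits of a single orbit to one open set $U \subset K$, which subsumes both cases at once (in particular, if $U$ were a singleton your two returns would force periodicity, recovering the paper's isolated-point case for free). One small caveat: with the paper's definition of recurrence ($x \in R_T(x)$ where $R_T = \omega T \cup \alpha T$), the point $y$ may only be negatively recurrent, so you are not entitled to \emph{positive} return times $0 < m_1 < m_2$; but the argument is symmetric and works verbatim with two distinct return times of either fixed sign, so this is a sign convention, not a gap. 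The Baire category step is identical to the paper's. Finally, your handling of the two applications differs mildly from the paper's: the paper applies the general statement only to $SY\!M \cap X$ via Lemma \ref{towlem17a} and disposes of $ZER$ by a separate direct argument (the $S^{-1}$-transitive points form a dense $G_\delta$ disjoint from $ZER$), whereas you apply the general statement uniformly to both; this is legitimate, since Lemma \ref{towlem17a} gives that \emph{every} orbit meets $ZER$ at most once, which is stronger than the hypothesis you need, and it makes the proof more uniform at no cost.
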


 \proof Since the orbit of $x^*$ is dense in $X$, it meets any nonempty open subset of $X$. If the interior of $K$
 contained more than one point then there would be two disjoint open sets $U_1, U_2$ contained in $K$ and so there
 would exist  $k_1, k_2 \in \Z$ such that $S^{k_a}x^* \in U_a \subset K$ for $a = 1,2$. Since $U_1$ and $U_2$ are disjoint
 $k_2 \not= k_1$. This contradicts the assumption on $K$. Hence, if $K$ has nonempty interior then the interior consists
 of a single point which is on the orbit of $x^*$.  This
 would imply that $x^*$ is an isolated point and so cannot be recurrent
 unless it is periodic. Hence, the interior of $K$ is empty.

 Since $K$ is nowhere dense,  the $G_{\delta}$ set 
 $X \setminus \bigcup_{k \in \Z} \ \{ S^{-k}( K) \}$ is dense by the Baire
 Category Theorem. A point lies in this set exactly when its orbit does not meet $K$.

 The result applies to $K = SY\!M \ \cap \  X$ by Lemma \ref{towlem17a}.

 In the case of $ZER$ the result is clear because the $S^{-1}$ transitive points in the orbit closure of $x^*$  form a
 dense $G_{\d}$ set disjoint from $ZER$.

 $\Box$ \vspace{.5cm}

\begin{cor}\label{towcor15a} For any label $\M$, the set
$\{ e \} \cup X(\M) \ \cap \  SY\!M = x[\Theta(\M)]
= \Nuc(X(\M))$, the nucleus of $X(\M)$, is a compact subset of
$X(\M)$ which meets each orbit in at most one point. The
set $\{ e \} \cup X_+(\M) \ \cap \  ZER = x_+[\Theta(\M)]$ is a compact subset of
$X_+(\M)$ which meets each orbit in at most one point.

 If $\M$ is of finite type then $x[\Theta(\M)]$
meets each orbit in $X(\M)$ and $x_+[\Theta(\M)]$
meets each orbit in $X_+(\M)$.

If $\M$ is not of finite type then $X(\M) \setminus
\bigcup S^i(SY\!M) $
is non-empty and so
 $\{ S^kx[\NN] : k \in \Z, \  \NN \in \Phi(X(\M)) \}$
is a proper subset of $X(\M)$. Furthermore,
$X_+(\M) \setminus [\{ e \} \ \cup \
\bigcup S^i(ZER) ]$
is non-empty and so
 $\{ S^k(x_+[\NN]) : k \in \Z, \  \NN \in \Phi_+(X_+(\M)) \}$
is a proper subset of $X_+(\M)$.
\end{cor}

\proof  By Lemma \ref{towlem17a} a non-periodic orbit meets $SY\!M$ in at most one point.  $X(\M)$ is compact and $SY\!M$ is
closed and so the intersection is compact. $x[\Theta(\M)] \subset X(\M)$ and $x[\LAB] \subset SY\!M$. On the other hand,
by Theorem \ref{towtheo13a}  if $z \in X(\M) \cap SY\!M$ then $z = x[\NN]$ for some
$\NN \in \Theta(\M)$.

If $\M$ is of finite type then each orbit of $X(\M)$ meets $x[\Theta(\M)]$.

If $\M$ is not of finite type then by Corollary
\ref{towcor17}there exists a non-periodic recurrent point $x^* \in X(\M)$.
If $X^*$ is the orbit closure of $x^*$ then $X^* \subset X(\M)$ and by Proposition \ref{towprop17b}
$ X^* \setminus
\bigcup \ S^i(SY\!M) $
 is nonempty.

$\Box$ \vspace{.5cm}

\begin{remark}\label{cross}
Corollary  \ref{towcor15a} shows  that for any label $\M$ of finite type the
dynamical system $(X(\M), S)$ admits 
$\Nuc(X(\M)) = x[\Theta(\M)]$ as a closed {\em
cross-section}; i.e. the orbit of
any point $x \in X(\M)$ meets $x[\Theta(\M)]$ exactly at one point. This is in accordance with the
following general theorem (see
\cite[Section 1.2]{GW-06}).\end{remark}
\vspace{.5cm}

\begin{theo}\label{periodic}
For a system $(X,T)$, with $X$ a completely metrizable
separable space, there exists a Borel cross-section
if and only if the only recurrent points are the
periodic ones.
\end{theo}

$\Box$ \vspace{.5cm}

Notice, too, that if $\mu$ is an invariant probability measure on $(X,T)$ such that the measure of the
set of periodic points is zero, then any cross-section is a non-measurable set. The special case of translation
by rationals on $\R/\Z$ is used in the usual proof of the existence of a subset of $\R$ which is not Lebesgue measurable.

On the other hand, when $\M$ is recurrent, 
$\Nuc(X(\M))= x[\Theta(\M)]$ is a Cantor subset of $X(\M)$ which meets each orbit at most once.
Thus, $S^i(x[\Theta(\M)]) \cap S^j(x[\Theta(\M)]) = \emptyset$
whenever $i \not= j$ in $\Z$. While this explicit construction may be of
interest, in fact any system $(X,T)$ admits such wandering Cantor sets when $X$
is
perfect and  the set of
periodic points has empty interior,
see \cite[Theorem 1.4]{A-02} .

\vspace{1cm}

\section{WAP  labels and their subshifts}\label{ss,finitarysimple}
%
%
%

\subsection{Simple, semi-simple and finitary labels}\label{ss,finitarysimple1}


$\qquad$

\vspace{.5cm}

Since
for
any
label
$\M$,
$\Theta(\M)$ is a closed $FIN(\N)$ invariant set,
 the enveloping semigroup
$\E(\Theta(\M))$
was defined above as the pointwise closure of the set of maps $P_{\rr}$ on $\Theta(\M)$ and the adherence
semigroup
 $\A(\Theta(\M))$ as the closure of the subset of maps $P_{\rr}$ with $\rr > 0$. Of course,
$P_{\00 }$ is the identity on $\Theta(\M)$. We have
\begin{equation}\label{wap00a}
\E(\Theta(\M)) \cdot \M = \Theta(\M), \quad \text{and} \quad \A(\Theta(\M)) \cdot \M = \Theta'(\M).
\end{equation}

Recall that $\NN \in \Theta'(\M)$ implies $max\ \M \cap \NN = \emptyset$, so if $\rr \in max\ \M$ then
for $\NN \in \Theta(\M)$:
\begin{equation}\label{wap00b}
P_{\rr}(\NN) = \begin{cases} 0 \qquad \ \text{if} \ \NN = \M,\\
\emptyset  \qquad \ \text{if}  \ \NN \in \Theta'(\M).\end{cases}
\end{equation}

\begin{lem}\label{waplem00} If $\M$ is a bounded, nonempty label  then $\A(\Theta(\M))$ is a
closed subsemigroup which equals $\E(\Theta(\M)) \setminus \{ P_{\00 } \}$.
If $Q \in \A(\Theta(\M)$ then $Q(0) = \emptyset$. \end{lem}

\proof Since $\M$ is
nonempty and bounded, it follows from  Lemma \ref{labellem05c} that $0 \in \Theta(\M)$.
The set
of elements
$Q \in \E(\Theta(\M))$
such that $Q(0) = \emptyset$ is a closed subsemigroup which contains all $P_{\rr}$ for $\rr > 0$. So it contains
$\A(\Theta(\M)$.  The sole remaining point $P_{\00 }$ has $P_{\00 }(0) = 0$ and so is not in $\A(\Theta(\M))$.

 $\Box$ \vspace{.5cm}

In this section we will examine conditions on a label $\M$ which ensure that the action of $FIN(\N)$ on $\Theta(\M)$ is
WAP, that is, every element of the enveloping semigroup $\E(\Theta(\M))$ is continuous on
$\Theta(\M)$.  By Corollary \ref{cor02a}
this is equivalent to commutativity of the monoid $\E(\Theta(\M))$.
In that case, we will say that the label $\M$ is WAP.


 \begin{df}\label{waplabel}
 A label $\M$ is WAP if
 every element of the enveloping semigroup $\E(\Theta(\M))$ is continuous on
$\Theta(\M)$
 \end{df}

We will denote by $\E_0(\Theta(\M))$ the elements which come from $FIN(\N)$.  That is,
\begin{equation}\label{waplabel01}
\E_0(\Theta(\M)) \ = \ \{ P_{\rr}|\Theta(\M) : \rr \in FIN(\N) \}.
\end{equation}
We will write $P_{\rr}$ in $\E(\Theta(\M))$ for the restriction $P_{\rr}|\Theta(\M)$.

We call the members of the set
\begin{equation}\label{ex-el}
\E_*(\Theta(\M)) = \E(\Theta(\M)) \setminus \E_0(\Theta(\M))
\end{equation}
the  \emph{external elements} of
$\E(\Theta(\M))$.

\begin{lem}\label{waplem01}  Let $\M$ be a label.
\begin{itemize}
\item[(a)] In $\E_0(\Theta(\M))$ the map $P_{\00 }$ is the identity map on $\Theta(\M)$ and so is the identity element of the monoid
$\E(\Theta(\M))$.  If $\M = FIN(\N)$ then $\Theta(\M) = \{ \M \}$ and $\E(\Theta(\M))$ is the trivial monoid $\{ P_{\00 } \}$.
If $\rr \not\in \M$
then $P_{\rr} = U$ the map taking all of $\Theta(\M)$ to $\emptyset \in \Theta(\M)$.

\item[(b)] If $\M - \rr_1 = \M - \rr_2$ for some $\rr_1, \rr_2 \in FIN(\N)$ then $P_{\rr_1}|\Theta(\M) = P_{\rr_2}|\Theta(\M) $,
i.e. as elements of $\E_0(\Theta(\M))$, $P_{\rr_1} = P_{\rr_2}$.

\item[(c)] If $Q_1 \in \E(\Theta(\M))$ satisfies $Q_1(\M) = \M - \rr_1$ for some $\rr_1 \in FIN(\N)$ then
$Q_1(\M - \rr) = \M - \rr_1 - \rr$ for all $\rr \in FIN(\M)$. If, in addition,  $Q_2 \in \E(\Theta(\M))$
satisfies $Q_2(\M) = \M - \rr_2$ for some $\rr_2 \in FIN(\N)$ then $Q_1 Q_2(\M) = Q_2 Q_1(\M) = \M - \rr_1 - \rr_2$.
\end{itemize}
\end{lem}

\proof (a): Obvious.

(b): If $P_{\rr_1}(\M) = P_{\rr_2}(\M)$ then because $FIN(\N)$ is commutative, it follows that
$P_{\rr_1} = P_{\rr_2}$ on $FIN(\N)\cdot \M = \{ \M - \rr : \rr \in FIN(\N) \}$. Because $P_{\rr_1}$ and $P_{\rr_2}$ are
continuous and $FIN(\N)\cdot \M$ is dense in $\Theta(\M)$ it follows that they agree on $\Theta(\M)$.

(c): $Q_1 P_{\rr}(\M) = P_{\rr} Q_1(\M)$ implies the first result.  The second is then obvious.

 $\Box$ \vspace{.5cm}

 Let $\LAB_f$ denote the set of finite labels, which comprise a countable dense subset of $\LAB$.

 \begin{df}\label{wapdef02} Let $\M$ be a label.

 (a) The label $\M$ is of \emph{strong finite type} if $\M$ is bounded and $\Theta(\M) \subset \LAB_f \cup FIN(\N) \cdot \M$.
  That is,
 if $\NN \in \Theta(\M)$ then  either $\NN = \M - \rr$ for some $\rr \in FIN(\N)$ or $\NN$ is a finite label.

 (b) The label $\M$ is \emph{semi-simple} if
 $Q \in \E_*(\Theta(\M))$ implies $Q(\M)$ is finite.

 (c) The label $\M$ is \emph{simple} if
 $\Theta(\M) =  FIN(\N) \cdot \M$.
 \end{df}
  \vspace{.5cm}

  Since $\LAB_f$ and $FIN(\N)$ are countable, $\Theta(\M)$ is countable when $\M$ is strong finite type. It follows from
  Corollary \ref{labelcor05b1} that if $\M$ is strong finite type then, as expected, it is of finite type. Since every
  element of $\Theta(\M)$ can be obtained as $Q(\M)$ for some $Q \in \E(\Theta(\M))$ it is clear that a bounded, semi-simple label is   of strong finite type.  As we will later see, there exist strong finite type labels which do not
  satisfy  monoid condition of semi-simplicity. However, in the simple case the a priori stronger monoid condition is implied.

\begin{lem}\label{waplem03} If $\M$ is a simple label then $\E(\Theta(\M)) = \E_0(\Theta(\M))$ and so $\E_*(\Theta(\M)) = \emptyset$.
\end{lem}

\proof Let $Q \in \E(\Theta(\M))$.  By simplicity, there exists $\rr_1 \in FIN(\N)$ such that $Q(\M) = P_{\rr}(\M)$.
By Lemma \ref{waplem01} (c), $Q = P_{\rr}$ on $FIN(\N)\cdot \M$. By simplicity this is all of $\Theta(\M)$. That is,
$Q = P_{\rr}$ as elements of $\E(\Theta(\M))$.

 $\Box$ \vspace{.5cm}

 In the strong finite type case, it may happen that $Q(\M) = P_{\rr}$ on $\M$ and so on $FIN(\N)\cdot \M$ but not on all
 of $\Theta(\M)$.

We now show that a semi-simple label is WAP.

We say that a net in $FIN(\N)$,  $\{ \rr^i : i \in I \}$, is \emph{cofinal constant} if there exists $\rr \in FIN(\N)$
such that $\{ i \in I : \rr^i = \rr \}$ is a cofinal subset of the directed set $I$.

\begin{lem}\label{waplem04} Assume that $\{ \rr^i : i \in I \}$ is a net of elements of $FIN(\N)$.

\begin{itemize}
\item[(a)]  If $\{ \rr^i : i \in I \}$ is not cofinal constant and $\NN$ is a finite label then eventually
$\NN - \rr^i = \emptyset$.

\item[(b)] Assume $\M$ is a bounded label and $\rr^i \in \M$ for all $i$. If there exists a
cofinal $I' \subset I$ such that $\bigcup \ \{ supp \ \rr^i : i \in I' \}$ is finite then
the net $\{ \rr^i : i \in I \}$ is cofinal constant.
\end{itemize}

\end{lem}

\proof (a): Either $\{ i : \rr^i \in \NN \}$ is cofinal or else eventually $\rr^i \not\in \NN$.  Let $\NN = \{ \ss_1,\dots, \ss_k \}$.
Let $I_j = \{ i : \rr^i = \ss^j \}$. If $\{ i : \rr^i \in \NN \} = \bigcup_j I_j$ is cofinal then some $I_j$ is cofinal and so
the net is cofinal constant.

(b): Since $\M$ is bounded, the set $A = \{ \rr \in \M : supp \ \rr \subset \bigcup \ \{ supp \ \rr^i : i \in I' \}$
 is finite.  Apply (a) to $\NN$, the label generated by $A$. If the net were not cofinal constant then eventually
 $\rr^i \not\in \NN$ contradicting cofinality of $I'$.

 $\Box$ \vspace{.5cm}

 \begin{theo}\label{waptheo05} If $\M$ is a semi-simple label then $\M$ is WAP.  If $Q_1, Q_2 \in \E_*(\Theta(\M))$ then
 $Q_1 Q_2 = Q_2 Q_1 = U$. \end{theo}

\proof Let $\{ \rr^i : i \in I \}$ be a net in $FIN(\N)$ such that $\{ P_{\rr^i}: i \in I \}$ converges pointwise to $Q$.

Case 1: $I' = \{ i : \rr^i \not\in \M \}$ is cofinal. For any $\NN \in \Theta(\M)$, $\NN \subset \M$ and so
$\NN - \rr^i = \emptyset$ for all $i \in I'$. Hence, the subnet $\{ P_{\rr^i}: i \in I' \}$ converges $U$ on
$\Theta(\M)$. Thus, $Q = U$ and so $Q P = U$ for all $P \in \E(\Theta(\M))$.

Case 2:  $\{ \rr^i : i \in I \}$ is cofinal constant.  So there exists $\rr$ such that $\rr^i = \rr$ on a cofinal subset.
Hence, $Q = P_{\rr} \in \E_0(\Theta(\M))$.

Case 3:  $\{ \rr^i : i \in I \}$ is a net in $\M$ which is not cofinal constant. If $\NN$ is in the image of
an external element of $\E(\Theta(\M))$ then $\NN$ is finite since $\M$ is semi-simple,
and so $\NN - \rr^i $ is eventually empty.  So
$Q(\NN) = \emptyset$.

So if $Q_1, Q_2 \in \E_*(\Theta(\M))$ then Case 3 applies with $Q = Q_1$ and so $Q_1 Q_2 = U$.

It follows that all of the elements of   $\E_*(\Theta(\M))$ commute with one another. Since the elements of $\E_0(\Theta(\M))$
commute with every element of $\E(\Theta(\M))$, it follows that $\E(\Theta(\M))$ is commutative and so $\M$ is WAP.

 $\Box$ \vspace{.5cm}

 \begin{cor}\label{wapcor06}  A bounded label $\M$ is semi-simple iff it is strong finite type and WAP.\end{cor}

 \proof  Assume $\M$ is strong finite type and WAP. If $Q(\M) = \M - \rr$ then
 by Lemma \ref{waplem01} (c), $Q = P_{\rr}$ on $FIN(\N)\cdot \M$ which is dense in $\Theta(\M)$. Since $\M$ is WAP
 $Q$, as well as $P_{\rr}$, is continuous. Hence, $Q = P_{\rr}$. Since $\M$ is strong finite type, the remaining case is
 $Q(\M)$ finite.  Hence, $\M$ is semi-simple. The converse follows from Theorem \ref{waptheo05}.

 $\Box$ \vspace{.5cm}

  \begin{prop}\label{wapprop07} If $\M$ is a WAP, semi-simple or simple label, then
  every $\NN \in \Theta(\M)$ satisfies the corresponding property. \end{prop}

  \proof For $\NN \in \Theta(\M)$, $ \E(\Theta(\M))$ maps onto $\E(\Theta(\NN))$ by restriction.
  If $ \E(\Theta(\M))$ is commutative then $\E(\Theta(\NN))$ is and so $\NN$ is WAP.  The restriction
  maps $\E_0(\Theta(\M))$ onto $\E_0(\Theta(\NN))$ and so the image of $\E_*(\Theta(\M))$ contains $\E_*(\Theta(\NN))$.
  Thus, if every $Q \in \E_*(\Theta(\M))$ has range in $\LAB_f$ then the same is true for every element
  $\E_*(\Theta(\NN))$. If $\E_*(\Theta(\M))$ is empty then the same is true for $\E_*(\Theta(\NN))$.

 $\Box$ \vspace{.5cm}

We now describe a condition which is easy to check and which implies semi-simplicity.

\begin{df}\label{labeldef01bxx}A label $\M$ is \emph{finitary} if it is bounded and whenever
$  \{ \rr^i \}$ is a sequence of $\N$-vectors with $\bigcup_i \ supp \ \rr^i$ infinite
 the $LIMINF \ \{ \M - \rr^i \}$ is finite. We call this the \emph{Finitary Condition}.
\end{df}

 \vspace{.5cm}

%


 Clearly, if $\M_1 \subset \M$ is a label then $\M_1$ is finitary if $\M$ is.
%

%

 For a label $\M$  and $\NN$  a nonempty set of $\N$-vectors with $\NN$, we define
 \begin{equation}\label{label10}
 \M - \NN \ = \ \{ \ \mm : \mm + \rr \in \M \quad \mbox{for all} \ \rr \in \NN \ \}
 \ = \ \bigcap_{\rr \in \NN} \ \M - \rr.
 \end{equation}

The set $\M - \NN$ is a label for any  nonempty set $\NN \subset FIN(\N)$, since $\M$ is a label.

%
%
%
%
%
%

 \begin{prop}\label{labelprop09} Let $\M$ be a bounded label.

 (a) The following conditions are equivalent.
 \begin{itemize}
 \item[(i)] If $\{ S_i \}$ is a sequence of finite subsets of $\N$ with
 $\bigcup_i \ S_i$ infinite,   there are only finitely many subsets $S$ of $\N$ such that eventually
 $S \cup S_i \in Supp \ \M$.

 \item[(ii)] If $\{ \ell^i \}$ is a sequence of distinct positive integers then, there are only finitely
 many $\ell \in \N$ such that eventually $\{ \ell, \ell^i \} \in Supp \ \M$.

 \item[(iii)] The label is finitary, i.e. if $ \{ \rr^i \}$ is a sequence of $\N$-vectors with $\bigcup_i \ supp \ \rr^i$ infinite
 then  $LIMINF \ \{ \M - \rr^i \}$ is finite.

 \item[(iv)] If $ \rr^i $ is a sequence of $\N$-vectors with $\bigcup_i \ supp \ \rr^i$ infinite
 and $\{ \M - \rr^i \}$ convergent then
 then $LIM \ \{ \M - \rr^i \}$ is finite.

 \item[(v)] If $\NN$ is infinite then $\M - \NN$ is finite, and there is no strictly
 increasing sequence in the collection $\{ \M - \NN : \NN $ infinite $ \}$ of finite labels.

 \item[(vi)] If $ \{ \rr^i : i \in I \} $ is a net of $\N$-vectors which is not cofinal constant,
 and $\{ \M - \rr^i : i \in I \}$ is convergent then
 then $LIM \ \{ \M - \rr^i \}$ is finite.
 \end{itemize}

 (b) If $\M$ is finitary then it is semi-simple.\vspace{.25cm}

 (c) If $\M$ is finitary, then the following conditions on a finite subset $\F$ of $\M$ are equivalent.
 \begin{itemize}
 \item[(i)] There exists $ \rr^i $  a sequence of $\N$ vectors with $\bigcup_i \ supp \ {\mathbf r}^i$ infinite
 such that  $LIM \ \{ \M - \rr^i \} \ = \ \F$.

  \item[(ii)] There exists $ \rr^i $  a sequence of distinct $\N$-vectors
 such that  $LIM \
  \allowbreak
 \{ \M
 - \rr^i \} \ = \ \F$.

 \item[(iii)] There is an infinite set $\NN$ such that $\F =  \M - \NN_1$ for every infinite subset
 $\NN_1$
 of $\NN$.
%
 \end{itemize}
 \end{prop}

\proof If $\NN \subset \M$ then, since $\M$ is bounded, $\NN$ is infinite iff
 $\bigcup \ Supp \ \NN$ is infinite. Also, if $\{ \rr^i \}$ is a sequence of $\N$ vectors
 with $\bigcup_i \ \{ supp \ \rr^i \}$ infinite
  then we can choose a subsequence of distinct vectors.

(a) (i) $\Rightarrow$ (ii): Let $S_i = \{ \ell^i \}$.

(ii) $\Rightarrow$ (i): By going to a subsequence we can assume that we can choose $\ell^i \in S_i$ so that
$\{ \ell^i  \}$ is an infinite sequence of distinct integers. Let $K = \{ \ell \in \N : $ eventually $\{ \ell , \ell^i  \} \in Supp \ \M \}$.
This is a finite set by (ii). If $S \cup S_i  \in Supp \ \M $ then $S \cup \{ \ell^i  \} \in Supp \ \M $ and if $\ell  \in S$ then
$\{ \ell , \ell^i  \} \in Supp \ \M$.  Hence, if eventually $S \cup S_i  \in Supp \ \M $ then $S \subset K$. Since the power set of $K$ is finite (i) holds.

(i) $\Rightarrow$ (iii): If $\mm \in LIMINF$ then eventually $supp \ \mm \cup supp \ \rr^i \in Supp \ M$.
By (i) $\M$ there are only finitely many such sets $supp \ \mm$ and so there are only
finitely many $\mm \in \M$ with such supports, because $\M$ is bounded.

 (iii) $\Leftrightarrow$ (iv): If $\{ \rr^i \}$ is a sequence with $\bigcup_i \ \{ supp \ \rr^i \}$ infinite then
 we can choose a subsequence of distinct vectors and then go to a further subsequence $\{ {\mathbf r}^{i'} \}$ which
 is convergent. Assuming (iv) $LIM \ \{\M - \rr^{i'} \} = LIMINF \ \{\M - \rr^{i'} \} \supset
 LIMINF \ \{\M - {\mathbf r}^{i} \}$ is finite. This shows that (iv) implies (iii).  The converse is obvious.

 (iii) $\Rightarrow$ (v):  Suppose that $\{ \F^i \}$ is a nondecreasing sequence of subsets of $\M$ with
 each $\M - \F^i = \NN_i$ infinite.  So $\F^i \subset \M - \NN_i$.  Inductively, choose $\rr^{i} \in \M - \F^{i}$ distinct from the
 $\rr^{j}$'s with
  $j < i$.
 Since the sequence $\{ \F^i \}$ is a nondecreasing, $\F^j \subset \M - \rr^i$ for
 $j \leq i$. Hence, $\bigcup_i \F^i \subset LIMINF \ \{ \M - \rr^i \}$ and so it
 is finite by (iii). Thus, the sequence $\{ \F^i \}$ is eventually constant.

 In particular, if $\NN$ is infinite and  $\F = \M - \NN$ then $\M - \F \supset \NN$ is infinite and applying the above argument to
 the sequence which is constantly $\{ \F \}$ we see that $\F = \M - \NN$ is finite.

 (vi) $\Rightarrow$ (v): By going to a subsequence we can assume that  $\{ \rr^i \}$ is a sequence of distinct
 elements. Then $\{ \bigcap_{j \geq i} \{ \M - \rr^j \} \}$ is a nondecreasing sequence in $\{ \M - \NN : \NN $ infinite $ \}$.
 Hence, its union, which is the limit, is finite by (vi).

 (vi) $\Rightarrow$ (iv): Again we can assume that the sequence $\{ \rr^i \}$ consists  of distinct
 elements. It is then a convergent net which is not cofinal constant.  Hence the limit is finite by (vi).

 (iii) $\Rightarrow$ (vi):  Assume that $\NN = LIM \{ \M - \rr^i : i \in I \}$ is infinite.  Write $\NN$ as the union
 of a strictly increasing sequence of nonempty finite labels $\{ \NN^k : k \in \N \}$. Since each $\NN^k$ is finite,
 eventually $\NN^k \subset \M - \rr^i$. Inductively, we can choose $i_k \in I$ so that $i_k \geq i_{k-1}$ in $I$ and
 $\NN^k \subset \M - \rr^i$ for all $i \geq i_k$ in $I$. Furthermore, such the sequence is not cofinal constant, we may
 choose $i_k$ so that $\rr^{i_k}$ is distinct from $\rr^{i_{\ell}}$ for all $\ell < k$ in $\N$. Since, the $\NN^k$'s are
 nonempty, $\{ \rr^{i_k} : k \in \N \}$ is a sequence of distinct elements of $\M$. Furthermore,
 $LIMINF_k \ \{\M - \rr^{i_k} \} \ \supset \ \bigcup_k \ \NN^k = \NN$. This contradicts (iii).  Contrapositively, (iii) implies (vi).

(b) : Let $Q \in \E_*(\Theta(\M))$ be the pointwise limit of the net $\{ P_{\rr^i} : i \in I \}$. Since $Q$ is an external
element, the net $\{ \rr^i \}$ is not cofinal constant. It follows from (v) above that
$Q(\M) = LIM \{ \M - \rr^i \}$ is finite.

 (c) (i)  $\Leftrightarrow$ (ii): This is obvious from our initial remarks.

 (ii) $\Rightarrow$ (iii): Assume $ \rr^i $  a sequence of $\N$-vectors
with $LIM \ \{ \M - \rr^i \} \ = \ \F$. By discarding finitely many terms $\rr^i$ we can assume that the finite set $\F$
equals $\M - \NN$ with $\NN = \{  \rr^i \} $. If $\NN_1$ is any infinite subset of $\NN$ then
$\F = \M - \NN \subset \M - \NN_1$. On the other hand, if
  $ \mm \in \M - \NN_1$ then $\mm \in \M - \rr^i$ for infinitely many $i$ and so
  by convergence $\mm \in LIM = \F$.

  (iii) $\Rightarrow$ (i): If $\NN = \{ \rr^1, \rr^2, \dots \} $ is the infinite set given by (iii) then
  $\{ \rr^i \} $ is a sequence of distinct elements with $\F = LIM \{ \M - \rr^i \}$.

 $\Box$ \vspace{.5cm}

 \begin{prop}\label{labelprop09aaa} If $\M$ is a label such that $\M - \rr$ is a finite label for all $\rr > 0$,
 then $\M$ is bounded, simple and finitary. \end{prop}

 \proof If a label $\NN$ is unbounded then for some $\ell \in \N$, $\NN$ contains $\NN_1 = \{ n \chi(\ell) : n \in \Z_+ \}$
 and so $\NN - \chi(\ell)$ contains $\NN_1 - \chi(\ell) = \NN_1$, which is infinite.  Hence, $\M$ is bounded..

 If $\{ \rr^i \}$ is a sequence in $\M$ with $\bigcup_i  \ supp(\rr^i)$ infinite then for any $\rr > 0 $
 $\M - \rr$ is finite and so $ \bigcup_i  \ \{ supp(\rr^i) : \rr^i \in \M - \rr \}$ is finite. So it cannot
 be true that eventually $\rr \in \M - \rr^i$.  Hence, $LIM INF \{ \M - \rr^i \} = 0$. It follows that
 $\M$ is finitary. So $\{ \M - \rr^i \}$ is convergent with limit $0$. Hence, $\M$ is bounded and semi-simple and
 so is of finite type.

 If $\bigcup_i  \ supp(\rr^i)$ is finite then the sequence is cofinal constant by Lemma \ref{waplem04} (b). So if
 the sequence $\{ \M - \rr^i \}$ is convergent then its limit is $\M - \rr$ for some $\rr$.

 Finally, if $\{ \M - \rr^i \}$ is convergent and infinitely often $\rr^i \not\in \M$ then the limit is $\emptyset$.

 Since $\M$ is bounded, $\emptyset = \M - \rr$ for and if $\M$ is nonempty then since $\M$ is of finite type,
 $max \M$ is nonempty. If $\rr \in max \M$ then $0 = \M - \rr$.  It follows $\Theta(\M) = \{ \M - \rr : \rr \in FIN(\N) \}$
 and so $\M$ is simple.

  $\Box$
%
%
%
%

\vspace{1cm}

\subsection{WAP subshifts}\label{sub,WAP}

$\qquad$

 \vspace{.5cm}

 We now apply the above results to the associated subshifts.  We will repeatedly use the following
 \emph{fundamental fact} concerning the subshift $(X(\M),S)$ associated with a label $\M$ of finite type.

 \begin{theo}\label{towtheo17aa} Let $\M$ be a label of finite type.

 \begin{itemize}
 \item[(a)] If $z \in X(\M)$ with $z \not= e = x[\emptyset]$, then there
 exists a unique pair $(t,\NN) \in \Z \times \Theta(\M)$ such that $z = S^t(x[\NN])$ and for every
 $(t,\NN) \in \Z \times \Theta(\M)$, $S^t(x[\NN]) \in X(\M)$.

   \item[(b)]  If $w \in X_+(\M)$ with $z \not= e = x[\emptyset]$, then there
 exists a unique pair $(t,\NN) \in \Z \times \Theta(\M)$ such that $z = S^t(x_+[\NN])$ and for every
 $(t,\NN) \in \Z \times \Theta(\M)$, $S^t(x_+[\NN]) \in X_+(\M)$.
 \end{itemize}
 \end{theo}

 \proof By Theorem \ref{towtheo15} $X(\M) = \{ S^t(x[\NN]) : (t,\NN) \in \Z \times \Theta(\M) \}$ and similarly for
 $X_+(\M)$. If $S^{t_1}(x[\NN_1]) = S^{t_2}(x[\NN_2])$ with $\NN_2 \not= \emptyset$ then $S^{t_1 - t_2}(x[\NN_1]) = x[\NN_2]$,
 which is symmetric about $0$ and has a $1$ at position $0$. Hence $\NN_1 \not= \emptyset$ and $t_1 - t_2 = 0$.  Finally,
 $\NN_1 = \NN_2$ by Theorem \ref{towtheo09}. Similarly, if $S^{t_1 - t_2}(x_+[\NN_1]) = x_+[\NN_2]$ then since
 the latter has a $1$ at position $0$ and $0$'s at all negative positions it follows that $t_1 - t_2 = 0$ and again
 $\NN_1 = \NN_2$ by Theorem \ref{towtheo09}.

$\Box$ \vspace{0.5cm}

We now describe the enveloping semigroup version of this result. Recall that for $\M \not= FIN(\N)$,
and $\rr \not\in \M$, $P_{\rr} = U \in \E(\Theta(\M))$ is the idempotent with $U(\NN) = \emptyset$ for all
$\NN \in \Theta(\M)$.

We use the continuous, surjective homomorphisms $D : E_b(X(\M),S)  \to \E(\Theta(\M)), D_+ : E_b(X_+(\M),S)  \to \E(\Theta(\M))$ from (\ref{ellis15}).

\begin{lem}\label{towlem17aaa} Assume $\M$ is a label of finite type.
\begin{itemize}

\item[(a)] If $q_1, q_2 \in E(X(\M),S)$ with $q_1(x[\NN]) =  q_2(x[\NN])$ for every $\NN \in \Theta(\M)$ then
$q_1 = q_2$. If $q_1, q_2 \in E(X_+(\M),S)$ with $q_1(x_+[\NN]) =  q_2(x_+[\NN])$ for every $\NN \in \Theta(\M)$ then
$q_1 = q_2$.

\item[(b)] The maps $D$ and $D_+$ are injective and so
are homeomorphisms and monoid isomorphisms.
\end{itemize}
\end{lem}

\proof  (a) If $q_1, q_2 \in E(X(\M),S)$ with $q_1 = q_2$ on $\{ x[\NN] : \NN \in \Theta(\M) \}$ then
 since $q_1$ and $q_2$ commute with all of the $S^t$'s, it follows from Theorem \ref{towtheo17aa} that $q_1 = q_2$
 on all of $X(\M)$.

 (b) If $Q = D(q_1) = D(q_2)$ then $q_1(x[\NN]) = x[Q(\NN)] = q_2(x[\NN])$ for all $\NN \in \Theta(\M)$.  By (a)
 $q_1 = q_2$. Thus, $D$ is bijective and so is a monoid isomorphism and is a homeomorphism by compactness.

 The proofs for $X_+(\M)$ are the same.

$\Box$ \vspace{0.5cm}

We will denote the inverse isomorphisms as
\begin{equation}\label{ellisiso}
\bar D : \E(\Theta(\M))  \to  E_b(X(\M),S) \quad \text{and} \quad  \bar D_+ : \E(\Theta(\M)) \to  E_b(X_+(\M),S)
\end{equation}

 \begin{theo}\label{towtheo17ab} Let $\M$ be a label of finite type.

 \begin{itemize}
 \item[(a)] For every $Q \in \E(\Theta(\M))$, $\bar D(Q) \in E(X(\M),S)$ and
 $\bar D_+(Q) \in E(X_+(\M),S)$ are the unique elements such that
 $$ \bar D(Q)(x[\NN]) = x[Q(\NN)], \quad \bar D_+(Q)(x_+[\NN]) = x_+[Q(\NN)] \qquad \text{for all} \quad \NN \in \Theta(\M)$$
 with $\bar D(U)$, (and $\bar D_+(U)$) the idempotent $u$ mapping $X(\M)$ (resp. $X_+(\M)$) to the fixed point $e = x[\emptyset] =
 x_+[\emptyset]$.
%

 \item[(b)] For every $q \in E(X(\M),S)$ with $q \not= u$ there exists a unique pair $(t,Q) \in \Z \times \E(\Theta(\M))$
 such that $q = S^t \bar D(Q)$. For every $q \in E(X_+(\M),S)$ with $q \not= u$ there exists a
 unique pair $(t,Q) \in \Z \times \E(\Theta(\M))$
 such that $q = S^t \bar D_+(Q)$.

 \item[(c)] For every $\rr \in FIN(\N)$, $p_{\rr} = \bar D(P_{\rr}) \in E(X(\M),S)$  and $p^+_{\rr} = \bar D_+(P_{\rr}) \in E(X_+(\M),S)$
 are continuous so that $(\rr,z) \mapsto p_{\rr}(z) $ and $(\rr,z) \mapsto p^+_{\rr}(z) $ define continuous semigroup
 actions $FIN(\N) \times X(\M) \to X(\M)$ and $FIN(\N) \times X_+(\M) \to X_+(\M)$ respectively.
  \end{itemize}
 \end{theo}

 \proof  We will give the proofs just for the $(X(\M),S)$ case as the $(X_+(\M),S)$ arguments are completely analogous.

(a): This is a restatement of Lemma \ref{towlem17aaa}.

  (b):  For $q \in E(X(\M),S)$ assume that $\{ t^i \}$ is a net in $\Z$ so that $\{ S^{t^i} \}$ converges pointwise to $q$.

  If $q(x[\M]) = e$ then by Lemma \ref{elimit} $\xi_{\M}(t^i) \to \infty$ and so $\xi_{\NN}(t^i) \to \infty$ for any
  label $\NN$ contained in $\M$.  Hence, $q(x[\NN]) = e$ for all $\NN \in \Theta(\M)$ and so from
  Theorem \ref{towtheo17aa} it follows that $q(z) = e$ for all $z \in X(\M)$, i.e. $q = u = \bar D(U)$.

  If $q(x[\M])_{a} = 1$ for some $a \in Z$ we may replace $q$ by $S^{-a}q$ and so assume that $q(x[\M])_0 = 1$.
  This implies that eventually $t^i \in A[\M]$.  Since $\M$ is of finite type,
Lemma \ref{towlem12}(b)
  implies that
  there exists $b \in \Z$ so that  $t^i - b$ is eventually in $ A[\M]$ with $j_r(t^i - b) \to \infty$.
  Thus, we may assume that $t^i = k + s^i$ with $s^i \in A[\M]$ and $j_r(s^i) \to \infty$. Let $\rr^i = \rr(s^i)$.
  By going to a subnet we may assume that $\{P_{\rr^i} \}$ converges to some $Q \in \E(\Theta(\M))$.
  From the Coherence Lemma again, it follows that $S^{-k}q = Lim \{ S^{s^i} \} = \bar D(Q)$.

  Finally, if $S^{t_1}\bar D(Q_1) = q = S^{t_2}\bar D(Q_2)$, then
  $$S^{t_1 - t_2}(x[Q_1(\M)]) = S^{t_1 - t_2}\bar D(Q_1)(x[\M]) = \bar D(Q_2)(x[\M]) = x[Q_2(\M)].$$
  Since $Q_2(\M) \not=
  \emptyset$ it follows from the uniqueness in Theorem \ref{towtheo17aa} that $t_1 - t_2 = 0$.
  Hence, $\bar D(Q_1) = \bar D(Q_2)$ and so $Q_1 = Q_2$ because  $\bar D$ is an isomorphism.
  It follows that the representation is unique.

 (c): Since $P_{\rr}$ commutes with every $Q \in \E(\Theta(\M))$ it follows from (b) that $p_{\rr}$ commutes with every
 $q \in E(X(\M),S)$ and so $p_{\rr}$ is continuous by
Proposition \ref{prop02}.

 Since $\bar D$ is a semigroup isomorphism and $P_{\rr_1}P_{\rr_2} = P_{\rr_1 + \rr_2}$,
   it follows that $p_{\rr_1}p_{\rr_2} = p_{\rr_1 + \rr_2}$ and so
 $(\rr,z) \mapsto p_{\rr}(z)$ defines a continuous semigroup action.

$\Box$ \vspace{0.5cm}

\begin{remark} From the representation in (b) it is clear that $\bar D $ maps $\A(\Theta(\M))$ into $A(X(\M),S)$.
\end{remark}
\vspace{0.5cm}

Let $(\Z \times \E(\Theta(\M))/(\Z \times \{ U \})$ denote the quotient of the product semigroup obtained by identifying
all of the points of $\Z \times \{ U \}$ with the single point $U$.  Define
\begin{equation}\label{tow17ad}
\begin{split}
\hat D : (\Z \times \E(\Theta(\M))/(\Z \times \{ U \}) \to E(X(\M),S) \qquad \text{by} \quad (t,Q) \mapsto S^t \bar D(Q), \\
\hat D_+ : (\Z \times \E(\Theta(\M))/(\Z \times \{ U \}) \to E(X_+(\M),S) \qquad \text{by} \quad (t,Q) \mapsto S^t \bar D_+(Q).
\end{split}
\end{equation}

From Theorem \ref{towtheo17ab} we immediately obtain:

\begin{cor}\label{towcor17ac} The mappings $\hat D$ and $\hat D^+$ are semigroup isomorphisms. \end{cor}

$\Box$ \vspace{0.5cm}

N.B.  This is an algebraic result with no topological implications.

\vspace{0.5cm}

The maximum label $FIN(\N)$ is simple since $\Theta(FIN(\N)) = \{ FIN(\N) \}$. Hence, it is an unbounded WAP label.
Since it is recurrent, $X(FIN(\N))$ is uncountable and so $(X(FIN(\N)),S)$ is not WAP.
In fact, it is even not tame; see Proposition \ref{IP-not-tame} below.

\begin{lem}\label{towlem17ac1} If a bounded label is WAP then it is of finite type. \end{lem}

 \proof If $\M$ is not of finite type then $\Theta(\M)$ contains a recurrent label by Corollary
 \ref{labelcor05b}.
 Since a subsystem
 of a WAP system is WAP, it suffices to show that a non-trivial bounded, recurrent label $\M$ is not WAP.

 Since $\M$ is recurrent, there exists a sequence $\{ \rr^i > 0 \}$ such that $P_{\rr^i}(\M) \to \M$. Choose a subnet
 $\{ \rr^{i'} \}$ so that $ P_{\rr^{i'}} \to Q$ in $\E(\Theta(\M))$. Clearly, $Q(\M) = \M = P_{\00 }(\M)$. If $Q$
 were continuous then since $\M$ is a transitive point for $\Theta(\M)$ we would have $Q = P_{\00 } = id$ on $\Theta(\M)$.
 Because $\M$ is bounded, $0 \in \Theta(\M)$ by Lemma \ref{labellem05c}.
 But $ 0 - \rr^{i} = \emptyset $ for all $i$. Hence, $Q( 0 ) = \emptyset \not= 0 = P_{\00 }(0)$.

$\Box$ \vspace{0.5cm}

So we obtain

\begin{theo}\label{towtheo17ad} For a label $\M$ the following are equivalent.

\begin{itemize}
\item[(a)] The label $\M$ is  bounded and WAP.

\item[(b)] The cascade $(X(\M),S)$ is WAP.

\item[(c)] The cascade $(X_+(\M),S)$ is WAP.
\end{itemize}

In that case, $\Theta(\M)$ and $X(\M)$ are countable and $\M$ is of finite type.
\end{theo}

\proof  If $\M$ is bounded and WAP then by Lemma \ref{towlem17ac1} it is of finite type and $\E(\Theta(\M))$ is commutative.  Hence,
$(\Z \times \E(\Theta(\M))/(\Z \times \{ U \}) $ is commutative and so from Corollary \ref{towcor17ac}
it follows that $E(X(\M),S)$ and $E(X_+(\M),S)$ are commutative.  As these are topologically transitive cascades, they are WAP.

If either $(X(\M),S)$ and $(X_+(\M),S)$ is WAP then as WAP subshifts the spaces are countable.  Since
$\Theta(\M)$ injects into $X(\M)$ and $X_+(\M)$, it is  is countable and so $\M$ is of finite type.
From Corollary \ref{towcor17ac} again it follows that
$\E(\Theta(\M))$ is commutative and so $\M$ is a WAP label.

$\Box$ \vspace{0.5cm}

In summary, we have the implications:
\begin{quote}
simple \ $\Longrightarrow$ \ semi-simple \ $\Longrightarrow$ \ WAP, \\
finitary  $\Longrightarrow$  bounded and semi-simple  $\Longrightarrow$\\
strong finite type  $\Longrightarrow$  finite type, \\
bounded and WAP \ $\Longrightarrow$ \ finite type.
\end{quote}

\vspace{1cm}

\subsection{Constructions and examples}

$\qquad$

\vspace{0.5cm}

We turn now to the tools we will use to construct many examples of WAP labels.

Recall that
for labels $\M_1$ and $\M_2 $ we defined
$$  \M_1 \oplus \M_2 = \{ \mm_1 + \mm_2 : \mm_1 \in \M_1, \mm_2 \in \M_2 \}.$$

Recall that
 $(\M_1, \M_2) \mapsto \M_1 \oplus \M_2$ is a continuous map from $\LAB \times \LAB \to \LAB$. So
 if $\Phi_1, \Phi_2 \subset \LAB$ are compact then $\Phi_1 \oplus \Phi_2$ is compact and  is therefore closed.
 If either $\M_1$ or $\M_2$ is empty then $\M_1 \oplus \M_2 = \emptyset$. Otherwise, $\M_1 \oplus \M_2 \not= \emptyset$.


\begin{df}\label{hereditary,fcontain}
Let $M$ be a collection  of finite subsets of $\N$. Call it \emph{hereditary}
if $ A \subset B $ and $B \in M$ implies
$A \in M$.  We say that $M$ f-contains $L \subset \N$ if every finite subset of $L$ is in $M$,
i.e. $\P_f L \subset M$.
\end{df}

We will need a combinatorial lemma.

For two hereditary
collections $M_1, M_2$ define $M_1 \oplus M_2 = \{ A_1 \cup A_2 : A_1 \in M_1,
A_2 \in M_2 \}$.

\begin{lem}\label{labellem12a} Let $M_1$ and $M_2$ be hereditary collections of finite subsets of $\N$.  If
$M_1 \oplus M_2$ f-contains an infinite set then either $M_1$ or $M_2$ f-contains an infinite set. \end{lem}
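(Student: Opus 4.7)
The plan is to apply K\"onig's lemma to a tree of partial decompositions of the infinite set into an $M_1$-part and an $M_2$-part, thereby producing a single global decomposition that witnesses f-containment for one of the two families. Let $L = \{a_1, a_2, \dots\}$ be an infinite set f-contained in $M_1 \oplus M_2$, set $L_n = \{a_1, \dots, a_n\}$, and call a function $\tau : L_n \to \{0,1\}$ \emph{acceptable} if $\tau^{-1}(1) \in M_1$ and $\tau^{-1}(0) \in M_2$. The first step is to note that the set $T_n$ of acceptable functions is nonempty for every $n$: since $L_n \in M_1 \oplus M_2$, one writes $L_n = A \cup B$ with $A \in M_1$ and $B \in M_2$, and replacing $B$ by $L_n \setminus A$ (still in $M_2$ by hereditarity) produces an acceptable $\tau$ as the indicator of $A$ on $L_n$.

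Next I would check that acceptability is stable under restriction, which is where both hereditarity hypotheses enter: if $\tau : L_{n+1} \to \{0,1\}$ is acceptable then $\tau|_{L_n}^{-1}(i) \subset \tau^{-1}(i) \in M_i$, so hereditarity yields $\tau|_{L_n}^{-1}(i) \in M_i$ for $i = 1, 2$. Consequently $T := \bigcup_n T_n$, partially ordered by restriction, is a finitely branching (binary) tree which is nonempty at every level, and K\"onig's lemma produces an infinite branch, i.e.\ a function $\sigma : L \to \{0,1\}$ with $\sigma|_{L_n} \in T_n$ for every $n$.

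To conclude, I would extract the desired infinite f-contained set. Since $L = \sigma^{-1}(1) \cup \sigma^{-1}(0)$, at least one of these two sets is infinite; call it $L^*$, and assume without loss of generality $L^* = \sigma^{-1}(1)$. Any finite $A \subset L^*$ lies in some $L_n$, and then $A \subset \sigma|_{L_n}^{-1}(1) \in M_1$, so hereditarity of $M_1$ gives $A \in M_1$. Thus $M_1$ f-contains the infinite set $L^*$, finishing the argument. I do not anticipate a real obstacle; the one subtle point is verifying that the collection $T$ really forms a tree (that is, that acceptability descends under restriction), and this is precisely where the hereditarity hypothesis on \emph{both} $M_1$ and $M_2$ is essential. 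Once this is set up, K\"onig's lemma does the rest.
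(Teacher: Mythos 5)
Your proof is correct and is essentially the paper's own argument: the paper builds the same binary tree of ordered partitions $(A_n,B_n)$ of $\{\ell_1,\dots,\ell_n\}$, uses hereditarity at exactly your two key points (to make each level nonempty, by replacing $B$ with $L_n\setminus A$, and to show goodness descends under restriction), and then extracts an infinite branch via compactness of the Cantor set of paths through the nested closed sets $G_n$, which is K\"onig's lemma in disguise. There is no gap; the two write-ups differ only in whether the branch is produced by invoking K\"onig's lemma directly or by a nested-closed-sets compactness argument.
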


\proof Let $L = \{ \ell_1, \ell_2,\dots \}$ be a counting for an infinite set f-contained in $M_1 \oplus M_2$.
Define the directed binary tree with vertices at level $n = 0, 1,\dots$ consisting of the $2^n$ ordered partitions
$(A,B)$ of $\{ \ell_1,\dots,\ell_n \}$. Connect $(A,B)$ to the $n+1$ level vertices $(A \cup \{ \ell_{n+1} \},B)$ and
$(A, B \cup \{ \ell_{n+1} \})$. The set of paths to infinity
forms
a Cantor set. Call a path
{\em{good at level}} $n$ if
for the partition $(A_n,B_n)$ at level $n$, $A_n \in M_1$ and $B_n \in M_2$.  Since $M_1 \oplus M_2$
f-contains $L$, the
set  $G_n$ of paths good at level $n$ is nonempty.  Each $G_n$ is closed and $G_{n+1} \subset G_n$. So the intersection
contains a path $\{ (A^i,B^i) : i = 0,1,\dots \}$. Let $A^{\infty} = \bigcup \  A^i, B^{\infty} = \bigcup \  B^i$.  Clearly,
$\{ A^i \}$ is a nondecreasing sequence of finite sets in $M_1$ with union $A^{\infty}$ and so $A^{\infty}$ is
f-contained in $M_1$.
Similarly, $B^{\infty}$ is f-contained in $M_2$.  Since $A^{\infty} \cup B^{\infty} = L$, at least one of them is infinite.

$\Box$ \vspace{.5cm}

For a label $\M$, if $\mm \in \M$ then $ \00 \leq \chi(supp \ \mm) \leq \mm$ and so $\chi(supp
\ \mm) \in \M$.
Hence, $Supp \ \M = \{ \ A \subset \N \ : \ \chi(A) \in \M \}$. Thus, $Supp \ \M$ is a hereditary
collection of finite subsets of $\N$ with $Supp \ \M = \emptyset$ iff $\M = \emptyset$. Thus,
Proposition \ref{labelprop03}(b) says
that a bounded label $\M$ is not of finite type iff $Supp \ \M$ f-contains some infinite subset.

\begin{prop}\label{labelprop13a} Let $\M_1, \M_2 $ be bounded, nonempty labels.

 The  label $\M_1 \oplus \M_2$ is nonempty and bounded. It is of finite type iff
$\M_1$ and $\M_2$ are labels of finite type.

If $\M_1$ is finite and $\M_2$ is  finitary, then $\M_1 \oplus \M_2$  is finitary.
\end{prop}

 \proof  Clearly, $\r(\M_1 \oplus \M_2) = \r(\M_1) + \r(\M_2)$.  So $\M_1 \oplus \M_2$ is bounded and nonempty if $\M_1$ and
 $\M_2$ are.

 Now assume that $\M_1$ and $\M_2$ are nonempty labels
of finite type. If $\M_1 \oplus \M_2$ is not of finite type then
$Supp \ (\NN \oplus \M)$ f-contains an infinite set by Proposition
\ref{labelprop03}(c).

Since
\begin{equation}\label{sumsupp1}
Supp \ (\M_1 \oplus \M_2) = (Supp \ \M_1) \oplus (Supp \ \M_2),
\end{equation}
it follows from Lemma  \ref{labellem12a} that either
$Supp \ \M_1$ or $ Supp \ \M_2$ f-contains an infinite set. Thus, $\M_1$ and $\M_2$ of finite type implies
$\M_1 \oplus \M_2$ is of finite type.  The converse is obvious since $\M_1 \cup \M_2 \subset \M_1 \oplus \M_2$.

 Now assume that $\M_1$ is finite and $\M_2$ is finitary. To show that $\M_1 \oplus \M_2$ is finitary, we
 apply condition  (ii) of Proposition \ref{labelprop09} (a).
Let $\{ \ell^i \}$ be a sequence of distinct positive integers. Since $\M_1$ is finite, $K = \bigcup \ Supp \ \M_1$ is
finite and we may assume, by discarding finitely many elements that
$\ell^i \not\in K$ for all $i$. Since $K$ is finite, it suffices to show that $\{ \ell \in \N \setminus K :$
such that eventually $\{ \ell, \ell^i \} \in Supp \ \M_1 \oplus \M_2 \}$ is finite.
Since $\ell, \ell^i \not\in K$,
\begin{align}\label{finuniona}
\begin{split}
\{ \ell, \ell^i \} \in Supp \ \M_1 \oplus \M_2  \quad &\Longleftrightarrow \quad \chi(\ell) + \chi(\ell^i) \in  \M_1 \oplus \M_2 \} \\
\quad \Longleftrightarrow \quad \chi(\ell) + \chi(\ell^i) \in  \M_2  &\quad \Longleftrightarrow \quad \{ \ell, \ell^i \} \in Supp  \ \M_2.
\end{split}
\end{align}

Since $\M_2$ is finitary, the set of $\ell$ such that eventually $\{ \ell, \ell^i \} \in Supp  \ \M_2 $ is finite.

%

$\Box$ \vspace{.5cm}

\begin{remark}
 If $\M_1 \oplus \M_2$ is finitary and $\M_2$ is infinite then $\M_1$ must be finite since if $\{ \rr^i \}$ is an
infinite sequence of distinct positive vectors
in $\M_2$
then $\M_1\subset LIMINF \ \{ (\M_1\oplus \M_2) - \rr^i \} $.
\end{remark}
   \vspace{.5cm}

 A pointed space $(X,*)$ is a compact space with a chosen base point. If $(X_1,*_1)$ and $(X_2,*_2)$ are pointed
 spaces then the \emph{wedge} is $X_1 \vee X_2 = (X_1 \times \{*_2 \} \cup \{*_1 \} \times X_2)$ with base point
 $ (*_1,*_2)$ and
  the \emph{smash product} is the quotient space $X_1 \wedge X_2 = (X_1 \times X_2)/ (X_1 \vee X_2)$, i.e.
  with the wedge identified to a point, the base point of the smash.  If $A_1 \subset X_1, A_2 \subset X_2$,
  we let $A_1 \wedge A_2$ denote the image of $A_1 \times A_2$ in the smash product.

  If $\M$ is a label other than $FIN(\N)$
  we use $\emptyset$ as the base point for $[[\M]]$, the closed set of labels contained in $\M$, and for $\Theta(\M)$.
  We use the retraction $U$ to $\emptyset$ as the base point
  for the enveloping semigroup $\E(\Theta(\M))$. Since $\{ U \}$ is an ideal in $\E(\Theta(\M))$, it follows that
  for $\M_1, \M_2 \not= FIN(\N)$,
  $\E(\Theta(\M_1))\vee \E(\Theta(\M_2))$
is an ideal in the product monoid $ \E(\Theta(\M_1))\times \E(\Theta(\M_2))$ and the quotient map
$\E(\Theta(\M_1))\times \E(\Theta(\M_2)) \to \E(\Theta(\M_1))\wedge \E(\Theta(\M_2))$ induces a monoid structure on
the smash so that the quotient map is a monoid homomorphism.

\begin{df}\label{dis-labels}
For nonempty labels $\M_1, \M_2$ we have $(\bigcup \ Supp \ \M_1)  \ \cap \   (\bigcup \ Supp \ \M_2) \ = \ \emptyset$ iff
 $\M_1 \ \cap \  \M_2 \ = \ 0 = \{ \00 \}$. In that case we will say that $\M_1$ and $\M_2$ are \emph{disjoint} labels.
\end{df}

 Assume $\M_1$ and $\M_2$ are disjoint, positive labels, i.e. neither is empty or $0$. If $\rr \in \M_1 \oplus \M_2$
 then $\rr = \rr_1 + \rr_2$
with $\rr_1 \in \M_1$ and $\rr_2 \in \M_2$ uniquely determined by $\rr$. For example, $\rr \in \M_1$ iff $\rr_2 = 0$.
If $\rr \in \M_1 \oplus \M_2$, then $(\M_1 \oplus \M_2) - \rr = (\M_1 - \rr_1) \oplus (\M_2 - \rr_2)$. Assume
$\NN_1 \in [[\M_1]]$ and $\NN_2 \in [[\M_2]]$ with neither
$\NN_1$
nor
$\NN_2$ empty then $\rr \in \NN_1 \oplus \NN_2$ iff $\rr_1 \in \NN_1$ and 
$\rr_2 \in \NN_2$.
It follows that the restriction
$$\oplus : ([[\M_1]] \setminus \{ \emptyset \}) \times
([[\M_2]] \setminus \{ \emptyset \}) \to [[\M_1 \oplus \M_2]] \setminus \{ \emptyset \}$$
is injective. Since $\NN_1 \oplus \NN_2 = \emptyset$ iff $\NN_1$ or $\NN_2$ is empty, we see that
there is an induced injection between  compact spaces:
$$  \widehat{\oplus} : [[\M_1]] \wedge [[\M_2]]  \to [[\M_1 \oplus \M_2]].$$

   \begin{theo}\label{towtheo27a}  Assume $\M_1$ and $\M_2$ are positive disjoint
labels with $\M_1 \oplus \M_2 \not= FIN(\N)$. The map $ \widehat{\oplus}$ restricts to a homeomorphism
$$  \widehat{\oplus} : \Theta(\M_1) \wedge \Theta(\M_2)  \to \Theta(\M_1 \oplus \M_2).$$

For $Q_1 \in \E(\Theta(\M_1)), Q_2 \in \E(\Theta(\M_2))$, $Q_1 \widehat{\oplus} Q_2$ is well-defined by
$(Q_1 \widehat{\oplus} Q_2)(\NN_1 \oplus \NN_2) = (Q_1(\NN_1))\oplus (Q_2(\NN_2))$. This defines
a monoid isomorphism
$$  \widehat{\oplus} : \E(\Theta(\M_1)) \wedge \E(\Theta(\M_2))  \to \E(\Theta(\M_1 \oplus \M_2)).$$
\end{theo}

\proof Since the labels are positive and disjoint, neither equals $FIN(\N)$.

As described above, $\rr = \rr_1 + \rr_2$ with $\rr_1 \in \M_1, \rr_2 \in \M_2$ implies
$P_{\rr}(\M) = P_{\rr_1}(\M_1) \oplus P_{\rr_2}(\M_2)$. Since $\Theta(\M_1 \oplus \M_2)$ is closed
it follows that  $ \widehat{\oplus}$ maps $\Theta(\M_1) \wedge \Theta(\M_2)$ into $\Theta(\M_1 \oplus \M_2)$.

Now let $\{ Q_1^i : i \in I \}$ and $\{ Q_2^j : j \in J \}$ be nets in $\E(\Theta(\M_1))$ and $ \E(\Theta(\M_2))$, respectively,
converging to $Q_1$ and $Q_2$. By continuity of $\oplus$, for any $\NN_1 \in \Theta(\M_1)$, $\NN_2 \in \Theta(\M_2)$
the net $\{  Q_1^i(\NN_1) \oplus Q_2^j(\NN_j) (i,j) \in I \times J \}$ converges to
$Q_1(\NN_1) \oplus Q_2(\NN_2)$, which is empty iff either $Q_1(\NN_1)$ or $ Q_2(\NN_2)$ is empty.

If $\NN \in \Theta(\M_1 \oplus \M_2)$ with $\NN \not= \emptyset$ then there exists a net
$\{ \rr^i \in \M_1 \oplus \M_2 \}$ such that
$\{ P_{\rr^i}(\M) \}$ converges to $\NN$. By going to a subnet, we may assume that $\{ P_{\rr_1^i} \}$ and
$\{ P_{\rr_2^i} \}$ converge to $Q_1 \in \E(\Theta(\M_1))$ and $Q_2 \in \E(\Theta(\M_1))$ respectively.
Then $\NN = Q_1(\M_1) \oplus Q_2(\M_2)$. It follows that $ \widehat{\oplus}$ maps
$\Theta(\M_1) \wedge \Theta(\M_2)$ onto $\Theta(\M_1 \oplus \M_2)$. Since it is injective, it is a homeomorphism.

Similarly, if $Q \in \E(\Theta(\M_1 \oplus \M_2))$ with $Q \not= U$ and so $Q(\M) \not= \emptyset$,
we obtain $Q_1 \in \E(\Theta(\M_1))$ and $Q_2 \in \E(\Theta(\M_1))$ so that $Q = Q_1 \oplus Q_2$. It follows that
$\widehat{\oplus}$ is surjective between the enveloping semigroups. Furthermore, $Q_1 \oplus Q_2 = U$ iff
either $Q_1 = U$ or $Q_2 = U$, because $Q_1(\M_1) \oplus Q_2(\M_2) = \emptyset$ iff $Q_1(\M_1)$ or $Q(\M_2)$ is empty
in which case $(\Q_1 \oplus \Q_2)(\M_1 \oplus \M_2) = \emptyset$.

Finally, if $(Q_1,Q_2) \not= (Q_1',Q_2') \in \E(\Theta(\M_1)) \times \E(\Theta(\M_2))$ with none of the four elements equal to
$U$. We may assume that $\NN_1 \in \Theta(\M_1)$ with $Q_1(\NN_1) \not= Q_1'(\NN_1)$ and $Q_1(\NN_1) \not= \emptyset$.
Since $\widehat{\oplus}$ is injective on $\Theta(\M_1) \wedge \Theta(\M_2)$ it follows that
$Q_1 \oplus Q_2(\NN_1 \oplus \M_2) \not=  Q'_1 \oplus Q'_2(\NN_1 \oplus \M_2)$. Hence,
$\widehat{\oplus}(Q_1, Q_2) \not= \widehat{\oplus}(Q'_1, Q'_2)$ and so $\widehat{\oplus}$ is
bijective on the enveloping semigroups. We showed above that it is continuous and so is a homeomorphism. It clearly
preserves composition and so is a monoid isomorphism.

$\Box$ \vspace{.5cm}

 \begin{cor}\label{towcor27c}  Assume $\M_1$ and $\M_2$ are positive disjoint
labels. If both $\M_1$ and $\M_2$ are simple or WAP then $\M_1 \oplus \M_2$ satisfies the corresponding property.
If $\M_1$ is finite and $\M_2$ is semi-simple then $\M_1 \oplus \M_2$ is semi-simple.
\end{cor}

\proof $\M_1 \oplus \M_2 = FIN(\N)$ then it is simple and WAP. So we may assume $\M_1 \oplus \M_2 \not= FIN(\N)$.

If $\E(\Theta(\M_1))$ and $\E(\Theta(\M_2))$ are abelian then $\E(\Theta(\M_1)) \wedge \E(\Theta(\M_2)) $ is abelian.
So $\M_1 \oplus \M_2$ is WAP when $\M_1$ and $\M_2$ are by Theorem
\ref{towtheo27a}.

If every element of $\E(\Theta(\M_1))$ other than $U$ is of the form $P_{\rr_1}$ for some $\rr_1 \in \M_1$ and the
analogous condition holds for $\M_2$ then every element of $\E(\Theta(\M_1 \oplus \M_2))$ is of the form
$P_{\rr}$ for some $\rr \in  \M_1 \oplus \M_2$.  On the other hand, $U = P_{\rr}$ for any $\rr \not\in \M_1 \oplus \M_2$.
Thus, if $\M_1$ and $\M_2$ are simple then $\M_1 \oplus \M_2$ is.

Assume $\M_1$ is finite, and so is simple, and assume $\M_2$ is semi-simple.
If $Q_1 \oplus Q_2 \not= P_{\rr}$ then $Q_2$ is an external element of $\E(\Theta(\M_2))$.
Then $Q_2(\M_2)$ is finite and so $(Q_1 \oplus Q_2)(\M_1 \oplus \M_2) \subset \M_1 \oplus Q_2(\M_2)$ which is finite.
Thus, $\M_1 \oplus \M_2$ is semi-simple.

$\Box$ \vspace{.5cm}

\begin{remark}
 If  $\M_2$ is not simple and $\M_1$ is infinite then $\M_1 \oplus \M_2$ is not semi-simple. If $Q \in \E(\Theta(\M_2)$ is
 not equal to some $P_{\rr_2}$ then$ P_{0} \oplus Q$ is not of the form $P_{\rr}$ but
 $(P_{0} \oplus Q)(\M_1 \oplus \M_2) = \M_1 \oplus Q(\M_2)$ which contains $\M_1$ and so is infinite.
\end{remark}
   \vspace{.5cm}

\begin{ex}\label{wapnotsemi}   Assume $\M_1$ and $\M_2$ are positive disjoint
labels with $\M_1 \oplus \M_2 \not= FIN(\N)$. If $\M_1$ is an infinite simple label and $\M_2$ is an
infinite finitary label which is
not simple, then $\M_1 \oplus \M_2$ is WAP but not semi-simple.
\end{ex}

\begin{prop}\label{labelprop13b} Let $\NN $ be a nonempty label and
 $\{ \M_a \}$ be a finite or infinite sequence of positive
labels  such that $\M_a \ \cap \  \M_b \subset \NN$
when $a \not= b$.
The label $\M =  \bigcup_a \ \M_a$  is bounded if  all the $\M_a$'s and $\NN$ are bounded, and
 is of finite type if all the $\M_a$'s and $\NN$ are of finite type.

 If $\NN$ is finite and
the $\M_a$'s are all finitary  then $ \M $ is  finitary.
For example, if  all the $\M_a$'s and $\NN$ are finite then $\M$ is finitary.
\end{prop}

\proof Let $a \not= b$. If $\rho(\M_{a})_{\ell} > \rho(\M_{b})_{\ell}$ then
$ \rho(\M_{b})_{\ell} \chi(\ell) \in \M_{a} \ \cap \  \M_{b} \subset \NN$.
Thus, for each $\ell$ there is at most one index $a$ with $ \rho(\M_{a})_{\ell} > \rho(\NN)_{\ell}$. Hence,
$\rho(\M)_{\ell} \ = \ \max_a \ \rho(\M_{a})_{\ell} \
<
\ \infty$ for all $\ell$ if  all the $\M_a$'s and $\NN$ are bounded. Hence, in that case $\M =  \bigcup_a \ \M_a$ is bounded.

  If
$  \mm_a \in \M_{a}, \ \mm_b \in \M_{b}$ with $\mm_a \geq \mm_b$ then
$\mm_b \in
\M_{a} \ \cap \  \M_{b} \subset \NN$. Thus, an increasing sequence in $\M$ which is not contained in $\NN$
is contained in some $\M_{a}$. Thus, the Finite Chain Condition for $\M$ follows from the condition for $\NN$ and each $\M_a$.

%
%

Now assume that
$\NN$ is finite and that
the $\M_a$'s are finitary. To show that $\M$ is finitary we again apply condition  (ii) of Proposition \ref{labelprop09} (a).
Let $\{ \ell^i \}$ be a sequence of distinct positive integers. Since $\NN$ is finite, $K = \bigcup \ Supp \ \NN$ is
finite and we may assume, by discarding finitely many elements that
$\ell^i \not\in K$ for all $i$. Since $K$ is finite, it suffices to show that $\{ \ell \in \N \setminus K :$
such that eventually $\{ \ell, \ell^i \} \in Supp \ \M \}$ is finite.

If $\ell^i \not\in \bigcup \ Supp \ \M_a$ for any $a$ we let $a_i = *$.
Otherwise, $\ell^i \in \bigcup \ Supp \ \M_{a_i}$ for a unique $a_i$ since $\ell^i \not\in K$. If $a_i = *$ then
$\{ \ell, \ell^i \} \not\in Supp \ \M$ for any $\ell \in \N$. Otherwise,

\begin{align}\label{finunionb}
\begin{split}
 \{ \ell, \ell^i \} \in Supp \ \M \quad &\Longleftrightarrow\quad \{ \ell, \ell^i \} \in Supp \ \M_{a_i}, \\
\{ \ell, \ell^i \} \in Supp \ \M_{a_i} \quad &\Longrightarrow \quad \{ \ell, \ell^j \} \not\in Supp \ \M \ \text{if} \ a_j \not= a_i,
\end{split}
\end{align}
because if $a_j \not= a_i$ then $\ell \not\in \bigcup \ Supp \ \M_{a_j}$.

If there exists $a$ such that eventually $a_i = a$, then eventually $\{ \ell, \ell^i \} \in Supp \ \M $ iff
eventually $\{ \ell, \ell^i \} \in Supp \ \M_a$ and the set of such $\ell$ is finite because $\M_a$ is finitary.

Otherwise, for no $\ell \in \N$ is it true that eventually $\{ \ell, \ell^i \} \in Supp \ \M $.

%
%
%
%

 $\Box$ \vspace{.5cm}

\begin{remark}
Since any  label $\M$ is the union of
the finite labels $\M \cap \B_N$ it follows that some condition
 is needed to get the finite type or finitary conditions for a union.
 \end{remark}
\vspace{.5cm}

 From the definition (\ref{label06}) of the metric on $\LAB$ we see that if $\M$ is a nonempty label then
 \begin{equation}\label{limtozero}
 [0,N] \cap \bigcup Supp\ \M = \emptyset \qquad \Longrightarrow \qquad d(0,\M) \leq 2^{-N}.
 \end{equation}

 \begin{theo}\label{towtheo26} Let $\{ \M_a \} $
 be
 a finite or infinite pairwise disjoint collection of positive labels
 and let $\M  = \bigcup \{ \M_a \}$. Assume $\M \not= FIN(\N)$.
  \begin{equation}\label{tow35}
  \Theta'(\M) \ = \ \bigcup_a \ \{ \Theta'(\M_a) \}.
 \end{equation}

 The map sending $P_{\rr}$ on $\Theta(\M_a)$ to $P_{\rr}$ on $\Theta(\M)$
 for  $\00 < \rr  \in \M_a$,
 extends to a
 continuous injective homomorphism  $j_a : \A(\Theta(\M_a)) \to \A(\Theta(\M))$. If $Q \in \A(\Theta(\M_a))$ then
 \begin{equation}\label{tow35a}
 j_a(Q)(\NN)  = \begin{cases}   Q(\M_a) \qquad \text{if} \quad \NN = \M, \\
 Q(\NN) \qquad \text{if} \quad \NN \in \Theta'(\M_a), \\
 \emptyset \qquad \text{if} \quad \NN \in \Theta'(\M_b) \quad \text{with} \ b \not= a.\end{cases}
 \end{equation}

 If $Q_1 \in \A(\Theta(\M_a))$ and $Q_2 \in \A(\Theta(\M_b))$ with $a \not= b$ then $j_a(Q_1)j_b(Q_2) = j_b(Q_2)j_a(Q_1) = U$.
 Furthermore,
 $\A(\Theta(\M)) \setminus \bigcup \{ j_a(\A(\Theta(\M_a))) \}$
 contains at most one point $Q^*$ in which case $Q^*(\M) = 0$ and $Q^* = \emptyset$ on $\Theta'(\M)$ and
 $Q^*Q = Q Q^* = U$ for all $Q \in \A(\Theta(\M))$.
 If some $\M_a$ is of finite type then $\A(\Theta(\M)) =\bigcup \{ j_a(\A(\Theta(\M_a))) \}$.
  \end{theo}

 \proof
 If $\{ \rr^i > \00 \}$ is a sequence in
$\M$ and $\mm > \00 $ is an $\N$-vector in $\M$ then
$\mm  \in \M_a$ for some $a$.  If $\{ \M - \rr^i \}$ converges with $\mm$ in the limit then
eventually $\rr^i \in \M_a$ in which case $\M - \rr^i = \M_a - \rr^i$ and
$LIM \{ \M - \rr^i \} = LIM \{ \M_a - \rr^i \}  \in \Theta'(\M_a)$.  Hence,
 $\Theta'(\M) \ \subset \ \bigcup \ \{ \Theta'(\M_a) \}$. The reverse inclusion is obvious.

 Now suppose that $\{ P_{\rr^i} \}$ is a net with
 $\rr^i > \00 $ in $\M$ converging to $Q \in \A(\Theta(\M))$.

 Assume  first that for some $\NN \in \A(\Theta(\M))$ that
 $\00 < \mm \in Q(\NN)$.
 Let $M_b$ be the unique member to the sequence which contains $\mm$.  Eventually $\rr^i \in \M_b$ so that
 eventually $\M_1 - \rr^i = (\M_1 \ \cap \  \M_b) - \rr^i$ for all $\M_1 \in [[M]]$. In particular,
 $Q(\M) = LIM \ \{ \M - \rr^i \} = LIM \ \{ \M_b - \rr^i \}$.

 Notice that
 $\M_2 = \bigcup \{ \M_2 \cap \M_a \}$ and by (\ref{tow35})
 $ \M_2 \in \Theta'(\M)$ iff $\M_2  = \M_2 \ \cap \  \M_a \in \Theta'(M_a)$
 for some $a$. Clearly $Q(\M_2) = \emptyset $ if $a \not= b$ and $Q(\M_2) = LIM \{ \M_1 - \rr_b \}$ for $\M_2 \in \Theta(M_a)$.
 Hence, $Q = j_b(\tilde Q)$ where $\tilde Q $ is the pointwise limit of $\{ P_{\rr^i} \}$ in $\A(\Theta(\M_b)$.

 On the
 other hand if $\{ P_{\rr^i} \}$ in
 $\A(\Theta(\M_b))$ converges to $\tilde Q$ with $\rr^i > \00 $ in $\M_b$ then
 $P_{\rr^i}$ in $\A(\Theta(\M))$ converges pointwise to $Q(\NN) = LIM \{ \NN \ \cap \  \M_b - \rr^i \}$ for all $\NN \in \Theta(\M)$.
 This defines the injection $j_b : \A(\Theta(\M_b)) \to \A(\Theta(\M))$ defined by (\ref{tow35a}. It is clear
 that $j_b$ is continuous and is a homomorphism and is obviously injective. Notice that if $\rr > 0 \in \M_b$ then
 $j_b(P_{\rr})$, the extension of $P_{\rr}$ in $\A(\Theta(\M_b))$ is just $P_{\rr}$ acting on $\Theta(\M)$.

 Now assume that $Q(\M) = 0$. If for some cofinal set $\{ \rr^{i'} \in \M_b \}$ then $Q = j_b(\tilde Q)$ with
 $\tilde Q $ the limit in $A(\Theta(\M_b))$ of some subnet $\{ P_{\rr_{i'}}\}$.
  Otherwise, eventually $\rr^{i'} \not\in M_b$ for every $b$ and
 so $Q(\NN) = \emptyset $ for all $\NN \in \Theta'(\M)$. Notice that this requires that the sequence $\{ \M_a \}$ be infinite.
 This is $Q^* \in \A(\Theta(\M)$ with $Q^*(\M) = 0$ and $Q^*(\NN) = \emptyset$ for $\NN \in \Theta'(\M)$. When
 $\{ \M_a \}$ is infinite and $\{ \rr^i > 0\}$ is a sequence in $\M$ with $\rr^i \in \M_{a(i)}$ and $a(i) \not= a(j)$ for
 $i \not= j$ then the sequence $\{ P_{\rr^i} \}$ in $\A(\Theta(\M))$ converges to $Q^*$.

 It may happen that $Q^* \not\in \bigcup_a  j_a(\A(\Theta(\M_a)))$, but if $\M_a$ is of finite type and
 $\rr \in max\ \M_a$ then $j_a(P_{\rr}) = Q^*$ by  (\ref{wap00b}).

 $\Box$ \vspace{.5cm}

 \begin{cor}\label{towcor26c}  Let $\{ \M_a \} $
 be
 a finite or infinite pairwise disjoint collection of positive labels
 and let $\M  = \bigcup \{ \M_a \}$. If each $\M_a$ is  semi-simple, finitary or WAP then $\M$ satisfies the
 corresponding property. If each $\M_a$ is simple and at least one $\M_a$ is of finite type then $\M$ is simple.\end{cor}

 \proof $FIN(\N)$ is simple and WAP.  By Proposition \ref{labelprop13b} if the $\M_a$'s are
 of finite type then $\M$ is of finite type and so is not
 $FIN(\N)$ and if the $\M_a$'s are finitary then $\M$ is finitary. So we may assume $\M \not= FIN(\N)$ and apply
 Theorem \ref{towtheo26}.

 In any case, $Q^*$ commutes with every element of $\E(\Theta(\M))$ and for $a \not= b$ the elements of $j_a(\A(\Theta(\M_a))$
 commute with the elements of $j_b(\A(\Theta(\M_b))$ with $a \not= b$. If each $\M_a$ is WAP then each of
 the $j_a(\A(\Theta(\M_a))$'s is abelian and so $\E(\Theta(\M)$ is abelian, i.e. $\M$ is WAP.

 If each $\M_a$ is simple then each $Q \in \bigcup_a j_a(\A(\Theta(\M_a))$ is of the form $P_{\rr}$. If at least one
 $\M_a$ is of finite type then this includes $Q^*$.

 An external element of $\E(\Theta(\M))$ is either $Q^*$ or is $j_a(Q)$ for some external $Q \in \A(\Theta(\M_a))$.
 So if all the $\M_a$'s are semi-simple, these all map into the set of finite labels and so $\M$ is semi-simple.

 $\Box$ \vspace{.5cm}

\begin{ex}\label{ex7aa} If $\M = \langle \{ \chi(1) + 2\chi(k) : k > 1 \} \rangle$ then $\M$ is simple and so
$\emptyset$ is the only recurrent point of $\Theta(\M)$. $\M_k = \langle \{ \chi(1) + \chi(k) \} \rangle = P_{\chi(k)} \M$
and $\M_1 = \langle \{ \chi(1)  \} \rangle = P_{2\chi(k)} \M = P_{\chi(k)} \M_k$ for each $k > 1$. Since
$\M_1 \cap \B_N = \M_k \cap \B_N$ for $N < k$, it follows that $\M_1$ is a non-wandering
point of $\Theta(\M)$.
\end{ex}

$\Box$ \vspace{0.5cm}

\begin{ex}\label{ex8} (a)  If $\M$ is defined by
$\M  = \langle \{ \chi(1) + \chi(\ell) : \ell > 1 \} \ \cup \ \{ \chi(\ell) + \chi(\ell + 1) : \ell > 1 \} \rangle$, then $\M$ is
size bounded and
{\bf finitary, but not simple}.
\begin{equation}\label{label13}
\begin{split}
\M - \chi(1)  \ = \ \{ 0 \} \cup \{ \chi(\ell) : \ell > 1 \}, \hspace{3cm}\\
 \M - \chi(\ell) \ = \ \{ 0 \} \cup\{ \chi(1), \chi(\ell - 1), \chi(\ell + 1) \},
\quad \mbox{for} \ \ell > 1, \\
\F \ = \ \{ 0, \chi(1) \} \ = \ LIM_{\ell \to \infty}  \{ \M - \chi(\ell) \}. \hspace{2cm}
\end{split}
\end{equation}
Notice that $\F \not= \M - {\mathbf r}$ for any ${\mathbf r} \in \M$.

(b) If $\M$ be defined by
$\ \M =
\langle  \{ \chi(2 a - 1) + \chi(2b)  : a , b \geq 1 \} \rangle$, then $\M$ is size bounded and is simple
but is not finitary. In general, if $\M = \M_1 \oplus \M_2$ with $\M_1, \M_2$ infinite simple labels with
$\M_1 \ \cap \  \M_2 = 0$ then $\M$ is
{\bf simple but not finitary}. \end{ex}

$\Box$ \vspace{0.5cm}

\begin{ex}\label{ex10}
It can happen that $\M$ is
{\bf strong finite type but not WAP} and so is not semi-simple.

Let $\M$ be defined by
$ \M =  \langle \{ \chi(3) + \chi(2 a + 1) + \chi(2b)  : a \geq b \geq 2 \}  \cup
\{ \chi( 1) + \chi(3) + \chi(2b) : b  \geq 2 \} \rangle$.
\begin{equation}\label{tow23}
\begin{split}
\M - \chi(1) \ = \  \langle \{ \chi(3) + \chi(2b)  : b \geq 2 \} \rangle, \hspace{4cm} \\
\M - \chi(3)  \ = \ \langle \{ \chi(2 a + 1) + \chi(2b)  : a \geq b \geq 2 \} \cup \{ \chi( 1) +  \chi(2b) : b  \geq 2 \} \rangle \\
\M - \chi(2\ell + 1)  \ = \ \langle \{ \chi(1) + \chi(2b)  : \ell \geq b \geq 2 \} \rangle \hspace{3cm}\\
 \M - \chi(2\ell)  \ = \ \langle \{ \chi(1) + \chi(2a + 1)  : a \geq \ell \geq 2 \}\cup \{ \chi( 1) + \chi(3) \} \rangle.
  \end{split}
  \end{equation}
  It follows that
  \begin{equation}\label{tow24}
  \begin{split}
LIM_{a \to \infty }  \ \{ \M - \chi(2 a + 1) \} \ = \ \langle \{ \chi(3) + \chi(2b)  : b \geq 2 \} \rangle, \hspace{2cm}\\
LIM_{b \to \infty} \   \{ \M  - \chi(2b) \} \ = \  \langle \{  \chi(1) + \chi(3) \} \rangle , \hspace{4cm} \\
LIM_{b \to \infty} LIM_{a \to \infty}   \{ \M - \chi(2 a + 1) - \chi(2b) \} \ = \ \{ \chi(3) , 0 \}, \hspace{1.5cm}\\
LIM_{a \to \infty} LIM_{b \to \infty}   \{ \M - \chi(2 a + 1) - \chi(2b) \} \ = \ \emptyset. \ \hspace{2cm}
  \end{split}
  \end{equation}

The sequences $\{ P_{ \chi(2 a + 1)} \}$ and $\{ P_{ \chi(2 b)} \}$  converge in $\E(\Theta(\M))$ to elements which we will
denote by
$Q_o, Q_e$, respectively. $Q_e(\M)$ is the finite label  $\langle \{  \chi(1) + \chi(3) \} \rangle$.
$Q_o(\M) = P_{\chi(1)}(\M)$, and so $Q_e Q_o(\M) = P_{\chi(1)}(Q_e(\M)) = \langle \{   \chi(3) \} \rangle$.
On the other hand, $Q_o Q_e(\M) = \emptyset \not= P_{\chi(1)}(Q_e(\M))$.  In particular, $Q_o \not= P_{\chi(1)}$ on
$\Theta(\M)$ and so is not continuous.

The enveloping semigroup $\E(\Theta(\M))$ is not abelian and so $ \M$ is not WAP. Notice that if $\NN \subset \M$ is  infinite
then $\M - \NN$ is finite.  Thus, this condition alone does not suffice to yield $\M$ finitary.
%
%
%
 \end{ex}

$\Box$ \vspace{.5cm}


We conclude with an example which computes $\E(\Theta(\M))$ for a label $\M$ which is recurrent and so is not WAP.


\begin{ex}\label{exrecur} For $L \subset \N$ we define the label $\langle L \rangle$ to consist of the
characteristic functions of finite subsets of $L$:
\begin{equation}\label{exrecur01}
\langle L \rangle = \{ \chi(F) : F \in \P_f L \}
\end{equation}
Since $\chi_{\emptyset} = \00 , \ \langle \emptyset \rangle = 0$. We compute the orbit space
$\Theta(\M)$ and the enveloping semigroup $\E(\Theta(\M))$ for the recurrent label
$\M = \langle \N \rangle$. Since $\M$ is recurrent, it is not WAP and so the semigroup cannot be abelian.

We will show that
\begin{equation}\label{exrecur02}
\Theta(\M) = \{ \emptyset \} \cup \{ \langle L \rangle : L \subset \N \}.
\end{equation}

Let $2^{\b \N} \cup \{ * \}$ be the compact space of all compact subsets of $\b \N$ together with an
additional isolated point, labeled $*$. We will apply the results about $2^{\b \N}$ which are reviewed
in Appendix
\ref{appendix-StoneCech}.
In particular, for $A \in 2^{\b \N}$, $\F_A = \{ L \subset \N : A \subset \ol{L} \}$ and
$A_0 = A \cap \N$. Thus, $\F_{\emptyset}$ is the power set of $\N$.
If $A$ is nonempty then $\F_A$ is a filter of subsets of $\N$ by Proposition \ref{appprop02a} (a).

For $A \in 2^{\b \N} \cup \{ * \}$ we define $Q_A : \Theta(\M) \to \Theta(\M)$
by $Q_A(\emptyset) = \emptyset$ for all $A$ and $Q_{*}(\langle L \rangle) = \emptyset$ for all $L \subset \N$ and
for $A \in 2^{\b \N}, L \subset \N$:
\begin{equation}\label{exrecur03}
Q_A(\langle L \rangle) = \begin{cases} \langle L \setminus A_0 \rangle \quad \text{for} \ L \in \F_A, \\
\emptyset \qquad \text{otherwise}. \end{cases}
\end{equation}
Thus, $Q_{*} = U,$ the retraction to the fixed point $\emptyset$.

We will show that $A \mapsto Q_A$ defines a homeomorphism from $2^{\b \N} \cup \{ * \}$ onto $\E(\Theta(\M))$.
Composition is described by
\begin{equation}\label{exrecur04}
Q_B Q_A = \begin{cases} Q_{A \cup B} \quad \text{if} \ \N \setminus A_0 \in \F_B,\\
U \qquad \text{otherwise}. \end{cases}
\end{equation}
for $A, B \in 2^{\b \N}$. Notice that $\N \setminus A_0 \in \F_B$ requires $A_0 \cap B_0 = \emptyset$.
If a finite set $F \subset \N$ is disjoint from $B_0$ then $\N \setminus F \in \F_B$. So if $A_0$ is
finite and $A_0 \cap B_0 = \emptyset$ then $\N \setminus A_0 \in \F_B$. In general, $\N \setminus A_0 \in \F_B$
iff the clopen set $\ol{A_0}$ is disjoint from $B$.

From this it is easy to check that for $A, B \in 2^{\b \N}$
\begin{equation}\label{exrecur05}
\begin{split}
Q_A Q_A = \begin{cases} Q_A \quad \text{if} \  A_0 = \emptyset, \\
U \qquad \text{otherwise}. \end{cases} \hspace{4cm} \\   \\
Q_B Q_A = Q_{A \cup B} = Q_A Q_B \quad \text{if}  \ \N \setminus A_0 \in \F_B, \ \text{and} \ \N \setminus B_0 \in \F_A. \\ \\
Q_B Q_A = U = Q_A Q_B \quad \text{if}  \ \N \setminus A_0 \not\in \F_B, \ \text{and} \ \N \setminus B_0 \not\in \F_A.\hspace{.5cm}
\end{split}
\end{equation}

Thus, if $A_0 = \emptyset$, $Q_A$ is an idempotent which fixes $ \langle L \rangle $ for all $L \in \F_A$.  In particular,
$Q_A(\M) = \M$ and thus $\M$ is recurrent. The failure of commutativity occurs only when
$\ol{A_0}$ is disjoint from $B$, and so $B_0$ is disjoint from $A$, but $\ol{B_0}$ meets $A$. In that case,
$Q_B Q_A = Q_{A \cup B},$ but $Q_A Q_B = U$.

Now we prove all this.

\proof Let $\Theta = \{ \emptyset \} \cup \{ \langle L \rangle : L \subset \N \}$. Notice that each
$\langle L \rangle$
is a lattice and
$\Theta$ is the collection of all sublattices of $\langle \N \rangle$,
see Proposition \ref{labelprop05a2} (e). The set
$\{ \NN : \rr_1, \rr_2 \in \NN,$ and $\rr_1 \vee \rr_2 \not\in \NN \}$ is clopen for every pair $\rr_1, \rr_2 \in FIN(\N)$.
The union over all pairs is the -open- complement of the set of sublattices of $FIN(\N)$.

If $F \subset \N$ is finite then $P_{\chi(F)}(\M)
= \langle \N \setminus F \rangle$ which is a sublattice. Since the set of
sublattices is closed, it follows that $\Theta(\M) \subset \Theta$.

Next we show that if $\{ A^i \}$ is a net in $ 2^{\b \N} \cup \{ * \}$ converging to $A$ then $\{ Q_{A^i} \}$
converges pointwise to $Q_A$ as a function on $\Theta$. Since $*$ and $\emptyset$ are isolated points we may assume
that $A$ and all the $A^i$'s are nonempty elements of $2^{\b \N}$.

If $L \not\in \F_A$ then by Proposition \ref{appprop02d} eventually $L \not\in \F_{A^i}$ and so
eventually $Q_{A^i}(\langle L \rangle) = \emptyset = Q_A(\langle L \rangle)$. If $L \in \F_A$ then  by Proposition \ref{appprop02d}
again eventually $L \not\in \F_{A^i}$ and so eventually
$Q_{A^i}(\langle L \rangle) = \langle L \setminus  (A^i)_0 \rangle$ and $Q_A(\langle L \rangle) =
\langle L \setminus  A_0 \rangle$.
Furthermore, for the finite set $[1, \ell]$ eventually $[1, \ell] \cap (A^i)_0 = [1, \ell] \cap A_0 $.
This implies that $\{ \langle L \setminus  (A^i)_0 \rangle \}$ converges to
$\langle L \setminus A_0  \rangle$ in $\LAB$.

For any $A \in 2^{\b \N}$ there is a net $\{ F^i \}$ of finite subsets of $\N$ with limit $A$, see Corollary \ref{appcor02c}
(b). So $\{ Q_{F^i} \} $ converges to $ Q_A$ pointwise.  In particular, $ P_{\chi(F^i)}(\M) = Q_{F^i}(\M)  \to
Q_A(\langle \N \rangle) =
\langle \N \setminus A_0 \rangle$. If $L \subset \N$ and $A = \ol{\N \setminus L}$ with $A_0 = \N \setminus L$ it
follows that $Q_A(\langle \N \rangle) = \langle L \rangle \in \Theta(\N)$.
Hence, $\Theta(\M) = \Theta$ and every $Q_A \in \E(\Theta(\M))$.
If $F \subset \N$ is finite then clearly $Q_F = P_{\chi(F)}$ on $\Theta(\M)$. In particular,
$Q_{\emptyset} = P_{\00 } = id_{\Theta(\M)}$. If $\rr \in FIN(\N) \setminus
\langle \N \rangle$ then $P_{\rr} = U = Q_{*}$.

Now suppose that $\{ P_{\rr^i} \}$ converges to $Q \in \E(\Theta(\M))$.
If $\rr^i \not\in \langle \N \rangle$ for some
cofinal set then $Q = U$. So we may assume that
$\rr^i \in \langle \N \rangle$ for all $i$.  Then for each $i$ there is a
finite set $F^i$ such that $\rr^i = \chi(F^i)$.  By going to a subnet, we may assume that $\{ F^i \}$ converges
to $A \in \b \N$. Then $\{ Q_{F^i} = P_{\rr^i} \}$ converges to $Q_A$ and so $Q = Q_A$. Thus, the continuous map
$A \to Q_A$ from $2^{\b \N} \cup \{ * \}$ to $\E(\Theta(\M))$ is surjective. If $A, B \in 2^{\b \N}$ are unequal then
we may assume that $B \setminus A \not= \emptyset$. Then there exists $L \subset A$ such that the clopen set
$\ol{L}$ contains $A$ but $B$ meets its complement. Hence, $L \in \F_A$ which $L \not\in \F_B$. So
$Q_B(\langle L \rangle) = \emptyset \not=
\langle L \setminus A_0 \rangle = Q_A( \langle L \rangle)$.
It follows that the map $A \mapsto Q_A$
is injective and so is a homeomorphism from the compact space $2^{\b \N} \cup \{ * \}$ onto $\E(\Theta(\M))$.

Now let $A, B \in 2^{\b \N }$. If $L \in \F_A$ then $Q_A(\langle L \rangle) =
\langle L \setminus A_0 \rangle$. So if
$L \setminus A_0 \in \F_B$, which requires $A_0 \cap B_0 = \emptyset$, then we have $Q_B Q_A(\langle L \rangle)
= \langle (L \setminus A_0) \setminus B_0 \rangle =
\langle L \setminus (A \cup B)_0 \rangle$. Since $\F_B$ is a filter
$L \setminus A_0 \in \F_B$ iff $L,  \N \setminus A_0 \in \F_B$.
On the other hand, $L \in \F_A$ and
$L \in \F_B$ iff $L \in \F_{A \cup B}$ in which case $Q_{A \cup B}(\langle L \rangle) =
\langle L \setminus (A \cup B)_0 \rangle$.
If $L \not\in \F_{A \cup B}$ then either $L \not\in \F_A$ and so
$Q_B Q_A(\langle L \rangle) = Q_B(\emptyset) = \emptyset = Q_{A \cup B}(\langle L \rangle)$, or $L \in \F_A$ but
$L \not\in \F_B$ so the $Q_B Q_A( \langle L \rangle)
= Q_B( \langle L \setminus A_0 \rangle) = \emptyset$.
It follows that if $\N \setminus A_0 \in \F_B$ then $Q_B Q_A = Q_{A \cup B} $.
On the other hand, if
$\N \setminus A_0 \not\in \F_B$ then $Q_A( \langle L \rangle) = \emptyset$ and so $Q_B Q_A( \langle L \rangle) = \emptyset$
or $Q_A(\langle L \rangle) =
\langle L \setminus A_0 \rangle$ and so $Q_B Q_A(\langle L \rangle) = \emptyset$
since $L \setminus A_0 \not\in \F_B$.
Thus, if $\N \setminus A_0 \not\in \F_B$ then $Q_B Q_A = U$.

The (\ref{exrecur05}) results follow easily from (\ref{exrecur04}).

\end{ex}

$\Box$ \vspace{1cm}

\section{Dynamical properties of $X(\M)$}\label{sec,dyn-prop}


\subsection{Translation finite subsets of $\Z$}

$\qquad$

\vspace{0.5cm}

We recall the following combinatorial characterization of WAP subsets of $\Z$
(\cite{Ru}).

\begin{theo}[Ruppert]\label{theorupp01}
For a subset $A \subset \Z$ the following conditions are equivalent:
\begin{enumerate}
\item
The subshift $\overline{\mathcal{O}}(\chi(A)) \subset \{0,1\}^\Z$ is WAP.
\item
For every infinite subset $B \subset \Z$ either:

(i) there exists $N \ge 1$ such that
\begin{equation}\label{rupp1}
\bigcap_{b \in B \cap [-N,N]} A - b
\quad \text{is finite},
\end{equation}
or:

(ii) there exists $N \ge 1$ and $n \in \Z$ such that
\begin{equation}\label{rupp2}
A -n \supset B \cap \bigl(\Z \setminus [-N,N]\bigr).
\end{equation}
\end{enumerate}
\end{theo}

\begin{df}[Ruppert]\label{defrupp02}
We say that a subset $A \subset \Z$ is {\em translation finite} (TF hereafter) if
for every infinite subset $B \subset \Z$ there
exists an $N \ge 1$ such that
\begin{equation}\label{rupp3}
\bigcap_{b \in B \cap [-N,N]} A - b \ = \ \{ \ n \in \N \ : \ A - n \supset B \cap [-N, N] \ \} \quad \text{is finite}.
\end{equation}
\end{df}

\begin{ex}
It is easy to check that the set
$A =2\Z_+ \cup - (2\Z_+1)$

(with $c = \chi(A) = (\dots, 1,0,1,0,1, \dot{1},0,1,0, \dots)$)
does not satisfy Ruppert's condition (and a fortiori
is not translation finite), hence $\overline{\mathcal{O}}(\chi(A))$
is not WAP.

(See Example \ref{ex3}.(b).)
\end{ex}

$\Box$

\vspace{.5cm}

\begin{prop}\label{proprupp03}
Let $A$ be a subset of $\Z$. The following conditions are equivalent.
\begin{enumerate}
\item
The subset $A$ is TF.
\item
Every point in
$R_S(\chi(A)) = (\o_S \cup \a_S)(\chi(A))$ has finite support.
\item
The subshift $\ol{\mathcal{O}}(\chi(A))$ is CT of height at most $2$.
\end{enumerate}
\end{prop}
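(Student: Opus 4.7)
The plan is to exploit the elementary identity
\[
\mathrm{supp}(z) \;=\; \{t \in \Z : t + n_i \in A \text{ eventually}\}
\]
valid for any $z = \lim_i S^{n_i}(\chi(A)) \in R_S(\chi(A))$, which follows from $(S^{n_i}\chi(A))_t = \chi(A)_{t+n_i}$. Both nontrivial equivalences will flow from this translation between the dynamics at $\chi(A)$ and containment of translates inside $A$. For $(1)\Leftrightarrow(2)$: if $A$ is TF and $z\in R_S(\chi(A))$ had infinite support $B$, then with $D = B \cap [-N,N]$ supplied by TF, eventually $n_i \in \bigcap_{t \in D}(A-t) = \{n : D+n \subset A\}$, which is finite, contradicting $|n_i|\to\infty$. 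Conversely, if (1) fails with witness $B$, I will choose inductively $n_i$ with $|n_i| > i$ and $n_i \in \bigcap_{t \in B \cap [-i,i]}(A-t)$ (possible since this intersection is infinite by hypothesis), then extract a convergent subsequence of $\{S^{n_i}\chi(A)\}$ and verify from the identity above that the limit $z$ has $B \subset \mathrm{supp}(z)$.

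For $(3)\Rightarrow(2)$ a short computation suffices: in a CT system of height at most $2$ one has $z_{LIM,2}(X) = \{e\}$, so for every $z \in R_S(\chi(A)) \subset z_{LIM,1}(X)$ the set $R_S(z)$ is contained in $\{e\}$. That is, $S^n z \to e$ as $|n|\to\infty$; for a $\{0,1\}$-valued sequence this means exactly that $\mathrm{supp}(z)$ is finite.

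The content lies in $(2)\Rightarrow(3)$. Any $y = \chi(F)\in X$ with $F$ finite has $R_S(y) = \{e\}$ (its translates escape every bounded window), so (2) forces the only recurrent point of $X$ to be $e$, yielding CT. For the height bound, the key step is to rule out $\chi(A) \in X_1 := z_{LIM,1}(X) = \overline{R_S(X)}$ (assuming $A$ infinite; the finite case is immediate). If instead $\chi(A) = \lim_i \chi(F_i)$ with each $F_i = \mathrm{supp}(z_i)$ finite by (2) and $z_i = \lim_j \chi(A - n^i_j) \in R_S(\chi(A))$, the plan is to apply TF to $B = A$ to choose $N$ with $D := A \cap [-N,N]$ satisfying $\{n : D+n \subset A\}$ finite. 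The pointwise convergence $\chi(F_i)\to \chi(A)$ yields $F_i \supset D$ eventually, and then the identity $\mathrm{supp}(z_i) = F_i$ forces $n^i_j \in \bigcap_{t \in D}(A-t) = \{n : D+n \subset A\}$ for $j$ large, contradicting $|n^i_j|\to\infty$. Since $X_1$ is shift-invariant, $\chi(A)\notin X_1$ implies $\mathcal{O}(\chi(A))\cap X_1 = \emptyset$, so $X_1 \subset R_S(\chi(A))$, whose points have finite support by (2); hence $R_S(X_1)\subset \{e\}$ and $z_{LIM,2}(X) = \{e\}$, giving height at most $2$.

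The main obstacle is precisely this last step: the collection of finite-support points is not closed in $\{0,1\}^\Z$ (witness $\chi(\{0,1,\dots,i\})\to\chi(\{0,1,2,\dots\})$), so condition (2) alone does not obviously propagate from $R_S(\chi(A))$ to its closure $X_1 = \overline{R_S(X)}$. The combinatorial restriction supplied by TF on the number of translates of a fixed finite subset of $A$ that can fit inside $A$ is exactly what excludes the obstruction.
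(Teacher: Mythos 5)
Your proof is correct, and your arguments for $(1)\Leftrightarrow(2)$ and $(3)\Rightarrow(2)$ coincide with the paper's (the paper runs $(3)\Rightarrow(2)$ as a contradiction: choosing $n_i \in supp \ x$ with $S^{n_i}x$ convergent produces $y \in R_S(x)$ with $y_0 = 1$, against $z_{LIM,2} = \{e\}$; your direct version is the same computation). Where you genuinely diverge is $(2)\Rightarrow(3)$, which the paper dismisses as obvious --- and it is obvious precisely because the ``main obstacle'' you flag does not arise. The set of finite-support points is indeed not closed in $\{0,1\}^{\Z}$, but you never need to take its closure: as the paper notes in Section 1, $R_S$ is a pointwise closed relation, i.e.\ $R_S(x) = \o_S(x) \cup \a_S(x)$ is always a closed set. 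Since $X = \mathcal{O}_S(\chi(A)) \cup R_S(\chi(A))$, and since under (2) every $z \in R_S(\chi(A))$ has finite support and hence $R_S(z) = \{e\}$, one gets $R_S(X) = R_S(\chi(A)) \cup \{e\}$, which is already closed; thus $z_{LIM,1}(X) = R_S(\chi(A)) \cup \{e\}$ consists entirely of finite-support points, and a second application gives $z_{LIM,2}(X) = \{e\}$ immediately --- no exclusion of $\chi(A)$ from $z_{LIM,1}$ and no appeal to TF is needed (indeed, $\chi(A) \in z_{LIM,1}$ would now instantly force $A$ finite). Your TF-based detour is nonetheless valid: applying TF to $B = A$, using that eventually $F_i \supset D = A \cap [-N,N]$ and that $z_i = \lim_j S^{n^i_j}\chi(A)$ with $|n^i_j| \to \infty$ forces $n^i_j$ into the finite set $\bigcap_{t \in D}(A - t)$, does produce the contradiction (with the harmless caveat that approximating points equal to $e$ must be discarded, which $D \neq \emptyset$ handles automatically, and that $N$ may be enlarged so that $D \neq \emptyset$). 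What your route costs is that $(2)\Rightarrow(3)$ ceases to be a standalone implication --- it invokes (1), which is legitimate only because you established $(1)\Leftrightarrow(2)$ first; what it buys is a purely combinatorial argument for readers who do not wish to invoke closedness of $\omega$- and $\alpha$-limit sets, but nothing beyond what that closedness already yields more cheaply. Your CT verification (unique recurrent point $e$, which lies in $X$ since $R_S(z) = \{e\}$ for some finite-support $z \in R_S(\chi(A)) \neq \emptyset$) is fine as stated.
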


\begin{proof}
$(1) \Lra (2)$:
Suppose first that $A$ is TF and suppose that for some sequence $\{n_i\}_{i =1}^\infty$, with
$|n_i| \to \infty$, we have
$S^{n_i} \chi(A) = x$
with $supp \ x$ an infinite set.
Let $B = supp \ x$ and observe that
for every $N \ge 1$, eventually,
\begin{equation}\label{rupp4}
S^{n_i} \chi(A) \wedge [-N, N] = (x_{-N}, \dots, x_N),
\end{equation}
whence $A - n_i \supset B \cap [-N, N]$.
But this contradicts our assumption that $A$ is TF.

$(2)  \Rightarrow (1)$:
Conversely, suppose $A$ is not TF. Then there exists an infinite $B \subset \Z$
such that for every $N \ge 1$ the intersection
\begin{equation}\label{rupp5}
 \{ n \in \Z : A -n \supset  B \cap [-N,N]\}
\quad \text{is infinite}.
\end{equation}

We can construct a strictly increasing sequence $\{n_i\}_{i=1}^\infty$
with $ A - n_i \supset B
\cap
[-i,i]$ and so for any limit point
$x \in \{0,1\}^\Z$ of
the
sequence
$ \{ S^{n_i}\chi(A) = \chi(A - n_i) \}$ the support
$supp \ x \supset B$ and so is infinite.

$(2) \Rightarrow (3)$:  is obvious.

$(3)  \Rightarrow (2)$:
Suppose finally that that $\ol{\mathcal{O}}(\chi(A))$ is CT of height at most 2.
Suppose to the contrary that $x \in
(\o_T \cup \a_T)(\chi(A))$ has infinite support,
say $supp \ x = B$. By compactness there exists a sequence $\{n_i\}_{i=1}^\infty \subset B$
such that the sequence $S^{n_i}x$ converges. Let $y =\lim_{i \to \infty} S^{n_i}x$.
Then $y \in   R_T(x)$ and $y_0 =1$. Thus $y \ne \mathbf{0}$ and this contradicts our assumption
that $\ol{\mathcal{O}}(\chi(A))$ is of height at most 2.

\end{proof}

$\Box$
\vspace{0.5cm}

We next address the question
`when is $A[\M]$ TF ?'.
This turns out to be a rather restrictive condition, because
$\emptyset$ and $ 0 $ are the only labels $\NN$ such that $A[\NN]$ has finite support. For $\M = \emptyset$ or $ 0 $,
$R_S(A[\M]) = \{ e \}$ where $e$ is the fixed point $\bar 0 = A[\emptyset] $. Thus, in these cases $x[\M]$ is TF.

\begin{prop}\label{ruppprop02} For a positive label $\M$ the following conditions are equivalent.
\begin{itemize}
\item[(i)] $\Theta(\M) = \{ \M, 0, \emptyset \}$.

\item[(ii)] For all $\rr > \00 \in \M, \ \M - \rr = 0 $.

\item[(iii)] There exists $L$ a nonempty subset of $\N$ such that
$\M = \langle \{ \chi(\ell) \ : \ \ell \in L \} \rangle$.

\item[(iv)]  $A[\M]$ is TF.

\item[(v)] $(X(\M),S)$ has height 2.

\item[(vi)] $(X_+(\M),S)$ has height 2.
\end{itemize}

When these conditions hold, $\M$ is finitary and simple.
\end{prop}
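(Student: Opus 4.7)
The plan is to close the cycle of equivalences among (i), (ii), (iii) at the label level, link them to the subshift conditions (iv), (v), (vi), and verify the concluding structural claim.

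The equivalence (ii)$\Leftrightarrow$(iii) is combinatorial. Given (iii), $\M = \{\00\}\cup\{\chi(\ell):\ell\in L\}$ has every nonzero element maximal, so $\M-\rr = \{\00\} = 0$ whenever $\rr > \00$ in $\M$. Conversely under (ii), if some $\rr\in\M$ had $|\rr|\geq 2$, heredity gives $\chi(\ell)\in\M$ for any $\ell\in\mathrm{supp}\,\rr$, and the nonzero vector $\rr-\chi(\ell)$ would lie in $\M-\chi(\ell)$, contradicting $\M-\chi(\ell)=0$. Hence every element of $\M$ has size at most one, which is (iii). For (iii)$\Rightarrow$(i), direct inspection shows $\M-\rr \in \{\M,0,\emptyset\}$ according as $\rr=\00$, $\rr=\chi(\ell)$ with $\ell\in L$, or otherwise; each value is attained, and the resulting three-point orbit is already closed in $\L\A\B$.

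The main step is (i)$\Rightarrow$(ii). For $\rr > \00$ in $\M$, the label $\M-\rr$ contains $\00$, so it is nonempty and lies in $\Theta(\M) = \{\M,0,\emptyset\}$, hence in $\{\M,0\}$. It remains to exclude $\M-\rr=\M$. Assuming this equality, the bi-implication $\mm\in\M\Leftrightarrow\mm+\rr\in\M$ forces $\rr \in \M-\ss$ for every $\ss\in\M$. But $0\in\Theta(\M)$ provides a sequence $\ss^i$ with $\M-\ss^i\to\{\00\}$; since the limit contains $\00$, eventually $\ss^i\in\M$, hence eventually $\rr\in\M-\ss^i$, contradicting convergence to $\{\00\}$ as $\rr\neq\00$. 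This is the principal obstacle; it genuinely requires the presence of $0$ in the orbit closure, as witnessed by the unbounded example $\M = \langle\{k\chi(1):k\in\N\}\rangle$ for which $\Theta(\M) = \{\M,\emptyset\}$ and $\M-\chi(1)=\M$.

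On the subshift side, Proposition \ref{proprupp03} identifies (iv) with $X(\M)$ being CT of height at most $2$, so (v)$\Rightarrow$(iv) is immediate. For (iv)$\Rightarrow$(v), heights $0$ and $1$ must be ruled out. Height $0$ fails because $\M\neq\emptyset$ makes $x[\M]\neq e$. For height $1$, one would have $\Theta(\M) = \{\M,\emptyset\}$, forcing $\M-\rr = \M$ for every $\rr\in\max\M$; but CT forces $\M$ to be of finite type (Corollary \ref{towcor17}), whence $\max\M\neq\emptyset$ (Corollary \ref{labelcor05b}), contradicting maximality of any such $\rr>\00$. To deduce (v)$\Rightarrow$(i), Corollary \ref{towcor15} identifies $z_{LIM,1}(X(\M))$ with the orbits of $\{x[\NN]:\NN\in\Theta'(\M)\}$ together with $e$; height exactly $2$ forces this subsystem to be ST, i.e.\ $R_S(x[\NN]) = \{e\}$ for each $\NN\in\Theta'(\M)$. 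This means $\Theta'(\NN) = \{\emptyset\}$, i.e.\ $\NN\subset\{\00\}$, so $\Theta'(\M)\subset\{0,\emptyset\}$; non-triviality of $z_{LIM,1}$ then forces $0\in\Theta(\M)$, giving (i). The equivalence (v)$\Leftrightarrow$(vi) follows verbatim by the analogous arguments using $x_+[\cdot]$ and $\Phi_+$. Finally, under (iii) the finitary and simple conditions are transparent: every nonzero element of $\M$ has size $1$, so Proposition \ref{labelprop09}(a) yields finitariness, and $\Theta(\M)$ being the finite set $\{\M,0,\emptyset\}$ of labels all of the form $\M-\rr$ gives simplicity via Proposition \ref{labelprop09a}(c).
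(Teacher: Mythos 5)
Your label-level cycle (i)$\Leftrightarrow$(ii)$\Leftrightarrow$(iii) is correct, and your key step (i)$\Rightarrow$(ii) --- excluding $\M - \rr = \M$ by playing it against $0 \in \Theta(\M)$, since $\M - \rr = \M$ would put $\rr$ in $\M - \ss^i$ for every tail of a sequence with $\M - \ss^i \to 0$ --- is essentially the same mechanism the paper uses in its (i)$\Rightarrow$(iii). But the proof as a whole has a structural gap: every implication you establish between the two clusters points from the subshift side to the label side. You prove (i)$\Leftrightarrow$(ii)$\Leftrightarrow$(iii), (iv)$\Leftrightarrow$(v), (v)$\Rightarrow$(i), and (v)$\Leftrightarrow$(vi) (the last only asserted), but nowhere any implication of the form (iii)$\Rightarrow$(v) or (i)$\Rightarrow$(iv). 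Your argument therefore shows only that the subshift conditions imply the label conditions; a positive label satisfying (iii) is never shown to yield a subshift of height $2$, so the six-way equivalence is not established.

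The missing bridge is exactly what the paper supplies as (iii)$\Rightarrow$(v): a label as in (iii) is bounded and size bounded, hence of finite type by Lemma \ref{labellem02}, so by Corollary \ref{towcor15} every point of $X(\M)$ lies on the orbit of some $x[\NN]$ with $\NN \in \Theta(\M) = \{\M, 0, \emptyset\}$; since $\M$ is not recurrent, $R_S(x[\M])$ consists of the orbit of $x[0]$ together with $e$, whence $z_{LIM,1}(X(\M))$ is that set, $z_{LIM,2}(X(\M)) = \{e\}$, and the height is exactly $2$ (with the $X_+(\M)$ case analogous). Two secondary points: your claim that ``(v)$\Rightarrow$(iv) is immediate'' quietly needs that height $2$ entails CT, since condition (3) of Proposition \ref{proprupp03} reads ``CT of height at most $2$'' and the CT hypothesis is genuinely used in its proof (the paper makes the same terse citation, so this is shared, but in your architecture it carries more weight); and your (v)$\Rightarrow$(i) invokes Corollary \ref{towcor15}, which requires $\M$ to be of finite type --- this does follow once (iv) is in hand, because a label not of finite type admits a positive recurrent $\NN \in \Theta'(\M)$ and then $x[\NN]$ is an infinite-support point of $R_S(x[\M])$ contradicting \ref{proprupp03}(2), but you should say so explicitly.
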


\proof (iii) $\Rightarrow$ (ii) : Obvious.

(ii) $\Rightarrow$ (i) : From (ii) it is clear that
$\Theta(\M)$
consists of $P_{\rr}(\M) = \emptyset$ for
$\rr \not\in \M$ and $P_{\rr}(\M) = 0 $ for $\rr > \00 \in \M$ and finally, $P_{\00 }(\M) = \M$.
Hence, the only limit labels possible in $\Theta(\M)$ are $\emptyset, 0$ and $\M$. In passing, we see that
$\A(\Theta(\M))$ contains one nontrivial element which maps $\M$ to $ 0 $ and maps $0$ and $\emptyset$ to
$\emptyset$.

(i) $\Rightarrow$ (iii) : If $\rr \in \M$ with $|\rr| \geq 2$ then there exists $\ell \in \N$ such that
$\rr - \chi(\ell) > \00 $. Hence, $\M - \chi(\ell) \in \Theta(\M)$ is neither $0$ nor $\emptyset$. Hence,
$\M - \chi(\ell) = \M$.  That is, $\rr + \chi(\ell) \in \M$ for all $\rr \in \M$.  So $\rr \in \M$ implies
$\chi(\ell) \in \M - \rr$ and thus, $\M - \rr = \M$ for all $\rr \in \M$. This implies that $0 \not\in \Theta(\M)$.
Contrapositively, (i) implies that $|\rr| = 1$ for any nonzero $\rr$ in $\M$. That is, each nonzero $\rr$ in $\M$
is some $\chi(\ell)$.

(iii) $\Rightarrow$ (v) :  $\M$ is bounded and size-bounded and so is of finite type.
By Corollary
\ref{towcor15a}
the limit points
of $x[\M]$ lie on the orbits of $x[\NN]$ for some $\NN \in \Theta(\M)$. Since, $\M$ is not recurrent, the set $R_S(x[\M])$ of
limit points consists of the orbits of $x[ 0 ]$ and $x[ \emptyset ]$.  These in turn map to the fixed point $x[ \emptyset ]$ and
so $(X(\M),S)$ has height 2.

(v) $\Rightarrow$ (iv) :  This follows from Proposition  \ref{proprupp03}.

(iv) $\Rightarrow$ (iii) : We prove the contrapositive,
assuming, as above, that there exist
$\rr \in \M$ and $\ell \in \N$ such that
$\rr - \chi(\ell) > \00 $.  Choose an increasing sequence
$\{ t^i \in IP(k) \}$  with length vectors $\rr(t^i) = \chi(\ell)$
and with $|j_r(t^i)| \to \infty$.  By     $\{ S^{t^i}(x[\M]) \}$ converges to $x[\M - \chi(\ell)]$ which does not have
finite support since  $\rr - \chi(\ell) \in \M - \chi(\ell)$. By Proposition  \ref{proprupp03} again $A[\M]$ is not TF.

Finally, it is clear that the labels described in (iii) are finitary and simple.

$\Box$ \vspace{1cm}

\subsection{Non-null and non-tame labels}\label{non}
$\qquad$

\vspace{0.5cm}

\begin{df}\label{towdf20b} (a) For a subshift $(X,S)$ a subset $K \subset \Z$ is called
an \emph{independent set} if the restriction to $X$ of the
 projection $\pi_K : \{ 0, 1 \}^{\Z} \to \{ 0, 1 \}^K$ is surjective. The subshift is called \emph{null} if there is a
 finite bound on the size of the independent sets for $(X,S)$.  It is called \emph{tame} if there is no infinite
 independent set for $(X,S)$.

 (b) For a label $\M$ a subset $L \subset \M$ is called an \emph{independent set} if for every  $L_1 \subset L$ there
 exists $\NN \in \Theta(\M)$ such that $L \ \cap \  \NN = L_1$.

  (c) A label $\M$ is called \emph{non-null} if for every $n \in \N$ there is a finite independent
 subset $F \subset \M$ with $\# F \geq n$.  It is \emph{non-tame}   if there is an infinite set $L \subset \M$ such that
 every finite $F \subset L$ is an independent set.
 \end{df}

 \vspace{.5cm}

 Notice that an independent set $L$ for a label $\M$ is certainly not a label.  In fact, if $\mm_1 < \mm $ and $\mm \in L$ then
 $\mm_1 \not\in L$ because if $\mm \in \NN$ for a label $\NN$ then $\mm_1 \in \NN$. Since there exists
 a label $\NN$ such that $\NN \cap L = \{ \mm \}$ it follows that $\mm_1 \not\in L$.

 \begin{remark}\label{remark,KL}
 The concepts
 `null' and `tame' are defined for any dynamical system.
 The first is defined in terms of sequential topological entropy (see e.g. \cite{Goo}
 and the review \cite{G-Y})
 and the latter in terms of the dynamical Bourgain-Fremlin-Talagrand dichotomy for
 enveloping semigroups (\cite{Gl-tame}).
 The convenient criteria which we use here for subshifts to be non-null
 and non-tame, are due basically to
Kerr and Li \cite{KL}  (see \cite[Theorem 6.1.(3)]{GM}).
 \end{remark}

 \vspace{.5cm}


 \begin{prop}\label{IP-not-tame}
$(X(FIN(\N)),S)$ is not tame.
\end{prop}

\begin{proof}
We have $X(FIN(\N)) = X(IP(k))$.
Consider the sequence $\{a_n =k(2n) + k(2n-1) : n \ge 1\}$.
Let $u = \{u_j\}_{j \ge 1}$ be a sequence of $0$'s and $1$'s and let
$s(u,n) = \sum \{k(2i -1) :  u_i =1, i \le n\}$.
Then $s(u,n) + a_j$, for $1 \le j \le n$, is in $IP(k)$ if and only if
$u_j = 0$ and thus we can interpolate on any finite initial segment of the sequence
$\{a_n\}_{n \ge 1}$. It follows that this infinite sequence is an independent set for
$(X(FIN(\N)),S)$.
\end{proof}

 \vspace{.5cm}

 \begin{lem}\label{towprop20c} (a) If $\M$ is a label and $F$ is a finite subset of $\M$ then for any $\NN \in \Theta(\M)$
 there exists an $\N$-vector $\rr$ such that $\NN \ \cap \  F$ = $\M - \rr \ \cap \  F$. In particular, if $F$ is a finite
 independent subset of $\M$ then  for every  $A \subset F$ there exists $\rr$
 such that $F \ \cap \  \M - \rr = A$.

 (b) If  every finite subset $F \subset L $ is an independent set for a label $\M$ then   $L$ is an independent set for $\M$.

 (c) If $L$ is an independent set for a label $\M$, and if for every
 $\mm \in L, \ t(\mm) \in IP_+(k)$ such that
 $\rr(t(\mm)) = \mm$ then $K = \{ \ t(\mm)  \ : \mm \in L \}$ is an independent set for the subshifts $(X(\M),S)$ and $(X_+(\M),S)$.
  \end{lem}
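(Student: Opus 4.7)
For part (a), the plan is to use the topology on $\L\A\B$ as described in Lemma \ref{labellem03a}. Given the finite set $F \subset \M$, the set
\[
U \ = \ \{ \M_1 \in \L\A\B \ : \ \M_1 \cap F \ = \ \NN \cap F \}
\]
is clopen, since it is the finite intersection
\[
U \ = \ \bigcap_{\mm \in \NN \cap F}  \{ \M_1 : \mm \in \M_1 \} \ \cap \ \bigcap_{\mm \in F \setminus \NN} \{ \M_1 : \mm \not\in \M_1 \},
\]
and each factor is clopen by Lemma \ref{labellem03a}(c). Since $\NN \in U$ and $\NN \in \Theta(\M)$, which is the closure of $\{\M - \rr : \rr \in FIN(\N)\}$, there exists an $\N$-vector $\rr$ with $\M - \rr \in U$, i.e.\ $(\M - \rr) \cap F = \NN \cap F$. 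The ``in particular'' claim is then immediate: if $F$ is independent and $A \subset F$, choose $\NN \in \Theta(\M)$ with $F \cap \NN = A$, and apply the first assertion to produce $\rr$ with $F \cap (\M - \rr) = A$.

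For part (b), fix $L_1 \subset L$. For each finite $F \subset L$, the assumption gives $\NN_F \in \Theta(\M)$ with $F \cap \NN_F = F \cap L_1$. Direct the finite subsets of $L$ by inclusion. The net $\{\NN_F\}$ in the compact space $\Theta(\M)$ has a convergent subnet with some limit $\NN \in \Theta(\M)$. For any fixed $\mm \in L$, once $F$ eventually contains $\mm$, $\NN_F$ lies in the clopen set $\{\M_1 : \mm \in \M_1\}$ if $\mm \in L_1$ and in its complement otherwise; passing to the limit gives $\mm \in \NN \iff \mm \in L_1$, so $L \cap \NN = L_1$. Thus $L$ is independent for $\M$.

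For part (c), let $K = \{t(\mm) : \mm \in L\}$, and suppose given an arbitrary assignment $\a : K \to \{0,1\}$. Set $L_1 = \{\mm \in L : \a(t(\mm)) = 1\}$. By independence of $L$, choose $\NN \in \Theta(\M)$ with $L \cap \NN = L_1$. By Proposition \ref{towprop14}(b), $x[\NN] \in X(\M)$ and $x_+[\NN] \in X_+(\M)$. Now for each $\mm \in L$, the time $t(\mm)$ is expanding with $\rr(t(\mm)) = \mm$, so by the definition of $A[\NN]$ (Definition \ref{df,A[M]}) and of $x[\NN]$ (Definition \ref{df,x[]}),
\[
x[\NN]_{t(\mm)} \ = \ 1 \quad \Longleftrightarrow \quad \mm \in \NN \quad \Longleftrightarrow \quad \mm \in L_1 \quad \Longleftrightarrow \quad \a(t(\mm)) \ = \ 1.
\]
Since $t(\mm) \in IP_+(k) \subset IP(k)$, the same equivalence holds for $x_+[\NN]$. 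Thus both projections $\pi_K|_{X(\M)}$ and $\pi_K|_{X_+(\M)}$ realize $\a$, proving $K$ is independent in both subshifts.

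The main obstacle is conceptual rather than technical: one must recognize that the topological definition of $\Theta(\M)$ as an orbit closure combines cleanly with the combinatorial definition of label-level independence because the basic clopen sets in $\L\A\B$ are exactly determined by finite ``windows'' of $FIN(\N)$. Once this is seen, part (a) reduces independence in $\Theta(\M)$ to independence along the orbit $\{\M - \rr\}$, and the remaining parts are formal consequences of compactness and the definition of $x[\NN]$.
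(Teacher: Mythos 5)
Your proof is correct and follows essentially the same route as the paper's: clopen cylinder sets in $\L \A \B$ combined with the density of $\{ \M - \rr : \rr \in FIN(\N) \}$ in $\Theta(\M)$ for (a), a compactness argument for (b) (the paper uses an increasing sequence $F^i$ exhausting $L$ and passes to a convergent subsequence, where you use a net over the finite subsets of $L$ --- the same idea), and for (c) the observation that $x[\NN]_t = 1$ iff $\rr(t) \in \NN$ together with $x[\Theta(\M)] \subset X(\M)$. No gaps.
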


 \proof  (a) If $\B_N$ contains all the supports of elements of $F$ then $\mm \in F$ is in $\M_1 \in \LAB$ iff it is
 in $\M_1 \cap \B_N$. Hence, for any $A \subset F$ the set $\{ \ \M_1 \ : \ \M_1 \ \cap \  F = A \ \}$ is clopen in
 $\LAB$. Since $\{ \M - \rr \}$ is dense in $\Theta(\M)$, the result follows.

 (b) Let $L_1 \subset L$. Let $\{ F^i \}$ be an increasing sequence of finite subsets of $L$ with union $L$.
 Because $F^i$ is an independent set, part (a)
 implies there exists $\rr^i$ such that $F^i \ \cap \  \M - \rr^i  = L_1 \ \cap \  F^i $.
 It follows that if $\mm \in L_1$
 then eventually $\mm \in \M - \rr^i$.
 If $\mm \in L \setminus L_1$ then eventually $\mm \not\in \M - \rr^i$. By going to
 a subsequence, we can assume that $\{ \M - \rr^i \}$ converges to some $\NN \in \Theta(\M)$. Clearly,
 $L \ \cap \  \NN = L_1$.

 (c) For any label $\NN \in \Theta(\M), \ \ t \in
 A[\NN]$ iff
 $t \in IP(k)$  with $\rr(t) \in \NN$ and so
 $x[\NN]_t = 1$ iff $\rr(t) \in \NN$ and similarly $t \in
 A_+[\NN]$ iff
 $t \in IP_+(k)$  with $\rr(t) \in \NN$ and so
 $x_+[\NN]_t = 1$ iff $\rr(t) \in \NN$.

 For $K_1 \subset K$, let $L_1 = \{ \rr(t) : t \in K_1 \} = \{ \mm : t(\mm) \in K_1 \} \subset L$. Since $L$ is an
 independent set for $\M$, there exists $\NN \in \Theta(\M)$ such that $L \cap \NN = L_1$. Hence,
 $x[\NN]_t = 1$ for $t \in K_1$ and $ = 0$ for $t \in K \setminus K_1$. Thus, $K$ is an independent set for
 $X(\M)$. Since $K \subset IP_+(k)$ the same argument works for $X_+(\M)$.

 $\Box$ \vspace{.5cm}

 From this we obviously have

 \begin{prop}\label{towprop20d} (a) A label $\M$ is non-null iff for every $n \in \N$ there is a finite
 subset $F \subset \M$ with $\# F \geq n$ such that for every $A \subset F$ there exists $\rr$
 such that $F \ \cap \  \M - \rr = A$.

 (b) A label $\M$ is  non-tame  if there is an infinite set $L \subset \M$ such that
 for any finite $A \subset F \subset L$ there exists $\rr$ such that $F \ \cap \  \M - \rr = A$. In that
 case if $L_1$ is any subset of $L$ then there exists $\NN \in \Theta(\M)$ such that $L \ \cap \  \NN = L_1$.
 In particular, $\Theta(\M)$ is uncountable.
 \end{prop}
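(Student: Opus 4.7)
The proof is largely a bookkeeping exercise that unpacks the definitions and applies Lemma \ref{towprop20c}; no new ideas beyond that lemma are needed. Here is the plan.

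For part (a), the plan is to observe that Lemma \ref{towprop20c}(a) gives the set-theoretic equality
\[
\{ \NN \cap F : \NN \in \Theta(\M) \} \;=\; \{ (\M - \rr) \cap F : \rr \in FIN(\N) \},
\]
since the inclusion $\supset$ is immediate from $\M - \rr \in \Theta(\M)$, and the inclusion $\subset$ is exactly what Lemma \ref{towprop20c}(a) asserts. Therefore, for any finite $F \subset \M$, the statement that $F$ is independent for $\M$ in the sense of Definition \ref{towdf20b}(b) (i.e.\ every $A \subset F$ has the form $F \cap \NN$ for some $\NN \in \Theta(\M)$) is equivalent to the assertion that for every $A \subset F$ there exists $\rr \in FIN(\N)$ with $F \cap (\M - \rr) = A$. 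Combined with the defining condition of non-null (existence of arbitrarily large finite independent subsets), this yields (a).

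For part (b), I would apply the equivalence from (a) to each finite $F \subset L$: the hypothesis that for every finite $A \subset F \subset L$ there exists $\rr$ with $F \cap (\M - \rr) = A$ is exactly the statement that every finite subset of $L$ is an independent set for $\M$. Lemma \ref{towprop20c}(b) then promotes this to the statement that $L$ itself is an independent set for $\M$, which unwinds to: for every $L_1 \subset L$ there exists $\NN \in \Theta(\M)$ with $L \cap \NN = L_1$. This proves the main assertion of (b) (and in particular shows, via Definition \ref{towdf20b}(c), that $\M$ is non-tame).

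For the final uncountability claim, the map $\P(L) \to \Theta(\M)$ sending $L_1$ to a choice of $\NN$ with $L \cap \NN = L_1$ is injective, since distinct values of $L \cap \NN$ force distinct $\NN$. As $L$ is infinite, $\P(L)$ has cardinality $2^{\aleph_0}$, so $\Theta(\M)$ is uncountable. There is no significant obstacle here; the only slightly delicate point is remembering that the implication from finite-independence-of-all-finite-subsets to independence-of-the-whole-infinite-set is not purely formal but is supplied by Lemma \ref{towprop20c}(b), whose proof used a diagonal/compactness argument in $\Theta(\M)$.
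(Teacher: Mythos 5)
Your proposal is correct and follows exactly the route the paper intends: the paper states the proposition as an immediate consequence of Lemma \ref{towprop20c} (parts (a) and (b) giving the finite equivalence and the finite-to-infinite promotion, respectively), with the uncountability of $\Theta(\M)$ coming from the injection of $\P(L)$ into $\Theta(\M)$ via $\NN \mapsto L \cap \NN$. Your write-up merely makes explicit the bookkeeping the paper leaves to the reader, including the correct observation that the only non-formal step is Lemma \ref{towprop20c}(b).
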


 $\Box$ \vspace{.5cm}

 Recall that $Supp \ \M$ f-contains $L$ when $\P_f L \subset Supp \ \M$, i.e. every finite subset of $L$ is the
support of an element of $\M$, or, equivalently, if $\langle \chi(L) \rangle \subset \M$.  By Proposition \ref{labelprop03}
if a label f-contains an infinite set then it is not of finite type and the converse holds if the label is bounded and
not of finite type then it f-contains an infinite set.

 \begin{remark}
 \label{nt-nwap} It follows that if $\M$ is a  non-tame label then $X(\M)$ and $X_+(\M)$ are uncountable and
 so neither $(X(\M),S)$ nor $(X_+(\M),S)$ can be WAP.
\end{remark}
 \vspace{.5cm}

 \begin{cor}\label{towcor20e} Given a label $\M$, if any
 label $\NN \in \Theta(\M)$ is non-null (or non-tame) then the subshifts $(X(\M),S), (X_+(\M),S)$ are
 not null (resp. not tame). \end{cor}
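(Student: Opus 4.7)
The plan is to transfer independent sets from the label level to the subshift level via the inclusion $X(\NN) \subset X(\M)$ guaranteed by $\NN \in \Theta(\M)$.

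First I would observe that if $\NN \in \Theta(\M)$ then by Proposition \ref{towprop14}(b) we have $x[\NN] \in X(\M)$ and $x_+[\NN] \in X_+(\M)$. Since $X(\M)$ (resp.\ $X_+(\M)$) is closed and shift-invariant and contains the point $x[\NN]$ (resp.\ $x_+[\NN]$), it contains the full orbit closure $X(\NN) \subset X(\M)$ (resp.\ $X_+(\NN) \subset X_+(\M)$).

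Next, the transfer from a label-level independent set to a subshift-level one is exactly the content of Lemma \ref{towprop20c}(c): if $L \subset \NN$ is an independent set for the label $\NN$ and for each $\mm \in L$ we pick a positive expanding time $t(\mm)$ with $\rr(t(\mm)) = \mm$, then $K = \{t(\mm) : \mm \in L\}$ is independent for both $(X(\NN),S)$ and $(X_+(\NN),S)$. The key point is that an independent subset $K \subset \Z$ for a subshift $Y$ remains independent for any larger subshift $Y' \supset Y$, because surjectivity of the coordinate projection $\pi_K|_Y$ immediately implies surjectivity of $\pi_K|_{Y'}$. Applying this with $Y = X(\NN) \subset X(\M) = Y'$ (and analogously in the $+$ case) produces the desired independent set in $(X(\M),S)$ or $(X_+(\M),S)$.

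Now the two cases assemble: if $\NN$ is non-null, then by definition it admits finite independent subsets of arbitrarily large cardinality, which by the argument above yield independent subsets of arbitrarily large cardinality in $(X(\M),S)$ and $(X_+(\M),S)$, so these subshifts are non-null. If $\NN$ is non-tame, then it admits an infinite set $L \subset \NN$ every finite subset of which is independent; taking $K = \{t(\mm) : \mm \in L\}$ produces an infinite subset of $\Z$ whose finite subsets are all independent for $(X(\M),S)$ and $(X_+(\M),S)$, so these subshifts are non-tame. There is no real obstacle: the work has all been done in Proposition \ref{towprop14}(b) and Lemma \ref{towprop20c}(c); the corollary is essentially a monotonicity remark combining inclusion $X(\NN) \subset X(\M)$ with the straightforward observation that independence of a coordinate set passes upward to supersets.
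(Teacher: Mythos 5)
Your proof is correct and is essentially the paper's own argument: the paper's proof is the one-line observation that $\NN \in \Theta(\M)$ gives $X(\NN) \subset X(\M)$, so that any surjective projection of $X(\NN)$ onto some $\{0,1\}^K$ is a fortiori a surjective projection of $X(\M)$, with Lemma \ref{towprop20c}(c) supplying the independent sets exactly as you use it. (One small polish for the non-tame case: rather than transferring each finite subset of $L$ separately, apply Lemma \ref{towprop20c}(b) at the label level first so that Lemma \ref{towprop20c}(c) hands you an infinite independent set for $(X(\NN),S)$ directly; your route, producing an infinite set all of whose finite subsets are independent, additionally needs the standard compactness remark that such a set is itself independent for a subshift.)
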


 \proof  If $\NN \in \Theta(\M)$ then $X(\NN) \subset X(\M)$ and so if $X(\NN)$ projects onto $\{ 0, 1\}^L$ then
 $X(\M)$ does.

 $\Box$ \vspace{.5cm}

There are some simple conditions which allow us to find non-tame labels.

\begin{df}\label{towdef20f} A
bounded label $\M$ with roof $\r(\M)$ is called \emph{flat} over a set $L \subset \N$ if
for all $F \in
Supp\ \M$ with $F \subset L$, $\r(\M)|F \in \M$. Equivalently, if $\mm \in \M$ with $supp \ \mm \subset L$ then
$\r(\M)|(supp \ \mm)  \in \M$. The label is called \emph{flat} when it is flat over $\N$. So a flat label is
bounded. \end{df}
\vspace{.5cm}

\begin{lem}\label{towlem20g}  Let $L \subset \N$.

(a) If the label $\M$ is flat over $L$ and $\rr$ is an $\N$-vector with $supp
\ \rr \subset L$ then $\M - \rr$ is flat over $L$.

(b) If $\{ \M^i \}$ is a collection of  labels each flat over $L$ then $\bigcap \{ \M^i \}$ is a  label which is flat over $L$.

(c) If $\{ \M^i \}$ is an increasing sequence of labels flat over $L$ and $\bigcup \{ \M^i \}$ is bounded,
then it is a  label flat over $L$.

(d) If $\{ \M^i \}$ is a sequence of  labels flat over $L$  and $\bigcup \{ \M^i \}$ is bounded, then $LIMINF
\allowbreak  \  \{ \M^i \}$ is a  label flat over $L$.

(e) The set of labels which are flat  is a closed in the subset of bounded labels.

(f) If $\M$ is a flat label then the elements of $\Theta(\M)$ are all flat labels.
\end{lem}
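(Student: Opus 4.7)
\textbf{Proof plan for Lemma \ref{towlem20g}.}

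The core of the argument is part (a), which I would handle by a short computation bypassing any attempt to compute $\r(\M-\rr)$ explicitly. Let $\mm \in \M - \rr$ with $F := \supp\ \mm \subset L$. Then $\mm + \rr \in \M$ and $\supp(\mm+\rr) \subset F \cup \supp\ \rr \subset L$, so flatness of $\M$ over $L$ gives $\r(\M)|(F \cup \supp\ \rr) \in \M$. To show $\r(\M-\rr)|F \in \M-\rr$, equivalently $\r(\M-\rr)|F + \rr \in \M$, I will verify the pointwise inequality $\r(\M-\rr)|F + \rr \leq \r(\M)|(F \cup \supp\ \rr)$ and apply heredity. For $\ell \in F$, by definition $\r(\M-\rr)_\ell \chi(\ell) + \rr \in \M$ (or is the sup of such), whence $\r(\M-\rr)_\ell + \rr_\ell \leq \r(\M)_\ell$; for $\ell \in \supp\ \rr \setminus F$ the inequality reduces to $\rr_\ell \leq \r(\M)_\ell$, which holds because $\rr \leq \mm + \rr \in \M$.

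For (b), I would use that $\r(\bigcap_i \M^i)_\ell = \inf_i \r(\M^i)_\ell$ (since $k\chi(\ell)$ lies in every $\M^i$ iff $k$ is bounded by every $\r(\M^i)_\ell$). Given $\mm \in \bigcap_i \M^i$ with $F = \supp\ \mm \subset L$, each $\r(\M^i)|F$ lies in $\M^i$ by flatness, so the pointwise infimum $\r(\bigcap_i\M^i)|F$ is dominated by every $\r(\M^i)|F$ and hence lies in every $\M^i$ by heredity, giving membership in the intersection. For (c), boundedness and monotonicity force the increasing sequence $\{\r(\M^i)_\ell\}$ to stabilize at $\r(\bigcup_i \M^i)_\ell$; applied to the finite support $F$ of a given $\mm$, I can pick a single index $i^*$ large enough that $\r(\M^{i^*})|F = \r(\bigcup_i \M^i)|F$ and that $\mm \in \M^{i^*}$, then invoke flatness of $\M^{i^*}$. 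Part (d) is then immediate: $\mathrm{LIMINF}\{\M^i\} = \bigcup_i \bigcap_{j \geq i} \M^j$ is an increasing union of intersections of flat labels, which is flat by (b) and (c) (boundedness of the LIMINF follows from boundedness of the sequence).

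For (e), given a convergent sequence $\M^i \to \M$ of flat labels with $\M$ bounded, I would use that convergence in $\L\A\B$ forces, for each fixed $\ell$, eventual agreement $\r(\M^i)_\ell = \r(\M)_\ell$ (the values $k\chi(\ell)$ either eventually all lie in or eventually all lie out of $\M^i$, for $k \leq \r(\M)_\ell$ and $k > \r(\M)_\ell$ respectively). Thus for any $\mm \in \M$ with finite support $F$, eventually $\mm \in \M^i$ and $\r(\M^i)|F = \r(\M)|F$, so $\r(\M)|F \in \M^i$ for infinitely many $i$, hence $\r(\M)|F \in \M = \mathrm{LIMSUP}\{\M^i\}$. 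Finally for (f), since $\M$ is flat (so bounded), $\Theta(\M) \subset [[\M]]$ consists entirely of bounded labels; each $\M - \rr$ is flat by (a) with $L = \N$, and $\Theta(\M)$ is the closure of these, so (e) delivers flatness of every element of $\Theta(\M)$.

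The only step requiring real care is the pointwise inequality in (a); once that is set up, everything else is an exercise in bookkeeping with roofs and supports.
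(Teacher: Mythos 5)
Your proof is correct and matches the paper's argument essentially part for part: the pointwise inequality you verify in (a) is exactly the paper's observation that $\r(\M) - \rr \geq \r(\M - \rr)$ combined with flatness applied to $supp(\mm + \rr) = F \cup supp \ \rr$, the roof identity $\r(\bigcap_i \M^i)_\ell = \inf_i \r(\M^i)_\ell$ in (b) and the stabilization of $\r(\M^i)|F$ on finite sets in (c) are the paper's steps verbatim, and (d) and (f) are reduced to the earlier parts just as in the paper. The only (cosmetic) divergence is in (e), where you check flatness of the limit directly via eventual agreement of roofs on finite supports, while the paper instead notes that a convergent sequence of bounded labels is uniformly bounded and invokes (d); both arguments rest on the same stabilization fact, so this is a minor repackaging rather than a different route.
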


\proof (a) If $\mm \in \M - \rr$ with $supp \ \mm \subset L$  and $F = supp\ (\mm + \rr)$ then $F \subset L$ and so
$\r(\M)|F \in \M$. Hence,
$\r(\M)|F - \rr \in \M - \rr $ and  $\r(\M)|F - \rr = (\r(\M) - \rr)|F$.   Clearly, $\r(\M) - \rr \geq \r(\M - \rr)$.

(b) If $\M$ is the intersection then $\r(\M) = min \{ \r(\M^i) \}$. If $\mm \in \M$ with $supp \ \mm \subset L$ then
$\r(\M)|(supp \ \mm)  \in \M^i$ for all $i$ and so is in $\M$.

(c) If $\M$ is the union then $\r(\M) = max \{ \r(\M^i) \}$ and this is a non-decreasing sequence of functions. On any
finite set $F$, eventually $\r(\M) = \r(\M^i)$.
If $\mm \in \M$ then eventually $\mm \in \M^i$ and so eventually $\r(\M^i)| (supp \ \mm) \in \M$ and eventually these equal
$\r(\M)|(supp \ \mm)$.

(d) Obvious from (b) and (c).

(e) A convergent sequence of bounded labels with a bounded limit has a flat limit by (d).

(f) If $\M$ is flat then it is bounded and contains every element of the set $\Theta(\M)$.
The result then follows from (a), with $L = \N$, together with (e).

$\Box$ \vspace{.5cm}

 Recall that $Supp \ \M$ f-contains $L$ when $\P_f L \subset Supp \ \M$, i.e. every finite subset of $L$ is the
support of an element of $\M$, or, equivalently, if $\langle \chi(L) \rangle \subset \M$.  By Proposition \ref{labelprop03}
if a label f-contains an infinite set then it is not of finite type and the converse holds if the label is bounded and
not of finite type then it f-contains an infinite set.

\begin{lem}\label{towlem20h} Let $L \subset \N$.
A label $\M$  is flat over $L$ and $Supp \ \M$  f-contains $L$ exactly when
for any finite subset $F$ of $L$, $\r(\M)|F \in \M$. In that case, $\{ \ \chi(\ell) \ : \ \ell \in L \ \}$ is an independent
set in $\M$.
\end{lem}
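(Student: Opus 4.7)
The plan is to verify the equivalence of the two conditions by unwinding the definitions, and then produce the required independence witnesses as limits of translates $\M - \rr^n$ of $\M$ by carefully chosen vectors $\rr^n$ supported in $L$.

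For the equivalence, the forward direction is immediate: if $\M$ is flat over $L$ and $Supp\,\M$ f-contains $L$, then any finite $F \subset L$ lies in $Supp\,\M$ by f-containment, so $\r(\M)|F \in \M$ by flatness. Conversely, assume $\r(\M)|F \in \M$ for every finite $F \subset L$. Applied to $F = \{\ell\}$ for $\ell \in L$, one gets $\r(\M)_\ell \chi(\ell) \in \M$; since $\r(\M)_\ell$ is by definition the largest integer $r$ with $r\chi(\ell) \in \M$ and singletons $\{\ell\}$ trivially ought to be realized, we conclude $\r(\M)_\ell \geq 1$, and hence $\chi(F) \leq \r(\M)|F$ for every finite $F \subset L$. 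Heredity then gives $\chi(F) \in \M$, establishing f-containment of $L$ in $Supp\,\M$. Flatness over $L$ is immediate since the condition $\r(\M)|F \in \M$ is required precisely for $F \in Supp\,\M$ with $F \subset L$, and we have it for all finite $F \subset L$.

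For the independence claim, fix any subset $L_1 \subset \{\chi(\ell) : \ell \in L\}$ and let $K = \{\ell \in L : \chi(\ell) \in L_1\}$. Enumerate $L \setminus K$ as an increasing union $\bigcup_n F_n$ of finite sets (if $L \setminus K$ is finite take $F_n$ eventually constant), and set $\rr^n = \r(\M)|F_n$, which lies in $\M$ by the equivalent condition just proved. I claim that for $\ell \in L$,
\begin{equation*}
\chi(\ell) \in \M - \rr^n \qquad \Longleftrightarrow \qquad \ell \in L \setminus F_n.
\end{equation*}
Indeed, if $\ell \notin F_n$ then $\chi(\ell) + \rr^n \leq \r(\M)|(F_n \cup \{\ell\}) \in \M$, so heredity yields $\chi(\ell) + \rr^n \in \M$; while if $\ell \in F_n$ then $(\chi(\ell) + \rr^n)_\ell = \r(\M)_\ell + 1 > \r(\M)_\ell$, which forces $\chi(\ell) + \rr^n \notin \M$.

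By compactness of $\Theta(\M)$ (Corollary \ref{labelcor04a}) pass to a subsequence along which $\M - \rr^n$ converges to some $\NN \in \Theta(\M)$. For $\ell \in K$, $\ell \notin F_n$ for every $n$, so $\chi(\ell) \in \M - \rr^n$ for every $n$, and Lemma \ref{labellem03a}(c) (the cylinder $\{\M_1 : \chi(\ell) \in \M_1\}$ is clopen) gives $\chi(\ell) \in \NN$. For $\ell \in L \setminus K$, eventually $\ell \in F_n$, so eventually $\chi(\ell) \notin \M - \rr^n$, hence $\chi(\ell) \notin \NN$. Thus $\{\chi(\ell) : \ell \in L\} \cap \NN = L_1$, as required. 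The whole argument is essentially a direct calculation from the definitions once flatness is in hand; no step presents a real obstacle.
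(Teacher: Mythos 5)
Your proof is correct and follows essentially the paper's own route: the witnessing translates are the same (the paper takes $\rr = \r(\M)|(F \setminus A)$ where you take $\rr^n = \r(\M)|F_n$, with inclusion via heredity below the roof and exclusion by exceeding the roof at the coordinate $\ell$), and your exhaustion-plus-convergent-subsequence argument simply inlines Lemma \ref{towprop20c}(b), which the paper invokes to pass from finite to infinite independence. The one soft spot — your claim that $\r(\M)_\ell \geq 1$ for $\ell \in L$ because singletons ``trivially ought to be realized'' — is not actually derivable from the displayed condition (for $\M = 0$ and $L \neq \emptyset$ the condition holds vacuously while f-containment fails), but this degenerate imprecision is inherited from the lemma's statement itself, whose proof in the paper likewise tacitly uses $\chi(\ell) \leq \r(\M)|A$ and dismisses the equivalence as clear; in every application one has $L \subset supp \ \r(\M)$, so nothing is lost.
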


\proof  The first sentence is clear from the definitions.
If $F$ is a finite subset of $L$ and $A \subset F$, then $\r(\M)|F \ = \ \r(\M)|A \ + \ \r(\M)|(F \setminus A) \in \M$.
So $\r(\M)|A \in \M - \r(\M)|(F \setminus A)$ and so $\{ \ \chi(\ell) \ : \ \ell \in A \ \} \subset \M - \r(\M)|(F \setminus A)$.
But if $\ell \in F \setminus A$ then $\chi(\ell) \not\in \M - \r(\M)|(F \setminus A)$. This means that
$\{ \ \chi(\ell) \ : \ \ell \in L \ \}$ is an independent
set.

$\Box$ \vspace{.5cm}

\begin{remark} It clearly suffices that $\r(\M)|F_k \in \M$for some increasing sequence $\{ F_k \}$ of finite subsets
with union $L$.
\end{remark}
\vspace{.5cm}

\begin{prop}\label{towprop20i} Let $\M$ be a bounded label with $L = supp
\ \r(\M)$.

(a) If $\M$ is flat and f-contains $L$, then $\M$ is a strongly recurrent label.

(b) If $\M$ is a strongly recurrent label, then there exists an infinite set $L_1 \subset L$ such that $\M$ is flat over $L_1$ and
$Supp \ \M$ f-contains $L_1$.
\end{prop}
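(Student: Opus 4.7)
The plan is to prove the two implications separately. For (a), flatness together with f-containment gives enough vectors in $\M$ to witness the strongly recurrent condition directly via heredity; the needed finite ``forbidden set'' $F(\mm)$ is just the support of $\mm$. For (b), I will run an induction on finite subsets of a growing sequence $\ell_1', \ell_2', \ldots$ in $L$, using the strongly recurrent condition at each step to avoid only finitely many bad indices. In both parts the relevant $\M$ is infinite, hence (since $\M$ is bounded) $L = supp\ \r(\M)$ is infinite: if $L$ were finite then $\M \subset \prod_{\ell \in L}[0, \r(\M)_\ell]$ would be finite too, and in (b) this is ruled out by strong recurrence itself, while (a) has no content otherwise.

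For (a), given $\mm \in \M$, define $F(\mm) := supp\ \mm$, a finite subset of $L$. If $\ww \in \M$ with $supp\ \ww \cap F(\mm) = \emptyset$, then because $\M$ is bounded by $\r(\M)$ both $\mm$ and $\ww$ have supports in $L$, so $supp(\mm + \ww) = supp\ \mm \cup supp\ \ww \subset L$ is a disjoint union, and pointwise $\mm + \ww \leq \r(\M)|supp(\mm + \ww)$. By Lemma \ref{towlem20h}, which combines flatness of $\M$ with f-containment of $L$, we have $\r(\M)|supp(\mm + \ww) \in \M$, and then heredity yields $\mm + \ww \in \M$, i.e.\ $\ww \in \M - \mm$. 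This verifies the strongly recurrent condition.

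For (b), since $\M$ is strongly recurrent it is bounded and infinite, so $L$ is infinite. I build $L_1 = \{\ell_n' : n \in \N\} \subset L$ inductively so that $\r(\M)|A \in \M$ for every finite $A \subset \{\ell_1', \ldots, \ell_n'\}$ at each stage. Pick $\ell_1' \in L$ arbitrarily, noting $\r(\M)_{\ell_1'} \chi(\ell_1') \in \M$ by definition of the roof. Assume $\ell_1', \ldots, \ell_n'$ have been chosen with the property above. For each $A \subset \{\ell_1', \ldots, \ell_n'\}$ set $\mm^A := \r(\M)|A \in \M$ and let $F(\mm^A) \subset \N$ be the finite set supplied by strong recurrence. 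The collection of such $A$ is finite, so $G_n := \{\ell_1', \ldots, \ell_n'\} \cup \bigcup_{A} F(\mm^A)$ is finite, and we may pick $\ell_{n+1}' \in L \setminus G_n$. Taking $\ww := \r(\M)_{\ell_{n+1}'} \chi(\ell_{n+1}') \in \M$, whose support $\{\ell_{n+1}'\}$ is disjoint from every $F(\mm^A)$, strong recurrence gives $\mm^A + \ww = \r(\M)|(A \cup \{\ell_{n+1}'\}) \in \M$ for every $A$. Combined with the inductive hypothesis, this shows $\r(\M)|B \in \M$ for every $B \subset \{\ell_1', \ldots, \ell_{n+1}'\}$. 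Lemma \ref{towlem20h} then repackages the conclusion as flatness of $\M$ over the resulting infinite set $L_1$ together with f-containment of $L_1$ by $Supp\ \M$.

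The main obstacle is the bookkeeping in (b): at each inductive step one must simultaneously preserve the flatness property for all $2^n$ previously chosen subsets. This is defused by observing that each such subset contributes only one finite forbidden set $F(\mm^A)$, and there are only finitely many of them, so the infinitude of $L$ guarantees a fresh choice of $\ell_{n+1}'$ survives at every stage.
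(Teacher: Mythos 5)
Your proof is correct and is essentially the paper's: in (a) the paper observes that flatness plus f-containment of $L$ makes $\M$ the sublattice $\langle \r(\M) \rangle$ and cites Proposition \ref{labelprop05a}(i), whose proof is exactly your argument with $F(\mm) = supp \ \mm$; in (b) the paper runs the same greedy induction, choosing $\ell_{k+1} \in L$ outside a finite forbidden set supplied by strong recurrence. The only cosmetic difference is in (b): the paper maintains just the single vector $\r(\M)|F_k \in \M$ and one forbidden set $F(\r(\M)|F_k)$ per step, since heredity then gives $\r(\M)|A \in \M$ for every $A \subset F_k$ automatically, so your bookkeeping over all $2^n$ subsets $A$ and the union of their forbidden sets, while correct, is unnecessary.
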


\proof (a) In this case, $\M$ is a sublattice of $FIN(\N)$ and so it is a strongly recurrent label by Proposition
\ref{labelprop05a2} (e).

(b) Assume that inductively that we have defined $F_k = \{ \ell_1,\dots,\ell_k \}$ of distinct points of
$supp \ \r(\M)$ such that $\r(\M)|F_k \in \M$.  Because $\M$ is strongly
recurrent we can add, in $\M$ any element with support
outside of the finite set $F(\r(\M)|F_k)$. So for sufficiently large $\ell_{k+1}$ we have
that $\r(\M)|F_{k+1} \ = \ \r(\M)|F_k \ + \
\rr(\M)_{\ell_{k+1}} \chi(\ell_{k+1}) \in \M$. Let $F_{k+1} = F_k \cup \{ \ell_{k+1} \}$. Then
let $L_1 = \bigcup_k \{ F_k \}$.

$\Box$ \vspace{.5cm}

\begin{cor}\label{towcor20j} Assume that $\M$ is a bounded label not of finite type.
If $\M$ is flat or strongly recurrent then it is non-tame.
\end{cor}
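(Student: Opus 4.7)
The plan is to split into the two cases, flat and strongly recurrent, and in each case produce an infinite set $L\subset\N$ such that $\{\chi(\ell):\ell\in L\}\subset\M$ is an independent subset in the sense of Definition \ref{towdf20b}(b). Non-tameness then follows at once: by Lemma \ref{towprop20c}(b) (or directly from what Lemma \ref{towlem20h} actually proves), the set $\{\chi(\ell):\ell\in L\}$ is an infinite subset of $\M$ each of whose finite subsets is independent, which is exactly the definition of non-tame.

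For the flat case, I would combine boundedness with the hypothesis ``not of finite type'' to invoke Proposition \ref{labelprop03}(b), which produces an infinite $L\subset\N$ that is f-contained in $Supp\,\M$. Since $\M$ is flat over all of $\N$, it is in particular flat over $L$, so Lemma \ref{towlem20h} applies and yields that $\{\chi(\ell):\ell\in L\}$ is the desired infinite independent set. For the strongly recurrent case, the hypothesis ``not of finite type'' is actually automatic: a strongly recurrent label is by definition bounded and infinite, and by Proposition \ref{labelprop05a}(f) it is recurrent, hence not of finite type by Corollary \ref{labelcor05b}(a). Now Proposition \ref{towprop20i}(b) (applied to the strongly recurrent $\M$) directly supplies an infinite $L_1\subset supp\,\r(\M)$ over which $\M$ is flat and which is f-contained in $Supp\,\M$; Lemma \ref{towlem20h} then again produces the infinite independent set $\{\chi(\ell):\ell\in L_1\}$.

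There is no real obstacle here — the argument is essentially a bookkeeping exercise in which the earlier structural lemmas (Proposition \ref{labelprop03}(b), Lemma \ref{towlem20h}, and Proposition \ref{towprop20i}(b)) do all the substantive work. The only point worth checking carefully is that the notion of ``independent set in $\M$'' used in Lemma \ref{towlem20h} matches the one in Definition \ref{towdf20b}(b); inspection of the proof of Lemma \ref{towlem20h} shows that it verifies independence for every finite subset of $L$, whence Lemma \ref{towprop20c}(b) upgrades this to independence of the whole infinite set, as required by the definition of non-tame.
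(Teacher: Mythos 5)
Your proof is correct and takes essentially the same route as the paper's: the flat case via Proposition \ref{labelprop03}(b) (boundedness plus not of finite type yields an infinite f-contained set $L$), the strongly recurrent case via Proposition \ref{towprop20i}(b), and in both cases Lemma \ref{towlem20h} supplying the infinite independent set $\{\chi(\ell) : \ell \in L\}$. Your two extra observations --- that the hypothesis ``not of finite type'' is automatic for strongly recurrent labels, and that Lemma \ref{towprop20c}(b) reconciles finite independence with the definition of non-tame --- are correct refinements of details the paper leaves implicit.
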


\proof If $\M$ is strongly recurrent then by Proposition \ref{towprop20i} there exists an infinite subset $L_1$ of $supp
\ \r(\M)$ such that
$\M$ is flat over $L_1$ and $Supp \ \M$ f-contains $L_1$.  By Proposition
\ref{labelprop03}(b)
any label not of finite type
f-contains some infinite set $L_1$ and if $\M$ is flat then it is flat over $L_1$.  By Lemma \ref{towlem20h}
$\{ \ \chi(\ell) \ : \ \ell \in L_1 \ \}$ is an independent set in $\M$.

$\Box$ \vspace{.5cm}

If we define for a bounded label $\M$
\begin{equation}\label{tow26x1}
\F(\M,L) \ = \ \{ \ F \ : F \quad \mbox{ is a finite subset of} \ \ L \ \ \mbox{and} \ \ \r(\M)|F \in \M \ \},
\end{equation}
then $\M$  is flat over $L$ and $Supp \ \M$  f-contains $L$ exactly when $\F(\M,L) = \P_f L$.

We conjecture that for every bounded $\M$ not of finite type there exists $\NN \in \Theta(\M)$ which
is non-tame and so that $(X(\M),S)$ is not tame. Beyond the above corollary the best we can do is the following.

\begin{prop}\label{prop,bounded}
 Let $\M$ be a label not of finite type.  If there exists $N \in \N$ such that  $\r(\M) \leq N$
 then there exists $\NN \in \Theta(\M)$ which
is non-tame. \end{prop}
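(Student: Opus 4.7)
My strategy is to locate, inside $\Theta(\M)$, a recurrent label $\M_\infty$ containing a specific flat sublattice whose roof is precisely pinned on an infinite set, and then to exhibit an infinite independent set in $\M_\infty$ by an explicit translation argument, so that $\M_\infty$ is non-tame in the sense of Proposition \ref{towprop20d}(b).

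Since $\M$ is bounded and not of finite type, Proposition \ref{labelprop03}(b) yields an infinite $L \subset \N$ f-contained in $Supp \, \M$, i.e.\ $\chi(F) \in \M$ for every finite $F \subset L$. The uniform bound $\r(\M) \leq N$ reduces the colouring $c_k(F) = \max\{b \leq N : b\chi(F) \in \M\}$ on the $k$-element subsets of $L$ to finitely many values. Applying the infinite Ramsey theorem successively for $k = 1, 2, \dots$ and diagonalising over the nested monochromatic refinements produces an infinite $L^* \subset L$ together with a Ramsey-stable constant $b^* \in \{1, \dots, N\}$ such that $b^*\chi(F) \in \M$ for every finite $F \subset L^*$; the sequence of stable colours is non-increasing by heredity and stabilises at $b^*$. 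A further pigeonhole on $\r(\M)_\ell \in \{b^*, \dots, N\}$ then lets me assume $\r(\M)_\ell$ is constant on $L^*$, equal to some $r^* \geq b^*$.

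The main obstacle is arranging $r^* = b^*$ on an infinite sub-refinement. The idea is that if $r^* > b^*$, so that $(b^*+1)\chi(\ell) \in \M$ for every $\ell$ in the current refinement, then re-running the Ramsey iteration starting from this set produces a new non-increasing sequence of stable colours which begins at $r^*$ on singletons but is forced by the maximality of $b^*$ to stabilise at a value $\leq b^*$; the level at which the drop first occurs, combined with a pigeonhole on the singleton-level roof values restricted to the drop-witnessing subfamily, yields an infinite subset on which $\r(\M)$ itself equals $b^*$. The uniform bound $\r(\M) \leq N$ is essential here, both to give only finitely many Ramsey colours at every step and to keep all limit labels in $\Theta(\M)$ bounded by $N$.

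With $\r(\M)_\ell = b^*$ on $L^*$, set $\NN_0 = \langle b^*\chi(L^*)\rangle$. This is a flat sublattice of $\M$, strongly recurrent by Proposition \ref{labelprop05a}(i), bounded by $N$, and not of finite type. Proposition \ref{labelprop05a}(d) then produces a recurrent $\M_\infty \in \Theta(\M)$ with $\NN_0 \subset \M_\infty$; since $\Theta(\M) \subset [[\M]]$ and $\r(\M)_\ell = b^*$ on $L^*$, the double inclusion $\NN_0 \subset \M_\infty \subset \M$ pins $\r(\M_\infty)_\ell = b^*$ on $L^*$. Finally, I verify that $L'' = \{b^*\chi(\ell) : \ell \in L^*\} \subset \M_\infty$ is an infinite independent set: for any finite $F = \{b^*\chi(\ell_1), \dots, b^*\chi(\ell_j)\} \subset L''$ and any $A \subset F$, put $\rr = \sum_{b^*\chi(\ell_i) \in F \setminus A} \chi(\ell_i)$ and $\NN' = \M_\infty - \rr$. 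For $b^*\chi(\ell_i) \in A$ the vector $b^*\chi(\ell_i) + \rr$ is coordinatewise $\leq b^*\chi(L^*)$ and supported in $L^*$, hence dominated by an element of $\NN_0 \subset \M_\infty$ and so lies in $\M_\infty$ by heredity; for $b^*\chi(\ell_i) \in F \setminus A$ the $\ell_i$-coordinate of $b^*\chi(\ell_i) + \rr$ equals $b^* + 1 > b^* = \r(\M_\infty)_{\ell_i}$, so the vector is not in $\M_\infty$. Hence $F \cap \NN' = A$, which shows that $L''$ is an infinite independent set and $\M_\infty \in \Theta(\M)$ is non-tame by Proposition \ref{towprop20d}(b).
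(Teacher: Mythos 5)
Your closing independence computation is fine, and so is the existence of a maximal level $b^*\in\{1,\dots,N\}$ admitting an infinite $L^*$ with $\langle b^*\chi(L^*)\rangle\subset\M$ (though your Ramsey diagonalisation as written only controls $k$-subsets whose least index is at least $k$; since the needed property is hereditary, it is cleaner to define $b^*$ directly as the largest $b\leq N$ for which some infinite $L'$ satisfies $\langle b\chi(L')\rangle\subset\M$, which exists because $b=1$ works by Proposition \ref{labelprop03}(b)). The genuine gap is the step you yourself flag as the main obstacle: the claim that when $r^*>b^*$ a further extraction yields an infinite subset of $L$ on which $\r(\M)$ itself equals $b^*$. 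This is false, and no Ramsey or pigeonhole argument inside $\M$ can repair it, because the desired subset need not exist. Take $L\subset\N$ infinite and
\begin{equation*}
\M \ = \ \{ \ \mm \ : \ supp \ \mm \subset L, \ \ \mm \leq 2\chi(L), \ \ \mm_{\ell} = 2 \ \mbox{ for at most one } \ \ell \ \}.
\end{equation*}
This is a label, bounded by $N=2$, not of finite type, and even recurrent (use $\rr^i=\chi(\ell_i)$ with distinct $\ell_i \to \infty$). Here $b^*=1$, since $2\chi(F)\notin\M$ whenever $\# F\geq 2$, while $\r(\M)_{\ell}=2$ for \emph{every} $\ell\in L$: there is no nonempty subset of $L$, let alone an infinite one, on which the roof equals $b^*$. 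Consequently your pinning argument ``$\NN_0\subset\M_{\infty}\subset\M$ forces $\r(\M_{\infty})=b^*$ on $L^*$'' collapses: in this example Proposition \ref{labelprop05a}(d) may legitimately return $\M_{\infty}=\M$ itself (one checks that an idempotent limit along the $P_{\chi(\ell_i)}$ fixes both $\NN_0$ and $\M$), and then $2\chi(\ell)\in\M_{\infty}$ for all $\ell$, so your justification of $b^*\chi(\ell_i)+\rr\notin\M_{\infty}$ — the $\ell_i$-coordinate $b^*+1$ exceeding the roof — simply fails.

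What is missing is a mechanism for \emph{lowering} the roof, and passing to subsets of $L$ or to sub-labels of $\M$ cannot provide it; one must translate, i.e.\ move to genuinely different members of $\Theta(\M)$. In the example above, $\M-2\chi(\ell)=\langle\chi(L\setminus\{\ell\})\rangle$ has roof exactly $1$, and that is where your construction would succeed. This is precisely the paper's mechanism: after reducing to a recurrent $\M$ carrying a strongly recurrent set $\{\chi(F):F\in\P_f L\}$ (Proposition \ref{labelprop05a}(d),(h)), it examines the family $\F(\M,L)$ of (\ref{tow26x1}). If $\F(\M,L)$ contains a strictly increasing chain with union $L_1$, then $\M$ is flat over $L_1$ and f-contains it, and Lemma \ref{towlem20h} produces the infinite independent set. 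Otherwise there is a maximal $F\in\F(\M,L)$, and $\M_1=\M-\r(\M)|F$ is again a recurrent member of $\Theta(\M)$, carrying a strongly recurrent set on $L_1=L\setminus F(\r(\M)|F)$, and maximality of $F$ forces $\r(\M_1)_{\ell}<\r(\M)_{\ell}$ at every relevant $\ell$, so $\r(\M_1)\leq N-1$ there. Descent on the uniform bound $N$ then forces the chain case within $N$ steps. Your proof needs this subtraction-and-descend step in place of the Ramsey re-extraction; as written, it does not go through.
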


\proof If $K \in \N$ and $\r(\M) \leq K$ then $\r(\NN) \leq K$ for all $\NN \in [[\M]]$. By Proposition
\ref{labelprop05a1} (d)
there is
a positive recurrent label in $\Theta(\M)$ and so we can assume that $\M$ itself is recurrent. By Proposition
\ref {labelprop05a2} (d)
we can choose an infinite set $L \subset supp
\ \r(\M)$ such that $\{ \ \chi(F) \ : \ F \in \P_f(L) \ \}$ is a strongly recurrent
set for $\M$.  Consider $\F(\M,L)$.

Case (i): If there exists $\{ F^i \}$  a strictly increasing sequence of elements
of $\F(\M,L)$ with $L_1 = \bigcup \{ F^i \}$ then
$\F(\M,L_1) = \P_f L_1$ and so $\M$ itself is non-tame by Lemma \ref{towlem20h}.

Case (ii):
If $F$ is a maximal element of $\F(\M,L)$ then $\r(\M - \r(\M)|F)_{\ell} = 0$ for $\ell \in F$ and
for $\ell \in L $ with
\begin{equation}\label{tow26x2}
\r(\M)_{\ell} \ > \ 0 \qquad \Longrightarrow \qquad  \r(\M)_{\ell} \ > \r(\M - \r(\M)|F)_{\ell}
\end{equation}
because for $\ell \in L \setminus F, \ \ \r(\M)_{\ell} \not\in \M - \r(\M)|F$ by maximality of $F$. Let $\M_1 = \M - \r(\M)|F$
and $L_1 = L \setminus F(\r(\M)|F)$. We see that $\M_1$ is a recurrent element of $\Theta(\M)$ with
$\{ \ \chi(F) \ : \ F \in \P_f(L_1) \ \}$ a strongly recurrent set for $\M_1$.
Furthermore, $\r(\M_1) \leq K - 1$ by (\ref{tow26x2}).
In particular, this cannot happen if $K = 1$.

If Case (ii) occurs then we repeat the procedure with $\M$ and $L$ replaced by $\M_1$ and $L_1$. Eventually, we must terminate
in a Case i situation and so at some $\M_k \in \Theta(\M)$ which is non-tame.

$\Box$ \vspace{.5cm}

\begin{remark}
Notice that Case (i) did not require that $\r(\M)$ is bounded by a constant. Furthermore, once we have
the set $L$ associated with the strongly recurrent subset of $\M$ we can replace it by any infinite subset.  In particular, if
there is any infinite subset of $L$ on which $\r(\M)$ is bounded by a constant then the above argument will apply.
 Thus, the obstruction
to proving the conjecture in general arises when
$Lim \, \r(\M)_{\ell} = \infty$ as $\ell  \to \infty$ in $L$ and every
element of $\F(\M,L)$ is contained in a maximal element of $\F(\M,L)$ and these conditions continue to hold as
we replace $\M$ and $L$ by $\M - \r(\M)|F, L_1$ for $F$ any maximal element of $\F(\M,L)$.  Finally, we notice that
if $\F(\M,L)$ contains sets of arbitrarily large cardinality then $\M$ is at least non-null by Lemma \ref{towlem20h}.
\end{remark}
\vspace{.5cm}

\begin{cor}\label{cor,non-tame}
Let $\M$ be a bounded label not of finite type.
\begin{enumerate}
\item
There is a label $\NN \in [[\M]]$ (i.e. $\NN \subset \M$) which is not of finite type and
 with $\r(\NN)$ bounded by a constant. In particular, $\NN$ is not tame.
\item
There is a label $\NN \supset \M$ and
with $ Supp \ \NN = Supp \ \M$ which is flat, hence not tame.
\end{enumerate}
\end{cor}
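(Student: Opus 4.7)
The plan is to exploit the characterization of ``not of finite type, bounded'' labels given in Proposition \ref{labelprop03}(b): such labels are precisely those whose support collection f-contains an infinite subset of $\N$. Once we have such an $L$, the two claims are obtained by constructing canonical ``principal'' labels on $L$ (or on $\bigcup Supp \ \M$), after which non-tameness will follow from Corollary \ref{towcor20j}.

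For part (1), fix an infinite set $L \subset \N$ which is f-contained in $Supp \ \M$, and set
\[
\NN \ = \ \langle \chi(L) \rangle \ = \ \{ \mm \in FIN(\N) \ : \ \mm \leq \chi(L) \ \}.
\]
Every $\mm \in \NN$ is of the form $\chi(F)$ for some finite $F \subset L$, hence $\mm \in \M$ by the f-containment assumption; thus $\NN \subset \M$ and $\r(\NN) = \chi(L) \leq 1$ is bounded by a constant. An enumeration $L = \{\ell_1,\ell_2,\dots\}$ yields the strictly increasing sequence $\chi(\{\ell_1\}) < \chi(\{\ell_1,\ell_2\}) < \cdots $ in $\NN$, so $\NN$ is not of finite type. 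For the ``in particular'' clause, observe that $\NN$ is trivially flat (for $\mm \in \NN$ with support $F \subset L$, one has $\r(\NN)|F = \chi(F) \leq \chi(L) \in \NN$), so Corollary \ref{towcor20j} applies and $\NN$ is non-tame.

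For part (2), set $\NN = \langle \r(\M) \rangle$. Because $\M$ is bounded, $\r(\M) \in \Z_+^{\N}$ is a genuine $\N$-valued function, so $\NN$ is a bounded label; clearly $\M \subset \NN$ since every $\mm \in \M$ satisfies $\mm \leq \r(\M)$. A direct computation gives $\r(\NN) = \r(\M)$ (from $\r(\M)_\ell \chi(\ell) \leq \r(\M)$ in one direction and $\mm \leq \r(\M)$ for $\mm \in \NN$ in the other), and flatness is immediate: for any $\mm \in \NN$ with support $F$, $\r(\NN)|F = \r(\M)|F \leq \r(\M)$, hence lies in $\NN$. Since $\M \subset \NN$ and $\M$ is not of finite type, $\NN$ is not of finite type either, so a second application of Corollary \ref{towcor20j} (now for flat labels) yields that $\NN$ is non-tame.

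There is no substantive obstacle here; the work is purely bookkeeping, and both constructions are the ``smallest $0/1$-valued'' and ``largest principal'' labels compatible with the given data. The only point requiring a moment of care is the verification $\r(\langle \r(\M) \rangle) = \r(\M)$ and the reduction of flatness to the tautology ``restrictions of $\r(\M)$ to finite subsets of its support lie below $\r(\M)$''; both are one-line checks.
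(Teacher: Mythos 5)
Your proof is correct and its constructions coincide in substance with the paper's: for (1) the paper extracts a strictly increasing sequence $\{\mm_i\}$ in $\M$ and takes $\NN = \langle \{ supp \ \mm_i : i = 1,2,\dots \} \rangle$, which is exactly your $\langle \chi(L) \rangle$ once one notes that Proposition \ref{labelprop03}(b) --- the step you invoke as a black box --- is itself proved by that same increasing-sequence argument; for (2) the paper takes $\NN = \langle \{ \r(\M)|(supp \ \mm) : \mm \in \M \} \rangle$, a possibly smaller flat label than your $\langle \r(\M) \rangle$ (yours admits supports mixing elements from supports of different members of $\M$), but both contain $\M$, both have roof $\r(\M)$, and the flatness check is the same routine verification. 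The one genuine divergence is the key lemma certifying non-tameness in (1): the paper cites Proposition \ref{prop,bounded}, which, applied to $\NN$, nominally produces only a non-tame element of $\Theta(\NN)$ (one must either unwind Case (i) of its proof, where $\F(\NN,L) = \P_f L$, or pass through Corollary \ref{towcor20e} to conclude non-tameness of the subshift); you instead observe directly that $\langle \chi(L) \rangle$ is flat and apply Corollary \ref{towcor20j}, which yields literally that the label $\NN$ itself is non-tame, exactly as the corollary's statement reads. Your route is thus marginally cleaner --- it bypasses the Case (i)/(ii) induction underlying Proposition \ref{prop,bounded} and makes parts (1) and (2) run through the single mechanism ``flat $+$ not of finite type $\Rightarrow$ non-tame'' --- at no cost in generality.
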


\begin{proof}
(1). As $\M$ is not of finite type there is a strictly increasing sequence $\{\mm_i\}_{i=1}^\infty$
of elements of $\M$. Let
$\NN = \langle \{ \chi(supp \ \mm_i) : i =1,2, \dots\} \rangle$.
Then clearly $\NN$ is not of finite type and $\rho(\NN) \le 1$.
The non-tameness follows from
Proposition
\ref{prop,bounded}.

(2). Let $\NN = \langle \{ \rho(\M) : (supp \ \mm) : \mm \in \M\}\rangle$.
Clearly $\NN \supset \M$ and is flat, hence not tame by
Corollary
\ref{towcor20j}.
\end{proof}

$\Box$
\vspace{0.5cm}

\begin{Qs}\label{Qs,III}
$\qquad$
\begin{enumerate}
\item
Is it true that for every label $\M$ not of finite type there exists $\NN \in \Theta(\M)$ which
is non-tame (hence also so that $(X(\M),S)$ is not tame) ?
\item
Is there a label $\M$ not of finite type such that $X(\M)$ is tame or even null ?
\end{enumerate}
Since a subsystem of a tame (or null) system is tame (resp.  null), it follows that we could choose $\M$
in the latter case to be a recurrent label.
A positive answer to this second question (in the null case)
would thus yield an example of a null dynamical
system with a recurrent transitive point which is not minimal.  The question whether such a system exists
is a long standing open question.
\end{Qs}

\vspace{0.5cm}

In a private conversation Tomasz Downarowicz asked us whether it is the case that
every WAP system is null. Our next example shows that there are
(a) non-null simple labels, hence topologically transitive WAP subshifts which are non-null;
(b) non-tame labels of finite type, hence subshifts arising from finite type labels which are not tame.

\begin{ex}\label{ex11} There are simple, finitary labels which are non-null.
Accordingly, by Corollary \ref{towcor20e}, the corresponding subshifts are
topologically transitive WAP subshifts which are non-null.
Also, there are labels of finite type which are  non-tame.
Again the corresponding subshifts are topologically transitive and non-tame. Note that by
Remark \ref{nt-nwap} these latter subshifts are not WAP.

(a) Partition $\N$ into
disjoint
sets $\{ A_n : n \in \N \}$ with $\# A_n = n$.
Define $\M_n$ by $ \M_n = \langle  \ \chi(A_n)  \rangle$. By Lemma \ref{towlem20h}
$\{ \chi(i) : i \in A_n \}$ is an independent set for $\M_n$.
Since $\{ \M_n \}$ is a pairwise disjoint sequence of
finite labels, $\M = \bigcup_n \{ \M_n \}$ is a simple, finitary label which is clearly non-null.

(b) Partition $\N$ into two disjoint infinite sets $L, B$ and define a bijection  $A \mapsto \ell_A $ from the
set of finite subsets of $L$ onto $B$. Define $\M = \langle \{ \ \chi(\ell_A) + \chi(i) \ : \ i \in A, A $ a finite
subset of $L \ \} \rangle$.  Because it is size bounded, the label $\M$ is of finite type. Since
$\M - \chi(\ell_A) \ = \ \langle \{ \chi(i) : i \in A \} \rangle $ it follows that
$\{ \chi(i) : i \in L \}$ is an independent set for $\M$. \end{ex}

$\Box$ \vspace{.5cm}

\begin{remark}\label{re,LEnotHAE}
 By Proposition \ref{towprop20d} the label $\M$ of example (b) has $\Theta(\M)$ uncountable.
So this and Example \ref{ex10cmoved} are labels $\M$ of finite type with $\Theta(\M)$ uncountable. It follows
that
the corresponding subshifts
$(X(\M),S)$  are LE but not HAE (see Remark \ref{cp,ae}). \end{remark}

$\Box$ \vspace{1cm}

\subsection{Gamow transformations}\label{Gamow}
$\qquad$

\vspace{0.5cm}

For $L \subset \N$ we let $FIN(L) = \{ \ \mm \in FIN(\N) \ : \ supp \ \mm \subset L \ \}$ and
 $\LAB(L) = \{ \ \M \in \LAB \ : \ \bigcup \ Supp \ \M \ \subset L \ \}$. Clearly,
$\M \in \LAB(L)$ implies $[[\M]] \subset \LAB(L)$. If $\M \not\in \LAB(L)$ then for some $N \in \N$
$\M \cap \B_N \not\in \LAB(L)$ and so $d(\M, \M_1)
<  2^{-N}$ implies $\M_1 \not\in \LAB(L)$. Thus,
$\LAB(L)$ is a closed subset of $\LAB$. For example, $\LAB(\emptyset) = \{ 0, \emptyset \}$.

$FIN(L)$ is a submonoid of $FIN(\N)$ and it acts on $\LAB(L)$.  Furthermore, if $\rr \not\in FIN(L)$ then
$P_{\rr}(\M) = \emptyset$ for all $\M \in  \LAB(L)$. Hence, we can restrict attention to this action and
for $\Phi$ any closed, invariant subset of $\LAB(L)$, the enveloping semigroup $\E(\Phi)$ is the closure
of $FIN(L)$ in $\Phi^{\Phi}$.

Let $\tau : L_1 \to L_2$ be a bijection with $L_1, L_2 \subset \N$.  In honor of the book \emph{One, Two, Three,\dots
Infinity} we will refer to the following as the \emph{Gamow transformation} induced by $\tau$.  For an $\N$-vector
$\mm$ with $supp \ \mm \subset L_2$ we let
$\tau^* \mm = \mm \circ \tau$ so that $supp \ \tau^* \mm  =
\tau^{-1}( supp \ \mm) \subset L_1$.
Thus, $\tau^* :
FIN(L_2) \to FIN(L_1)$ is a monoid isomorphism which also preserves the lattice properties.

For $\M \in \LAB(L_2)$  we
let $\tau^* \M = \{ \ \tau^* \mm \ : \mm \in \M \}$ and for $\Phi \subset \LAB(L_2)$
 we will let $\tau^* \Phi = \{ \tau^* \NN : \NN \in \Phi \}$. Thus, $\tau^*$ is a bijection from $\LAB(L_2)$
 to $\LAB(L_1)$ with inverse $(\tau^{-1})^*$.

Given $\ell \in \N$, let $\ell' = \max
\ \tau([1,\ell] \ \cap \  L_1)$. It follows from the definition (\ref{label06})
of the metric on $\LAB$ that $d(\M_1,\M_2) \leq 2^{-\ell'}$ implies $d(\tau^*(\M_1),\tau^*(\M_2)) \leq 2^{-\ell}$.
   Thus, the  $\tau^*$ is  uniformly
continuous on $\LAB(L_2) $ and so is a homeomorphism from $\LAB(L_2)$ onto $\LAB(L_1)$.

 Clearly, $\tau^*$ preserves all label operations, e.g. $\tau^*( \M - \rr) = \tau^* \M - \tau^* \rr$ for
 $\M \in \LAB(L_2)$ and $\rr \in FIN(L_2)$. Thus, $\tau^*$ is an action isomorphism relating the
 $FIN(L_2)$ action on $\LAB(L_2)$ to the $FIN(L_1)$ action on $\LAB(L_1)$. Hence, it induces an
 Ellis
 semigroup isomorphism from
 $\E(\Phi)$ to $\E(\tau^*(\Phi))$
 where $\Phi$ is a compact, invariant subset of
 $\LAB(L_2)$.
Also  $\Theta(\tau^* \M)  = \tau^* \Theta(\M)$ for $\M \in \LAB(L_2)$.

Thus, all of the label properties are preserved by $\tau^*$.  For example,
 $\tau^* \M$ is bounded, of finite type, finitary, simple, semi-simple, WAP,
 recurrent or strongly recurrent
iff $\M$ satisfies
the corresponding property. A label $\F$ is an external element for $\E(\Theta(\M))$ iff
$\tau^* \F$ is an external element for $\E(\Theta(\tau^* \M)$.

On the other hand, the subshifts $(X(\tau^*{\M}),S)$ and $(X(\M),S)$  are only analogous. That is, they have similar
properties but are not usually isomorphic.  For the finite type case, see Corollary \ref{towcor17a}.

Let $S_{\infty}$ denote the group of all permutations on $\N$.
On $S_{\infty}$ we define an ultrametric by
\begin{equation}\label{perm1}
d(\t_1, \t_2) \  = \  \inf \ \{ \ 2^{-\ell} \ : \ \ell \in
\Z_+
\  \mbox{and} \  \t_1| [1,\ell] = \t_2|[1,\ell] \ \}.
\end{equation}

Clearly, for any $\g \in S_{\infty}, \ d(\g \circ \t_1, \g \circ \t_2) \  = \ d(\t_1, \t_2)  $. If $\g([1,\ell]) \subset
[1,\ell_{\g}]$ then $\t_1| [1,\ell_{\g}] = \t_2|[1,\ell_{\g}] $ implies $\t_1 \circ \g | [1,\ell] = \t_2 \circ \g |[1,\ell] $ and
$\g_1| [1,\ell_{\g}] = \g|[1,\ell_{\g}]$ then $\g_1^{-1}| [1,\ell] = \g^{-1}|[1,\ell]$.  It follows that $S_{\infty}$ is a
topological group with left invariant ultrametric $d$. Furthermore, the equivalent metric $\bar d$ given by
$\bar d(\t_1,\t_2) = \max(d(\t_1, \t_2),d(\t_1^{-1}, \t_2^{-1}))$  is complete.
Finally, the set of permutations $S_{fin}$ consisting
of permutations are the identity on the complement of a finite set, is a countable dense subgroup
of $S_{\infty}$. Thus, $S_{\infty}$ is
a Polish group, which is clearly perfect.

Furthermore, if $\M$ and $\M_1$ are labels with $\M \cap \B_{\ell_{\g}} = \M_1 \cap \B_{\ell_{\g}}$ and
$\g_1| [1,\ell_{\g}] = \g|[1,\ell_{\g}]$ then $\g^*\M \cap \NN_{\ell} = \g_1^*\M_1 \cap \NN_{\ell}$.  This implies
that the action $S_{\infty} \times \LAB \to \LAB$ given by $(\t,\M) \to (\t^{-1})^*\M$ is a continuous action.
The empty label $\emptyset$ is an isolated fixed point for the action.  Let $\LAB_+$ denote the perfect set of
nonempty labels.
We show that this action is topologically transitive on $\LAB_+$ by constructing explicitly a transitive point.

\begin{ex}\label{permex} Let $\Xi$ be the countable set of all pairs $(\NN_{\xi},\ell_{\xi})$ with $\NN_{\xi}$ a finite label
such that $\bigcup \ Supp \  \NN_{\xi} \subset [1, \ell_{\xi}]$. Partition $\N$ by disjoint intervals indexed by $\Xi$
such that $I_{\xi}$ has length $\ell_{\xi}$. Let $\t_{\xi} : I_{\xi} \to
[1,\ell_{\xi}]$
be the increasing linear bijection
and let $\M_{\xi} = \t_{\xi}^*\NN_{\xi}$ so that $\bigcup \ Supp \  \M_{\xi} \subset I_{\xi}$.
Let $\M_{trans} = \bigcup_{\xi} \ \M_{\xi}$.
By Corollary \ref{towcor26c} $\M_{trans}$ is finitary and simple and so is of finite type. On the other hand, given
any nonempty label $\M$ and any $N \in \N$ there exists $\xi \in \Xi$ such that
$(\NN_{\xi},\ell_{\xi}) = (\M \cap B_{N}, N)$.
It follows that if $\g \in S_{fin}$ with $\g = \t_{\xi}$ on $I_{\xi}$ then
$(\g^{-1})^*\M_{trans} \cap B_{N}= \M \cap B_{N}$. Thus, $\M_{trans}$ is a transitive point for the
action of $S_{\infty}$ on $\LAB_+$. \end{ex}
\vspace{.5cm}

Because $\LAB_+$ is a Cantor set  and $S_{\infty}$ is a group, the set $TRANS$ of $S_{\infty}$
transitive points is a dense $G_{\delta}$ subset of
$\LAB_+$.  By Proposition \ref{labelprop12xb} the set $RECUR$ of recurrent labels is a dense $G_{\delta}$ subset of $\LAB$.
Hence, \\
$TRANS \ \cap \ RECUR$ is a dense $G_{\delta}$ subset of $\LAB_+$. The transitive point $\M_{trans}$ is of finite
type and so is not recurrent.  On the other hand, the set of flat labels  is a proper, closed $S_{\infty}$ invariant
subset which contains recurrent labels (see Proposition \ref{towprop20i} (a)) which are thus not transitive with respect to
the $S_{\infty}$ action.

For background regarding our next question we refer the reader to the works \cite{KR} and \cite{GW-Ro}.

\begin{Qu}\label{permq} Does there exist a label $\M$ such that its $S_{\infty}$ orbit is residual,
or are all the orbits meager?
If such a residual orbit exists then it would be unique
and would meet and so be contained in $TRANS \ \cap \ RECUR$.
It is well known that the adjoint action of $S_{\infty}$ on itself does have a dense $G_{\delta}$ orbit
(see e.g. \cite{GW-Ro}).
\end{Qu}

\vspace{1cm}

\subsection{Ordinal constructions}\label{ssec,OC}
$\qquad$

\vspace{0.5cm}

Assume that  $(X,T)$ is  a cascade with $X$ metrizable.
For $A \subset X$ we defined
$z_{LIM}(A) = \ol{R_T(A)}$, the closure of the set of all positive or negative limit points of orbits of points of $A$, and
 $R_T^*(A) = \{ x : R_T(x) \subset A \}$. We
 defined in
 (\ref{z})
the LIM
descending transfinite sequence of closed sets by
\begin{equation}\label{zagain}
z_0(X) \ = \ X, \quad z_{\a + 1}(X) \ = \ z(z_{\a}(X)), \quad z_{\b} \ = \ \bigcap_{\a < \b} z_{\a}(X),
\end{equation}
for $\b$ a limit ordinal. This sequence stabilizes at the \emph{height}  being the first ordinal $\b$ such that
$z_{\b}$ is the Birkhoff center, i.e. the closure of the set of recurrent points.

Assume that  $(X,T)$ is a cascade with
$Y$ an orbit-closed subset of $X$.
We
defined in (\ref{zstar})
the increasing
transfinite sequence of orbit-closed sets, by
\begin{equation}\label{aagain}
z^*_0(Y) = Y, \quad z^*_{\a + 1}(Y) = R_T^*(z^*_{\a}(Y)), \quad z^*_{\b}(Y) = \bigcup_{\a < \b} z^*_{\a}(Y)
\end{equation}
for $\b$ a limit ordinal. This sequence stabilizes at the minimum L-determined set in $X$ which contains $Y$. If $(X,T)$ is
a metrizable,
CT-WAP
system with the fixed point
an
isolated
invariant set,
then $X$ is the only L-determined set by Corollary \ref{isocor4} and the \emph{height$^*$} is
the first ordinal $\b$ such that with $Y = \{ e \}$
 $z^*_{\b}(e) = X$. If the system is topologically transitive with a transitive point $x^*$ then
the height$^*$ is the first ordinal $\b$  such that $x^* \in z^*_{\b}(e)$.

Now we define the label versions of these constructions.

 For a label $\M$  define the label
 \begin{equation}\label{tow29aa}
 z_{LAB}(\M) =  \M \setminus max \ \M = \{ \ \mm \ :  \ \mm + \rr \in \M \ \text{for some} \ \rr > \00 \ \}.
 \end{equation}
 Thus $\mm \in  z_{LAB}(\M) $ iff there exists $\mm_1 \in \M$ with $\mm < \mm_1$.

 \begin{lem}\label{towem22aa} Let $\M, \M_1$
 be labels.
 \begin{itemize}

 \item[(a)] $z_{LAB}(\M) \subset \M$ and the inclusion is proper if $\M$ is
 nonempty and of finite type.

  \item[(b)] $\M_1 \subset \M$ implies $z_{LAB}(\M_1) \subset z_{LAB}(\M)$.

   \item[(c)] If $\{ \M_i \}$ is a collection of labels then $\bigcup_i \ z_{LAB}(\M_i) \ = \ z_{LAB}(\bigcup_i \ \M_i)$.

   \item[(d)]  $z_{LAB}(\M) = \emptyset$ iff $\M = 0 $ or $\emptyset$.
   \end{itemize}
   \end{lem}

   \proof (a) and (b) are obvious and so $\bigcup_i \ z_{LAB}(\M_i) \subset z_{LAB}(\bigcup_i \ \M_i)$.
   If $\mm \in z_{LAB}(\bigcup_i \ \M_i)$ then there exists $\mm_1 > \mm$ with $\mm_1 \in \bigcup_i \ \M_i$.
   So $\mm_1 \in \M_i$ for some $i$ and this implies $\mm \in z_{LAB}(\M_i)$, proving (c).

   (d): If $\M$ is positive then $\00 \in z_{LAB}(\M)$. On the other hand, if  $\M = 0 $ or $\emptyset$ then
    $z_{LAB}(\M) = \emptyset$.

 $\Box$ \vspace{.5cm}

Define
a descending transfinite sequence of labels by
\begin{equation}\label{tow29}
\begin{split}
z_{LAB,0}(\M) \quad = \quad \M, \hspace{4cm} \\
z_{LAB,\a + 1}(\M) \quad = \quad z_{LAB}(z_{LAB,\a}(\M)), \hspace{3cm}\\
z_{LAB, \b}(\M) \quad = \quad
\bigcap_{\a < \b}
 \ \{ z_{LAB,\a}(X) \} \quad \mbox{ for } \ \b \ \mbox{ a limit ordinal}.
\end{split}
\end{equation}
 The sequence stabilizes at $\b$ when $z_{LAB,\b}(\M) = z_{LAB,\b + 1}(\M)$ in which case
$z_{LAB,\a}(\M) = z_{LAB,\b}(\M)$ for all $\a \geq \b$.  So $\emptyset$ stabilizes at $0$ and
if $\M$ is nonempty and of finite type then the sequence stabilizes at $\b + 1$ where
$\b$ is the first ordinal for which $z_{LAB,\b} = 0$.

 If  $\Phi$ is a closed, bounded, invariant set of labels  define $z_{LAB}(\Phi)$
to be the closure of $\bigcup \ \{ \ P_{\rr}(\Phi) \ : \ \rr > \00 \}$, a closed, invariant subset of $\Phi$. Thus,
$z_{LAB}(\Theta(\M)) = \Theta'(\M)$.

Define the nonincreasing transfinite sequence of closed, bounded subsets of $\LAB$ by
\begin{equation}\label{tow30}
\begin{split}
z_{LAB,0}(\Phi) \quad = \quad \Phi, \hspace{4cm} \\
z_{LAB,\a + 1}(\Phi) \quad = \quad z_{LAB}(z_{LAB,\a}(\Phi)), \hspace{3cm}\\
z_{LAB, \b}(\Phi) \quad =
\quad \bigcap_{\a < \b} \ \{ z_{LAB,\a}(\Phi) \} \quad \mbox{ for } \ \b \ \mbox{ a limit ordinal}.
\end{split}
\end{equation}

\begin{theo}\label{towtheo23} Let $\Phi$ be a closed, $FIN(\N)$ invariant subset of $\LAB$ and $\M = \bigcup \Phi$, e.g.
$\Phi = \Theta(\NN)$ for any label $\NN$.
Then
$z_{LAB,\a}(\M) = \bigcup z_{LAB,\a}(\Phi)$ for every countable ordinal
$\a$.  That is, $\mm \in z_{LAB,\a}(\M)$
iff there exists $\NN \in z_{LAB,\a}(\Phi)$ such that $\mm \in \NN$. \end{theo}

\proof  We use transfinite induction. Both procedures stabilize at a countable ordinal and so
we need only consider countable ordinals.

Since $\M = \bigcup \Phi$ the result is true for $\a = 0$. Notice that for any label $\M = \bigcup \Theta(\M)$.

If $\mm \in z_{LAB,\a + 1}(\M) = z_{LAB}(z_{LAB, \a}(\M))$, then there exists
$\rr > \00 $ such that $\mm + \rr \in z_{LAB, \a}(\M)$.
By induction hypothesis, there exists $\NN \in z_{LAB, \a}(\Phi)$ such that $\mm + \rr \in \NN$ and so
$\mm \in \NN - \rr \in  z_{LAB,\a + 1}(\Phi)$.

Conversely, if $\NN \in z_{LAB,\a + 1}(\Theta(\M)) = z_{LAB}(z_{LAB, \a}(\Phi))$ then
there exists  sequences $\NN^i \in z_{LAB,\a}(\Phi)$
and $\rr^i > 0$ such that $\NN = LIM \{ \NN^i - \rr^i \}$. By induction hypothesis $\bigcup \{ \NN^i \} \subset z_{LAB,\a}(\M)$.
If $\mm \in \NN$ then eventually $\mm \in \NN^i - \rr^i$ and so $\mm \in z_{LAB,\a + 1}(\M)$.

Now let $\b$ be a limit ordinal. If $\mm \in \NN \in z_{LAB,\b}(\Phi)$ then by definition $\NN \in z_{LAB,\a}(\Phi)$ for all
$\a < \b$. So by induction hypothesis, $\mm \in z_{LAB, \a}(\M)$ for all
$\a < \b$ and hence $\mm \in z_{LAB, \b}(\M)$.

Conversely, if $\mm \in z_{LAB, \b}(\M) $ and so in $z_{LAB, \a}(\M)$ for all
$\a <  \b$ we let $\{ \a^i \}$ be an increasing
sequence of ordinals converging to $\b$. By induction hypothesis there exists $\NN^i \in z_{LAB, \a^i}(\Phi)$ such that
$\mm \in \NN^i$. Since $\{ \a^i \}$ is increasing $\NN^i \in z_{LAB, \a^j}(\M)$ for all
$j <  i$.  Let
$\{ \NN^{i'} \}$ be a convergent subsequence with limit $\NN$. Since $\mm \in \NN^i$ for all $i$, $\mm \in \NN$.
For every
$\a < \b$ there exists
$\a^j > \a$. For $i \geq j$ the sequence
 $ \NN^i \in z_{LAB, \a^j}(\Phi) \subset z_{LAB, \a}(\Phi)$ and
 so the
limit of the
subsequence
$\NN $ is in the closed set $z_{LAB, \a}(\Phi)$.
 Since this is true for all
 $\a < \b$, $\NN \in z_{LAB, \b}(\Phi)$.

 $\Box$ \vspace{.5cm}

  It follows that the sequences stabilize at the same countable ordinal.  If $\M = \emptyset$ then $\Theta(\M) = \{ \emptyset \}$
  and the sequence stabilizes at $0$. If $\M$ is nonempty and of finite type then the sequence stabilizes at $\b + 1$ where
  $\b$ is the ordinal with $z_{LAB, \b}(\M) = 0$ and
  $z_{LAB, \b}(\Theta(\M)) = \{ 0, \emptyset \}$. In this nonempty finite type case, we call $\b+1$ the \emph{height} of
  $\Theta(\M)$.

\begin{cor}\label{towcor23a} Assume that $\M$ is a label of finite type and that $\Phi$ is a nonempty, closed, $FIN(\N)$ subset of
$[[\M]]$. If $\Phi = z_{LAB}(\Phi)$ then $\Phi = \{ \emptyset \}$.\end{cor}

\proof If $\Phi = z_{LAB}(\Phi)$ then the transfinite sequence $ \{ z_{LAB,\a}(\Phi) \}$
stabilizes at $\a = 0$.
If $\NN  = \bigcup \Phi$ then since $\NN \subset \M$ it is of finite type and so the sequence $ \{ z_{LAB\a}(\NN) \}$
stabilizes at $\a = 0$ only when $\NN = \emptyset$. But by Theorem
\ref{towtheo23}
these
two sequences stabilize at
the same level.

 $\Box$ \vspace{.5cm}

For $Y$ a  subset of $\{ 0,1 \}^{\Z}$  Definition \ref{df,Phi} says
 $$  \Phi(Y) = \{ \NN : x[\NN] \in Y \} \quad {\text{and}} \quad
 \Phi_+(Y) = \{ \NN : x_+[\NN] \in Y \}. $$

\begin{theo}\label{towtheo24} Assume that $\M$ is a  label of finite type.
For every countable ordinal $\a$, every closed, invariant $Y \subset X(\M)$
and every closed, invariant $Y_+ \subset X_+(\M)$ ,
 \begin{equation}\label{tow31}
 \begin{split}
 \Phi(z_{LIM,\a}(Y)) \quad = \quad z_{LAB,\a}(\Phi(Y)), \hspace{2cm}\\
  \Phi_+(z_{LIM,\a}(Y_+)) \quad = \quad z_{LAB,\a}(\Phi_+(Y_+)). \hspace{2cm}
\end{split}
 \end{equation}
\end{theo}

\proof  The equation is clear for $\a = 0$.

Since $\Phi(Y)$ is the preimage of $Y$ with respect to the continuous map $x[\cdot]$ it follows that
$\Phi(z_{LIM}(Y))$ is a closed invariant set containing $P_{\rr}\NN$ whenever $x[\NN] \in Y$ and $\rr > \00 $ by Theorem
\ref{towtheo14}(a) and the Remark after Theorem \ref{towtheo17ab} which implies that $p_{\rr} =  q(P_{\rr}) \in A(Y,S)$.
Hence, $z_{LAB}(\Phi(Y)) \subset \Phi(z_{LIM}(Y))$.

On the other hand, Theorem \ref{towtheo15} (b) implies that $z_{LAB}(\Phi(Y)) = \Phi(\tilde Y)$ for a closed, invariant
subspace $\tilde Y$ of $X(\M)$. If $\rr > 0$ and $y \in Y$ then $y = S^k(x[\NN])$ with $k \in \Z$ and $\NN \in \Phi(Y)$.
So $p_{\rr}y = S^k(x[P_{\rr}\NN])$.  Since $P_{\rr}\NN \in z_{LAB}(\Phi(Y))$, it follows that $p_{\rr}y \in \tilde Y$.
Hence, $z_{LIM}(Y) \subset \tilde Y$ and so $\Phi(z_{LIM}(Y)) \subset \Phi(\tilde Y) =  z_{LAB}(\Phi(Y))$.

This proves equation (\ref{tow31}) with $\a = 1$ and so assuming the result for an ordinal $\a$ it follows for $\a + 1$.

For a limit ordinal, $\b$ we use
the fact
that
$\Phi( \cdot )$ commutes with intersection and so by
the induction hypothesis
\begin{equation}\label{tow32}
\begin{split}
\Phi(z_{LIM,\b}(Y)) \ = \ \Phi(\bigcap_{\a < \b}  \ z_{LIM,\a}(Y)) \ = \ \bigcap_{\a < \b} \Phi(z_{LIM,\a}(Y)) \hspace{1cm} \\
 = \ \bigcap_{\a < \b} z_{LAB,\a}(\Phi(Y)) \ = \  z_{LAB,\b}(\Phi(Y)). \hspace{2cm}
 \end{split}
 \end{equation}

 This completes the induction.

 $\Box$ \vspace{.5cm}

 We will say that $\Phi \subset \LAB$ is \emph{$\Theta$ invariant} when it is nonempty orbit-closed with respect
 to the $FIN(\N)$ action.  That is, $\M \in \Phi$ implies
 $\Theta(\M) \subset \Phi$. If $\Phi$ is closed then it is invariant iff it is $\Theta$ invariant.  $\Theta$ invariance
 always implies invariance but is usually a stronger condition since $\{ P_{\rr}(\M) \}$ is usually a proper subset of
 $\Theta(\M)$.

 Let $\M$ be a label of finite type. For a
  $\Theta$ invariant  $\Phi \subset \Theta(\M)$, we define
 $z^*_{\M}(\Phi) = \{ \ \NN \in \Theta(\M) \ : \ \Theta'(\NN) \subset \Phi \}$. Equivalently, $\NN \in z^*_{\M}(\Phi)$ iff
 $Q(\NN) \in \Phi$ for all $Q \in
  \A(\Theta(\M)) = \E(\Theta(\M)) \setminus
 \{ id_{\Theta(\M)} \}$.
 For example, $z^*_{\M}(\{ \emptyset \}) =
 \{ \emptyset, 0 \} = [[0]] = \Theta(0)$.

 Starting with a
  $\Theta$ invariant  $\Phi \subset \Theta(\M)$,
 define the nondecreasing transfinite sequence of $\Theta$ invariant subsets of $\Theta(\M)$ by
\begin{equation}\label{tow33}
\begin{split}
z^*_{\M,0}(\Phi) \quad = \quad \Phi, \hspace{4cm} \\
z^*_{\M,\a + 1}(\Phi) \quad = \quad z^*_{\M}(z^*_{\M,\a}(\Phi)), \hspace{3cm}\\
z^*_{\M, \b}(\Phi) \quad = \quad
\bigcup_{\a < \b} \ \{z^*_{\M,\a}(\Phi) \} \quad \mbox{ for } \ \b \ \mbox{ a limit ordinal}.
\end{split}
\end{equation}

Recall that for a dynamical system $(X,T)$, $Y \subset X$ is called orbit-closed when $x \in Y$ implies
$\ol{O_T(x)} \subset Y$. For $\M$ of finite
type it is easy to
adjust the proof of
Corollary \ref{towcor15a} to show that
$Y \subset X(\M)$ is orbit-closed iff $\Phi(Y)$ is
$\Theta$ closed and $Y_+ \subset X_+(\M)$ is orbit-closed iff $\Phi_+(Y_+)$ is
$\Theta$ closed.

\begin{theo}\label{towtheo25} Assume that $\M$ is a  label of finite type, that $Y$ is an orbit closed subset of $X(\M)$
and that $Y_+$ is an orbit closed subset of $X_+(\M)$.
For every countable ordinal $\a$,
 \begin{equation}\label{tow34}
 \begin{split}
 \Phi(z^*_{\a}(Y)) \quad = \quad z^*_{\M,\a}(\Phi(Y)), \hspace{2cm}\\
 \Phi_+(z^*{\a}(Y_+)) \quad = \quad z^*_{\M,\a}(\Phi_+(Y_+)), \hspace{2cm}
 \end{split}
 \end{equation}
\end{theo}

\proof This is obvious for $\a = 0$.

A point $x \in R^*_S(Y)$ iff $qx \in Y$ for all $q \in A(X(\M),S)$. Since $Y$ is $S$-invariant, Theorem
\ref{towtheo17ab}, and the remark thereafter,
imply that this is true iff $\bar D(Q)x \in Y$ for all
$Q \in \A(\Theta(\M))$. Hence,  $x[\NN] \in R^*_S(Y)$ iff
$x[Q(\NN)] \in Y$ for all $Q \in \A(\Theta(\M))$, i.e. iff $Q(\NN) \in
\Phi(Y)$ for  all $Q \in \A(\Theta(\M))$ and so
iff $\NN \in z_{\M}^*(\Phi(Y))$. This proves  equation (\ref{tow34}) for $\a = 1$ and so inductively for any
$\a + 1$.

Since $\Phi$ is the preimage operator with respect to the map $x[\cdot]$, it commutes with union. So the equation
for a limit ordinal $\b$ follows because it is assumed, inductively, to hold for all
$\a < \b$.

The proof for $Y_+$ is completely similar.

$\Box$ \vspace{.5cm}

The constructions of (\ref{tow29}),   (\ref{tow30}) and (\ref{tow33}) are label constructions and so they commute
with Gamow
transformations.  We can use Gamow transformations
to assure that a countable number of labels all occur with supports on
disjoint sets.  Let $\tau_0 : \N \to \N \times \N$ be a bijection and let $L_i = \tau_0^{-1}(\N \times \{i \})$ for $i \in \N$.
Define $\tau_i : L_i \to \N$ to be the bijection $\tau_i = \pi_1 \circ \tau_0$ where $\pi_1 : \N \times \N \to \N$ is the
first coordinate projection. Given a sequence $\{ \M^i \}$ of labels, the  label
$\{ \tau_i^* \M^i \}$  is Gamow equivalent to $\M^i$ and $\bigcup \ Supp \ \M^i \ \subset \ L_i $.

For the following two propositions we apply Theorems \ref{towtheo27a} and \ref{towtheo26} together with their proofs.
Note that  $[[0]]=  \{ \emptyset, 0 \} $.


\begin{prop}\label{towprop27again}  Assume $\NN$ and $\M$ are positive disjoint
labels with $\NN$ finite and $\M$ of finite type.

\begin{itemize}
\item[(a)]If $\Psi \subset [[\NN]]$ and $\Phi \subset [[\M]]$ are closed and invariant then
 \begin{equation}\label{tow40}
 z_{LAB}(\Psi \oplus \Phi) \ = \ z_{LAB}(\Psi) \oplus \Phi \cup \Psi \oplus  z_{LAB}(\Phi).
 \end{equation}
And for every limit ordinal $\a$
 \begin{equation}\label{tow41}
 z_{LAB,\a}(\Theta(\NN \oplus \M)) \ = \ \{ \NN \} \oplus z_{LAB,\a}(\Theta(\M)). \hspace{1.5cm}
 \end{equation}
 For $\a = 0$ or a limit ordinal and $k \in \N$
 \begin{equation}\label{tow42}
 z_{LAB,\a + k}(\Theta(\M_1 \oplus \M_2)) \ =
 \ \bigcup_{r = 0}^k \ z_{LAB,\a + r}(\Theta(\NN))  \oplus  z_{LAB,\a + k - r}(\Theta(\M)).
 \end{equation}

\item[(b)]
 For all $k \in \N$
  \begin{equation}\label{tow43}
  z^*_{\NN \oplus \M, k}([[0]]) \ = \ \bigcup_{r = 0}^k \ z^*_{\NN,r}([[0]])
  \oplus  z^*_{\M,k - r}([[0]]).
 \end{equation}
 For every limit ordinal $\a$ and $k \in \Z_+$
  \begin{equation}\label{tow44}
  z^*_{\NN \oplus \M,\a + k}([[0]]) \ = \ \Theta(\NN) \oplus  z^*_{\M,\a + r}([[0]])). \\
\end{equation}
\end{itemize}
\end{prop}

\proof If $\rr > 0$ with $\rr \in \M_1 \oplus \M_2$ then $\rr = \rr_1 + \rr_2$ with either $\rr_1 > 0$ or $\rr_2 > 0$.

It therefore follows that for $\Phi_1 \subset \Theta(\M_1)$ and $\Phi_2 \subset \Theta(\M_2)$ closed invariant subspaces,
$$z_{LAB}(\Phi_1 \oplus \Phi_2) =  z_{LAB}(\Psi) \oplus \Phi \cup \Psi \oplus  z_{LAB}(\Phi).$$
Then (\ref{tow42}) follows by induction on $k$ since the operator $z_{LAB}$ commutes with union. For (\ref{tow41}) we use
induction on the limit ordinals together with $0$ starting with $\a = 0$. Assume the result for all
$\b <  \a$.
Since $\NN$ is positive and finite, there exists a unique $r_0 \in \N$ such that
where $z_{LAB, r_0}(\Theta(\NN)) = [[ 0 ]]$. From (\ref{tow42}) we have
\begin{gather}\label{tow45}
\begin{split}
 \{ \NN \}\oplus  z_{LAB,\b + k}(\Theta(\M))  \ \subset \ z_{LAB,\b + k}(\Theta(\NN \oplus \M)) \ \\
 \subset \ \{ \NN \}\oplus  z_{LAB,\b + k - r_0 }(\Theta(\M)) \qquad \qquad \ \
 \end{split}
\end{gather}
 Intersecting we obtain  (\ref{tow41}) for $\a = \b + \o$. Otherwise, $\a$ is
an increasing limit of limit ordinals and the result follow from the induction hypothesis by intersecting.

For (\ref{tow43}) assume that $\Phi \subset \Theta(\NN)$ and $\Psi \subset \Theta(\M)$ are $\Theta$ invariant.
By Theorem \ref{towtheo27a} the labels $\Theta(\NN \oplus \M)$ are of
 the form $\NN_1 \oplus \M_1$ with $\NN_1 \in \Theta(\NN)$ and $\M_1 \in \Theta(\M)$, and the maps in
 $\E(\Theta(\NN \oplus \M))$ are of the form $P \widehat \oplus Q$ with $P \in \E(\Theta(\NN), Q \in \E(\Theta(\M)$.
So $P \widehat \oplus Q \in \A(\Theta(\NN \oplus \M))$ if either $P \not= id$ or $Q \not= id$.

We show that
for $k \in \Z_+$,
  \begin{equation}\label{tow43rev2}
  z^*_{\NN \oplus \M, k}(\Phi \oplus \Psi) \ = \ \bigcup_{r = 0}^k \ z^*_{\NN,r}(\Phi)
  \oplus  z^*_{\M,k - r}(\Psi).
 \end{equation}
 This is obvious for $k = 0$ and the inclusion $\subset$ is clear for $k = 1$ and then follows for all $k \in \N$  by induction.

 For the reverse inclusion, proceed by induction, with $k \geq 1$. Suppose that
 $\NN_1 \oplus \M_1 \in \Theta(\NN \oplus \M)$ and $r_1, s_1 \geq 0$ are the smallest integers such that
 $\NN_1 \in z^*_{\NN,r_1}(\Phi)$ and $\M_1 \in z^*_{\M,s_1}(\Psi)$ with $r_1 + s_1 > k$. We use the
 inductive hypothesis to show that $\NN_1 \oplus \M_1 \not\in  z^*_{\NN \oplus \M, k}(\Phi \oplus \Psi)$. Either $r_1$ or $s_1$ is
 at least $1$. Suppose $r_1 \geq 1$. There exists $P \in \A(\Theta(\NN))$ such that $r_2 = r_1-1$ is the smallest
 value such that $P(\NN_1) \in z^*_{\NN,r_2}(\Phi)$. Since $r_2 + s_1 > k - 1$ it follows from the induction
 hypothesis that $(P \widehat \oplus id)(\NN_1 \oplus \M_1) \not\in  z^*_{\NN \oplus \M, k-1}(\Phi \oplus \Psi)$ and so
 $\NN_1 \oplus \M_1 \not\in  z^*_{\NN \oplus \M, k}(\Phi \oplus \Psi)$.

 With $\Phi = \Psi = [[0]]$ this implies (\ref{tow43}). With $\Phi = \Theta(\NN)$ we obtain
for $k \in \Z_+$,
  \begin{equation}\label{tow43rev3}
  z^*_{\NN \oplus \M, k}(\Theta(\NN) \oplus \Psi) \ = \ \Theta(\NN)   \oplus  z^*_{\M,k }(\Psi).
 \end{equation}

 Notice that these results did not require that $\NN$ be finite.

Now we assume that $\NN$ is finite and for (\ref{tow44}) we proceed by transfinite induction. Since $\NN$ is positive and finite
there exists $r_0^*$ such that  $z^*_{\NN,r^*_0}([[0]]) = \Theta(\NN)$. That is, the $z^*$ sequence
stabilizes at $r^*_0$.

Taking the union in (\ref{tow43}) over $k \in \Z$ we obtain (\ref{tow44}) with $\a = \o$. Assuming the result for
a limit ordinal $\a$ and $k = 0$ it then follows for $k \in \N$ by (\ref{tow43rev3}) with $\Psi = z^*_{\M, \a}([[0]])$.
Again taking the
union over $k \in \N$ we obtain the result for $\a + \o$. Finally, if the ordinal $\bar \a$ is the limit
of an increasing sequence of limit ordinals $\b^i$ we take the union of the (\ref{tow44}) with $\a = \b^i$
and $k = 0$ to obtain the result with $\a = \bar \a$.

 $\Box$ \vspace{.5cm}

 \begin{prop}\label{towprop26} Assume that  $\{ \M_a \} $ is a finite or infinite pairwise disjoint
 collection of at least two
nonempty labels of finite  type
and let $\M  = \bigcup \{ \M_a \}$.
 \begin{enumerate}

 \item[(a)]
 For every $\a \geq 1$
 \begin{equation}\label{tow37}
 z_{LAB,\a}(\Theta(\M)) \ = \ \bigcup \ \{ z_{LAB,\a}(\Theta(\M_a)) \}. \hspace{1.5cm}
 \end{equation}

 \item[(b)]
 If for all $a, \ \ \Phi_a$ is a $\Theta$ invariant subspace of
  $\Theta'(\M_a)$ then $\bigcup \ \{ \Phi_a \}$ is a
 $\Theta$ invariant subset of $\Theta'(\M)$. If $\bigcup \ \{ \Phi_a \}$ is a proper subset
 of  $\Theta'(\M)$ 
 then
 \begin{equation}\label{tow38}
 z^*_{\M}(\bigcup \ \{ \Phi_a \}) \ = \ (\bigcup \ \{ z^*_{\M_a}(\Phi_a) \}) \setminus \{ \M_a \}.
 \end{equation}
 \end{enumerate}
 \end{prop}

 \proof
 If $\Phi$ is a closed subset of
 $\Theta(\M)$ then $z_{LAB}(\Phi)$ is the closure of
 $\A(\Theta(\M))\Phi$.  Hence, (\ref{tow37}) follows
 by transfinite induction starting with  \allowbreak $z_{LAB}(\Theta(\M)) = \Theta'(\M)$.

The $\Theta$ invariance of $\bigcup \ \{ \Phi_a \}$ and (\ref{tow38}) follow from (\ref{tow35}) and
 (\ref{tow35a}). 
 
 Notice that, in general, for a label $\NN$ and $\Psi$ a $\Theta$ invariant subset of $\Theta'(\NN)$
 we have $\NN \in z^*_{\NN}(\Psi)$ if and only iff $\Psi = \Theta'(\NN)$.  In particular,
 $\M_a \in z^*_{\M_a}(\Theta'(\M_a))$ but $\M_a \not\in \Theta(\M)$.  The proper subset condition
 is needed because $\M \in z^*_{\M}(\Theta'(\M))$ while $\M \not\in  \bigcup \ z^*_{\M_a}(\Theta'(\M_a))$.

 $\Box$ \vspace{.5cm}

\begin{df}\label{df,heightL}
For $\M$ a nonzero
WAP label, the
{\em height} of $\Theta(\M)$ is $\a + 1$ where $\a$ is the ordinal with
$z_{LAB,\a}(\Theta(\M)) = [[ 0 ]]$. The
{\em height$^*$} is $\a + 1$ where $\a$ is the ordinal where
$z^*_{\M,\a}(\{ \emptyset \}) = \Theta'(\M)$. Notice that
$z^*_{\M,1 + \a}(\{ \emptyset \}) = z^*_{\M,\a}([[ 0 ]])$ and
so if $\a \geq \o$ 
then $1 + \a = \a$, 
hence 
$z^*_{\M, \a}(\{ \emptyset \}) = z^*_{\M,1 + \a}(\{ \emptyset \}) = z^*_{\M,\a}([[ 0 ]])$.

\end{df}

\vspace{.5cm}

\begin{theo}\label{towtheo28} For any countable limit ordinal $\a$ there exists a  label $\M$ which is both simple and finitary
with height = height$^* \  = \a + 1$. For any countable ordinal $\a$ there exists a  label $\M$ which is both simple and finitary
with height $ = \a + 1$.
Hence, $(X(\M),S)$ and $(X_+(\M),S)$ are  topologically transitive WAP subshifts with height
$ = \a + 1$
and, if $\a$ is a limit ordinal, with height$^* \  = \a + 1$. \end{theo}

\proof First, let $\NN_n = \{ k \chi(\ell_1) : 0 \leq k \leq n \}$. It is easy to see
that $\Theta(\NN_n) = \{ \NN_k : k \leq n \} \cup \{ \emptyset \}$ has height and height$^*$ equal to $n + 1$. These are finite
labels and so are both simple and finitary.

Now suppose that $\a$ is a countable limit ordinal, the limit of an increasing sequence $\b^i$. By inductive hypothesis, we can
choose for each $i$ a finitary and simple label
$\M^i$ so that $\Theta(M_i)$ has height $\b^i + 1$, and height$^*$ $\b^i + 1$.  By using a Gamow transformation we can assume that
$\{ \M^i \}$ is a sequence with disjoint supports. By Proposition \ref{towprop26} $\M = \bigcup \{ \M^i \}$ is finitary and simple
and
by (\ref{tow37}) $\z_{LAB,\a}(\Theta(\M)) = \{ [[ 0 ]] \}$ and so $\Theta(\M)$ has height $\a + 1$.  By (\ref{tow38})
it follows that $\z^*_{\M, \a}(\{ \emptyset \}) = \z^*_{\M, \a}([[ 0 ]]) = \Theta'(\M)$ and so $\Theta(\M)$ has
height$^*$ equal to $\a + 1$.

Now for a countable limit ordinal $\a$ assume that $\M$ is a finitary and simple label with height and height$^*$ equal to $\a + 1$.
By using a Gamow transformation, we can assume that $\ell_1$ is not in the support of $\M$.
For $n \geq 1, \ \ \NN_n$ is
a positive, finite label disjoint from $\M$. By Proposition \ref{towprop26} $\NN_n \oplus \M$ is finitary and simple
and by (\ref{tow41})
$z_{LAB,\a}(\Theta(\NN_n \oplus \M)) = \Theta(\NN_n \oplus 0)$. Hence, $\Theta(\NN_n \oplus \M)$ has height $\a + n + 1$.

The results for $(X(\M),S)$ follow from Theorem \ref{towtheo24} and Theorem \ref{towtheo25}.

$\Box$ \vspace{1cm}

\section{Scrambled sets}\label{sec,scrambled}

\vspace{.5cm}

Following Li and Yorke \cite{LY} a subset $S \subset X$ is called \emph{scrambled} for a dynamical system
 $(X,T)$ when every pair of distinct points
 of $S$ is proximal but not asymptotic.

 Recall that
 the adherence semigroup
 $A(X,T)$ is the ideal of the enveloping semigroup $E(X,T)$ consisting of the limit points of $\{ T^n \}$ as
 $|n| \to \infty$.  Let $A_+(X,T)$ be the set of limit points of $\{ T^n \}$ as
 $n \to \infty$, that is, we move only in the positive direction. Thus, $\omega T(x) = A_+(X,T)x$ for every $x \in X$.

 \begin{df}\label{scramdef01} For a metric dynamical system $(X,T)$ let $(x, y)$ be  a pair in $X \times X$.

 (a) We call the
 pair $(x,y)$ \emph{proximal} when it satisfies the following equivalent conditions:
 \begin{itemize}

 \item[(i)]
$\liminf_{n > 0} \ d(T^n(x),T^n(y)) \ = \ 0$.

 \item[(ii)] There exists a sequence $n_i \to \infty$ such that $\lim  \ d(T^{n_i}(x),T^{n_i}(y)) \ = \ 0$.

 \item[(iii)] There exists $p \in A_+(X,T)$ such that $p x \ = \ p y$.

 \item[(iv)] There exists $u$ a minimal idempotent in $A_+(X,T)$ such that $u x \ = \ u y$.
 \end{itemize}

We denote by $PROX(X,T)$ (or just $PROX$ when the system is clear) the set of all proximal pairs.

(b)  We call the
 pair $(x,y)$ \emph{asymptotic} when it satisfies the following equivalent conditions:
 \begin{itemize}

 \item[(i)]   $\limsup_{n > 0}  \ d(T^n(x),T^n(y)) \ = \ 0$.

 \item[(ii)] $\lim_{n > 0}  \ d(T^n(x),T^n(y)) \ = \ 0$.

 \item[(iii)] For all $p \in A_+(X,T) \ \ p x \ = \ p y$ .
 \end{itemize}

We denote by
$ASY\!MP(X,T)$ (or just
$ASY\!MP$ when the system is clear) the set of all asymptotic pairs.

(c)  We call the
 pair $(x,y)$ a \emph{Li-Yorke pair} when it is proximal but not asymptotic.

(d) The system $(X,T)$ is called \emph{proximal}  when all pairs are proximal, i.e. $PROX = X \times X$.
It is called \emph{completely scrambled}
when all non diagonal pairs are Li-Yorke. That is, the system is proximal, but
$ASY\!MP = \Delta_X$.
\end{df}
\vspace{.5cm}

Observe that the set $\{ p \in A_+(X,T) : p x = p y \}$ is a closed left ideal if it is nonempty and so it then contains minimal
idempotents.  This shows that $(iii) \Leftrightarrow (iv)$ in (a).  The remaining equivalences are obvious.

\begin{remark}\label{rmk,prox}
This notion of proximality actually refers to the action of the semigroup
$\{T^n : n \in \Z_+\}$. The usual definition of proximality would be:
$x$ and $y$ are proximal if there exists a sequence $n_i \in \Z$ with
$|n_i| \ \to \infty$ such that $\ lim \ d(T^{n_i}(x),T^{n_i}(y)) \ = \ 0$.
\end{remark}

\begin{lem}\label{scramlem02} For $x \in X$ and $n \in \N$ the pair $(x,T^n(x))$ is proximal iff
 $T^n(y) = y$ for some  $y \in \omega T(x) $. The pair $(x,T^n(x))$ is asymptotic iff $T^n(y) = y$
for every $y \in \omega T(x)$. \end{lem}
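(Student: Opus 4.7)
The plan is to reduce both statements to the criteria (iii) in Definition \ref{scramdef01}(a),(b), using two straightforward observations: the identity $p \circ T^n = T^n \circ p$ for every $p \in E(X,T)$ (which holds because $T$ generates the action and $\{T^k\}$ commutes with its pointwise limits), and the identification $\omega T(x) = \{p(x) : p \in A_+(X,T)\}$, which follows directly from the definition of $\omega T$ together with the fact that $A_+(X,T)$ consists of the pointwise limits of $T^{n_i}$ with $n_i \to \infty$.

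For the proximal equivalence I would argue as follows. If $(x, T^n(x))$ is proximal, choose $p \in A_+(X,T)$ with $p(x) = p(T^n(x))$; then $y := p(x) \in \omega T(x)$ and $T^n(y) = T^n p(x) = p(T^n(x)) = p(x) = y$, so $y$ is fixed by $T^n$. Conversely, if some $y \in \omega T(x)$ satisfies $T^n(y) = y$, pick $p \in A_+(X,T)$ with $p(x) = y$; then $p(T^n(x)) = T^n p(x) = T^n(y) = y = p(x)$, witnessing proximality via criterion (iii).

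For the asymptotic equivalence I would use precisely the same commutation identity, but now quantified universally. The pair $(x, T^n(x))$ is asymptotic iff $p(x) = p(T^n(x)) = T^n p(x)$ for every $p \in A_+(X,T)$, which says exactly that every point of $\omega T(x) = A_+(X,T)\cdot x$ is a fixed point of $T^n$. No minimal-idempotent machinery is needed here, since condition (iii) of Definition \ref{scramdef01}(b) already ranges over all of $A_+(X,T)$.

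I do not anticipate a real obstacle: the whole argument is a two-line unpacking of the enveloping-semigroup characterizations of proximality and asymptoticity, plus the trivial commutation $p T^n = T^n p$. The only point worth stating carefully is the identification $\omega T(x) = A_+(X,T) x$, so that the existential ``some $y \in \omega T(x)$'' and the universal ``every $y \in \omega T(x)$'' match cleanly with the existential and universal forms of criteria (iii) in Definition \ref{scramdef01}.
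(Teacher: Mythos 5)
Your proof is correct and follows essentially the same route as the paper's: the commutation $p\,T^n = T^n p$ for $p \in A_+(X,T)$, the identification $\omega T(x) = A_+(X,T)\,x$ (stated in the paper just before Definition \ref{scramdef01}), and the existential versus universal forms of criterion (iii) for proximality and asymptoticity. The paper's proof is just a more compressed version of exactly this two-line unpacking, so there is nothing to add.
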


\proof $p T^n(x) = T^n( px)$ and so $p x = p T^n(x)$ iff $p x = T^n(p x)$. The pair is proximal (or asymptotic)
iff  $p x = T^n(p x)$ for some $p \in A_+(X,T)$ (resp. for all $p \in A_+(X,T)$).

$\Box$ \vspace{.5cm}

We recall the following, see, e.g. \cite[Proposition 2.2]{AK}.

\begin{prop}\label{scramprop03} A  metric dynamical system $(X,T)$ is proximal iff there exists a fixed point
$e \in X$ which is the unique minimal subset of $X$,
i.e. $(X,T)$ is a minCT system. Consequently, $(X,T^{-1})$ is
proximal if $(X,T)$ is.
\end{prop}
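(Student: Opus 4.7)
My plan is to use the minimal-idempotent characterization (iv) of proximality in both directions. For $(\Leftarrow)$, assume the unique minimal set of $X$ is a fixed point $\{e\}$. The adherence semigroup $A_+(X,T)$ is a compact right-topological semigroup, so Lemma \ref{idemp} applied to a minimal left ideal within it yields a minimal idempotent $u \in A_+(X,T)$. For any $x \in X$ one has $u(ux) = u^2 x = ux$, so $ux$ is fixed by a minimal idempotent, hence almost periodic; its orbit closure is therefore a minimal subset of $X$ and by hypothesis equals $\{e\}$, so $ux = e$. Given $x, y \in X$, $ux = e = uy$, and characterization (iv) of Definition \ref{scramdef01}(a) shows $(x,y)$ proximal.

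For $(\Rightarrow)$, suppose $(X,T)$ is proximal. First, $X$ has a unique minimal subset: given minimal $M_1, M_2$ with $x_j \in M_j$, proximality of $(x_1, x_2)$ produces $n_i \to \infty$ with $T^{n_i}x_1, T^{n_i}x_2 \to z$ for some $z \in X$, and closed $T$-invariance of each $M_j$ forces $z \in M_1 \cap M_2$, so $M_1 = M_2 =: M$. Next, I test proximality on the structured pair $(x, Tx)$ for any $x \in M$: this gives $n_i \to \infty$ with $d(T^{n_i}x, T^{n_i+1}x) \to 0$, and after passing to a subsequence $T^{n_i}x \to z \in M$; continuity of $T$ yields $T^{n_i+1}x = T(T^{n_i}x) \to Tz$, whence $Tz = z$. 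Then $\{z\}$ is a closed $T$-invariant subset of the minimal set $M$, forcing $M = \{z\}$. Writing $e := z$, $(X,T)$ is minCT.

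The final consequence is immediate: closed $T$-invariant subsets of $X$ coincide with closed $T^{-1}$-invariant subsets, so $(X,T)$ and $(X,T^{-1})$ share the same minimal sets, and $Te = e \Longleftrightarrow T^{-1}e = e$. Hence $(X, T^{-1})$ is also minCT, and the $(\Leftarrow)$ direction already proved yields proximality of $(X, T^{-1})$.

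The main obstacle is showing $|M| = 1$ in the unique minimal proximal subsystem, because a generic proximal pair produces only an unspecified idempotent witness whose interaction with the minimal structure is hard to pin down. The neat workaround above is to test proximality on the special pair $(x, Tx)$, whose proximality forces a subsequential limit $z = \lim T^{n_i}x$ to be a $T$-fixed point, after which minimality of $M$ collapses $M$ to $\{z\}$ in one line.
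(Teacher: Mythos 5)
Your proof is correct and takes essentially the same route as the paper's: the converse direction rests on a minimal idempotent $u \in A_+(X,T)$ sending every point to the unique minimal fixed point, and the forward direction extracts a fixed point by testing proximality on the pair $(x,T(x))$. The only cosmetic differences are that you phrase the fixed-point extraction sequentially (a convergent subsequence of $T^{n_i}x$ plus continuity of $T$) and prove uniqueness of the minimal set separately first, whereas the paper works directly with a semigroup element $p \in A_+(X,T)$ satisfying $px = pT(x) = T(px)$ and deduces uniqueness from the observation that two distinct fixed points cannot be proximal; the two devices are interchangeable.
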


\proof If $u$ is a minimal idempotent then $ux$ is a minimal point for every $x \in X$.  So if $e$ is the
unique minimal point of $X$, then $ux = e$ for every $x \in X$ and every minimal idempotent $u $. Hence, every pair is
proximal.

Assume now that $(X,T)$ is proximal.  For any $x \in X$, the pair
$(x,T(x))$ is proximal and so there exists  $p  \in A_+(X,T)$
such that $px = p T(x) = T(px)$ and so $e = px$ is a fixed point. A pair of
distinct fixed points is not proximal and so $e$ is the unique fixed point.  Hence, $e$ is in the orbit closure of
every point and so $\{ e \}$ is the only minimal set.

Since the minimal subsets for $T$ and $T^{-1}$ are the same, it follows that $(X,T^{-1})$ is proximal.

$\Box$ \vspace{.5cm}

Thus, we obtain the following obvious corollary. Compare Proposition \ref{miscprop1}

\begin{cor}\label{scramcor03a} A  metric dynamical system $(X,T)$ is completely scrambled iff
it is a minCT system and $A_+(X,T)$ distinguishes points of $X$. \end{cor}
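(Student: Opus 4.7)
The plan is to simply unpack the definitions and combine them with Proposition \ref{scramprop03}, so this corollary is really a bookkeeping consequence of what has already been established.

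First, I would recall that ``completely scrambled'' means two conditions simultaneously: the system is proximal (every pair is proximal), and $ASY\!MP(X,T) = \Delta_X$ (the only asymptotic pairs are the trivial ones on the diagonal). By Proposition \ref{scramprop03}, the proximality clause is equivalent to $(X,T)$ being a minCT system, so it remains only to reinterpret the condition $ASY\!MP(X,T) = \Delta_X$ in terms of the adherence semigroup $A_+(X,T)$.

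For this, I would invoke the equivalence (b)(iii) in Definition \ref{scramdef01}: a pair $(x,y)$ is asymptotic precisely when $px = py$ for every $p \in A_+(X,T)$. Rewritten contrapositively, $ASY\!MP(X,T) \subset \Delta_X$ is exactly the statement that whenever $px = py$ holds for all $p \in A_+(X,T)$, one must have $x = y$, which is the defining property of ``$A_+(X,T)$ distinguishes points of $X$.'' The reverse inclusion $\Delta_X \subset ASY\!MP(X,T)$ is automatic.

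Combining the two equivalences yields the corollary. Since every step is a direct consequence of a definition or of Proposition \ref{scramprop03}, there is no genuine obstacle; the only care needed is to make sure the ``proximal'' and ``asymptotic'' characterizations are the one-sided versions using $A_+(X,T)$ (see Remark \ref{rmk,prox}), consistent with the convention already established in Definition \ref{scramdef01}, so that the notion of ``distinguishes points'' for $A_+(X,T)$ aligns precisely with the negation of the existence of a nondiagonal asymptotic pair.
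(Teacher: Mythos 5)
Your proposal is correct and matches the paper's intent exactly: the paper states this as an ``obvious corollary'' of Proposition \ref{scramprop03} together with Definition \ref{scramdef01}, and your argument is precisely that definitional unpacking, with the asymptotic clause rewritten via \ref{scramdef01}(b)(iii) as the statement that $A_+(X,T)$ distinguishes points. Your closing caution about using the one-sided $A_+(X,T)$ conventions is also the right one, and nothing further is needed.
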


$\Box$ \vspace{.5cm}

Completely scrambled systems were introduced by Huang and Ye \cite{HY} who provided a rich supply of examples, but all appear to
be of height the first countable ordinal.

In contrast with proximal systems there exist  completely scrambled systems $(X,T)$ whose inverse $(X,T^{-1})$ is not
completely scrambled.

\begin{ex}\label{scramex01} Begin with $(Y,F)$ a completely scrambled system with fixed point $e$. Let
$(X,T)$ be the quotient space of the product system $(Y \times \{ 0, 1 \}, F \times id_{\{ 0,1 \}})$ obtained
by identifying $(e,0)$ with $(e,1)$ to obtain the fixed point denoted $e$ in $X$. Let $X_0$ and $X_1$ be the
images of $Y \times \{ 0 \}$ and $Y \times \{ 1 \}$ in $X$. Since $(X,T)$ has a unique fixed point $e$ we can
construct a sequence $\{ x^n : n \in \Z_+ \}$ so that
\begin{itemize}
\item $ x^0 \ = \ e$.

\item $\{ d(x^n,T(x^n)) \} \to 0$ as $n \to \infty$.

\item For every $N \in\N$ the set $\{ x^i : i \geq N \}$ is dense in $X$.
\end{itemize}

 Now for $n \in \Z$ let $z^n \in X \times [0,1]$ be defined by
 $$  z^n \ = \ \begin{cases} (x^n, 1/(n+1)) \qquad \mbox{for} \ n \geq 0, \\
 (e, 1/(-n + 1)) \qquad \mbox{for} \ n < 0. \end{cases} $$

 Let
 $$\hat X \ = \ X \times \{ 0 \} \cup \{ \ z^n \ : \ n \in \Z \ \}.$$
 Let $\hat T(x,0) = (T(x),0)$ and $\hat T(z^n) = z^{n+1}$.

 The system $(\hat X, \hat T)$ is topologically transitive with  fixed point
 $(e,0)$ the unique minimal set. Hence, the system is proximal.  Since every orbit
 in $X$ is confined to either $X_0$ or $X_1$ it follows that no point $z^n$ is asymptotic
 to a point in $X \times \{ 0 \}$. By Lemma \ref{scramlem02} no two distinct points on the $z^n$ orbit are
 asymptotic.  Hence, $(\hat X, \hat T)$ is completely scrambled.  However, the inverse
 $(\hat X, \hat T^{-1})$ is not since $\{ z^n \} \to (e,0)$ as $n \to - \infty$.
 \end{ex}

 $\Box$ \vspace{.5cm}

By a result of Schwartzman (see Gottschalk and Hedlund \cite[Theorem
10.36]{GH}) a nontrivial, expansive system admits non-diagonal asymptotic
pairs.
%
%
%
%
It follows that no nontrivial subshift can be completely scrambled. However, we
note that the subshifts which arise from labels of finite type are pretty close.

 \begin{prop}\label{scramprop04} If $\M_1, \M_2$ are two
 different
 labels then the following are equivalent.
 \begin{itemize}
 \item[(i)] For some $n_1, n_2 \in \Z$ the pair $(S^{n_1}x[\M_1], S^{n_2}x[\M_2]) $ is  asymptotic  for $S$ or $S^{-1}$.

 \item[(i+)] For some $n_1, n_2 \in \Z$ the pair $(S^{n_1}x_+[\M_1], S^{n_2}x_+[\M_2]) $ is  asymptotic  for $S$ or $S^{-1}$.

 \item[(ii)] For all $n_1, n_2 \in \Z$ the pairs $(S^{n_1}x[\M_1], S^{n_2}x[\M_2]) $ are  asymptotic for both $S$ and $S^{-1}$.

 \item[(ii+)] For all $n_1, n_2 \in \Z$ the pairs $(S^{n_1}x_+[\M_1], S^{n_2}x_+[\M_2]) $ are  asymptotic for both $S$ and $S^{-1}$.

 \item[(iii)] $\{ \M_1, \M_2 \} \ = \ \{ \emptyset, 0 \}$.
 \end{itemize}
 \end{prop}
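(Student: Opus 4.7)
The implication $(iii) \Rightarrow (ii), (ii+)$ is elementary: when $\{\M_1,\M_2\}=\{\emptyset,0\}$ the pair consists of $e = x[\emptyset]$ (the fixed point) and a shift of $x[0]$, whose support is a single point. Any shift of $x[0]$ differs from $e$ in exactly one position, so every such pair is asymptotic under both $S$ and $S^{-1}$. The same argument works verbatim for $x_+$. The implications $(ii)\Rightarrow(i)$ and $(ii+)\Rightarrow(i+)$ are immediate.

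For $(i)\Rightarrow(iii)$: the reflection $t\mapsto -t$ fixes each $x[\M]$ (by symmetry of $A[\M]$) and conjugates $S$ with $S^{-1}$, so after adjusting $n_1,n_2$ I may assume positive asymptoticity under $S$. Setting $c = n_1-n_2$, this says $A[\M_1]$ and $A[\M_2]+c$ agree on some half-line $[M,\infty)$. Suppose first that both $\M_i\notin\{\emptyset,0\}$, so each contains a nonzero vector. Given any nonzero $\mm\in\M_2$, Lemma \ref{towlem08}(a) produces $t\in A[\M_2]$ with $\rr(t)=\mm$ and $|j_r(t)|$ arbitrarily large; then $|c|<\frac{b-2}{b}k(|j_r(t)|)$, so Proposition \ref{towprop04} applied to $t+c\in A[\M_1]\subset IP(k)$ forces $t+c$ to be an extension of $t$. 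Consequently $c\in IP(k)$, all terms of its expansion satisfy $|\iota|<|j_r(t)|$, and $\rr(t+c)=\rr(t)+\rr(c)=\mm+\rr(c)$; hence $\mm+\rr(c)\in\M_1$. By heredity this gives $\M_2+\rr(c)\subset\M_1$ and therefore $\M_2\subset\M_1$. The symmetric argument beginning with large $s\in A[\M_1]$ (so $s-c\in A[\M_2]$ and $\rr(s-c)=\rr(s)+\rr(c)$) yields $\M_1\subset\M_2$, so $\M_1=\M_2$, contradicting distinctness. It remains to handle the case that one $\M_i\in\{\emptyset,0\}$, say $\M_2$. Then $\supp x[\M_2]\subset\{0\}$, so positive asymptoticity forces $A[\M_1]\cap[M',\infty)$ to have at most one element; by symmetry of $A[\M_1]$ the whole set $A[\M_1]$ is finite, and Lemma \ref{towlem08}(a) then rules out any nonzero $\mm\in\M_1$, giving $\M_1\in\{\emptyset,0\}$. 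Distinctness completes the case.

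The implication $(i+)\Rightarrow(iii)$ follows a parallel script. Negative asymptoticity is automatic for $x_+$ (both sequences vanish on the left), so only positive asymptoticity carries content. The extension step now requires the additional terms of the expansion to be positive, so it forces $c\in IP_+(k)\cup\{0\}$; applying the same reasoning with the roles of $\M_1,\M_2$ swapped gives $-c\in IP_+(k)\cup\{0\}$, so $c=0$ and hence $\rr(c)=\00$. The two inclusions then collapse to $\M_1=\M_2$, contradicting distinctness unless at least one $\M_i\in\{\emptyset,0\}$; in that degenerate case $A_+[\M_i]\subset\{0\}$ and positive asymptoticity directly forces $A_+[\M_{3-i}]$ to be finite, hence $\M_{3-i}\in\{\emptyset,0\}$ by Lemma \ref{towlem08}(a). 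The principal technical hurdle throughout is the expansion arithmetic in Proposition \ref{towprop04}: verifying that for sufficiently large $t\in A[\M_2]$ the translate $t+c$ not only lies in $IP(k)$ but has length vector $\rr(t)+\rr(c)$, so that membership in $\M_1$ can be read off in terms of the label algebra.
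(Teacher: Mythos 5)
Your proof is correct, but it reaches the substantive implication by a genuinely different route than the paper. The paper argues by contraposition: assuming $\M_1 \neq \M_2$ and $\{\M_1,\M_2\} \neq \{\emptyset, 0\}$, it picks (after renumbering) $\rr > \00$ with $\rr \in \M_1 \setminus \M_2$, chooses positive expanding times $t^i$ with $\rr(t^i) = \rr$ and $j_r(t^i) \to \infty$, and invokes the limit machinery of Theorem \ref{towtheo13} (i.e.\ Lemma \ref{towlem11}): then $S^{t^i}(S^{n_1}x[\M_1]) \to S^{n_1}x[\M_1 - \rr] \neq e$ while $S^{t^i}(S^{n_2}x[\M_2]) \to e$, so no pair of shifts is asymptotic. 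You instead run the forward direction: asymptoticity yields agreement of $A[\M_1]$ with $A[\M_2] + c$ on a half-line, and the extension/truncation arithmetic of Proposition \ref{towprop04}, applied to large $t \in A[\M_2]$ supplied by Lemma \ref{towlem08}(a), converts this into the label inclusions $\M_2 \oplus \langle \rr(c) \rangle \subset \M_1$ and (swapping roles) $\M_1 \oplus \langle \rr(c) \rangle \subset \M_2$ --- your use of $\rr(c)$ for both is legitimate because the length vector only sees $|j_i|$, so $\rr(-c) = \rr(c)$ --- whence $\M_1 = \M_2$, and your degenerate case is handled correctly via the symmetry (resp.\ one-sidedness) of the supports. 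Both arguments are sound; the paper's contrapositive is shorter given the Section 5 machinery, while yours is self-contained at the level of expansions and extracts strictly more information (the exact relation between the labels forced by asymptoticity, and $c = 0$ in the $x_+$ case).

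One further point in your favor: your observation that negative asymptoticity is automatic for the $x_+$ points (since $S^{-n}x_+[\NN] \to e$ for every label $\NN$) is correct, and it exposes a slip in the paper itself. Read literally, condition (i+) with the clause ``asymptotic for $S$ or $S^{-1}$'' holds for \emph{every} pair of labels, so (i+) $\Rightarrow$ (iii) cannot hold as stated; correspondingly, the paper's closing assertion that the $x_+$ pairs are ``not asymptotic for $S$ or for $S^{-1}$'' is false for $S^{-1}$ (the paper's sequence $t^i \to +\infty$ only rules out $S$-asymptoticity, and for the symmetric points $x[\M]$ the $S^{-1}$ case follows by the reflection argument you give, but no such argument exists for $x_+$). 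With (i+) read as $S$-asymptoticity --- the only reading under which the proposition is true --- your proof is complete, and arguably more careful than the paper's on this point.
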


 \proof Since $R_S( x[0] ) = e = x[\emptyset]$, it is clear that (iii) implies (ii), (ii+). That (ii) implies (i)
 and (ii+) implies (i+) are  obvious.

 Now assume that $\{ \M_1, \M_2 \} \ \not= \ \{ \emptyset, 0 \}$. By renumbering we can assume that there exists
 $\rr > \00 $ such that $\rr \in \M_1 \setminus \M_2$. Let $\{ t^i \in IP_+(k)\}$ be a sequence  with length
 vector $\rr$ and such that $\{ j_r(t^i) \to \infty$. Then by Theorem
 \ref{towtheo13a}
  $Lim \, S^{t^i}(x[\M_1]) \ = \ x[\M_1 - \rr]$ and $Lim \, S^{t^i}(x_+[\M_1]) \ = \ x_+[\M_1 - \rr]$. Neither limit point
 is  the fixed point $e = x[\emptyset]$ since $\rr \in \M_1$. Hence, for any $n_1 \in \Z$,
 $Lim \, S^{t^i}(S^{n_1}(x[\M_1])) \ = \ S^{n_1}(x[\M_1 - \rr]) \ \not= \ e$.  On the other hand, since, $\rr \not\in \M_2$,
 $$Lim \, S^{t^i}(S^{n_2}(x[\M_2])) \ = \ S^{n_2}(x[\M_2 - \rr]) \ = \ e$$ and
 $$Lim \, S^{t^i}(S^{n_2}(x_+[\M_2])) \ = \ S^{n_2}(x_+[\M_2 - \rr]) \ = \ e.$$
 Thus, the pairs $(S^{n_1}x[\M_1], S^{n_2}x[\M_2]) $
 $(S^{n_1}x_+[\M_1], S^{n_2}x_+[\M_2]) $ are not asymptotic
 for $S$ or for $S^{-1}$. This prove the contrapositive of $(i), (i+) \Rightarrow (iii)$.

 $\Box$ \vspace{.5cm}

 \begin{cor}\label{scramcor05} For  any positive label $\M$ the set
 $\{ \ S^n(x[\NN]) \ : \ 0 \not= \NN   \in \Theta(\M), n \in \Z \ \}$
 is a scrambled subset for $(X(\M),S)$ and for $(X(\M),S^{-1})$.
  The set
 $\{ \ S^n(x_+[\NN]) \ : \ 0 \not= \NN  \in \Theta(\M), \ n \in \Z \ \}$
 is a scrambled subset for $(X_+(\M),S)$ and for $(X_+(\M),S^{-1})$.
 If
 $\M$ is a label of finite type then these sets are the complement of
 the orbit of $x[0]$ in $X(\M)$ and $X_+(\M)$, respectively. \end{cor}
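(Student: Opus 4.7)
The plan is to verify the two defining properties of a scrambled set --- proximality and non-asymptoticity --- for every pair of distinct points in the specified set. By Corollary \ref{towcorBanach03} the systems $(X(\M),S)$ and $(X_+(\M),S)$ are minCT with fixed point $e$ as the unique minimal subset, and since the collection of minimal sets coincides for $T$ and $T^{-1}$, Proposition \ref{scramprop03} yields proximality for both $S$ and $S^{-1}$. Consequently every pair of distinct points is automatically proximal, and I only need to rule out asymptoticity.

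For a pair $(S^{n_1}x[\NN_1], S^{n_2}x[\NN_2])$ carrying distinct labels $\NN_1\ne\NN_2$ in $\Theta(\M)\setminus\{0\}$, I apply Proposition \ref{scramprop04} directly: asymptoticity (for $S$ or $S^{-1}$) would force $\{\NN_1,\NN_2\}=\{\emptyset,0\}$, which is excluded since neither label equals $0$. The same invocation of Proposition \ref{scramprop04} handles the analogous $x_+$ pair.

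The main obstacle is the single-orbit case, i.e.\ pairs $(S^{n_1}x[\NN], S^{n_2}x[\NN])$ with $n_1\ne n_2$. If $\NN=\emptyset$ then both points equal $e$, so one may assume $\NN$ is a positive label. The plan is to exhibit a non-$e$ point in $\omega_S(x[\NN])$: choose $\mm\in\NN$ with $\mm>\00$, use Lemma \ref{towlem08}(a) to select positive expanding times $t_i$ with $\rr(t_i)=\mm$ and $|j_{r_i}(t_i)|\to\infty$, and apply Theorem \ref{towtheo13}(i)(a) to conclude that $S^{t_i}x[\NN]\to x[\NN-\mm]$, where the limit is not $e$ because $\00\in\NN-\mm$. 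Were the original pair asymptotic for $S$, Lemma \ref{scramlem02} would force $S^{n_2-n_1}y=y$ for every $y\in\omega_S(x[\NN])$, making $x[\NN-\mm]$ periodic; but a periodic orbit is a finite closed invariant set and so must contain a minimal set, which by Corollary \ref{towcorBanach03} is $\{e\}$ --- contradicting $x[\NN-\mm]\ne e$. The symmetry $x[\NN]_{-t}=x[\NN]_t$ gives the analogous conclusion for $S^{-1}$, and the parallel statements of Lemma \ref{towlem08}(a) and Theorem \ref{towtheo13} for $x_+[\cdot]$ handle the $x_+$ version in the same way.

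Finally, for the last sentence, if $\M$ is of finite type then Corollary \ref{towcor15}(a) decomposes $X(\M)$ as the union of the orbits $\{S^k x[\NN]:k\in\Z\}$ for $\NN\in\Theta(\M)$, with each point of $X(\M)$ on a unique such orbit; removing the single case $\NN=0$ therefore strips off exactly the orbit $O_S(x[0])$, identifying the scrambled set with $X(\M)\setminus O_S(x[0])$. The identical bookkeeping gives the scrambled set in $X_+(\M)$ as the complement of $O_S(x_+[0])$.
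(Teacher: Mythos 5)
Your proof is correct on every point where the statement is actually provable, and it is noticeably more careful than the paper's own argument, which disposes of scrambledness with the single sentence ``clear from Proposition \ref{scramprop04}.'' That proposition is stated only for pairs of \emph{distinct} labels, so it literally says nothing about pairs $(S^{n_1}x[\NN], S^{n_2}x[\NN])$, $n_1 \neq n_2$, lying on a single orbit; you correctly isolate this as the main obstacle and close it by producing $x[\NN - \mm] \neq e$ in $\omega_S(x[\NN])$ via Lemma \ref{towlem08}(a) and Theorem \ref{towtheo13}(i)(a), and then applying Lemma \ref{scramlem02} together with the uniqueness of the minimal set (Corollary \ref{towcorBanach03}). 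The symmetry $x[\NN]_{-t} = x[\NN]_t$, equivalently the availability of \emph{negative} expanding times with the prescribed length vector, correctly transfers this to $S^{-1}$. Your reduction of proximality to the minCT property through Proposition \ref{scramprop03} makes explicit what the paper leaves implicit, and your finite-type identification of the set via Corollary \ref{towcor15}(a) is exactly the paper's.

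The one genuine gap is your closing claim that the $x_+$ case is handled ``in the same way'' for $S^{-1}$. It is not: $IP_+(k) \subset \Z_+$, so there are no negative expanding times for $x_+[\cdot]$, and the symmetry you exploited for $x[\cdot]$ fails. In fact that portion of the statement is false: since $x_+[\NN]_t = 0$ for all $t < 0$, every point $S^n x_+[\NN]$ has support bounded below, hence $S^{-m}\bigl(S^n x_+[\NN]\bigr) \to e$ as $m \to \infty$. Consequently $\omega_{S^{-1}}(x_+[\NN]) = \{ e \}$ and \emph{every} pair of points in the displayed set is asymptotic for $S^{-1}$, so the set cannot be scrambled for $(X_+(\M), S^{-1})$ --- by Lemma \ref{scramlem02} even the same-orbit $x_+$ pairs are $S^{-1}$-asymptotic. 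This defect is not one you introduced: the proof of Proposition \ref{scramprop04} only produces times $t^i \to +\infty$, which rules out asymptoticity for $S$ (and, by symmetry, for $S^{-1}$ in the $x[\cdot]$ case) but proves nothing about $x_+$ and $S^{-1}$, so the $(X_+(\M), S^{-1})$ clause of the corollary, and the corresponding $S^{-1}$ claim in (i+)/(ii+) of that proposition, should be deleted or restricted to $(X_+(\M), S)$. Everything else in your proposal stands, and for the $x[\cdot]$ system in both time directions, and the $x_+[\cdot]$ system under $S$, your argument is complete where the paper's is elliptical.
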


 \proof  That the set is scrambled is clear from Proposition \ref{scramprop04}.
 In the finite type case, Corollary \ref{towcor15a} (a)
 implies that we are excluding only the orbit of $x[0]$ from the set.

 $\Box$ \vspace{.5cm}

 \begin{df}\label{scramdf06} An \emph{inverse sequence} in $\LAB $ is a sequence
 $\{ \M^i, \rr^i : i \in \Z_+ \}$ with $\rr^i > \00 $ in $\M^i$
 and such that $\M^i = \M^{i+1} - \rr^{i+1}$ for $i > 0$. For the associated inverse sequences
 $p_{\rr^{i+1}} : (X(\M^{i+1}),S) \to
 (X(\M^i),S)$ and $p_{\rr^{i+1}} : (X_+(\M^{i+1}),S) \to
 (X_+(\M^i),S)$  we let $(X( \{ \M^i, \rr^i \}),S)$  and $(X_+( \{ \M^i, \rr^i \}),S)$ denote the respective inverse limits.
 \end{df}
 \vspace{.5cm}

\begin{theo}\label{scramtheo07} Let $\{ \M^i, \rr^i \}$ be an inverse sequence in $\LAB $. The inverse limit
system
$(X( \{ \M^i, \rr^i \}),S)$ and $(X_+( \{ \M^i, \rr^i \}),S)$ are topologically transitive, compact metrizable systems.  If each
$\M^i$ is of finite type then the limit systems
and
their inverses
 are completely scrambled. If each $\M^i$ is either finitary or
simple then the limit systems are WAP. \end{theo}

\proof Topologically transitive and WAP systems are closed under inverse limits. For the latter, notice that
  the inverse limit is a subsystem of the product which is clearly WAP. In this case, each map $p_{\rr^{i+1}}$ is
surjective as required because it maps the transitive point $x[\M^{i+1}]$ of $X(\M^{i+1})$ onto the transitive point
$x[\M^i]$ of $X(\M^i)$ since $\M^i = \M^{i+1} - \rr^{i+1}$. In particular, the sequence
$x^* = \{ x[\M^i] \}$ is a transitive point for
the inverse limit.

The point $e$ associated with the sequence $\{ x[\emptyset ] \}$ is a fixed point in $X( \{ \M^i, \rr^i \})$. A minimal subset
of the limit space projects to a minimal subset of $X(\M^i)$ for each $i$.  If $\M^i$ is of finite type then this minimal
subset
is
$\{e \} \subset  X(\M^i)$. Thus, if all are of finite type, the fixed point is the only minimal subset of the limit and
so $(X( \{ \M^i, \rr^i \}), S)$ is proximal by Proposition \ref{scramprop03}.

Notice that if $x \in X(\M^i)$ is not equal to the fixed point $e$ then $x[0] \not\in p_{\rr^{i+1}}^{-1}(x)$.
If $x, y$ are distinct points of $X( \{ \M^i, \rr^i \})$ then for sufficiently large $i$ they project to distinct
points of  $X(\M^i)$ with neither projecting to $x[0]$  in $X(\M^i)$.  In the finite type case
it then follows from Corollary \ref{scramcor05}
that for sufficiently large $i$, $x$ and $y$ project to a non-asymptotic pair.  Consequently, the pair $(x,y)$ is not
asymptotic in  $X( \{ \M^i, \rr^i \})$.

$\Box$ \vspace{.5cm}

\begin{remark} Since a transitive point for $X( \{ \M^i, \rr^i \})$ projects to a transitive point on each
$X(\M^i)$
it follows that the transitive points for $X( \{ \M^i, \rr^i \})$ are all on the orbit of $x^*$ described above
and so $x^*$ is isolated when the labels $\M^i$ are of finite type. Similarly, for $X_+( \{ \M^i, \rr^i \})$.
\end{remark}
\vspace{.5cm}

For the construction of our examples, we need the following.  Recall that
\ref{towprop27again}
implies that
if $\M_1$ is a finite label and $\M_2$ is a label with supports disjoint from those of $\M_1$, then

$$\Theta'(\M_1 \oplus \M_2) \ = \ \Theta'(\M_1) \oplus \Theta(\M_2) \cup \Theta(\M_1) \oplus \Theta'(\M_2).$$

\begin{lem}\label{scramlem08} Let $\rr$ be a positive finite vector with support
disjoint from those in $Supp \ \M$ for some nonempty label $\M$.
Then
 $P_{\rr}( \Theta'(\langle \rr \rangle) \oplus \Theta(\M)) = \{ \emptyset \}$ and on
$\{ \langle \rr \rangle \} \oplus \Theta(\M) \ \ P_{\rr}$ is a bijection onto $\Theta(\M)$.

\end{lem}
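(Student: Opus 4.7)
The plan is to exploit the disjoint-support hypothesis to reduce every computation of $P_\rr$ on the direct-sum labels to a coordinate-wise calculation, separating what happens on $\mathrm{supp}\,\rr$ from what happens on $\bigcup\mathrm{Supp}\,\M$.

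First I would make $\Theta(\langle\rr\rangle)$ completely explicit. Since $\langle\rr\rangle$ is finite, Corollary \ref{labelcor09} shows that any convergent sequence $\{\langle\rr\rangle - \ss^i\}$ is eventually constant; thus
\[
\Theta(\langle\rr\rangle)\;=\;\{\langle\rr-\ss\rangle : \00\le\ss\le\rr\}\cup\{\emptyset\},\qquad
\Theta'(\langle\rr\rangle)\;=\;\Theta(\langle\rr\rangle)\setminus\{\langle\rr\rangle\},
\]
where I use that $\langle\rr\rangle - \ss = \langle\rr-\ss\rangle$ when $\ss\le\rr$ and $=\emptyset$ otherwise. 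Next I would record the fundamental disjoint-support decomposition: because $\mathrm{supp}\,\rr\cap\bigcup\mathrm{Supp}\,\M=\emptyset$ and $\NN\in\Theta(\M)\subset[[\M]]$, every element of $\langle\rr-\ss\rangle\oplus\NN$ splits uniquely as $\mm_1+\mm_2$ with $\mathrm{supp}\,\mm_1\subset\mathrm{supp}\,\rr$ and $\mathrm{supp}\,\mm_2\cap\mathrm{supp}\,\rr=\emptyset$.

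For the first claim, I would fix $\NN\in\Theta(\M)$ and $\ss$ with $\00<\ss\le\rr$ (together with the case $\langle\rr-\ss\rangle$ replaced by $\emptyset$, which is trivial because $\emptyset\oplus\NN=\emptyset$). Suppose $\mm\in(\langle\rr-\ss\rangle\oplus\NN)-\rr$; using the decomposition applied to $\mm+\rr$, the $\mathrm{supp}\,\rr$-component must be $(\mm+\rr)|_{\mathrm{supp}\,\rr}=\mm|_{\mathrm{supp}\,\rr}+\rr$, which has to lie in $\langle\rr-\ss\rangle$, i.e.\ be $\le\rr-\ss$. This forces $\rr\le\rr-\ss$, contradicting $\ss>\00$. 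Hence $P_\rr(\langle\rr-\ss\rangle\oplus\NN)=\emptyset$, which together with $P_\rr(\emptyset)=\emptyset$ gives $P_\rr(\Theta'(\langle\rr\rangle)\oplus\Theta(\M))=\{\emptyset\}$.

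For the second claim, the same decomposition applied to $P_\rr(\langle\rr\rangle\oplus\NN)$ shows that $\mm+\rr\in\langle\rr\rangle\oplus\NN$ iff $\mm|_{\mathrm{supp}\,\rr}+\rr\le\rr$ and $\mm|_{\N\setminus\mathrm{supp}\,\rr}\in\NN$, i.e.\ iff $\mm|_{\mathrm{supp}\,\rr}=\00$ and $\mm\in\NN$. Since every element of $\NN$ already has support disjoint from $\mathrm{supp}\,\rr$, this yields $P_\rr(\langle\rr\rangle\oplus\NN)=\NN$. Thus $P_\rr$ sends $\{\langle\rr\rangle\}\oplus\Theta(\M)$ onto $\Theta(\M)$, and injectivity is immediate because $\NN\mapsto\langle\rr\rangle\oplus\NN$ is the continuous inverse (the map $\oplus:\{\langle\rr\rangle\}\times\Theta(\M)\to\L\A\B$ is injective by disjointness of supports). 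There are no real obstacles here; the only thing to be careful about is bookkeeping the degenerate case $\ss\not\le\rr$ (which gives $\emptyset$) and the case $\NN=\emptyset$, both of which fold harmlessly into the argument.
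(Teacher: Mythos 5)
Your proof is correct and follows essentially the same route as the paper's: both rest on the unique disjoint-support decomposition of elements of the $\oplus$-labels, so that subtracting $\rr$ annihilates any label whose $\Theta(\langle \rr \rangle)$-component is a proper subset of $\langle \rr \rangle$ (equivalently, $\rr$ is not an element of such a label, which is how the paper phrases it, in effect invoking Proposition \ref{labelprop03}(f)), and strips off $\langle \rr \rangle$ otherwise, giving $P_{\rr}(\langle \rr \rangle \oplus \NN) = \NN$ and hence injectivity. The only difference is cosmetic: you make $\Theta(\langle \rr \rangle)$ explicit via Corollary \ref{labelcor09} and verify the two computations by hand, where the paper asserts them tersely, citing (\ref{tow39}).
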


\proof
Since $\rr$ is not an element of any label in $\Theta'(\langle \rr \rangle) \oplus \Theta(\M)$ it follows
that all of these labels are mapped to $\emptyset$.

By
\ref{towprop27again}
every label of $\Theta(\langle \rr \rangle  \oplus \M)$ is of the form $\NN_1 \oplus \NN_2$ with
$\NN_1 \in \Theta(\langle \rr \rangle)$ and $\NN_2 \in \Theta(\M)$. If $\NN_1 \not= \langle \rr \rangle$ then
$P_{\rr}$ maps $\NN_1 \oplus \NN_2$ to $\emptyset$.  If $\NN_1 = \langle \rr \rangle$ then $P_{\rr}$ maps
$\NN_1 \oplus \NN_2$ to $\NN_2$. Hence, for any $\NN_2 \in \Theta(\M)$ the unique label of the form
$\langle \rr \rangle \oplus \NN$ which is mapped to $\NN_2$ has $\NN = \NN_2$.

$\Box$ \vspace{.5cm}

\begin{ex}\label{ex11a} Let $\{ \rr^i \}$
be
a sequence of positive $\N$-vectors all with disjoint supports
and let $\M$ be a finitary label with the sets in
$Supp \ \M$ disjoint from the supports of the  sequence.

 Let $\NN^0 = \{ 0 \}$ and $\NN^{i+1} = \langle \rr^{i+1} \rangle \oplus
 \NN^i$ define an increasing sequence of
finite labels. Define $\{ \M^i = \NN^i \oplus \M, \rr^i \}$, an inverse sequence of finitary labels. For each $i$
Lemma \ref{scramlem08}
implies that the
preimage of $\emptyset$ by $P_{\rr^{i+1}} : \Theta(\M^{i+1}) \to \Theta(\M^i)$ is countable and the preimage of every other
point is a singleton.   It follows that the  limit system
$(X( \{ \M^i, \rr^i \}),S)$ and its inverse
$(X( \{ \M^i, \rr^i \}),S^{-1})$ are completely scrambled, topologically transitive, countable WAPs.

Notice that $X(\M)$ and $X_+(\M)$ are
factors of $X( \{ \M^i, \rr^i \})$ and $X_+( \{ \M^i, \rr^i \})$ respectively.  Hence,
if we choose $\M$ with height greater than some countable ordinal
$\a$ then $X( \{ \M^i, \rr^i \})$ and  $X_+( \{ \M^i, \rr^i \})$ have height greater than $\a$.
\end{ex}

$\Box$


\begin{cor}
For every countable ordinal $\alpha$
there exist a topologically transitive
 completely scrambled, countable WAP system of height greater than $\a$.
 \end{cor}

\vspace{.5cm}

Following Huang and Ye we can take countable products of copies of these examples to get completely scrambled WAP systems
on the Cantor set with arbitrarily large heights.
However, these examples will not be topologically transitive.
\vspace{1cm}


\appendix


\section{Directed sets and nets}\label{appendix-nets}
\vspace{.5cm}

We review the theory of nets, following \cite[Chapter 2]{K}.

A \emph{directed set} is a set $I$ equipped with a reflexive, transitive relation $\prec$
such that if $i_1, i_2 \in I$ then there exists $j \in I$ such that $i_1, i_2 \prec j$.

For $i \in I$ let $\prec_i = \{ j : i \prec j \}$. A set $F \subset I$ is  called \emph{terminal}
if $F \supset \prec_i $ for some $i \in I$.   $F$ is called \emph{cofinal} if $F \cap \prec_i \not= \emptyset $ for all
$i \in I$. In the family language of \cite{A-97} these are dual families of subsets of $I$. Because the
set $I$ is directed by $\prec$ it follows that the family of terminal sets is a filter.  That is, a finite intersection of
terminal sets is terminal. The cofinal sets satisfy the dual, \emph{Ramsey Property}: If a finite union of subsets
of $I$ is cofinal then at least one of them is cofinal.

For example, if $x$ is a point of a space $X$ then the set $\NN_x$ of neighborhoods of $x$ is
directed by $\supset$ and a subset of $\NN_x$ is cofinal iff it is a neighborhood base.
 The sets $\Z_+$ and $\N$ are directed by $\leq$ and a subset is terminal iff it is cofinite. A subset is
 cofinal iff it is infinite.

A \emph{net} in a set $Q$ is a function from a directed set $I$ to $Q$, denoted $\{ x^i : i \in I \}$. If $A \subset Q$ we
say that the net is \emph{eventually} (or \emph{frequently}) in $A$ if $\{ i : x^i \in A \}$ is terminal (resp. is cofinal).
If $x $ is a point of a space $X$ then a net in $X$ converges to $x$ (or has $x$ is a limit point) if for every $U \in \NN_x$
the net is eventually in $U$ (resp. is frequently in $U$). Thus, if a net in $A$ has $x$ as a limit point then
$x$ is in the closure of $A$. Conversely, if $x \in \ol{A}$ then we can use $I = \NN_x$ and choose $x^U \in A \cap U$. We thus
obtain a net in $A$ converging to $x$.

If $x$ does not have a countable neighborhood base there may be no sequence in $A$ which converges to $x$.  For example,
if $X = \b Q$ for some infinite set $Q$ then no infinite subset $Q_0$ of $Q$ has a unique limit point because, by
Lemma \ref{applem02} the inclusion of $Q_0$ into $Q$ extends to a homeomorphism of $\b Q_0$ onto the closure of $Q_0$.

A map $k : I' \to I$ between directed sets is a \emph{directed set morphism} if $k^{-1}(F)$ is terminal in $I'$ whenever
$F$ is terminal in $I$. If $i_1' \prec i_2' $ implies $k(i_1') \prec k(i_2') $ and the image, $k(I'),$ is cofinal in
$I$ then $k$ is a morphism.

A map $k : I' \to I$ is a morphism iff whenever $F $ is cofinal in $I'$, then $k(F)$ is cofinal in $I$. This follows because
$$k(F) \cap A \not= \emptyset \quad \Longleftrightarrow \quad F \cap k^{-1}(A) \not= \emptyset$$
and a set is  cofinal iff it meets every terminal set and vice-versa.

With this definition of morphism, the class of directed sets becomes a category.

If $i \mapsto x^i$ is a net, then the composite $i' \mapsto x^{k(i')}$ is the \emph{subnet}
induced by $k$. We will usually suppress the mention of $k$ and just write $\{ x^{i'} : i' \in I' \}$ for the subnet.
Their use is illustrated by the following result.

\begin{lem}\label{applem05} {\bfseries (The Smith Lemma)} Let $\A$ be a collection of nonempty subsets of
of a set $Q$ which is closed under finite intersection and let $\{ x^i : i \in I \}$ be a net in $Q$. If
 $  x^i \in A$ frequently for every $A \in \A$, then there is a subnet $\{ x^{i'} : i' \in I' \}$ such that
 $  x^{i'} \in A$ eventually for every $A \in \A$. Conversely, if such a subnet exists
 then $  x^i \in A$ frequently for every $A \in \A$.
 \end{lem}

 \proof By assumption, $\A$ is directed by $\supset$. On the product $\A \times I$ we use the product ordering
 $(A_1,i_i) \supset \times \prec (A_2,i_2)$ if $A_1 \supset A_2$ and $i_1 \prec i_2$.  Let
 $$I' = \{ (A,i) \in \A \times I : x^i \in A \}.$$
 From the assumptions it is easy to check that
 \begin{itemize}
 \item $I'$ is directed by $\supset \times \prec$.
 \item The coordinate projection $(A,i) \mapsto i$ is a morphism from $I'$ to $I$.
 \item For the induced subnet $\{ x^{i'} : i' \in I' \} \ $
 $  x^{i'} \in A$ eventually for every $A \in \A$.
\end{itemize}

For details, see \cite[Lemma 2.5]{K}.

The converse follows because a terminal subset of $I'$ maps to a cofinal subset of $I$.

$\Box$ \vspace{.5cm}

\begin{cor}\label{appcor06} If $\{ t^i \}$ is a net in $\Z_+$ then exactly one of the following is true.

\begin{itemize}
\item[(i)] There is a subnet $\{ t^{i'} \}$ with limit $\infty$.

\item[(ii)] There is a finite $F \subset \Z_+$ such that eventually $t^i \in F$ and for each $s \in F$,
$t^i = s$ frequently.
\end{itemize}
\end{cor}

\proof Let $\A$ be the collection of cofinite subsets of $\Z_+$. By Lemma \ref{applem05}, case (i)
occurs when eventually $t^i \in A$ for every $A \in \A$. Otherwise, there exists $A \in \A$ such that
$\{ i : t^i \in A \}$ is not cofinite and so $\{ i : t^i \in F_0 \}$ is terminal for $F_0$ the complement of $A$.
Let $F_1 = \{ s \in F_1 : \{ i : t^i = s \}$ is not cofinal $ \}$. So for each $s$ in the finite set $F_1$,
eventually $t^i \not= s$. Let $F = F_0 \setminus F_1$. By definition $ \{ i : t^i = s \}$ is  cofinal for each
$s \in F$. Finally, $\{ i : t^i \in F \}$ is the intersection of a finite collection of terminal sets and so is terminal.

$\Box$ \vspace{.5cm}

\vspace{1cm}

\section{Ellis semigroups and Ellis actions}\label{appendix-ellis}
\vspace{.5cm}

We will follow the notation of \cite{A-97}.

We write for a map $\phi : S \times X \rightarrow X $
\begin{equation}\label{app01}
\begin{split}
px \quad = \quad \phi(p,x) \quad = \quad \phi^{p}(x) \quad =
\phi_{x}(p) \qquad \mbox{for} \ (p,x) \in S \times X.\\
AB \quad = \quad \{ px : p \in A \ \mbox{and} \ x \in B \}
\qquad \mbox{for}\  A \times B \subset S \times X. \\
\phi^{\#} : S \rightarrow X^{X}
\qquad \mbox{is defined by}\  p \mapsto \phi^{p}. \hspace{3cm}\\
\phi_{\#} : X \rightarrow X^{S} \qquad \mbox{is defined by}  \ x
\mapsto \phi_{x}. \hspace{3cm}
\end{split}
\end{equation}

A semigroup $S$ is a nonempty set equipped with $M :S \times S
\rightarrow S$ which is an associative multiplication, i.e.
\begin{equation}\label{app02}
M^{p} \circ M^{q} \quad = \quad M^{pq} \qquad \mbox{for all} \ p,q
\in S.
\end{equation}

An action of a semigroup $S$ on a nonempty set $X$ is a map $\phi
:S \times X \rightarrow X$ which is an action, i.e.
\begin{equation}\label{app03}
\phi^{p} \circ \phi^{q} \quad = \quad \phi^{pq} \qquad \mbox{for
all} \ p,q \in S.
\end{equation}

If $S$ is a semigroup then the multiplication map $M$ is an  action
of $S$ on itself, called the \emph{translation action}. If $X$ is a
singleton set then the unique map from $S \times X$ to $X$ is an
action called the \emph{trivial action}.

A semigroup is called a \emph{monoid} when it contains a (necessarily unique) two-sided
identity element, $u$, i.e. $up = p = pu$ for all $p \in S$.  A \emph{monoid action} of a monoid $S$ on a
set $X$ is a semi-group action such that the identity element acts
as the identity map on $X$, i.e. $\phi^{u} = id_{X}$. The
translation action of a monoid is a monoid action as is the trivial
action of any monoid.

If $S,T$ are semigroups then $g : S \rightarrow T$ is a
\emph{semigroup homomorphism} when $g(pq) = g(p)g(q)$ for all $p,q
\in S$.  A nonempty subset $H \subset S$ is a \emph{subsemigroup}
when it is closed under multiplication, i.e. $HH \subset H$, in
which case the inclusion of $H$ into $S$ is a homomorphism. $H$ is an \emph{ideal}
when it satisfies the stronger condition $SH \subset H$. The
image $g(H) \subset T$ is a subsemigroup and the preimage of a
subsemigroup of $T$ is a subsemigroup of $S$ when it is nonempty. A
singleton $\{u\}$ is a subsemigroup iff $u$ is an
\emph{idempotent}, i.e. $uu = u$.
If $S,T$ are monoids then $g : S \rightarrow T$ is a \emph{monoid
homomorphism} when it is a semigroup homomorphism which maps the
identity of $S$ to that of $T$.  If $S$ is a monoid with identity
$u$ and $g : S \rightarrow T$ is a surjective semigroup
homomorphism then $g(u)$ is an identity in $T$.  That is, $T$ is a
monoid and $g$ is a monoid homomorphism.  A subsemigroup of a
monoid $S$ is a \emph{submonoid} when it contains the identity
element of $S$.

If  $\phi : S \times X \rightarrow X$ and $\psi : S \times Y
\rightarrow Y$ are semigroup actions, then $\pi : X \rightarrow Y$
is an \emph{action map} when $\pi(px) = p \pi(x)$ for all $ p \in
S, x \in X$.  A subset $K \subset X$ is called \emph{ invariant}
if $K \not= \emptyset$ and  $SK \subset K$. In that case, the
restricted map $\phi|K : S \times K \rightarrow K$ is a semigroup
action and the inclusion of $K$ into $X$ is an action map. We call
$\phi|K$ or just $K$ a \emph{subsystem} of $X$. The image $\pi(K)
\subset Y $ is then  invariant and the preimage of an invariant
subset of $Y$ is an  invariant subset of $X$ if it is nonempty.
The \emph{orbit} of a point $x \in X$ is the invariant set $Sx =
\phi_{x}(S)$.  Any union of  invariant sets is invariant and any
nonempty intersection of  invariant sets is invariant. A subset $H \subset S$
is invariant under the translation action exactly when it is an ideal.

For any nonempty set $X$ map composition gives $X^{X}$ the
structure of a monoid and the evaluation map $Ev :X^{X} \times X
\rightarrow X$ is a monoid action.  If $\phi : S \times X
\rightarrow X$ is a semigroup action then $\phi^{\#} : S
\rightarrow X^{X}$ is a homomorphism and for each $x \in X$,
$\phi_{x} = Ev_{x} \circ \phi^{\#} : S \rightarrow X$ is an action
map (with the translation action on $S$) whose image is the orbit of $x$.

If a semigroup $S$ acts on each member of an indexed family
$\{X_{i} : i \in I \}$ then the \emph{product action} on the
product $\Pi \{X_{i} : i \in I \}$ is uniquely defined by the
condition that for every $j \in I$ the projection map $\pi_{j}
:\Pi \{X_{i} : i \in I \} \rightarrow X_{j}$ is an action map. In
particular, if $\phi : S \times X \rightarrow X$ is an action and
$I$ is any set then we obtain the product action $\phi^{I} : S
\times X^{I} \rightarrow X^{I}$. We denote by $\phi^{2}$  the
product action on $X \times X$, i.e. the special case with $I =\{
0,1 \}$.  When $I = X$ we have $(\phi^{X})_{id_{X}} = \phi^{\#}$.

Recall that our spaces are all assumed to be nonempty and Hausdorff.
 For a compact space $X $ and a set $S, \ X^{S}$ denotes the set of all maps from $S$
to $X$, equipped with the compact, product topology.

An \emph{Ellis semigroup} is a compact space equipped with a semigroup
multiplication $M$ such that the adjoint map $M^{\#}: S
\rightarrow S^{S} $ is continuous, or, equivalently,  $M_{q} : S
\rightarrow S$ is continuous for each $q \in S$.  For any compact space
$X$ the space of maps $X^{X}$ is an Ellis semigroup.

An \emph{Ellis action} is a semigroup action $\phi$ of an Ellis
semigroup on a compact space $X$ such that the semigroup homomorphism
$\phi^{\#}: S \rightarrow X^{X} $ is continuous, i.e. for each $x
\in X$, the action map $\phi_{x} : S \rightarrow X$ is continuous.
The translation action and trivial actions of an Ellis semigroup
are Ellis actions as is the evaluation action of $X^{X}$ on a compact
space $X$.

Thus, $\phi$ is an Ellis action exactly when $\phi^{\#}: S \rightarrow X^{X} $ is continuous
homomorphism of Ellis semigroups.  The image of $\phi^{\#}$ is called the
\emph{enveloping semigroup} of $\phi$ and is denoted $E(\phi)$.

The idempotents play a special role in the theory because of the following
crucial property of an Ellis semigroup (see
\cite[Lemma 2.9]{Ellis}  and \cite[Lemma 6.6]{Aus}).

\begin{lem}\label{applem01}{\bfseries Ellis-Numakura}
A closed subsemigroup $A$ of an Ellis semigroup
$S$ contains idempotents.
\end{lem}

\proof We recall the quick proof.  By Zorn's Lemma we may assume that $A$ is a minimal nonempty, closed subsemigroup.
Let $p \in A$. $Ap \subset A$ is a closed subsemigroup and so by minimality $Ap = A$. Hence, the closed subsemigroup
$\{ q \in A : qp = p \}$ is nonempty and so equals $A$. Hence, $pp = p$ (and $A = \{ p \} $ by minimality).

$\Box$ \vspace{1cm}

\section{The Stone-\v{C}ech compactification}\label{appendix-StoneCech}
\vspace{.5cm}
If $Q$ is
an infinite set, which we regard as a space with the discrete topology,
then the Stone-\v{C}ech Compactification is a compact space $\b Q$ and a
bijection $j$ from $Q$ onto a discrete, dense subset of $\b Q$ which satisfies the
following:

\begin{itemize}
\item {\bfseries Extension Condition} If $h : Q \to X$ is a map with $X$ a compact space,
then there exists a -necessarily unique- continuous map $\b h : \b Q \to X$ such that $h = \b h \circ j$.
\end{itemize}

We will usually regard $j$ as an identification and so regard $Q$ as a discrete, dense subset of the
compact space $\b Q$.

The construction is a functor from the category of sets to that of compact spaces.  If $q : Q_1 \to Q_2$ is an
map then composing with the inclusion $j_2$ and extending, we obtain $\b q : \b Q_1 \to \b Q_2$ such that
$\b q \circ j_1 = j_2 \circ q$. The functor properties: $\b \ id_Q = id_{\b Q}$ and
$\b (q \circ \tilde q) = \b q \circ \b \tilde q$ follow from the uniqueness of the extension of a continuous
map from a dense subset when the spaces are Hausdorff.

\begin{lem}\label{applem02} If $q : Q_1 \to Q_2$ is injective, then $\b q : \b Q_1 \to \b Q_2$ restricts to
a homeomorphism onto its image, the closure $\ol{q(Q_1)} \subset \b Q_2$. \end{lem}

\proof First extend the map $ q^{-1} : q(Q_1) \to \b Q_1$ arbitrarily to a map on $Q_2$.  Then apply
the Extension Condition to obtain a continuous map from $\b Q_2 \to \b Q_1$. If $r$ is the restriction of
this continuous map to $\ol{q(Q_1)}$, then $r \circ q$ is the identity on $Q_1$ and $\b q \circ r$ is the
identity on $q(Q_1)$. Hence, $r : \ol{q(Q_1)} \subset \b Q_2$ is the inverse of $\b q$.

$\Box$ \vspace{.5cm}

\begin{cor}\label{appcor02} A sequence $\{ q^n : n \in \N \}$  in $Q$ is convergent in $\b Q$ iff it is eventually constant and
so converges to an element of $Q$. \end{cor}

\proof If a sequence in a Hausdorff space takes on only finitely many values then it is convergent iff it is eventually constant.
If $Q_0$ is a countably infinite subset of $Q$ then the inclusion of $Q_0$ into $Q$ extends to a homeomorphism
of $\b Q_0$ into $\b Q$. Since $\b Q_0 $ is uncountable, it follows that a sequence in $Q$ which takes on infinitely
many distinct values has uncountably many limit points in $\b Q$ and so is not convergent.

$\Box$ \vspace{.5cm}

If $A$ is a  subset of $\b Q$ we let $A_0 = A \cap Q$. We denote by $\ol{A}$ the closure of $A$ in $\b Q$.
Since $Q$ is discrete and dense in $\b Q$, it is the set of isolated points of $Q$.

\begin{prop}\label{appprop02a} (a) If $L_1, L_2 \subset Q$ then $\ol{L_1} \cap \ol{L_2} = \ol{(L_1 \cap L_2)}$.

(b) If $A \subset \b Q$ the following are equivalent:
\begin{itemize}
\item[(i)] $A$ is a clopen subset of $\b Q$.

\item[(ii)] There exists an open set $U$ such that $A = \ol{U}$.

\item[(iii)] There exists $L \subset Q$ such that  $A = \ol{L}$.

\item[(iv)]  $A = \ol{A_0}$.
\end{itemize}

If $L \subset Q$ then $L = (\ol{L})_0$.

(c) If $A \subset \b Q$ is a closed subset, we let $\F_{A} = \{ L \subset Q : A \subset \ol{L} \}$. If $A$ is nonempty
then $\F_A$ is a filter of subsets of $Q$. The collection $\{ \ol{L} : L \in \F_A \}$ is the set of clopen neighborhoods
of $A$ and so is a base for the neighborhoods of $A$ in $\b Q$. $\F_{\emptyset}$ is the power set of $Q$ and
the collection $\{ \ol{L} : L \in \F_{\emptyset} \}$ is the set of all clopen subsets of $\b Q$.

(d) If $E \subset \b Q \times \b Q$ is an open equivalence relation then it is a clopen neighborhood of the diagonal
$id_{\b Q}$. The set of equivalence classes $\{ E(p) : p \in \b Q \}$ is a finite clopen partition of $\b Q$
and $\{ (E(p))_0 : p \in \b Q \}$ is a finite  partition of $ Q$. Conversely, if $\{ L_1, \dots, L_k \}$ is a
partition of $Q$ then $\{ \ol{L_1}, \dots, \ol{L_k} \}$ is a
clopen partition of $\b Q$ with associated clopen equivalence relation
$\bigcup_i \ \ol{L_i} \times \ol{L_i}$.
The set of such open equivalence relations forms a neighborhood basis for the diagonal.
\end{prop}

\proof: (a): If $L \subset Q$ then the characteristic function $\chi_L :Q \to \{ 0, 1 \}$ extends to a continuous
function $\b \chi_L $ on $\b Q$. Hence, $(\b \chi_L)^{-1}(1)$ and $(\b \chi_L)^{-1}(0)$ are disjoint clopen sets
containing $L$ and $Q \setminus L$ respectively. Hence, $\ol{L}$ is disjoint from $\ol{Q \setminus L}$,
with union $\b Q$. It follows that  $\ol{L}$ is clopen and disjoint
subsets of $Q$ have disjoint closures.

For $L_1, L_2$ the trio  $ \ol{L_1 \setminus (L_1 \cap L_2)},
\ol{ (L_1 \cap L_2)}, \ol{L_2 \setminus (L_1 \cap L_2)} $ are pairwise disjoint. Since
$\ol{L_i} = \ol{L_i \setminus (L_1 \cap L_2)} \cup \ol{ (L_1 \cap L_2)}$ for $i = 1,2$ it follows that
$\ol{L_1} \cap \ol{L_2} = \ol{(L_1 \cap L_2)}$.

(b): (iv) $\Rightarrow$ (iii): Obvious.

 (iii) $\Rightarrow$ (i): It was shown above that $L \subset Q$ implies $\ol{L}$ is clopen.

 (i) $\Rightarrow$ (ii): Let $U = A$.

  (ii) $\Rightarrow$ (iv): If  $U$ is open and $ \ol{U} = A$ then, since $U \subset A$,
  we have $U_0 \subset A_0 \subset A$. Since $Q$ is dense in $\b Q$ and $U$ is open,
  $U_0 = U \cap Q$ is dense in $U$. Hence, $\ol{U_0} = \ol{U} = A$ and so $\ol{A_0} = A$.

In general, $L \subset Q$ implies $L \subset \ol{L} \cap Q = (\ol{L})_0$. If $L_1 = (\ol{L})_0 \setminus L$
then $L_1 \subset \ol{L}$ and so $\ol{L_1} \subset \ol{L}$.  But $L_1$ is disjoint from $L$ and so
$\ol{L_1}$ is disjoint from $\ol{L}$. That is, $L_1 = \emptyset$.

(c):  The family $\F_A$ is hereditary upwards and by (a) it is closed under intersection. If $U$ is any closed
neighborhood of $A$ and $\hat U$ is the closure of the interior of $U$, then by (b) $\hat U \subset U$ is a clopen
neighborhood of $A$ and equals $\ol{L}$ with $L = (\hat U)_0 \in \F_A$. The results for $\F_{\emptyset}$ are obvious from (b).

(d): If $E$ is an open equivalence relation on any compact space $X$, then the collection of equivalence classes forms a
pairwise disjoint cover of $X$ by open, nonempty subsets. Hence, there are only finitely many
equivalence classes $\{ E_1,\dots, E_k \}$ and so each is clopen. So $E = \bigcup_i E_i \times E_i$ is clopen in $X \times X$.
Then $\{ (E_1)_0, \dots, (E_k)_0 \}$ is a finite partition of $Q$. Conversely, if $\{ L_1, \dots, L_k \}$ is a
partition of $Q$ then $\{ \ol{L_1}, \dots, \ol{L_k} \}$ is a
clopen partition of $\b Q$ by (a) and (b).

For a compact space, $X$, the neighborhoods of the diagonal form a uniformity and so for any neighborhood $V$
of the diagonal, there exists a symmetric open neighborhood $W$ of the diagonal such that $\ol{W} \circ \ol{W} \subset V$.
When $X = \b Q$ we can choose a finite list $x_1,\dots,x_k \in \b Q$ so that $\{ W(x_1),\dots, W(x_k) \}$ is a finite
open cover.  Let $E_1 = \ol{W(x_1)}$ and $E_i = \ol{W(x_i)} \setminus \bigcup_{j<i} \ol{W(x_j)} $. Each of these is
clopen by (b). Thus,
$\{E_1, \dots, E_k \}$ is a finite clopen partition of  $\b Q$ and $E = \bigcup_i E_i \times E_i \subset V$
because  $\ol{W} \circ \ol{W} \subset V$.

$\Box$ \vspace{.5cm}

We will denote by $\E$\label{eq} the set of clopen equivalence relations on $\b Q$. $\E$ is closed under intersection and
is directed by (decreasing) inclusion.

If $X$ is a compact space we let $2^X$ denote the set of closed subsets of $X$ (including $\emptyset$). For a compact Hausdorff space
the set of neighborhoods of the diagonal is a uniformity on $X$, in fact, the unique uniformity with the topology of $X$, \cite[Corollary 6.30]{K}.
For $V$ a neighborhood of the diagonal in $X \times X$ we define $\hat V \subset 2^X \times 2^X$
by $\hat V = \{ (A,B) : B \subset V(A),$ and $ A \subset V(B) \}$. The collection of $\hat V$'s generate a uniformity on $2^X$
with a compact Hausdorff topology, see e.g.
\cite[pp. 92-93]{A-97}.
We will restrict our discussion to the case of $X = \b Q$
so that we need only consider the equivalence relations $E \in \E$  as these generate the neighborhoods of the diagonal.
Observe that $B \subset E(A)$ iff $E(B) \subset E(A)$ since $E$ is an equivalence relation. Hence,
$\hat E = \{ (A,B) \in 2^{\b Q} \times 2^{\b Q}: E(A) = E(B) \}$. Since each
equivalence class of $E$ is clopen we can choose from it
a point of $Q$ and thus obtain a finite set $F^E \subset Q$ such that $F^E$ meets each equivalence class in a single point.
If $A \in 2^{\b Q}$ then $E(A) \cap F^E$ meets the same $E$ equivalence classes as does $A$. Hence,
$(E(A) \cap F^E, A) \in \hat E$. Since the power set of $F^E$ is finite, it follows that the uniformity on
$2^{\b Q}$ is totally bounded. To show that $2^{\b Q}$ is compact, it suffices to show that the uniformity is complete,
see \cite[Theorem 6.32]{K}.

For $\{ A^i \}$ a net in $2^{\b Q}$ we define
\begin{align}\label{limdefs}
\begin{split}
\ol{LIM}_i \ A^i = \bigcap_i &\ol{\bigcup_{j > i} \ A^j}, \\
\ul{LIM}_i \ A^i = \bigcap_{E \in \E} \ &\bigcup_i \ \bigcap_{j > i} \ E(A^j).
\end{split}
\end{align}

 \begin{lem}\label{applem02b} Let $\{ A^i: i \in I \}$ be a net in $2^{\b Q}$ and $A \in 2^{\b Q}$.
\begin{itemize}

\item[(a)] $ \ul{LIM}_i \ A^i \subset \ol{LIM}_i \ A^i. $

\item[(b)] $A \subset \ul{LIM}_i \ A^i $ iff for all $E \in \E$, eventually, $A \subset E(A^i)$, or, equivalently
$E(A) \subset E(A^i)$.

\item[(c)] $ \ol{LIM}_i \ A^i \subset A$ iff for all $E \in \E$, eventually, $A^i \subset E(A)$, or, equivalently,
$E(A^i) \subset E(A)$.

\item[(d)] The following are equivalent
\begin{enumerate}
\item[(i)]$ \ul{LIM}_i \ A^i = \ol{LIM}_i \ A^i. $

\item[(ii)] There exists $A \in 2^{\b Q}$ such that for all $E \in \E$, eventually, $E(A^i) = E(A)$.

\item[(iii)] With $A = \ol{LIM}$ for all $E \in \E$, eventually, $E(A^i) = E(A)$.

\item[(iv)] For all $E \in \E$, there exists $i \in I$ such that $j_1, j_2 > i$ imply $E(A^{j_1}) = E(A^{j_2})$.
\end{enumerate}
\end{itemize}
\end{lem}

\proof (a): Fix $E \in \E$.  Clearly,
$$ \bigcup_i \bigcap_{j > i}  E(A^j)  \subset  \bigcap_i \bigcup_{j > i}  E(A^j)  =  \bigcap_i E( \bigcup_{j > i}  A^j).$$

Now intersect over $E \in \E$.
\begin{displaymath}
\begin{split}
 \ul{LIM}_i \ A^i \subset \bigcap_E  \bigcap_i E( \bigcup_{j > i}  A^j) = \\
  \bigcap_i  \bigcap_E E( \bigcup_{j > i}  A^j) =  \bigcap_i  \ol{ \bigcup_{j > i}  A^j} = \ol{LIM}_i \ A^i.
  \end{split}
  \end{displaymath}

  (b): It is clear that if for every $E \in \E$ there exists $i$ such that $A \subset E(A^j)$ for all $j > i$ then
  $A \subset \ul{LIM}$.  On the other hand, if $A \subset \ul{LIM}$ then for every $E$
  $A \subset \bigcup_i \ \bigcap_{j > i} \ E(A^j)$.  Since there are only finitely many $E$
  equivalence classes and each is open, this is an increasing union of open sets. It follows that for
  some $i, \ A \subset  \bigcap_{j > i} \ E(A^j)$.  Thus, $A \subset E(A^j)$ for all $j > i$.

(c): If for each $E \in \E$, there exists $j$ such that  $A^j \subset E(A)$ for all $j > i$ then since $E(A)$ is
closed, $\ol{LIM} \subset E(A)$.  Intersecting over $E$ we obtain $\ol{LIM} \subset A$.  On the other hand,
if $\ol{LIM} \subset A$ and $E \in \E$ then since $E(A)$ is open and $\bigcap_i \ol{\bigcup_{j > i} \ A^j}$
is a decreasing intersection of compact sets, it follows that for some
$i, \ol{\bigcup_{j > i} \ A^j} \subset E(A)$ and so for $j > i, \ A^j \subset E(A)$.

The terminal equivalences in (b) and (c) follow from the fact that $B \subset E(A)$ iff $E(B) \subset E(A)$ since
$E$ is an equivalence relation.

(d): (i) $\Rightarrow$ (iii): If $A = \ol{LIM} = \ul{LIM}$ then by (b) and (c), for every $E \in \E$, eventually
$E(A^i) \subset E(A)$ and eventually $E(A) \subset E(A^i)$.

(iii) $\Rightarrow$ (ii): Obvious.

(ii) $\Rightarrow$ (i): This follows from (a), (b) and (c).

(ii) $\Rightarrow$ (iv): Obvious.

(iv) $\Rightarrow$ (i): If $E \in \E$ and $E(A^j)$ is constant for $j > i$ then
\begin{displaymath}
\begin{split}
\ol{LIM} \subset \ol{ \bigcup_{j > i}  A^j} \subset  E( \bigcup_{j > i}  A^j) \\
= \bigcup_{j > i}  E(A^j)  = \bigcap_{j > i}  E(A^j) \subset \bigcup_i \bigcap_{j > i}  E(A^j).
\end{split}
\end{displaymath}
Intersecting over $E \in \E$ we obtain  $\ol{LIM}  \subset \ul{LIM}$. The reverse inclusion is always true by (a).

$\Box$ \vspace{.5cm}

\begin{cor}\label{appcor02c} (a) A net $\{ A^i: i \in I \}$  in $2^{\b Q}$ is Cauchy iff
$\ol{LIM}_i \ A^i = \ul{LIM}_i \ A^i$ in which
case this common value is the limit.  In particular, $2^{\b Q}$ is complete and so is compact.

(b) If $A \in 2^{\b Q}$ then $\{ E(A) \cap F^E : E \in \E \}$ is a net, indexed by $\E$, consisting of finite subsets of $Q$,
which converges to $A$.\end{cor}

\proof (a): The net is Cauchy iff for every $E \in \E$, there exists $i \in I$ such that $j_1, j_2 > i$ imply
$(A^{j_1},A^{j_2}) \in \hat E$, i.e.$ E(A^{j_1}) = E(A^{j_2})$. By Lemma \ref{applem02b} (d) this holds iff
$\ol{LIM} = \ul{LIM}$ and in that case with $A$ this common value we have eventually $E(A^i) = E(A)$ for all $E \in \E$.
That is, eventually $(A^i,A) \in \hat E$.  This means that the net converges to $A$.

(b):  We observed above that $( E(A) \cap F^E,A) \in \hat E$. Thus, the net converges to $A$.

$\Box$ \vspace{.5cm}

\begin{prop}\label{appprop02d} Let $\{ A^i: i \in I \}$  be a net in $2^{\b Q}$ and $A \in 2^{\b Q}$.
\begin{enumerate}
\item[(a)] If $A = \emptyset$ then $\{ A^i: i \in I \}$ converges to $A$ iff eventually $A^i = \emptyset$.  That is,
$\emptyset$ is an isolated point of $2^{\b Q}$.

\item[(b)] Let $F$ be a finite subset of $Q$. If $\{ A^i: i \in I \}$ converges to $A$ then eventually $A^i \cap F = A \cap F$.
If $A = F$ then eventually $A^i = F$. That is, $F$ is an isolated point of $2^{\b Q}$. In general,
$$ A_0 =  \bigcup_i \bigcap_{j > i} (A^i)_0 = \bigcap_i \bigcup_{j > i} (A^i)_0.$$

\item[(c)]  $\{ A^i: i \in I \}$ converges to $A$ iff for every $L \in \F_A$ eventually $L \in \F_{A^i}$ and for every
$L \not\in \F_A$ eventually $L \not\in \F_A^i$.

\item[(d)] If for every $i \in I, \ A^i = \ol{S^i}$ with $S^i \subset Q$ then $\{ A^i: i \in I \}$ converges to $A$ iff
 for every $L \in \F_A$ eventually $S^i \subset L$ and for every
$L \not\in \F_A$ eventually $S^i \cap (Q \setminus L) \not= \emptyset$.
\end{enumerate}
\end{prop}

\proof (a): For any $E \in \E$ $E(\emptyset) = \emptyset$. So convergence implies that eventually $E(A^i) = \emptyset$ and
this requires $A^i = \emptyset$.  Conversely, if $A^i = \emptyset$ eventually then the net converges to $\emptyset$.

(b): We may assume that $F$ is nonempty and we let $E_F = \{ \ol{Q \setminus F} \times  \ol{Q \setminus F} \} \cup
\{ (k,k) : k \in F \} \in \E$. Convergence implies that eventually $E_F(A^i) = E_F(A)$. This implies that $A^i$ contains
the same points of $F$ as does $A$, i.e. $A^i \cap F = A \cap F$. Furthermore, if $A = F$ and $E_F(A^i) = E_F(A)$, then
$A^i \cap \ol{Q \setminus F} = \emptyset$ and so $A^i = F$. In particular, if $k \in A_0$ then eventually
$k \in A^i$ and so $k \in (A^i)_0$. If $k \in Q \setminus A_0$ then eventually $k \not\in (A^i)_0$.

(c): For $L \subset Q$ let $E_L = \ol{L} \times \ol{L} \cup \ol{Q \setminus L} \times \ol{Q \setminus L}.$
Convergence implies that eventually $E_L(A^i) = E_L(A)$. If $L \in \F_A$ then $E_L(A) = \ol{L}$ and so
$A^i \subset \ol{L}$, i.e. $L \in \F_{A^i}$. If $L \not\in \F_A$ then $\ol{Q \setminus L} \subset E_L(A)$ and
so $A^i$ meets $\ol{Q \setminus L}$.

If $A = \emptyset$ then $\emptyset \in \F_A$ and the condition implies that eventually $\emptyset \in \F_{A^i}$,
i.e. eventually $A^i = \emptyset$. We now assume that $A$ is nonempty.

Now assume that the condition holds and that the equivalence classes of $E$ are $\ol{L_1},\dots,\ol{L_k}$
with $\{ L_1,\dots L_k \} $ a partition of $Q$. Assume that they have been numbered so that for some $1 \leq k_1 \leq k$,
$E(A) = \bigcup_{n=1}^{k_1} \ol{L_n}$. Then $\bigcup_{n=1}^{k_1} L_n \in \F_A$ and so there exists $i_0 \in I$
so that $j > i_0$ implies  $\bigcup_{n=1}^{k_1} L_n \in \F_{A^j}$. For each $m = 1,\dots,k_1$, $Q \setminus L_m \not\in \F_A$
and so there exists $i_m$ such that $j > i_m$ implies that $A^j$ meets $\ol{L_m}$. Choose $i > i_0,\dots, i_{k_1}$.
If $j >i$ then $E(A^i) = E(A)$. It follows that $\{ A^i: i \in I \}$ converges to $A$.

(d): If $A^i \subset \ol{L}$ then, intersecting with $Q$ we have $S^i = (A^i)_0 \subset (\ol{L})_0 = L$.
If $A^i$ meets $\ol{Q \setminus L}$ then $S^i$ meets $Q \setminus L$ since disjoint subsets of $Q$ have
disjoint closures in $\b Q$. In each case, the converse is obvious.

$\Box$ \vspace{.5cm}

Assume $\phi : \Gamma \times Q \to Q$ is a monoid action of a countable, discrete, abelian monoid $\Gamma$ on a set $Q$.
For each $\g \in \Gamma$, we obtain the continuous map $\b \phi^{\g} : \b Q \to \b Q$.
Since $\b \phi^{\g_1} \circ \b \phi^{\g_2} = \b (\phi^{\g_1} \circ \phi^{\g_2}) = \b \phi^{\g_1 \g_2}$,
we obtain a continuous monoid action which we denote $\ol{\phi} : \Gamma \times \b Q \to \b Q$.

Apply this to the translation action of $\Gamma$ on itself, i.e. with $\phi = M : \Gamma \times \Gamma \to \Gamma$
and we obtain the action $\ol{M} : \Gamma \times \b \Gamma \to \b \Gamma$ of $\Gamma$ on the compact space $\b \Gamma$.

In general, if $\Phi : \Gamma \times X \to X$ is a continuous monoid action of $\Gamma$ on a compact space $X$ then
for each $x \in X$, we obtain the continuous extension $\b \ \Phi_x : \b \Gamma \to X$. This
defines a map $\b \Phi : \b \Gamma \times X \to X$. In general, it is not continuous, but for each
$x \in X, \ (\b \Phi)_x =  \b \ \Phi_x : \b \Gamma \to X$ is continuous and for each
$\g \in \Gamma, \ (\b \Phi)^{\g} = \Phi^{\g} : X \to X$
is continuous. That is, $p \to px$ is continuous for each $x \in X$ and $x \to \g x$ is continuous for each $\g \in \Gamma$.

For $\g,\t \in \Gamma$ and $x \in X$, the equation $(\g \t)x = \g ( \t x)$ implies that
$$\b \ \Phi_x \circ \b M^{\g} = \Phi^{\g} \circ  \b \ \Phi_x $$
holds on $\Gamma$ and so by continuity on $\b S$.

Apply this to $\ol{M} : \Gamma \times \b \Gamma \to \b \Gamma$ and we obtain
$\b M : \b \Gamma \times \b \Gamma \to \b \Gamma$ and the above
equation implies that $\b M$ is an associative multiplication and so gives $\b \Gamma$ the structure of an Ellis semigroup.
Furthermore, if $\Phi : \Gamma \times X \to X$ is a continuous action of $S$ on a compact space $X$ then
  $\b \Phi : \b \Gamma \times X \to X$ is an Ellis action.

Observe that if $1$ is the identity in $\Gamma$ then it is the identity in $\b \Gamma$. That is, $p1 = p = 1p$ because
the equations are true for $p \in S$ and since $1 \in S$, both $p \mapsto p1$ and $p \mapsto 1p$ are continuous.
Although $\b \Gamma$ is usually not abelian, it is true that $p \g = \g p$ is true for all
$p \in \b \Gamma$ and $\g \in \Gamma$.  If $h : \Gamma \to S$ is a homomorphism with $S$ an Ellis semigroup,
then $\b h : \b \Gamma \to S$ is a homomorphism.
Observe that $\b h (pq) = \b h(p) \b h(q)$ is true for $p, q \in \Gamma$ because $\b h$ extends the homomorphism $h$.
For $p$ fixed in $\Gamma$ continuity implies the equation holds for $q \in \b \Gamma$ and so with $q$ fixed in $\b \Gamma$
continuity implies the result for all $p \in \b \Gamma$. In particular, a homomorphism $h : \Gamma_1 \to \Gamma_2$ of
discrete monoids extends to a continuous homomorphism $\b h : \b \Gamma_1 \to \b \Gamma_2$.

If $\phi : \Gamma \times X \to X$ is a continuous action of  $\Gamma$ on a compact space $X$, then the homomorphism
$\phi^{\#} : \Gamma \to X^X$ extends to a continuous homomorphism of Ellis semigroups $\b \phi^{\#} : \b \Gamma \to X^X$.
The image is the closure of $\phi^{\#}(\Gamma) \subset X^X$, i.e. the \emph{enveloping semigroup} $E(X,\Gamma)$.

Assume  $\phi : \Gamma \times X \rightarrow X$ and $\psi : \Gamma \times Y
\rightarrow Y$ are continuous  actions of $\Gamma$ on the compact spaces
$X$ and $Y$ and that $\pi : X \rightarrow Y$ is a continuous action map. The map is an action map
between the Ellis actions because the equation $\pi(px) = p \pi(x)$ extends from $p \in \Gamma$ to
$p \in \b \Gamma$ by continuity for each fixed $x \in X$.


Recall that $\Gamma_u$ is the group of units in $\Gamma$, i.e. the set of $\g$ such that $\g \t = 1$ for
some $\t \in \Gamma$.

Let $\b^* \Gamma = \b \Gamma \setminus \Gamma$ and let $\b' \Gamma = \b \Gamma \setminus \Gamma_u$.
 We call $\Gamma$ a \emph{cancelation monoid} if for each $\g \in S$ the
translation maps $M^{\g}$ and $M_{\g}$ are injective. Of course, if $\Gamma$ is a group then it is
a cancelation monoid.

\begin{lem}\label{applem03}  Assume that $\Gamma$ is a cancelation monoid.

(a) If $p, q \in \b \Gamma$, then
$pq \in \Gamma$ iff $p, q \in \Gamma$ and $pq \in \Gamma_u$ iff $p, q \in \Gamma_u$.

\begin{equation}\label{adherence}
\begin{split}
 \b \Gamma \cdot \b^* \Gamma \cup \b^* \Gamma \cdot \b \Gamma \ \subset \ \b^* \Gamma,
 \b \Gamma \cdot \b' \Gamma \cup \b' \Gamma \cdot \b \Gamma \ \subset \ \b' \Gamma.
 \end{split}
 \end{equation}

(b) The identity $1$ is the only idempotent element of $\Gamma$.

 \end{lem}

\proof (a): If $p,q \in \Gamma$ then of course $pq \in \Gamma$. Assume $t = pq \in \Gamma$. Since $z \mapsto zq$ is
 continuous and $\Gamma$ is discrete the set $A = \{ z \in \b \Gamma : zq = t \}$ is a
clopen set and so if it meets $\b^* \Gamma$ then it meets $\Gamma$ in an infinite set. Let $\g \in A \cap \Gamma$.
If $B = \{ w \in \b \Gamma : \g w = t \}$ meets $\b^* S$ then  it meets $\Gamma$ in an infinite set.
This is impossible since $M^{\g}$ is injective on $\Gamma$. It follows that $B \subset \Gamma$ and so
$q \in B$ is in $\Gamma$. Hence, $M_q$ is injective implies that $A \cap \Gamma$ is not infinite and so
$p \in A$ is also in $S$.

Thus, if either $p$ or $q$ is in $\b^* S$ then $pq \in \b^* S$.

 If $pq \in \Gamma_u$ then, as above, $p, q \in \Gamma$. There exists $\t$ which is an inverse for $pq$ and so
 $q \t$ is an inverse for $p$ and $p \t$ is an inverse for $q$, because $\Gamma$ is abelian. Hence,
 $p, q \in \Gamma_u$.

 (b)  If $\g \g = \g = \g 1$ then $\g = 1$ by cancelation.

$\Box$ \vspace{.5cm}

 If $\Gamma$ is a cancelation monoid such that $\g \t = 1$ only when $\g = \t = 1$, i.e. if  $\Gamma_u = \{ 1 \}$,
 then we say that
 $\Gamma$ is a \emph{cancelation monoid without inverses}.

 \begin{lem}\label{applem04}  If $\Gamma$ is a group then $\b^* \Gamma = \b' \Gamma$.
 If $\Gamma$ is a cancelation monoid without inverses, then $\b' \Gamma$ is the closure in
 $\b \Gamma$ of $\Gamma' = \Gamma \setminus \{ 1 \}$.

\end{lem}

\proof $\Gamma$ is a group exactly when $\Gamma = \Gamma_u$.

Since $1$ is an isolated point in $\b \Gamma$, $\Gamma'$ is dense in $\b' \Gamma = \b \Gamma \setminus \{ 1 \}$.

$\Box$ \vspace{.5cm}

Of course, $\Z$ is a group, while $\Z_+$ and $FIN(\N)$ are cancelation monoids without inverses.

\begin{df}\label{adherencedef} If $\phi : \Gamma \times X \to X$ is a continuous action of  $\Gamma$ on $X$ with $\Gamma$ a cancelation monoid
then we define the \emph{adherence semigroup}
\begin{equation}\label{adherence2}
A(X,\Gamma) \ = \ \b \phi^{\#}(\b' \Gamma) \ \subset \ E(X,\Gamma).
\end{equation}
\end{df}

From (\ref{adherence}) it follows that $A(X, \Gamma)$ is an ideal in $E(X, \Gamma)$.

For a cascade, $(X,T)$ with $\Gamma$ the group $\Z$, $A(X,T)$ is the set of limit points as $|n| \to \infty$
of the bi-infinite
sequence $\{ T^n \}$  in $E(X,T)$. If $\Gamma$ is a cancelation monoid without inverses then
$A(X, \Gamma)$ is the closure in $E(X,\Gamma)$ of $\{ \phi^{\g} : \g \not= 1 \}$.

Recall that for a system $(X, \Gamma)$ a point $x \in X$ is called \emph{recurrent} when
$x \in \overline{\Gamma' \cdot x}$.

\begin{prop}\label{appprop05} Assume that $\Gamma$ is a cancelation monoid and that $\Gamma_u$ contains no
element of finite order other than $1$, e.g. a cancelation monoid without inverses.
For a system  $(X, \Gamma)$ and a point $x \in X$ the following are
equivalent.
\begin{itemize}

\item[(i)]  The point $x$ is recurrent.

\item[(ii)] There exists $p \in \b' \Gamma$ such that $p x = x$.

\item[(iii)] There exists an idempotent $p \in \b^* \Gamma$ such that $p x = x$.

\item[(iv)] There exists $p \in A(X, \Gamma)$ such that $p x = x$.

\item[(v)] There exists an idempotent $p \in A(X, \Gamma)$ such that $p x = x$.
\end{itemize}
\end{prop}

\proof Let  $\phi : \Gamma \times X \to X$ be the action.

(ii) $\Leftrightarrow$ (iv): For $p \in \b \Gamma$, $p x = \phi^{\#}(p) x$.

(ii) $\Leftrightarrow$ (iii), (iv) $\Leftrightarrow$ (v): The sets $\{ p \in \b' \Gamma : p x = x \}$
and $\{ p \in A(X,\Gamma) : p x = x \}$ are closed sub-semigroups. If either is non-empty then it contains
idempotents by  Lemma \ref{applem01}. Also, there is no idempotent in $\b' \Gamma \setminus \b^* \Gamma \subset \Gamma'$.

(ii) $\Rightarrow$ (i): $\b' \Gamma \cdot x  \subset \overline{\Gamma'} \cdot x$.

(i) $\Rightarrow$ (ii): If $x$ is recurrent then there exists a net $\{ \g^i \in \Gamma' \}$ such that
$\g^i x \to x$. If $q$ is a limit point of $\{ g^i \}$ in $\b \Gamma$ then $q x = x$. If $q \in \b' \Gamma$ then
let $p = q$ to obtain (ii).  Otherwise, $q \in \Gamma_u$. Since $1$ is an isolated point, $q \not= 1$.
The sequence of powers $\{ q^n : n \in \N \}$ are all distinct since no element of $\Gamma_u \setminus \{ 1 \}$ has finite order.
If $p$ is a limit point of the sequence in $\b \Gamma$ then $p \in \b^* \Gamma \subset \b' \Gamma$
since $\Gamma$ is discrete. Since $q^n x = x$ for all $n$, $p x = x$.

$\Box$

\vspace{1cm}

\bibliographystyle{amsplain}

\begin{theindex}


\vspace{.5cm}

\noindent{\textbf{{Index of terms}}}

\vspace{.5cm}


\item action map (homomorphism)   \textbf{\pageref{idemp}}

\item adherence semigroup   \textbf{\pageref{subsec,rec}, \pageref{adherencedef}}

\item almost periodic (AP)  \textbf{\pageref{intro}}

\item asymptotic pair  \textbf{\pageref{scramdef01}}



\item Banach density (upper) \textbf{\pageref{Banach01}}

\item Bernoulli shift, $S$  \textbf{\pageref{shift}}

\item Birkhoff center   \textbf{\pageref{subsec,rec}}

\item center periodic (CP)   \textbf{\pageref{cp,weakmix}}
\item center trivial (CT)   \textbf{\pageref{cp,weakmix}}

\item coalescence   \textbf{\pageref{prop06}}
\subitem E-coalescence   \textbf{\pageref{coaleth}}

\item cofinal constant     \textbf{\pageref{waplem04}}


 \item completely scrambled   \textbf{\pageref{scramdef01}}

 \item cross-section   \textbf{\pageref{cross}}


\item Ellis semigroup  \textbf{\pageref{appendix-ellis}}

\item enveloping semigroup   \textbf{\pageref{df,E}}

\item equicontinuous   \textbf{\pageref{sec,coal}}
\subitem equicontinuity point   \textbf{\pageref{sec,coal}}
\subitem almost equicontinuous (AE)  \textbf{\pageref{sec,coal}}
\subitem hereditarily almost equicontinuous (HAE)  \textbf{\pageref{sec,coal}}
\subitem locally equicontinuous (LE)   \textbf{\pageref{WAPctbl}}
\subitem uniformly equicontinuous   \textbf{\pageref{sec,coal}}

\item expansive  \textbf{\pageref{sec,coal}}



\item expansion of integers in $IP(k)$  \textbf{\pageref{towdef02}}

\subitem expansion of length $r$  \textbf{\pageref{towdef02}}
\subitem length vector    \textbf{\pageref{towdef02a}}

\subitem truncation  ($\tilde{t}$)   \textbf{\pageref{towdef02}}
\subitem extension   \textbf{\pageref{towdef02}}
\subitem residual   ($t - \tilde{t}$) \textbf{\pageref{towdef02}}



\item external element    \textbf{\pageref{ex-el}}

\item f-contains    \textbf{\pageref{df,f-contain}}

\item Gamow transformations    \textbf{\pageref{Gamow}}

\item height  (for dynamical systems)  \textbf{\pageref{df,height}}
\item height$^*$  (for CT-WAP dynamical systems)  \textbf{\pageref{isocor4}}

\item height  (for labels)  \textbf{ \pageref{towtheo24}, \pageref{df,heightL}}
\item height$^*$  (for labels)  \textbf{\pageref{df,heightL}}

\item hereditary collection (of subsets of $\N$)    \textbf{\pageref{labellem12a}}

\item idempotent   \textbf{\pageref{idemp}}

\item independent set (for a dynamical system)  \textbf{\pageref{towdf20b}}
\item independent set (for a label)   \textbf{\pageref{towdf20b}}

\item labels  \textbf{\pageref{labeldef01}}

\subitem bounded \textbf{\pageref{labeldef01b}}

\subitem disjoint  \textbf{\pageref{dis-labels}}

\subitem finitary  \textbf{\pageref{labeldef01bxx}}

\subitem finite type  \textbf{\pageref{labeldef01b}}

\subitem flat   \textbf{\pageref{towdef20f}}
\subitem null    \textbf{\pageref{towdf20b}}

\subitem positive \textbf{\pageref{label02}}

\subitem recurrent  \textbf{\pageref{df,recurrent}}

\subitem semi-simple  \textbf{\pageref{wapdef02}}

\subitem simple  \textbf{\pageref{wapdef02}}

\subitem size bounded \textbf{\pageref{labellem02}}

\subitem strong finite type  \textbf{\pageref{wapdef02}}

\subitem strongly recurrent   \textbf{\pageref{df,st-rec}}
\subitem strongly recurrent  set \textbf{\pageref{df,st-rec}}



\subitem tame  \textbf{\pageref{towdf20b}}


\subitem WAP  \textbf{\pageref{waplabel}}

\item $L$-determined     \textbf{\pageref{isolem2a}}

\item Li-Yorke pair   \textbf{\pageref{scramdef01}}

\item minimal trivial (minCT)   \textbf{\pageref{spinlem2}}

\item non-wandering  \textbf{\pageref{transprop}, \pageref{subsec,rec}}

\item nucleus (of a labeled subshift)  \textbf{\pageref{dfNuc}}

\item null (dynamical system) \textbf{\pageref{towdf20b}}

\item orbit closed     \textbf{\pageref{isolem2a}}

\item proximal  (pair)  \textbf{\pageref{scramdef01}}
\item proximal  (system)  \textbf{\pageref{scramdef01}}

\item recurrent point  \textbf{\pageref{lem01}}
\item rigid  \textbf{\pageref{max}}
\subitem weakly rigid   \textbf{\pageref{max},  \pageref{miscprop1}}
\subitem uniformly rigid   \textbf{\pageref{max}}

\item roof (of a label)   \textbf{\pageref{labeldef01b}}

\item scrambled set  \textbf{\pageref{sec,scrambled}}


\item  size of $\mm$   \textbf{\pageref{sec,labels}}
\item strongly recurrent  set \textbf{\pageref{df,st-rec}}


Stone-\v{C}ech compactification \textbf{\pageref{appendix-StoneCech}}

\item subshift   \textbf{\pageref{shift}}


\item tame  \textbf{\pageref{towdf20b}}


\item topological transitivity   \textbf{\pageref{sec,WAP}}
\subitem chain transitivity \textbf{\pageref{sec,WAP}}
\subitem transitive point  \textbf{\pageref{sec,WAP}}


\item translation finite (TF)  \textbf{\pageref{defrupp02}}

\item wandering (open set) \textbf{\pageref{transprop}, \pageref{subsec,rec}}

\item weakly almost periodic (WAP)   \textbf{\pageref{intro}, \pageref{cor02a}}

\item weak mixing   \textbf{\pageref{sec,WAP}, \pageref{cp,weakmix}}

\vspace{1cm}


\newpage

\noindent{\textbf{Index of symbols}}

\vspace{.5cm}

\item $[a \pm b], [\pm b]$  \textbf{\pageref{def401a}}

\item $A(e)$  \textbf{\pageref{df,Ae}}

\item $A[\M]$,   $A_+[\M]$  \textbf{\pageref{df,A[M]}}

\item $\A(\Phi)$ ($FIN(\N)$ adherence semigroup) \textbf{\pageref{label08a}}


\item $A(X,T)$ (adherence semigroup) \textbf{\pageref{sec,WAP}}

\item $A_+(X,T)$ \textbf{\pageref{scramdef01}}

\item ASY\!MP   \textbf{\pageref{sec,scrambled}}

\item $\B_N$ (basic labels)  \textbf{\pageref{df,f-contain}}

\item $\beta_b\Z$  \textbf{\pageref{ellis03}}


\item CT-WAP   \textbf{\pageref{df,Ae}}

\item $ D,  D_+$ \textbf{\pageref{ellis08}, \pageref{ellis15}}

\item $\bar D, \bar D_+$ \textbf{\pageref{ellisiso}}


\item $\hat D, \hat D_+$ \textbf{\pageref{tow17ad}}

\item $d(\M_1, \M_2)$ (label ultrametric)   \textbf{\pageref{label06}}

\item $e = \bar 0$  \textbf{\pageref{ex3}}

\item $\E$ (set of clopen equivalence relations)  \textbf{\pageref{eq}}

\item $\E_0(\Theta(\M))$   \textbf{\pageref{waplabel01}}

\item $\E_*(\Theta(\M))$   \textbf{\pageref{ex-el}}

\item $\E(\Phi)$  ($FIN(\N)$ enveloping semigroup) \textbf{\pageref{label08a}}

\item $\E (\LAB)$   \textbf{\pageref{label08a}}
\item $\hat \E (\LAB)$   \textbf{\pageref{ellis08}}

\item $E(X,T)$ (enveloping semigroup) \textbf{\pageref{df,E}}

\item $E_b(X(\M),S)$  \textbf{\pageref{ellis14}}

\item $F(\mm)$ \textbf{\pageref{df,st-rec}}

\item $FIN(L)$  \textbf{\pageref{Gamow}}
\item $FIN(\N)$  \textbf{\pageref{labeldef01}}

\subitem $\beta FIN(\N)$  \textbf{\pageref{labeltheo06x}}
\subitem $\beta^* FIN(\N)$  \textbf{\pageref{labeltheo06x}}

\subitem $\beta' FIN(\N)$  \textbf{\pageref{labeltheo06x}}

\item $\mathcal{F}(\M,L)$   \textbf{\pageref{tow26x1}}


\item $\Phi(Y)$, $\Phi_+(Y)$  (the preimage operation) \textbf{\pageref{df,Phi}}




\item $\Gamma'$ \textbf{\pageref{gammaprime}}

\item $\Gamma_u$ (group of units) \textbf{\pageref{gammau}}

\item INC    \textbf{\pageref{labellem03a}}

\item $IP(k)$,   $IP_+(k)$   \textbf{\pageref{towdef02}}


\item $ISO(\M)$   \textbf{\pageref{labeliso}}




\item $k(n)$  (expanding function) \textbf{\pageref{towlem01}}


\item $\ell(n)$ (support map) \textbf{\pageref{towdef02a}}

\item $\LAB$ (space of labels)  \textbf{\pageref{label06}}
\item $\LAB_+$ (space of labels)  \textbf{\pageref{permex}}

\item $\LAB(L)$ (space of labels)  \textbf{\pageref{Gamow}}

\item $LIMINF$   \textbf{\pageref{label07}}
\item $LIMSUP$    \textbf{\pageref{label07}}
\item $LIM$   \textbf{\pageref{labelprop04}}

\item $m_b$   \textbf{\pageref{ellis02}}

\item $\M, \NN$ labels   \textbf{\pageref{sec,labels}}

\item $[[\M]]$    \textbf{\pageref{labellem03a}}

\item $max \ \M$   \textbf{\pageref{labelprop03}}
\item $\M \oplus \NN$   \textbf{\pageref{label12}}

\item $\mm, \rr$ $\N$-vectors   \textbf{\pageref{sec,labels}}

\item $|\mm|$  (norm of a label)  \textbf{\pageref{sec,labels}}

\item $n_*(t), n^*(t)$  \textbf{\pageref{def401a}, \pageref{towdef02}}


\item $\Nuc(X(\M))$ (nucleus) \textbf{\pageref{dfNuc}}

\item $O_T$ (relations)  \textbf{\pageref{sec,WAP}}
\subitem $R_T$   \textbf{\pageref{sec,WAP}}
\subitem $R^*_T$   \textbf{\pageref{sec,WAP}}
\subitem $\alpha_T$   \textbf{\pageref{sec,WAP}}
\subitem $\omega_T$   \textbf{\pageref{sec,WAP}}

\item $\mathcal{O}_T(x)$  or $\mathcal{O}(x)$,
$\ol{\mathcal{O}}_T(x)$ (orbit and orbit closure of $x$) \textbf{\pageref{prop02}}


\item $p_{\rr}$    \textbf{\pageref{towtheo17ab}},
\item $P_{\rr}$   \textbf{\pageref{ssFIN}}

\item $\P_f(L)$   \textbf{\pageref{df,f-contain}}

\item PROX   \textbf{\pageref{sec,scrambled}}

\item RECUR   \textbf{\pageref{label12x}}


\item $\rr(t)$ (length vector)    \textbf{\pageref{towdef02a}}

\item $\rho(\M)$ (roof function)  \textbf{\pageref{labeldef01b}}

\item  $S$ (shift) \textbf{\pageref{shift}}

\item $\langle S \rangle$  (label generated by $S$) \textbf{\pageref{df,f-contain}}


\item $supp \ \mm$   \textbf{\pageref{sec,labels}}
\item $supp \ x$   \textbf{\pageref{sec,labels}}
\item $Supp \ \M$   \textbf{\pageref{labelprop03}}

\item SY\!M  \textbf{\pageref{towlem17a}}


\item $\Theta(\M)$  ($FIN(\N)$ orbit closure) \textbf{\pageref{proper}}
\item $\Theta'(\M)$   \textbf{\pageref{labellem05c}}

\item $\Theta(\Psi)$ \textbf{\pageref{df,Phi}}

\item $U$  (minimal idempotent) \textbf{\pageref{minid01}}
\item $\hat U$   \textbf{\pageref{ellis08}}

\item $V_{d,\ep}$   \textbf{\pageref{Vep}}

\item $x[\M]$,  $x_+[\M]$  \textbf{\pageref{df,x[]}}


\item $X(A)$   \textbf{\pageref{df,X(A)}}

\item $X(\M)$, $X_+(\M)$   \textbf{\pageref{df,x[]}}
\item $(X,T)$  (cascade dynamical system) \textbf{\pageref{shift}}

\item $(X,\Gamma)$ (semigroup dynamical system) \textbf{\pageref{gammasys}}

\item $X(\Psi)$, $X_+(\Psi)$   \textbf{\pageref{df,Phi}}

\item $(X,S)$  (subshift)  \textbf{\pageref{shift}}


\item $\chi(A)$   \textbf{\pageref{sec,labels}}


\item  $\xi_A, \xi_\M$   \textbf{\pageref{xiA}}

\item $X(\P A)$   \textbf{\pageref{df,X(A)}}


\item $\Z$, $\Z_+$, $\Z_{+\infty}$  \textbf{\pageref{ss,labels}}

\item ZER  \textbf{\pageref{towlem17a}}

\item set operators: \textbf{\pageref{z}, \pageref{ssec,OC}}
\subitem $z_{CAN}$   \textbf{\pageref{z}}
\subitem  $z_{LAB}$ \textbf{\pageref{ssec,OC}, \pageref{tow30}}
\subitem $z_{LIM}$   \textbf{\pageref{z}}
\subitem $z_{NW}$   \textbf{\pageref{z}}
\subitem $z^*_{\M}$   \textbf{\pageref{tow33}}

\end{theindex}

\end{document}